\let\mathcal\mathscr
\newcommand{\moins}{\mathbin{\fgebackslash}}
\theoremstyle{plain}
\numberwithin{equation}{section}
\newtheorem{prop}[equation]{\propname}
\newtheorem{theo}[equation]{\theoname}
\newtheorem{coro}[equation]{\coroname}
\newtheorem{lemm}[equation]{\lemmname}
\theoremstyle{definition}
\theoremstyle{remark}
\newtheorem{defi}[equation]{\definame}
\newtheorem{rema}[equation]{\remaname}
\newtheorem{conv}[equation]{Convention}
\let\emptyset\varnothing
\def\paskunas{Pa\v{s}k\={u}nas}
\let\cal\mathcal
\let\goth\mathfrak
\def\pp{\goth p}
\def\Q{{\bf Q}} \def\Z{{\bf Z}}
\def\zp{\Z_p} \def\qp{\Q_p}
\def\N{{\bf N}}
\def\O{{\cal O}}
\def\G{{\cal G}} 
\def\dual{{\boldsymbol *}}
\def\Qbar{\overline{\bf Q}}
\def\epsilon{\varepsilon}
 \def\ddr{{\bf D}_{\rm dR}}
\def\dpst{{\bf D}_{\rm pst}}
\def\ainf{{\rm A}_{{\rm inf}}}
\def\bst{{\rm B}_{{\rm st}}}
\def\bcris{{\rm B}_{{\rm cris}}} 
\def\bdr{{\rm B}_{{\rm dR}}}
\def\Bcris{{\mathbb B}_{{\rm cris}}} 
\def\Bdr{{\mathbb B}_{{\rm dR}}}
\def\piqp{{\bf P}^1}
 \def\A{{\bf A}}
\def\matrice#1#2#3#4{{\big(\begin{smallmatrix}#1&#2\\ #3&#4\end{smallmatrix}\big)}}
\def\rg{{\rm R}\Gamma}
\def\ocirc#1{\accentset{\circ}{#1}}
\newcommand{\R}{\mathrm {R} }
\newcommand{\Aut}{\operatorname{Aut} }
  \newcommand{\proet}{\operatorname{pro\acute{e}t}  }
 \newcommand{\eet}{\operatorname{\acute{e}t} }
 \newcommand{\dlog}{\operatorname{dlog} }
 \newcommand{\an}{\operatorname{an} }
 \newcommand{\Spf}{\operatorname{Spf} }
 \newcommand{\Gal}{\operatorname{Gal} }
 \newcommand{\sy}{{\mathcal{Y}}}
 \newcommand{\sn}{{\mathcal N}}
 \newcommand{\so}{{\mathcal O}}
 \newcommand{\sx}{{\mathcal{X}}}
      \def\A{{\bf A}}
\def\jcdot{{\scriptscriptstyle\bullet}}
\def\invlim{\mathop{\vtop{\ialign{##\crcr$\hfill{\lim}\hfil$\crcr
\noalign{\kern1pt\nointerlineskip}\leftarrowfill\crcr\noalign
{\kern -3pt}}}}\limits}
\def\dirlim{\mathop{\vtop{\ialign{##\crcr$\hfill{\lim}\hfil$\crcr
\noalign{\kern1pt\nointerlineskip}\rightarrowfill\crcr\noalign
{\kern -3pt}}}}\limits}
\def\epsilon{\varepsilon}
\let\mathcal\mathscr
\def\tHK{\widetilde{\rm HK}}
\def\mv{{\cal M}^\varpi}
\def\fn{\Phi{\rm N}}
\begin{document}
\title[Cohomologie des rev\^etements du demi-plan de Drinfeld]
{Cohomologie $p$-adique de la tour de Drinfeld: le cas de la dimension~$1$}
\author{Pierre Colmez}
\address{CNRS, IMJ-PRG, Sorbonne Universit\'e, 4 place Jussieu,
75005 Paris, France}
\email{pierre.colmez@imj-prg.fr}
\author{Gabriel Dospinescu}
\address{CNRS, UMPA, \'Ecole Normale Sup\'erieure de Lyon, 46 all\'ee d'Italie, 69007 Lyon, France}
\email{gabriel.dospinescu@ens-lyon.fr}
\author{Wies{\l}awa Nizio{\l}}
\address{CNRS, UMPA, \'Ecole Normale Sup\'erieure de Lyon, 46 all\'ee d'Italie, 69007 Lyon, France}
\email{wieslawa.niziol@ens-lyon.fr}
\thanks{Les trois auteurs sont membres du projet Percolator de l'ANR
(projet ANR-14-CE25).}
\begin{abstract}
Nous calculons la cohomologie \'etale g\'eom\'etrique $p$-adique des rev\^etements du demi-plan de Drinfeld,
et, dans le cas o\`u le corps de base est $\Q_p$,  montrons qu'elle r\'ealise la correspondance de Langlands locale
$p$-adique pour les repr\'esentations de de Rham de dimension~$2$ (\`a poids $0$ et $1$).
\end{abstract}
\begin{altabstract}
We compute the $p$-adic geometric \'etale cohomology of the coverings of Drinfeld half-plane,
and we show that, if the base field is $\Q_p$, this cohomology encodes the
$p$-adic local Langlands correspondence for $2$-dimensional de Rham representations
(of weight $0$ and $1$).
\end{altabstract}
\setcounter{tocdepth}{1}

\maketitle

{\Small
\tableofcontents
}

\section*{Introduction}
Soient $F$ une extension finie de $\Q_p$ et $C$ le compl\'et\'e d'une cl\^oture alg\'ebrique de~$F$.
On note $\G_F$ le groupe de Galois absolu de $F$
et ${\rm W}_F$ (resp.~${\rm WD}_F$) son groupe de Weil (resp.~de Weil-Deligne).

Gr\^ace aux travaux de~\cite{Faltings, Faltings2tours, Fargues, Harris, HT}, on sait que la cohomologie \'etale
$\ell$-adique de la tour de Drinfeld, pour $\ell\neq p$,
encode les correspondances de Langlands et Jacquet-Langlands
locales classiques pour ${\rm GL}_n(F)$.  Le but de cet article
est d'expliquer que:

$\bullet$ (vraisemblablement)
la cohomologie 
\'etale $p$-adique encode l'hypoth\'etique correspondance de Langlands $p$-adique,

$\bullet$ 
(accessoirement) la
cohomologie \'etale $p$-adique d'objets du genre des rev\^etements \'etales
des espaces de Drinfeld n'est pas aussi abominable que ce que l'on aurait pu penser. 

Une premi\`ere indication que ceci pourrait \^etre le cas est le r\'esultat suivant
de Drinfeld~\cite{Drinfeld}, dans lequel $\Omega_{\rm Dr}=\piqp_C-\piqp(F)$ d\'esigne le demi-plan de Drinfeld,
${\rm St}_{\Q_p}^{\rm cont}$ la steinberg continue\footnote{C'est le quotient de l'espace ${\cal C}(\piqp(F),\Q_p)$
des fonctions continues sur $\piqp(F)$, \`a valeurs dans~$\Q_p$, par celui des fonctions
constantes.}
\`a coefficients dans $\Q_p$ 
et $({\rm St}_{\Q_p}^{\rm cont})^\dual$ son dual continu.
\begin{prop}
On a un isomorphisme de ${\rm GL}_2(F)\times\G_{F}$-repr\'esentations
$$H^1_{\eet}(\Omega_{\rm Dr},\Q_p(1))\cong ({\rm St}_{\Q_p}^{\rm cont})^\dual,$$
l'action de $\G_{F}$ sur le membre de droite \'etant, par d\'efinition,
triviale.
\end{prop}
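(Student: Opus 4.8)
\emph{Indication (d'apr\`es Drinfeld).} L'id\'ee est de calculer $H^1_{\eet}(\Omega_{\rm Dr},\Q_p(1))$ \`a partir du groupe des unit\'es de $\Omega_{\rm Dr}$, via la suite de Kummer. Sur le site (pro)\'etale de $\Omega_{\rm Dr}$, vu comme espace analytique sur $C$, la suite exacte $1\to\mu_{p^n}\to\mathbb{G}_m\to\mathbb{G}_m\to 1$ (la seconde fl\`eche \'etant $x\mapsto x^{p^n}$) donne, $\Omega_{\rm Dr}$ \'etant connexe, une suite exacte
$$0\to \O(\Omega_{\rm Dr})^\times/(\O(\Omega_{\rm Dr})^\times)^{p^n}\to H^1_{\eet}(\Omega_{\rm Dr},\mu_{p^n})\to \Pic(\Omega_{\rm Dr})[p^n]\to 0.$$
Or $\Omega_{\rm Dr}$ poss\`ede un mod\`ele semi-stable dont le graphe dual est l'arbre de Bruhat--Tits $\mathcal{T}$ de ${\rm PGL}_2(F)$; cet arbre \'etant contractile, on a $\Pic(\Omega_{\rm Dr})=0$ (il suffirait qu'il soit sans $p$-torsion), d'o\`u $H^1_{\eet}(\Omega_{\rm Dr},\mu_{p^n})\cong \O(\Omega_{\rm Dr})^\times/(\O(\Omega_{\rm Dr})^\times)^{p^n}$. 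Les fl\`eches de transition \'etant surjectives, on obtient en passant \`a la limite projective puis en inversant $p$ un isomorphisme ${\rm GL}_2(F)\times\G_F$-\'equivariant
$$H^1_{\eet}(\Omega_{\rm Dr},\Q_p(1))\cong\Big(\invlim_n\O(\Omega_{\rm Dr})^\times/(\O(\Omega_{\rm Dr})^\times)^{p^n}\Big)\otimes_{\Z_p}\Q_p.$$

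Il s'agit donc de d\'ecrire $\O(\Omega_{\rm Dr})^\times$ (van der Put). Une unit\'e $u$ n'ayant ni z\'ero ni p\^ole sur $\Omega_{\rm Dr}=\piqp_C-\piqp(F)$, la forme $\dlog u=du/u$ est ferm\'ee, et l'application $u\mapsto\dlog u$ a pour noyau les constantes $C^\times$. Compos\'ee avec l'application des r\'esidus $\res\colon\Omega^1(\Omega_{\rm Dr})\to\operatorname{Har}_{\rm b}(\mathcal{T},C)$, \`a valeurs dans les cocha\^\i nes harmoniques born\'ees sur l'arbre (Schneider--Stuhler, de Shalit), elle induit un isomorphisme
$$\O(\Omega_{\rm Dr})^\times/C^\times\ \lomapr{\sim}\ \operatorname{Har}_{\rm b}(\mathcal{T},\Z):$$
la surjectivit\'e s'obtient \`a l'aide des produits de fonctions th\^eta $\prod_a(z-a)^{n_a}$ et de leurs limites convergentes, l'injectivit\'e vient de ce qu'une unit\'e dont le $\dlog$ est exacte est, \`a une constante multiplicative pr\`es, l'exponentielle d'une fonction analytique born\'ee sur $\Omega_{\rm Dr}$, donc d'une constante (il n'y a pas de fonction born\'ee non constante sur $\Omega_{\rm Dr}$). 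Comme $C^\times$ est $p$-divisible et contenu dans $(\O(\Omega_{\rm Dr})^\times)^{p^n}$, on en tire $\O(\Omega_{\rm Dr})^\times/(\O(\Omega_{\rm Dr})^\times)^{p^n}\cong\operatorname{Har}_{\rm b}(\mathcal{T},\Z)/p^n$; apr\`es limite projective et inversion de $p$, le membre de droite de l'isomorphisme pr\'ec\'edent devient $\operatorname{Har}_{\rm b}(\mathcal{T},\Q_p)$. Enfin, l'application ``mesure au bord'' identifie $\operatorname{Har}_{\rm b}(\mathcal{T},\Q_p)$ \`a l'espace des mesures sur $\piqp(F)$ \`a valeurs dans $\Q_p$ de masse totale nulle (la masse nulle \'etant une cons\'equence de l'harmonicit\'e), c'est-\`a-dire au dual continu $({\rm St}_{\Q_p}^{\rm cont})^\dual$ de ${\cal C}(\piqp(F),\Q_p)/\Q_p={\rm St}_{\Q_p}^{\rm cont}$; d'o\`u l'isomorphisme topologique cherch\'e.

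Reste \`a v\'erifier la compatibilit\'e aux actions. Celle de ${\rm GL}_2(F)$ sur $\Omega_{\rm Dr}$ induit son action par homographies sur $\mathcal{T}$, donc sur le bord $\piqp(F)$, et c'est celle qui d\'efinit l'action sur ${\rm St}_{\Q_p}^{\rm cont}$ et son dual. Quant \`a $\G_F$, il agit sur $H^1_{\eet}(\Omega_{\rm Dr},\mu_{p^n})$ de mani\`ere compatible \`a la suite de Kummer, donc \`a travers ses actions sur $\mu_{p^n}$ et sur $\O(\Omega_{\rm Dr})^\times$; mais le quotient qui intervient, $\operatorname{Har}_{\rm b}(\mathcal{T},\Z)/p^n$, est un objet combinatoire d\'efini sur $F$, sur lequel $\G_F$ agit trivialement, la torsion \`a la Tate $\Q_p(1)$ ``absorbant'' le caract\`ere cyclotomique; d'o\`u l'action triviale annonc\'ee. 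L'obstacle principal est \'evidemment la description de $\O(\Omega_{\rm Dr})^\times$: elle repose sur l'existence du mod\`ele semi-stable de $\Omega_{\rm Dr}$ et sur la combinatoire de $\mathcal{T}$ (nullit\'e du groupe de Picard, surjectivit\'e sur $\operatorname{Har}_{\rm b}(\mathcal{T},\Z)$ via les produits th\^eta, absence de fonction born\'ee non constante), ainsi que sur le contr\^ole des structures topologiques assurant que l'isomorphisme final est topologique.
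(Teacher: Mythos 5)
Votre d\'emarche est pour l'essentiel celle du texte (th.~\ref{omegaet}): suite de Kummer, annulation de $H^1_{\eet}(\Omega_{\rm Dr},{\mathbb G}_m)$, description des unit\'es modulo les constantes, identification avec des distributions de masse totale nulle sur $\piqp(F)$, puis compl\'etion $p$-adique; la seule diff\'erence est de langage, les cocha\^{\i}nes harmoniques sur $\mathcal{T}$ rempla\c{c}ant la limite projective sur les affino\"{\i}des $U_n$ et les mesures sur ${\cal P}_n$ (prop.~\ref{unites0} et \ref{steinb1}).

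Deux points sont \`a rectifier. D'abord, l'isomorphisme $\O(\Omega_{\rm Dr})^\dual/C^\dual\simeq {\rm Har}_{\rm b}(\mathcal{T},\Z)$ avec des cocha\^{\i}nes \emph{born\'ees} n'est pas le bon \'enonc\'e: la description de van der Put (c'est la prop.~\ref{steinb1}) fournit \emph{toutes} les cocha\^{\i}nes harmoniques \`a valeurs dans $\Z$, c'est-\`a-dire le dual alg\'ebrique $({\rm St}^{\rm lisse}_{\Z})^\dual$, et il existe des unit\'es dont la cocha\^{\i}ne n'est pas born\'ee. La bornitude --- qui est pr\'ecis\'ement ce qui distingue $({\rm St}^{\rm cont})^\dual$ (mesures) de $({\rm St}^{\rm lisse})^\dual$ (distributions) --- est cr\'e\'ee par la compl\'etion $p$-adique, via $({\rm St}^{\rm lisse}_{\Z})^\dual/p^k\simeq({\rm St}^{\rm lisse}_{\Z/p^k})^\dual$ puis $\varprojlim_k({\rm St}^{\rm lisse}_{\Z/p^k})^\dual=({\rm St}^{\rm cont}_{\Z_p})^\dual$; c'est cette \'etape qu'il faudrait justifier, et votre r\'edaction l'escamote en pla\c{c}ant la bornitude au niveau entier, o\`u elle est fausse. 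Ensuite, l'annulation de $H^1_{\eet}(\Omega_{\rm Dr},{\mathbb G}_m)$ ne d\'ecoule pas de la seule contractibilit\'e de l'arbre: elle utilise ${\rm Pic}(A_n)=0$ (les $U_n$ sont des affino\"{\i}des connexes de $\piqp$) et $\R^1\varprojlim A_n^\dual=0$, de m\^eme que le passage \`a la limite et l'injectivit\'e de votre fl\`eche reposent sur le fait que les fonctions born\'ees sur $\Omega_{\rm Dr}$ sont constantes (lemmes~\ref{gab1} et~\ref{technique}); ce sont des points faciles mais qu'il faut \'enoncer. Ces corrections faites, votre preuve co\"{\i}ncide avec celle du texte.
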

Ce r\'esultat est encourageant\footnote{Il a jou\'e un r\^ole d\'eterminant pour nous persuader
de regarder ce qui se passe pour les \'etages sup\'erieurs de la tour mais, comme nous l'ont
signal\'e Grosse-Kl\"onne et Berkovich, la preuve de Drinfeld de ce r\'esultat pour $\ell=p$ laisse
beaucoup \`a d\'esirer... (Drinfeld \'enonce le r\'esultat pour la cohomologie \'etale $\ell$-adique,
pour tout nombre premier $\ell$, y compris $\ell=p$.)} car il montre que
la cohomologie \'etale $p$-adique de $\Omega_{\rm Dr}$ est un objet
de taille raisonnable (c'est une repr\'esentation coadmissible de ${\rm GL}_2(F)$).  
Il est quand m\^eme un peu trompeur car, comme nous le verrons,
la cohomologie \'etale $p$-adique des rev\^etements du demi-plan de Drinfeld
est loin d'\^etre aussi simple: en particulier, elle n'est pas coadmissible.

 Un peu plus pr\'ecis\'ement, nous montrons
que, si $F=\Q_p$ et en dimension~$1$, la cohomologie \'etale $p$-adique
de la tour ${\cal M}_\infty$ de Drinfeld encode
la correspondance de Langlands locale $p$-adique pour les repr\'esentations
de de Rham de $\G_{\Q_p}$, de dimension~$2$, \`a poids de Hodge-Tate $0$ et $1$,
dont la repr\'esentation de Weil-Deligne associ\'ee est irr\'eductible,
ce qui fournit une construction g\'eom\'etrique de cette correspondance (pour ces
repr\'esentations particuli\`eres): si $V$ est une telle repr\'esentation,
$${\rm Hom}_{{\rm W}_{\Q_p}}(V,H^1_{\eet}({\cal M}_\infty,\Q_p(1)))={\rm JL}(V)\otimes\Pi(V)^\dual,$$
o\`u
${\rm JL}(V)$ et $\Pi(V)$ sont les objets associ\'es \`a $V$ via
les correspondances de Jacquet-Langlands locale classique~\cite{JL} et de Langlands
locale $p$-adique~\cite{gl2,CDP}.
L'\'enonc\'e analogue pour
la cohomologie \'etale $\ell$-adique est valable pour tout $F$, toute dimension~$n$,
et toute $\Qbar_\ell$-repr\'esentation irr\'eductible $V$ de $W_F$,
de dimension~$n$.

Un obstacle pour \'etendre nos r\'esultats \`a d'autres cas est l'absence\footnote{En dehors
des candidats de~\cite{6auteurs}.}
de correspondance de Langlands locale $p$-adique pour d'autres groupes que ${\bf GL}_2(\Q_p)$,
mais la formule ci-dessus a un sens en g\'en\'eral,
et on peut esp\'erer que ce qui en sort a un lien avec l'hypoth\'etique
correspondance de Langlands locale $p$-adique pour ${\bf GL}_n(F)$.

Pour \'enoncer pr\'ecis\'ement nos r\'esultats, nous allons avoir besoin
d'introduire un certain nombre d'objets et de notations.

\subsection{Les rev\^etements du demi-plan de Drinfeld}
Soient:

$\bullet$
$\O_F$ l'anneau de ses entiers et $\varpi$
une uniformisante de $F$,

$\bullet$ $G={\bf GL}_2(F)$, 

$\bullet$ $\check G$ le groupe des \'el\'ements inversibles de l'alg\`ebre de quaternions~$D$
de centre $F$, $\O_D$ l'ordre maximal de $D$, et $\varpi_D$ une uniformisante de $\O_D$.

\smallskip
Le demi-plan $p$-adique (de Drinfeld)
$\piqp_F-\piqp(F)$ admet une structure naturelle d'espace analytique
rigide $\Omega_{{\rm Dr},F}$ sur $F$, et une action de $G$ par homographies,
qui respecte cette structure.
Drinfeld a d\'efini \cite{Drinfeld1}
une tour de rev\^etements $\breve{\cal M}_n$, pour $n\in\N$,
de ce demi-plan, v\'erifiant les propri\'et\'es suivantes:

$\bullet$ $\breve{\cal M}_n$ est d\'efini sur $\breve{F}=\widehat{F^{\rm nr}}$
et muni d'une action de ${\rm W}_F$ compatible avec l'action naturelle sur $\breve{F}$.

$\bullet$ $\breve{\cal M}_n$ est muni d'actions de $G$ et de $\check G$
commutant entre elles ainsi qu'avec l'action de ${\rm W}_F$,
et les fl\`eches de transition $\breve{\cal M}_{n+1}\to \breve{\cal M}_n\to
\Omega_{{\rm Dr},F}$ sont ${\rm W}_F$, $\check G$ et $G$-\'equivariantes (l'action de
$\check G$ sur $\Omega_{{\rm Dr},F}$ \'etant l'action triviale).

$\bullet$ Si $n\geq 1$, alors $\breve{\cal M}_n$ est un rev\^etement galoisien
de $\breve{\cal M}_0$, de groupe de Galois $\O_D^\dual/(1+\varpi_D^n\O_D)$.

On note simplement ${\cal M}_n$ l'extension des scalaires
de $\breve{\cal M}_n$ \`a $C$:
$${\cal M}_n=C\times_{\breve{F}} \breve{\cal M}_n.$$ 

On note ${\cal M}_\infty$ le syst\`eme projectif (non compl\'et\'e)
des ${\cal M}_n$: si $H^{\bullet}$ est un th\'eorie cohomologique
raisonnable, contravariante, alors $H^\bullet({\cal M}_\infty)=
\varinjlim_n H^\bullet({\cal M}_n)$; par exemple,
$\O({{\cal M}_\infty})=H^0({\cal M}_\infty,\O)=
\varinjlim_nH^0({\cal M}_n,\O)=\varinjlim_n \O({{\cal M}_n})$.

Si $H=G,\check G,{\rm W}_F$, on dispose d'un morphisme
de groupes naturel $\nu_H:H\to F^\dual,$
o\`u $\nu_G=\det$, $\nu_{\check G}$ est la norme r\'eduite,
et $\nu_{{\rm W}_F}$ est le compos\'e de ${\rm W}_F\to {\rm W}_F^{\rm ab}$ et
de l'isomorphisme ${\rm W}_F^{\rm ab}\cong F^\dual$ de la th\'eorie locale du corps
de classes.  
L'ensemble $\pi_0({\cal M}_\infty)$
des composantes connexes de ${\cal M}_\infty$ est un espace homog\`ene principal
sous l'action de $F^\dual$ et $H=G,\check G,{\rm W}_F$ agit sur $\pi_0({\cal M}_\infty)$
\`a travers $\nu_H:H\to F^\dual$.  En particulier, $\check G$ agit sur
$\pi_0({\cal M}_\infty)$ \`a travers la norme r\'eduite (ces assertions se d\'eduisent, par exemple, de 
\cite{Strauchgeocc} et de la comparaison avec la tour de Lubin-Tate \cite{Faltings, Fargues}).

\Subsection{Cohomologie \'etale de ${\cal M}_\infty$ et correspondance de Langlands locale}
Soit $L$ une extension finie de $\Q_p$.
Si $V$ est une $L$-repr\'esentation de de Rham de~$\G_{\Q_p}$, de dimension~$2$, \`a poids $0$ et $1$,
on associe \`a $V$ les objets suivants\footnote{
${\rm WD}(V)$ est obtenue \`a partir de $\dpst(V)[1]$ par la recette de Fontaine~\cite{FonAst},
${\rm LL}(V)$ \`a partir de ${\rm WD}(V)$ par
la correspondance de Langlands locale et ${\rm JL}(V)$ \`a partir de
${\rm LL}(V)$ par
la correspondance de Jacquet-Langlands locale, $\Pi(V)$ est la repr\'esentation
associ\'ee \`a $V$ par la correspondance de Langlands
locale $p$-adique~\cite{gl2,CDP}.
 
Les caract\`eres centraux de ${\rm LL}(V)$ et ${\rm JL}(V)$ sont \'egaux et co\"{\i}ncident
avec $\det{\rm WD}(V)\cdot|\ |$ (vu comme caract\`ere
de $W_{\Q_p}^{\rm ab}\cong\Q_p^{\dual}$, le frobenius arithm\'etique s'envoyant sur $p$)
et avec la restriction de $\det V\cdot \chi_{\rm cyclo}^{-1}$ \`a $W_{\Q_p}$
(on a ${\rm WD}(V)={\rm WD}(\dpst(V))\otimes |\ |^{-1}$).}

$\bullet$ un $L$-$(\varphi,N,\G_{\Q_p})$-module $\dpst(V)$, de rang~$2$ sur $L\otimes_{\Q_p}\Q_p^{\rm nr}$,

$\bullet$ une $L$-repr\'esentation\footnote{Le $[1]$ signifie que
l'on multiplie par $p$ l'action de $\varphi$, i.e.~$\dpst(V)[1]=\dpst(V(-1))$ si $V(-1)$
d\'esigne la tordue de $V$ par l'inverse du caract\`ere cyclotmique.}
 ${\rm WD}(V):={\rm WD}(\dpst(V)[1])$ de ${\rm WD}_{\Q_p}$, de dimension~$2$,

$\bullet$ une $L$-repr\'esentation lisse irr\'eductible\footnote{De dimension infinie.} ${\rm LL}(V):={\rm LL}({\rm WD}(V))$ de $G$,

$\bullet$ une $L$-repr\'esentation lisse irr\'eductible\footnote{Donc de dimension finie.}
 ${\rm JL}(V):={\rm JL}({\rm LL}(V))$ de $\check G$,

$\bullet$ une $L$-repr\'esentation unitaire continue $\Pi(V)$ de $G$.

Les repr\'esentations ${\rm WD}(V)$, ${\rm LL}(V)$ et ${\rm JL}(V)$ ne d\'ependent
que du $L$-$(\varphi,N,\G_{\Q_p})$-module $\dpst(V)$,
mais pas
$\Pi(V)$ 
qui, elle, d\'epend vraiment
de~$V$.  On retrouve ${\rm LL}(V)$ \`a partir de $\Pi(V)$ en prenant les vecteurs localement
constants sous l'action de $G$ (cf.~\cite{gl2,Em08,Dosp1,poids}):
$${\rm LL}(V)=\Pi(V)^{\rm lisse}.$$

On dit que $V$ est {\it supercuspidale} si 
${\rm WD}(V)$ est irr\'eductible (auquel cas ${\rm LL}(V)$ est supercuspidale), ce qui implique,
en particulier, que $N=0$ sur $\dpst(V)$ (i.e.\,$V$ est potentiellement cristalline).

\begin{theo}\label{intro1}
Soit $V$ une $L$-repr\'esentation absolument irr\'eductible de $\G_{\Q_p}$, de dimension~$\geq 2$.

{\rm (i)}
Si $V$ est supercuspidale, de dimension~$2$, \`a poids $0$ et $1$,
$${\rm Hom}_{{\rm W}_{\Q_p}}(V,L\otimes_{\Q_p}H^1_{\eet}({\cal M}_\infty,\Q_p(1)))=
{\rm JL}(V)\otimes_L \Pi(V)^\dual.$$

{\rm (ii)} Dans le cas contraire, ${\rm Hom}_{{\rm W}_{\Q_p}}(V,L\otimes_{\Q_p}H^1_{\eet}({\cal M}_\infty,\Q_p(1)))=0$.
\end{theo}

\begin{rema}
{\rm (i)}  Le th.\,\ref{intro1} devrait pouvoir s'\'etendre aux repr\'esentations supercuspidales
de poids quelconques en rempla\c cant $\Q_p(1)$
par les puissances sym\'etriques du module de Tate du groupe $p$-divisible universel
au-dessus de ${\cal M}_\infty$.

{\rm (ii)} Il serait int\'eressant de d\'eterminer la structure compl\`ete
du $G\times\check G\times {\rm W}_{\Q_p}$-module $H^1_{\eet}({\cal M}_\infty,\Q_p(1))$.
On peut peut-\^etre esp\'erer une r\'eponse \`a la Emerton~\cite{Em08}, i.e. une d\'ecomposition
suivant les repr\'esentations r\'esiduelles $\bar\rho$ de dimension~$2$ de $\G_{\Q_p}$ faisant intervenir
l'anneau des d\'eformations universelles de $\bar\rho$ \`a poids $0$ et $1$ et type fix\'e.
Nous esp\'erons pouvoir revenir sur ce probl\`eme dans un travail ult\'erieur.

{\rm (iii)} Pour faire le lien avec les r\'esultats de Scholze~\cite{ScENS}, il faudrait
consid\'erer la cohomologie de la tour compl\'et\'ee $\widehat{\cal M}_\infty$
(qui est un espace perfecto\"{\i}de).  Si $V$ est une $L$-repr\'esentation de $\G_{\Q_p}$, de dimension~$2$,
absolument irr\'eductible, alors ${\rm Hom}_{L[G]}(\Pi(V)^\dual,H^1_{\eet}(\widehat{\cal M}_\infty,L(1)))$
est une repr\'esentation de $\check G\times W_{\Q_p}$, admissible~\cite{ScENS}
 en tant que repr\'esentation de $\check G$,
qui s'\'etend par continuit\'e en une repr\'esentation
de $\check G\times G_{\Q_p}$, et que l'on peut esp\'erer pouvoir factoriser sous la forme ${\rm JL}_p(\Pi(V))\otimes_L V$,
o\`u ${\rm JL}_p(\Pi(V))$ est une repr\'esentation de $\check G$.  Si c'est le cas, on peut
esp\'erer que $${\rm Hom}_{L[\G_{\Q_p}]}(V,H^1_{\eet}(\widehat{\cal M}_\infty,L(1)))=
{\rm JL}_p(\Pi(V))\widehat\otimes_L \Pi(V)^\dual$$ (i.e. $H^1_{\eet}(\widehat{\cal M}_\infty,L(1))$
r\'ealise simultan\'ement la correspondance de Langlands locale $p$-adique et une correspondance
de Jacquet-Langlands locale $p$-adique), et que ${\rm JL}(V)$ soit l'ensemble des vecteurs lisses de ${\rm JL}_p(\Pi(V))$
si $V$ est supercuspidale, \`a poids $0$ et $1$. On peut de plus esp\'erer une d\'ecomposition
\`a la Emerton pour le module $H^1_{\eet}(\widehat{\cal M}_\infty,L(1))$ tout entier...  
\end{rema}

\subsection{Cohomologies de de Rham et de Hyodo-Kato}
La preuve du th.\,\ref{intro1} fait intervenir la cohomologie pro\'etale de ${\cal M}_\infty$:
on r\'ecup\`ere la cohomologie \'etale en consid\'erant les classes de cohomologie pro\'etale
dont l'orbite sous l'action de $G$ est born\'ee (\'etudier directement la cohomologie \'etale
a l'air un peu d\'elicat, cf.~rem.\,\ref{entier}).
Les th\'eor\`emes de comparaison relient la cohomologie pro\'etale aux cohomologies
de de Rham et de Hyodo-Kato et nous allons commencer par d\'ecrire ces derni\`eres.
Pour \'enoncer le r\'esultat, nous allons privil\'egier le $(\varphi,N,\G_{\Q_p})$-module
$\dpst(V)[1]$ plut\^ot que la repr\'esentation $V$.  

On ne suppose plus que $F=\Q_p$.
Les recettes utilis\'ees plus haut fournissent, 
si $M$ est un $L$-$(\varphi,N,\G_F)$-module (i.e. un $L\otimes \Q_p^{\rm nr}$-module
muni d'actions d'un frobenius semi-lin\'eaire $\varphi$, d'un op\'erateur
$N$ tel que $N\varphi=p\varphi N$ et d'une action semi-lin\'eaire
lisse de $\G_F$), de rang $2$:

$\bullet$ une $L$-repr\'esentation ${\rm WD}(M)$ de ${\rm WD}_{F}$,
de dimension~$2$,

$\bullet$ une $L$-repr\'esentation lisse irr\'eductible ${\rm LL}(M)$ de $G$,

$\bullet$ une $L$-repr\'esentation lisse irr\'eductible (et donc de dimension finie) ${\rm JL}(M)$ de~$\check G$.

\noindent (Si $M=\dpst(V)[1]$, on a ${\rm WD}(V)={\rm WD}(M)$, ${\rm LL}(V)={\rm LL}(M)$ et ${\rm JL}(V)={\rm JL}(M)$.)

On dit
que $M$ est {\it supercuspidal},
si ${\rm WD}(M)$ est irr\'eductible.
Notons que, si $M$ est supercuspidal, les pentes de $\varphi$ sont toutes
\'egales \`a un m\^eme nombre rationnel que nous appellerons {\it la pente de $M$}.

\smallskip
Si $n\in\N$, nous montrons~(cf.~\S\,\ref{GRAB15}) que
${\cal M}_n$ poss\`ede un mod\`ele semi-stable $G\times \check G\times {\rm W}_F$-\'equivariant sur
l'anneau $\O_K$ des entiers d'une extension finie $K$ de $\breve F$:
on choisit un sous-groupe cocompact $\Gamma$ de $G$ op\'erant librement et sans point fixe
sur l'arbre de Bruhat-Tits, de telle sorte que $X_n(\Gamma)=\Gamma\backslash{\cal M}_n$ soit une courbe propre et lisse;
on choisit alors $K$ tel que $X_n(\Gamma)$ ait un mod\`ele semi-stable sur $\O_K$ 
et on fait le produit fibr\'e des mod\`eles semi-stables minimaux de 
$X_n(\Gamma)$ et $\Omega_{\rm Dr}$ sur
$\O_K$ au-dessus de celui de $X(\Gamma)=\Gamma\backslash\Omega_{\rm Dr}$.
 
On dispose des groupes de cohomologie de de Rham $H^1_{\rm dR}({\cal M}_\infty)$
et, gr\^ace \`a ce qui pr\'ec\`ede, de Hyodo-Kato $H^1_{\rm HK}({\cal M}_\infty)$,
de ${\cal M}_\infty$: le premier est un $C$-espace vectoriel,
le second un $(\varphi,N,\G_F)$-module sur $\breve\Q_p$, les deux sont
des ind-fr\'echets, et on a
un isomorphisme naturel~\cite{GK2} ({\it de Hyodo-Kato})
$$\iota_{\rm HK}:C\widehat\otimes_{\breve\Q_p} H^1_{\rm HK}({\cal M}_\infty)\overset{\sim}{\to}
H^1_{\rm dR}({\cal M}_\infty).$$ 

Si $M$ est supercuspidal, de rang $2$,
on pose $$\breve M=\breve\Q_p\otimes_{\Q_p^{\rm nr}}M,\quad M_{\rm dR}=(\Qbar_p\otimes_{\Q_p^{\rm nr}}M)^{\G_F}.$$
Alors $M_{\rm dR}$ est un $L\otimes_{\Q_p}F$-module libre de rang~$2$.
\begin{theo} \label{gabriel}
Il existe un diagramme commutatif naturel de $G$-fr\'echets
$$\xymatrix@R=.6cm@C=.8cm{{\rm Hom}_{L[\check{G}]}({\rm JL(M)}, L\otimes_{\Q_p} H^1_{\rm HK}({\cal M}_{\infty}))\ar[r]^-{{\sim}}\ar[d]^{\iota_{\rm HK}}&
\breve{M}\widehat{\otimes}_{L} {\rm LL}(M)^\dual \ar[d]\\
{\rm Hom}_{L[\check{G}]}\big({\rm JL}(M), L\otimes_{\Q_p} H^1_{\rm dR}({\cal M}_\infty)\big)\ar[r]^-{{\sim}}&C\widehat\otimes_F M_{\rm dR}\widehat{\otimes}_L {\rm LL}(M)^\dual }$$
  la fl\`eche \`a gauche \'etant induite par l'isomorphisme de 
Hyodo-Kato et celle \`a droite par l'identification 
  $M_{\rm dR}\otimes_{F}  C=M\otimes_{\Q_p^{\rm nr}} C=\breve{M}\otimes_{\breve{\Q}_p} C$.
Toutes les fl\`eches commutent \`a l'action de ${\rm W}_F$ et la fl\`eche horizontale du haut commute
en plus \`a $\varphi$.        
\end{theo}
        
\begin{rema} 
{\rm (i)}
La partie pour la cohomologie
de de Rham et pour $F=\Q_p$ a \'et\'e \'etablie dans~\cite{DL}, et notre preuve
du th\'eor\`eme
est une adaptation de celle de~\cite{DL}:
des m\'ethodes globales (i.e.~l'utilisation de courbes de Shimura obtenues comme
quotients de ${\cal M}_n$) montrent 
l'existence d'un plongement du terme de droite dans celui de gauche 
(prop.\,\ref{weak}) et il s'agit de prouver
qu'il n'y a rien de plus dans le membre de gauche.  Dans~\cite{DL}, cela se fait en utilisant
la correspondance de Langlands $p$-adique pour ${\rm GL}_2(\Q_p)$.
Comme une telle correspondance est encore hypoth\'etique si $F\neq\Q_p$,
nous utilisons (\no\ref{GAB7}), \`a la place, l'isomorphisme avec la cohomologie
de la tour de Lubin-Tate (th.~\ref{intro-comp} ci-dessous) pour laquelle
on peut utiliser la th\'eorie de Lubin-Tate non ab\'elienne \cite{Carayol2, HT}
pour contr\^oler la dimension des invariants par des sous-groupes ouverts compacts de $G$.

{\rm (ii)} Lue Pan~\cite{Pan} a prouv\'e l'\'enonc\'e ci-dessus, par des m\'ethodes
purement locales, pour le premier \'etage ${\cal M}_1$ de la tour.  La preuve repose
sur la construction d'un mod\`ele semi-stable explicite.
\end{rema}

Soient $({\rm LT}_j)_{j\geq 0}$ la tour de Lubin-Tate et ${\rm LT}_\infty$ le syst\`eme projectif
des ${\rm LT}_j$.
Fixons une uniformisante $\varpi$ de $F$ et notons
simplement
${\rm LT}_\infty^\varpi$ et ${\cal M}_\infty^\varpi$ les quotients de
${\rm LT}_\infty$ et ${\cal M}_\infty$ par
       l'action de $\varpi$ vu comme \'el\'ement du centre de $\check G$.

\begin{theo}\label{intro-comp}
On a
un isomorphisme naturel 
$$ H^1_{\rm dR, c}({\rm LT}_\infty^\varpi)\simeq  H^1_{\rm dR, c}({\cal M}_\infty^\varpi)$$
qui munit les deux membres d'une structure de
$C[G\times \check{G}]$-module lisse, admissible d\'ej\`a en tant que $G$-module.
\end{theo}

\begin{rema}
{\rm (i)} 
Les limites projectives compl\'et\'ees $\widehat{\rm LT}_\infty$ et $\widehat{\cal M}_{\infty}$
des tours de Lubin-Tate et Drinfeld 
sont des espaces perfecto\"{\i}des~\cite[th. 6.5.4, 7.2.3]{SW},
isomorphes, 
mais l'isomorphisme du th.\,\ref{intro-comp}
 est quand m\^eme un peu surprenant car les espaces perfecto\"{\i}des n'ont pas
de cohomologie de de Rham digne de ce nom (extraire des racines d'ordre $p^\infty$ et compl\'eter
rend la d\'erivation probl\'ematique).

{\rm (ii)}
Il est raisonnable de penser que ce r\'esultat s'\'etend en dimension quelconque.
 Si on remplace la cohomologie de de Rham \`a support compact par la cohomologie $\ell$-adique 
\`a support compact (avec $\ell\neq p$), on dispose d'un r\'esultat tr\`es g\'en\'eral de 
dualit\'e d\^u \`a Scholze \cite[prop. 5.4]{survey}. 
\end{rema}

\subsection{Cohomologie pro\'etale}
Soit $M$ un $L$-$(\varphi,N,\G_F)$-module supercuspidal, de rang $2$. 
Si $Z$ est un $\Q_p[\check G]$-module, on pose 
$$Z[M]={\rm Hom}_{L[\check G]}({\rm JL}(M),L\otimes_{\Q_p}Z).$$

Le r\'esultat suivant fournit une description de $H^1_{\proet}({\cal M}_\infty)$
comme $G\times W_F$ repr\'esentation.  C'est le point de d\'epart de la preuve du th.\,\ref{intro1}.
On pose $$X_{\rm st}^+(M)=(\bst^+\otimes_{\Q_p^{\rm nr}} M)^{N=0,\varphi=p}.$$

\begin{theo}\label{diagfond}
Il existe un diagramme commutatif de $G\times W_F$-fr\'echets, \`a lignes exactes,
$$\xymatrix@C=.6cm@R=.6cm{0 \ar[r]&\ \mathcal{O}({\cal M}_\infty)[M]\ar[r]^-{{\rm exp}}\ar@{=}[d]&
H^1_{\proet}({\cal M}_\infty, L(1))[M]\ar[d]^{{\rm dlog}}\ar[r]&
X_{\rm st}^+(M)\widehat{\otimes}_L{\rm LL}(M)^\dual \ar[d]^{\theta}\ar[r]&0\\
0\ar[r]& \mathcal{O}({\cal M}_\infty)[M] \ar[r]^-d&\Omega^1({\cal M}_\infty)[M]\ar[r]&
(C \otimes_F M_{\rm dR})\widehat{\otimes}_L {\rm LL}(M)^\dual \ar[r]&0
}$$
  \end{theo}

Le lecteur trouvera des indications sur la preuve de cet \'enonc\'e au \S\,\ref{preuves}.

\subsection{La conjecture de Breuil-Strauch}
On suppose $F=\Q_p$ dans ce paragraphe.  Si $M$ est supercuspidal de rang $2$,
alors 
$M_{\rm dR}=(\Qbar_p\otimes_{\Q_p^{\rm nr}} M)^{\G_{\Q_p}}$ 
est un $L$-espace de rang~$2$.
Si ${\cal L}$ est une $L$-droite de $M_{\rm dR}$, on d\'efinit
la repr\'esentation $V_{M,{\cal L}}$ de $\G_{\Q_p}$ par $$V_{M,{\cal L}}={\rm Ker}\big((\bcris^+\otimes_{\Q_p^{\rm nr}} M)^{\varphi=p}\to
C\otimes_{\Q_p} (M_{\rm dR}/{\cal L})\big).$$
Il r\'esulte de~\cite{CF} que $V_{M,{\cal L}}$
est une repr\'esentation supercuspidale \`a poids $0$ et $1$ et toute telle repr\'esentation
est de la forme $V_{M,{\cal L}}$ pour un unique couple $(M,{\cal L})$.

On a alors le r\'esultat suivant~\cite{DL}, conjectur\'e par Breuil et Strauch (non publi\'e),
et qui constitue le premier r\'esultat tangible reliant la correspondance de Langlands locale
$p$-adique \`a la tour de Drinfeld.
\begin{prop}\label{dl10}
L'inclusion de ${\cal L}$ dans $M_{\rm dR}$ donne naissance \`a un diagramme commutatif
de $G$-fr\'echets, \`a lignes exactes, o\`u $\Pi(V_{M,{\cal L}})^{\rm an}$ est l'espace
des vecteurs localement analytiques de $\Pi(V_{M,{\cal L}})${\rm :}
$$\xymatrix@C=.5cm@R=.6cm{0 \ar[r]&\ \mathcal{O}({\cal M}_n)[M]\ar[r]\ar@{=}[d]&
C\widehat\otimes_{\Q_p}(\Pi(V_{M,{\cal L}})^{\rm an})^\dual\ar[r]\ar[d]&
(C\otimes_{\Q_p}{\cal L})\widehat\otimes{\rm LL}(M)^\dual \ar[d]\ar[r]&0\\
0\ar[r]& \mathcal{O}({\cal M}_n)[M] \ar[r]^-d&\Omega^1({\cal M}_n)[M]\ar[r]&
(C \otimes_{\Q_p} M_{\rm dR})\widehat{\otimes}_L{\rm LL}(M)^\dual \ar[r]&0
}$$
\end{prop}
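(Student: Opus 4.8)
The statement is, by the remark following it, essentially the $F=\Q_p$ specialization of Theorem~\ref{diagfond}, read through the $p$-adic local Langlands correspondence; so the plan is to \emph{deduce} it from Theorem~\ref{diagfond} together with the defining property of $\Pi(V_{M,\mathcal L})$. The starting point is the top row of Theorem~\ref{diagfond} for $F=\Q_p$:
$$0\to\mathcal O(\mathcal M_\infty)[M]\to H^1_{\proet}(\mathcal M_\infty,L(1))[M]\to X_{\rm st}^+(M)\wotimes_L{\rm LL}(M)^\dual\to 0,$$
and the observation that, for a supercuspidal $M$ of rank $2$ and weights $0,1$, one has $X_{\rm st}^+(M)=(\bst^+\otimes M)^{N=0,\varphi=p}=(\bcris^+\otimes M)^{\varphi=p}$, whose image in $C\otimes_{\Q_p}M_{\rm dR}$ under $\theta$ realizes, by the construction recalled before the proposition (using~\cite{CF}), the family of representations $V_{M,\mathcal L}=\ker\big((\bcris^+\otimes M)^{\varphi=p}\to C\otimes(M_{\rm dR}/\mathcal L)\big)$. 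Concretely, an $L$-line $\mathcal L\subset M_{\rm dR}$ is exactly the datum needed to cut out a $\G_{\Q_p}$-stable $L$-subspace $V_{M,\mathcal L}^\dual$ (dual point of view) of $X_{\rm st}^+(M)$ sitting in $0\to V_{M,\mathcal L}\to X^+_{\rm st}(M)\to C\otimes_{\Q_p}(M_{\rm dR}/\mathcal L)\to 0$; pulling back the top exact sequence of Theorem~\ref{diagfond} along $V_{M,\mathcal L}\wotimes{\rm LL}(M)^\dual\hookrightarrow X^+_{\rm st}(M)\wotimes{\rm LL}(M)^\dual$ produces a subextension
$$0\to\mathcal O(\mathcal M_\infty)[M]\to E[M]\to V_{M,\mathcal L}\wotimes_L{\rm LL}(M)^\dual\to 0,$$
and the whole content of the proposition is the identification $E\cong C\wotimes_{\Q_p}(\Pi(V_{M,\mathcal L})^{\rm an})^\dual$, compatibly with the lower (de Rham) row, which is just the pullback of the lower row of Theorem~\ref{diagfond} along $C\otimes\mathcal L\hookrightarrow C\otimes M_{\rm dR}$.

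First I would pin down, using the $p$-adic Langlands correspondence for ${\rm GL}_2(\Q_p)$ (\cite{gl2,CDP}) and the theory of $(\varphi,\Gamma)$-modules, the analogous purely local exact sequence computing $(\Pi(V_{M,\mathcal L})^{\rm an})^\dual$: the locally analytic vectors of $\Pi(V)$ for $V=V_{M,\mathcal L}$ supercuspidal of weights $0,1$ fit (\cite{Dosp1,poids}, building on~\cite{gl2,Em08}) into a sequence whose two outer terms are an ``algebraic'' piece built from ${\rm LL}(M)$ and the line $\mathcal L$, and a ``Steinberg-type'' piece built from $\mathcal O$ of the relevant space, and whose extension class records the Hodge filtration $\mathcal L\subset M_{\rm dR}$. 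That the ``$\mathcal O$'' appearing there is precisely $\mathcal O(\mathcal M_\infty)[M]=\Hom_{L[\check G]}({\rm JL}(M),L\otimes\mathcal O(\mathcal M_\infty))$ is where the Jacquet--Langlands functor $[M]$ and the supercuspidality of $M$ are used: ${\rm JL}(M)$ is finite-dimensional, the $\check G$-isotypic projection is exact, and by Theorem~\ref{gabriel} (its Hyodo--Kato/de Rham part) one knows $\mathcal O(\mathcal M_\infty)[M]$ explicitly enough to match it with the local object. I would then check that the two extension classes — the geometric one coming from pulling back Theorem~\ref{diagfond}, and the local one coming from $p$-adic Langlands — agree in $\Ext^1_{G}\big(V_{M,\mathcal L}\wotimes{\rm LL}(M)^\dual,\ \mathcal O(\mathcal M_\infty)[M]\big)$; compatibility with the de Rham row and with the $\G_{\Q_p}$-action on $V_{M,\mathcal L}$ should rigidify this $\Ext^1$ enough that the two classes, being sent to the same thing (namely the class of $\mathcal L\subset M_{\rm dR}$) under the map to the lower row, coincide.

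The main obstacle I expect is precisely this last matching step: identifying the geometrically-defined extension
$$0\to\mathcal O(\mathcal M_\infty)[M]\to H^1_{\proet}(\mathcal M_\infty,L(1))[M]\to X^+_{\rm st}(M)\wotimes{\rm LL}(M)^\dual\to 0$$
with the one predicted by $p$-adic Langlands \emph{on the nose}, i.e. as an isomorphism of exact sequences and not merely an abstract isomorphism of the middle terms. One needs to know that the relevant $\Ext^1$ group in the category of $G$-Fréchet spaces is small — ideally that the pair (\,horizontal class in the top row,\, its image in the de Rham row\,) determines the extension — which in turn rests on a rigidity statement for $\mathcal O(\mathcal M_\infty)[M]$ as a $G$-representation (its lack of nontrivial self-extensions of the relevant type). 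I would reduce this, via the finite-level models $\mathcal M_n$ of \S\ref{GRAB15} and a cocompact lattice $\Gamma\subset G$, to a statement about $\Omega^1$ and $\mathcal O$ of the Shimura curves $X_n(\Gamma)$, where the Hodge--de Rham filtration and Faltings' comparison give the needed control; alternatively, since $F=\Q_p$ here, one can invoke the full strength of $p$-adic Langlands as in~\cite{DL} to see there is ``nothing more'' on the automorphic side, which is the route~\cite{DL} takes and which I would follow, using the present Theorem~\ref{diagfond} in place of the ad hoc computations of~\cite{DL}. The remaining verifications — exactness of $[M]$, $G$-equivariance of all maps, compatibility of the two vertical ${\rm dlog}$/$d$ maps with $\theta$ restricted to $\mathcal L$ — are routine given the earlier results.
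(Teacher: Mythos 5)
Your plan rests on the idea that Proposition~\ref{dl10} can be extracted from Theorem~\ref{diagfond} plus some Ext-rigidity, but this is not how the paper treats it, and the sketch has a genuine gap. In the paper this proposition \emph{is} the Breuil--Strauch conjecture, i.e.\ the main theorem of~\cite{DL} (restated in the body as prop.\,\ref{dl2}: $W_{M,{\cal L}}\cong(\Pi(V_{M,{\cal L}})^{\rm an})^\dual$), and the paper's ``proof'' is a citation; it is an independent input which is then \emph{combined} with th.\,\ref{diagfond} to get th.\,\ref{intro1}, not a consequence of it. Your fallback (``invoke the full strength of $p$-adic Langlands as in \cite{DL}'') therefore coincides with what the paper does, but the derivation you actually sketch does not work, and the step you flag as the ``main obstacle'' is the entire content of the statement rather than a final verification.

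Two concrete problems. First, the construction is off target: the middle term of the top row of prop.\,\ref{dl10} is, by definition of the diagram, the pullback of the \emph{de Rham} (bottom) row along $(C\otimes{\cal L})\widehat\otimes\,{\rm LL}(M)^\dual\hookrightarrow (C\otimes M_{\rm dR})\widehat\otimes\,{\rm LL}(M)^\dual$, i.e.\ a subspace of $\Omega^1({\cal M}_n)[M]$ (this is $W'_{M,{\cal L}}$ of \no\ref{Emer4}, up to the twist by ${\cal L}^{-1}$ and $C\widehat\otimes$); whereas you pull back the pro\'etale row of th.\,\ref{diagfond} along $V_{M,{\cal L}}\widehat\otimes\,{\rm LL}(M)^\dual\subset X^+_{\rm st}(M)\widehat\otimes\,{\rm LL}(M)^\dual$, which yields a different, Galois-theoretic object. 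The correct bridge between th.\,\ref{diagfond} and $(\Pi^{\rm an})^\dual$ is ${\rm Hom}_{L[\G_{\Q_p}]}(V_{M,{\cal L}},-)$ (prop.\,\ref{ajout2} and~\ref{LL}), and that identification itself invokes prop.\,\ref{dl2}, so routing through it to prove \ref{dl10} would be circular. Second, and decisively, the identification of this geometric extension with $C\widehat\otimes(\Pi(V_{M,{\cal L}})^{\rm an})^\dual$ cannot be obtained by the soft argument you propose: nothing in this paper gives the structure of $\Pi(V_{M,{\cal L}})^{\rm an}$ as a $G$-representation (th.\,\ref{gabriel} computes the Hyodo--Kato and de Rham multiplicities, i.e.\ the quotients, not $\O({\cal M}_n)[M]$ itself, and provides no statement that the kernel of $(\Pi^{\rm an})^\dual\twoheadrightarrow{\cal L}\otimes{\rm LL}(M)^\dual$ is isomorphic to $\O({\cal M}_n)[M]$ with extension class the Hodge line), and the hoped-for smallness of the relevant $\Ext^1$ of $G$-Fr\'echet spaces is neither stated nor plausible without exactly the fine analysis of locally analytic vectors (Colmez's functor, $(\varphi,\Gamma)$-module models, infinitesimal characters, Schneider--Teitelbaum duality) that constitutes the proof in~\cite{DL}. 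So either you cite~\cite{DL}, as the paper does, or you must supply that local analysis; the sketch as written supplies neither.
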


Pour d\'eduire le th.\,\ref{intro1} de cet \'enonc\'e,
on utilise le fait que ${\rm Hom}_{L[\G_{\Q_p}]}(V_{M,{\cal L}},L\otimes C)$ et
${\rm Hom}_{L[\G_{\Q_p}]}(V_{M,{\cal L}},X_{\rm st}^+(M))$ sont tous les deux de dimension~$1$ sur $L$,
et on compare le diagramme obtenu en appliquant ${\rm Hom}_{L[\G_{\Q_p}]}(V_{M,{\cal L}},-)$ au
diagramme du th.\,\ref{diagfond} \`a celui
de la prop.\,\ref{dl10}.
On en d\'eduit que
$${\rm Hom}_{L[\check G\times {\rm W}_{\Q_p}]}
({\rm JL}(M)\otimes V_{M,{\cal L}},H^1_{\proet}({\cal M}_\infty,\Q_p(1)))=
(\Pi(V_{M,{\cal L}})^{\rm an})^\dual.$$
On utilise alors
le fait (prop.\,\ref{fonda1})
que $H^1_{\eet}({\cal M}_\infty,\Q_p(1)))$ est l'ensemble des \'el\'ements
de $H^1_{\proet}({\cal M}_\infty,\Q_p(1)))$ dont l'orbite sous l'action de $G$ est born\'ee,
et que $\Pi(V_{M,{\cal L}})$ est le compl\'et\'e universel de $\Pi(V_{M,{\cal L}})^{\rm an}$ d'apr\`es~\cite{CD},
ce qui, par dualit\'e, se traduit par le fait que $\Pi(V_{M,{\cal L}})^\dual$ est l'ensemble des
\'el\'ements $G$-born\'es de $(\Pi(V_{M,{\cal L}})^{\rm an})^\dual$;
on en d\'eduit que
$${\rm Hom}_{L[\check G\times {\rm W}_{\Q_p}]}
({\rm JL}(M)\otimes V_{M,{\cal L}},H^1_{\eet}({\cal M}_\infty,\Q_p(1)))=
\Pi(V_{M,{\cal L}})^\dual,$$
ce qui fournit la moiti\'e du th.\,\ref{intro1}; le reste repose sur une combinatoire difficile
\`a r\'esumer (cf.~\no\ref{emer5.2}), 
mais qui utilise le fait que, si $V$ est irr\'eductible et n'est pas de la forme $V_{M,{\cal L}}$, alors
$\dim_L{\rm Hom}_{L[\G_{\Q_p}]}(V,L\otimes C)>\dim_L{\rm Hom}_{L[\G_{\Q_p}]}(V,X_{\rm st}^+(M))$ sauf si les deux termes
sont nuls auquel cas il n'y a rien \`a faire.

\subsection{Le diagramme fondamental}\label{preuves}
Il y a plusieurs mani\`eres d'\'etablir l'existence
du diagramme du th.\,\ref{diagfond}.  On en pr\'esente deux ci-dessous:
la premi\`ere est d\'evelopp\'ee dans~\cite{CDN1} o\`u l'on trouvera aussi une troisi\`eme approche
utilisant l'int\'egration $p$-adique sur les courbes~\cite{ColemanRCC, CI, Cz-periodes, Cz-inte},
 et la seconde est celle utilis\'ee dans
l'article.
\subsubsection{Cohomologie syntomique}
L'approche la plus naturelle est probablement de passer par la cohomologie syntomique
car le lien avec le complexe de de Rham est inscrit dans la d\'efinition
m\^eme de la cohomologie syntomique.
Cette approche se g\'en\'eralise bien en dimension sup\'erieure~\cite{CDN2}.

Soit $X$ une courbe analytique sur $C$ avec un mod\`ele semi-stable ${\cal X}$ sur l'anneau des entiers $\O_K$
d'un sous-corps ferm\'e de $C$ de valuation discr\`ete.
On suppose que $X$ n'a qu'un nombre fini de composantes connexes.
On note $\rg_{\rm cris}(X)$ le complexe calculant la cohomologie cristalline absolue
de ${\cal X}\times \O_C$.
Par d\'efinition de la cohomologie syntomique \cite{FM, NN}, 
on dispose
d'un diagramme commutatif
de triangles distingu\'es:
$$
\xymatrix@R=.6cm{
\rg_{\rm syn}(X,\Q_p(1))\ar[r]\ar[d]^{{\beta}} & \rg_{\rm cris}(X)^{\varphi=p}
\ar[d]^-{\gamma}\ar[r] & \rg_{\rm cris}(X)/{\rm Fil}^1\ar@{=}[d]\\
(0\to\Omega^1) \ar[r] & (\O\to\Omega^1) \ar[r] & (\O\to 0)
}
$$
dans lequel:

$\bullet$  la ligne du haut est la d\'efinition
de $\rg_{\rm syn}(X,\Q_p(1))$, celle du bas est \'evidente.

$\bullet$ L'application $\gamma$ est induite par la compos\'ee
de $\iota_{\rm can}:\rg_{\rm cris}(X)^{\varphi=p}\to \rg_{\rm cris}(X)$ et de $\theta:\rg_{\rm cris}(X)\to
\rg_{\rm dR} (X)$.

$\bullet$ L'application $\beta$ est induite par la compos\'ee de
l'application naturelle
$\tilde\beta:\rg_{\rm syn}(X,\Q_p(1))\to {\rm Fil}^1\rg_{\rm cris}(X)$ provenant
de la d\'efinition de $\rg_{\rm syn}(X,\Q_p(1))$
et de 
$\theta:{\rm Fil}^1\rg_{\rm cris}(X)\to (0\to\Omega^1)$.

En passant \`a la cohomologie, et en utilisant l'isomorphisme
entre la cohomologie (pro)\'etale et la cohomologie syntomique~\cite{CN,Ts},
cela fournit le diagramme commutatif:
$$
\xymatrix@C=.5cm@R=.5cm{
0\ar[r] & C\otimes\Z[\pi_0(X)]\ar@{=}[d]\ar[r]&
\O(X)\ar[r]\ar@{=}[d] & H^1_{\proet}(X,\Q_p(1))\ar[d]^{\beta} \ar[r] & 
{\rm HK}^1_1(X)\ar[r]\ar[d]^{\gamma} & H^1(X,\O)\ar@{=}[d]\\
0\ar[r]&C\otimes\Z[\pi_0(X)]\ar[r]&
 \O(X) \ar[r]^-d & \Omega^1(X) \ar[r] & H^1_{\rm dR}(X)\ar[r] & H^1(X,\O)
}
$$
o\`u l'on a not\'e ${\rm HK}^1_1(X)$ le groupe $H^1(\rg_{\rm cris}(X)^{\varphi=p})$.

Si $X$ est un affino\"{\i}de ou une courbe de Stein (cas qui nous int\'eresse), on a
$H^1(X,\O)=0$. Si $X$ est une courbe propre, le th\'eor\`eme
de comparaison semi-stable fournit un isomorphisme ${\rm HK}^1_1(X)\cong
(\bst^+\otimes H^1_{\rm HK}(X))^{N=0,\varphi=p}$. Par contre,
si $X$ est un affino\"{\i}de, le groupe ${\rm HK}^1_1(X)$ est fort peu sympathique: en particulier,
c'est un espace topologique non s\'epar\'e.
Heureusement, une courbe de Stein se comporte plus comme une courbe propre que comme un
affino\"{\i}de, et on peut montrer~\cite{CDN1}, en faisant un peu de gymnastique, que l'on a
encore un isomorphisme
$${\rm HK}^1_1(X)\cong
(\bst^+\widehat\otimes H^1_{\rm HK}(X))^{N=0,\varphi=p},$$ 
si $X$ est une courbe de Stein
(la diff\'erence avec le cas propre est que $H^1_{\rm HK}(X)$ est une limite projective
d'espaces de dimension finie, et donc qu'il faut prendre un produit tensoriel compl\'et\'e).

Pour en d\'eduire le th.\,\ref{diagfond}, il suffirait
d'appliquer ce qui pr\'ec\`ede \`a $X=\mv_n$ (cf.~\S\,\ref{notasup} pour le passage
de $\mv_n$ \`a ${\cal M}_n$), d'appliquer le foncteur $Z\mapsto Z[M]$,
et d'utiliser le th.\,\ref{gabriel} pour faire appara\^{\i}tre la colonne de droite
(et le fait que $\check G$ agit par la norme r\'eduite sur $\pi_0(\mv_n)$ 
et donc que $(\Z[\pi_0(X)])[M]=0$).

\begin{rema}\label{entier}
Les th\'eor\`emes de comparaison \'etale-syntomique sont valables (\`a torsion born\'ee pr\`es)
pour des coefficients entiers (i.e.~$\Z_p(1)$ au lieu de $\Q_p(1)$).
On peut donc les utiliser pour \'etudier la cohomologie \'etale.  La premi\`ere ligne
du diagramme du th.\,\ref{diagfond}
devient alors:
$$0\to H^1_{\eet}({\cal M}_\infty,L(1))[M]\to X_{\rm st}^+(M)\otimes_L \widehat{{\rm LL}(M)}^\dual\to
\big(L\otimes H^1({\cal M}_n^\circ,\O)\big)[M],$$
o\`u $\widehat{{\rm LL}(M)}$ est le compl\'et\'e unitaire universel de ${\rm LL}(M)$,
$n$ est tel que $1+\varpi_D^n\O_D$ agisse trivialement sur ${\rm JL}(M)$, et
${\cal M}_n^\circ$ est un mod\`ele semi-stable $\check G\times G$-\'equivariant de
${\cal M}_n$. Comme il existe des repr\'esentations irr\'eductibles $V$ de $\G_{\Q_p}$,
 de dimension arbitraire, telles que ${\rm Hom}_{\G_{\Q_p}}(V,X_{\rm st}^+(M))\neq 0$,
le th.\,\ref{intro1} montre que
$H^1({\cal M}_n^\circ,\O)$ est tr\`es loin d'\^etre nul (au moins si $F=\Q_p$) contrairement \`a
$H^1({\cal M}_n,\O)$ (et $\O({\cal M}_n^\circ)=\O_C\otimes\Z[\pi_0({\cal M}_n)]$, ce qui explique le $0$ 
\`a gauche dans la suite exacte ci-dessus).
\end{rema}  

\subsubsection{Cohomologie des anneaux de Fontaine relatifs}
Une autre approche possible, et c'est celle utilis\'ee dans l'article car elle se
marie bien avec les m\'ethodes globales utilis\'ees pour prouver le th.\,\ref{gabriel},
consiste \`a utiliser les anneaux de Fontaine relatifs et les th\'eor\`emes de
comparaison de Faltings et Scholze \cite{Fa1,RAV}

On a un diagramme de faisceaux pour la topologie pro\'etale, \`a lignes exactes:
$$\xymatrix@R=.5cm@C=.6cm{0\ar[r]&\Q_p(1)\ar[r]\ar[d]&({\mathbb B}_{\rm cris}^+)^{\varphi=p}
\ar[r]\ar[d]&\widehat\O\ar@{=}[d]\ar[r]&0\\
0\ar[r]&\widehat\O(1)\ar[r]&{\mathbb B}_{\rm dR}^+/t^2\ar[r]&\widehat\O\ar[r]&0}$$
On d\'eduit de~\cite[Lemma\,3.24]{survey} que $\Omega^1(X)(-1)$ est naturellement
un quotient de $H^1_{\proet}(X,\widehat\O)$; si $X$ est une courbe
de Stein, on a le r\'esultat plus pr\'ecis suivant:
$$H^0_{\proet}(X,\widehat\O)=\O(X)\quad{\rm et}\quad H^1_{\proet}(X,\widehat\O)=\Omega^1(X)(-1).$$
D\'efinissons le groupe $\tHK(X)$ par:
\begin{align*}
\widetilde{\rm HK}(X)
={\rm Ker}\big(H^1_{\proet}(X,({\mathbb B}_{\rm cris}^+)^{\varphi=p})\to
H^1_{\proet}(X,\widehat\O)\to \Omega^1(X)(-1)\big).
\end{align*}
Si $X$ est une courbe propre,
on d\'eduit des th\'eor\`emes de comparaison
un isomorphisme (cf. prop. \ref{Shimura})
$$\tHK(X)\cong (\bst^+\otimes H^1_{\rm HK}(X))^{N=0,\varphi=p}.$$
Si $X$ est une courbe de Stein, on d\'eduit de ce qui pr\'ec\`ede
un diagramme commutatif
\`a lignes exactes: 
$$\xymatrix@R=.5cm@C=.6cm{
 0\ar[r]&C\otimes\Z[\pi_0(X)]\ar@{=}[d]\ar[r]&
\mathcal{O}(X)\ar[r]^-{\rm exp}\ar@{=}[d]& H^1_{\proet}(X, \Q_p(1))\ar[r]\ar[d]^-{\rm dlog}&
\tHK(X)\ar[r]\ar[d]^-{\rm \iota_{\rm can}}&0\\
0\ar[r]&C\otimes\Z[\pi_0(X)]\ar[r]&
\mathcal{O}(X)\ar[r]^-{d} & \Omega^1(X) \ar[r]^-{\pi_{\rm dR}}&
H^1_{\rm dR}(X)\ar[r]&0
}$$           

Pour en d\'eduire le th.\,\ref{diagfond}, on utilise ce qui pr\'ec\`ede pour $X=\mv_n$
et on est r\'eduit \`a calculer $\tHK(\mv_n)[M]$.
Pour cela, on commence par prouver que $\iota_{\rm can}$ identifie $\tHK(\mv_n)[M]$ 
\`a un sous-espace ferm\'e de $H^1_{\rm dR}(\mv_n)[M]$; cela implique, en utilisant le th.\,\ref{gabriel}
et la prop.\,\ref{isot},
qu'il existe $Z$ tel que $\tHK(\mv_n)[M]\cong Z\widehat\otimes {\rm LL}(M)^\dual$.
Pour calculer $Z$, on utilise les quotients de $\mv_n$ par des sous-groupes cocompacts
bien choisis de $G$ (de congruence dans une alg\`ebre de quaternions sur un corps de nombres dont un
compl\'et\'e est $F$), les th\'eor\`emes de compatibilit\'e local-global de Carayol~\cite{Carayol}
 et Saito~\cite{Saito},
 et le fait que 
$\tHK(X)\cong (\bst^+\otimes H^1_{\rm HK}(X))^{N=0,\varphi=p}$,
si $X$ est une courbe propre.

\subsubsection*{Remerciements}
Certaines parties de ce projet ont \'et\'e r\'ealis\'ees lors de s\'ejours au BICMR de P\'ekin (P.C.),
au SCMS de l'universit\'e de Fudan \`a Shanghai (W.N. et P.C.), au Tata Institute de Bombay (W.N. et P.C.), \`a l'institut Mittag-Leffler (W.N.) ou au KIAS de S\'eoul (G.D.), 
et nous voudrions remercier ces institutions pour leur hospitalit\'e.
Nous voudrions aussi remercier 
Matt Baker et Michael Temkin pour leurs r\'eponses d\'etaill\'ees
\`a nos questions sur les mod\`eles des courbes analytiques, ainsi que 
Antoine Ducros, Laurent Fargues, Luc Illusie, Arthur-C\'esar Le Bras, Qing Liu, Lue Pan, Takeshi Saito et Shanwen Wang
pour leurs remarques, r\'ef\'erences ou explications.

\section{La cohomologie du demi-plan de Drinfeld et la steinberg}\label{DEMI}
Soient $K\subset L$ des sous-corps ferm\'es de $C$.  Si $X_K$ est une
vari\'et\'e analytique d\'efinie sur $K$, on note $X_L$ son extension des scalaires \`a $L$,
et simplement $X$ l'extension des scalaires \`a $C$.
Alors $\G_K={\rm Aut}_{\rm cont}(C/K)$ agit sur $X$ et sur tous les objets
qui s'en d\'eduisent.
\subsection{Affino\"{\i}des de $\piqp$}

Si $Y$ est un affino\"{\i}de\footnote{Tous nos affino\"{\i}des sont r\'eduits et munis de la valuation
spectrale.} 
d\'efini sur $C$, on note:

$\bullet$ $\O^+(Y)$ le sous-anneau de $\O(Y)$ des fonctions \`a valeurs enti\`eres,

$\bullet$ $\O^{++}(Y)$ l'id\'eal ${\goth m}_C\otimes_{\O_C}\O^+(Y)$ de $\O^+(Y)$,

$\bullet$
 $\O(Y)^{\dual\dual}$ le sous-groupe $1+\O^{++}(Y)$ de $\O(Y)^\dual$.

$\bullet$ $v_Y$ la valuation spectrale sur $\O(Y)$.

\noindent
On a donc 
\begin{align*}\O^+(Y)=\{f\in \O(Y),\ v_Y(f)\geq 0\},\quad&
\O^{++}(Y)=\{f\in \O(Y),\ v_Y(f)>0 \},\\
\O(Y)^{\dual\dual}= \{f\in \O(&Y),\ v_Y(f-1)> 0\}.
\end{align*}

     Si $x\in \mathbf{P}^1(C)$ et $r\geq 0$, on note $B^-(x,r)$
  la boule ouverte de centre $x$ et de valuation $r$, avec la convention que 
    $B^-(x,r)=\{z,\  v_p(z-x)>r\}$ si $x\in \O_C$ et 
$B^-(x,r)=\{z,\  v_p(z^{-1}-x^{-1})>r\}$ pour 
    $x\in \mathbf{P}^1(C)-\O_C$.
Pour avoir des formules uniformes, 
notons 
$z-x$ le param\`etre local $z^{-1}-x^{-1}$ de $B^-(x,r)$,
si $x\in\mathbf{P}^1(C)-\O_C$.
   
Soit $U$ un affino\"{\i}de connexe de $\piqp$.  Il existe 
des boules ouvertes $B^-(x,r_x)$, pour $x\in X$, o\`u $X$ est fini et $X\not\ni\infty$, telles que
$$U=\mathbf{P}^1-\bigcup_{x\in X} B^-(x, r_x).$$
   D'apr\`es la proposition 2.5.10 de \cite{FDP},
on a le r\'esultat suivant.
 \begin{prop}\label{unites0}
Tout $f\in\O(U)^\dual$ peut s'\'ecrire
sous la forme
\begin{center}
\hfill$f=c\,u\prod_{x\in X}(z-x)^{m_x},\hskip.4cm
 {\text {avec $c\in C^\dual$, $u\in \O(U)^{\dual\dual}$,
 $m_x\in\Z$ et $\sum_{x\in X} m_x=0$.}}$
\end{center}
Cette \'ecriture est unique \`a $c\mapsto u_0c$ et $u\mapsto u_0^{-1}u$ pr\`es, avec
$u_0\in 1+{\goth m}_C$.
 \end{prop}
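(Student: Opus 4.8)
The plan is to read off the integers $m_x$ from the local behaviour of $f$ near each hole, divide them out, and show that what is left is a constant times a unit congruent to $1$; this is the argument of \cite[Prop.~2.5.10]{FDP}, which I sketch. After applying a homography carrying a $C$-point of $U$ to $\infty$ I may assume $\infty\in U$ and that the balls $B^-(x,r_x)$, $x\in X\subset \O_C$, are pairwise disjoint. The tool is the Mittag--Leffler decomposition $\O(U)=C\oplus\bigoplus_{x\in X}\O_0(\mathbf{P}^1-B^-(x,r_x))$, where $\O_0(\mathbf{P}^1-B^-(x,r_x))$ is the space of series $\sum_{n\geq 1}a_{x,n}(z-x)^{-n}$ convergent on $\mathbf{P}^1-B^-(x,r_x)$, together with the description of $v_U$ it yields via the Newton polygons of the Laurent expansions. \emph{Uniqueness} is then immediate: any $u\in\O(U)^{\dual\dual}$ has $v_U(u-1)>0$, so $|u|\equiv 1$ on $U$, so $u$ (like a constant) has valuation polygon identically $0$; hence in $f=c\,u\prod_x(z-x)^{m_x}$ the integer $m_x$ must be the slope of the valuation polygon of $f$ in the direction of the hole $B^-(x,r_x)$, an invariant of $f$ alone; and $c\,u=c'\,u'$ forces $c'/c=u/u'\in C^\dual\cap\O(U)^{\dual\dual}$, i.e.\ $c'/c=:u_0\in 1+{\goth m}_C$, so $c'=u_0c$ and $u'=u_0^{-1}u$.

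\emph{Existence: the exponents.} Fix $f\in\O(U)^\dual$. Since $f$ is nowhere zero on $U$, the function $-\log|f|$ (the valuation polygon of $f$) is a harmonic, piecewise-affine, integer-sloped function $\phi$ on the skeleton $\Sigma$ of $U$; because $U$ is a connected affinoid of $\mathbf{P}^1$, $\Sigma$ is a \emph{finite tree}, whose leaves are in bijection with $X$ — there is no leaf ``at $\infty$'' precisely because $\infty$ is interior to $U$. Let $m_x\in\Z$ be the slope of $\phi$ along $\Sigma$ in the direction of the leaf attached to $B^-(x,r_x)$ (equivalently, the winding number of $f$ around that ball, or the residue of $d\log f$ at $x$; integrality is read off the Newton polygon). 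Summing the harmonicity identity ``the outgoing slopes at $v$ sum to $0$'' over all vertices $v$ of $\Sigma$ makes the internal edges cancel in pairs, leaving $\sum_{x\in X}m_x=0$.

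\emph{Existence: dividing out the winding.} Thanks to $\sum_x m_x=0$, the rational function $P:=\prod_{x\in X}(z-x)^{m_x}$ has divisor supported in $X$ and is regular and invertible at $\infty$ and on all of $U$, so $P\in\O(U)^\dual$; put $g:=f/P\in\O(U)^\dual$. Its valuation polygon equals $\phi$ minus that of $P$, hence is harmonic on $\Sigma$ with \emph{every} leaf-slope $0$ — and a harmonic integer-sloped function on a finite tree with vanishing slope at every leaf is constant (peel the leaves one at a time, propagating the vanishing slope inwards by harmonicity at each newly exposed vertex). So $g$ has constant valuation polygon, of value $v_U(g)\in v_p(C^\dual)$; picking $c_1\in C^\dual$ with $v_p(c_1)=v_U(g)$, the function $g_0:=c_1^{-1}g$ has $v_U(g_0)=v_U(g_0^{-1})=0$, hence lies in $\O^+(U)^\dual$ and reduces to a unit $\overline g_0$ of $\O^+(U)/\O^{++}(U)\cong\O(\widetilde U)$, the canonical reduction $\widetilde U$ being a tree of $\mathbf{P}^1$'s with punctures. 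As the valuation polygon of $g_0$ is $\equiv 0$ out to the ends of $\Sigma$, $\overline g_0$ and $\overline g_0^{-1}$ are regular at each puncture, so they extend to mutually inverse regular functions on the connected proper genus-$0$ curve over $\kbar$ obtained by filling the punctures; a unit on such a curve is a constant $\overline c_0\in\kbar^\dual$. Lifting $\overline c_0$ to $c_0\in\O_C^\dual$, the function $u:=c_0^{-1}g_0$ satisfies $v_U(u-1)>0$, i.e.\ $u\in\O(U)^{\dual\dual}$, and $f=c_1\,g_0\,P=(c_1c_0)\,u\prod_{x\in X}(z-x)^{m_x}$ with $c:=c_1c_0\in C^\dual$ — the required form.

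\emph{The main obstacle.} The substantive point — the ``hard part'' — is the rigidity used above: once the winding around the holes is divided out, the unit $g$ has \emph{constant} valuation polygon, and so differs from a constant only by an element of $\O(U)^{\dual\dual}$. This is exactly where the genus-$0$, tree-like geometry of $U\subset\mathbf{P}^1$ is indispensable (on a higher-genus wide open, or on an annulus with $\infty$ counted among its holes, one genuinely gets extra ``logarithmic'' units): it rests on the conjunction of the facts that the skeleton of a connected affinoid of $\mathbf{P}^1$ is a finite tree, that a harmonic integer-sloped function on a finite tree with vanishing leaf-slopes is constant, and that a unit on a connected proper genus-$0$ curve over $\kbar$ is constant. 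The remaining items — the Mittag--Leffler decomposition, the identification of $v_U$ with the Newton polygons (which makes the slopes $m_x$ manifestly integral), the canonical reduction $\O^+(U)/\O^{++}(U)$, and the extension of $\overline g_0$ across the punctures — form the routine technical substrate, all carried out in \cite[\S\,2.5]{FDP}.
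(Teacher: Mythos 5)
Your proof is correct, and it is worth pointing out that the paper itself does not prove this statement: it is quoted directly from \cite[prop.~2.5.10]{FDP}, so there is no internal argument to compare with. Your sketch is essentially the standard argument behind that reference, recast in skeleton/potential-theoretic language: where Fresnel--van der Put work with the Mittag--Leffler decomposition of $\O(U)$ and spectral norms, you phrase the same content as piecewise-affineness and harmonicity of $-\log|f|$ on the finite tree spanned by the boundary points $\eta_{x,r_x}$, plus constancy of units on the (proper, genus~$0$) compactified reduction; the two formulations are equivalent, and your identification of the key rigidity step (constant valuation polygon once the winding is divided out) is accurate. One small wording point: at a leaf $\eta_{x,r_x}$ the naive harmonicity identity ``sum of outgoing slopes over directions lying in $U$ equals $0$'' is false, since $f$ is not defined in the direction of the hole; to get $\sum_x m_x=0$ you should either sum the identity over interior vertices only (the leaf edges then contribute exactly $\sum_x m_x$, by affineness along each edge), or include the missing direction with weight $m_x$, justified by the fact that the reduction of $f$ at the leaf is a rational function on the component $\mathbf{P}^1_{\kbar}$ whose order at the puncture accounts for the winding number. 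Since you define $m_x$ as the slope along the leaf edge (equivalently the residue of $d\log f$ on the boundary circle), this is a purely cosmetic adjustment and the argument goes through; the same remark applies to your peeling lemma, where harmonicity is only ever invoked at interior vertices, as it should be.
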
  
   
Notons ${\rm LC}(X,\Z)$ le $\Z$-module des fonctions $\phi:X\to \Z$ et
${\rm LC}(X,\Z)^\dual$ son $\Z$-dual (c'est l'espace des mesures sur $X$
\`a valeurs dans $\Z$). Un \'el\'ement $\mu$ de
${\rm LC}(X,\Z)^\dual$ est \'equivalent \`a la donn\'ee de $m_x\in\Z$, pour $x\in X$:
la mesure associ\'ee est $\phi\mapsto\int_X\phi\,\mu=\sum_{x\in X}m_x\phi(x)$ ou, de mani\`ere
\'equivalente, $\mu=\sum_xm_x{\rm Dir}_x$ o\`u ${\rm Dir}_x$ est la masse de Dirac en $x$.
Si $\mu\in {\rm LC}(X,\Z)^\dual$, on pose $\int_X\mu=\int_X{\bf 1}_X\,\mu$.
Le r\'esultat suivant est alors une simple traduction
de la proposition~\ref{unites0} ci-dessus.
   \begin{coro}\label{unites}
On a une suite exacte de groupes ab\'eliens
$$\xymatrix@C=.5cm{
0\ar[r]&\O(U)^{\dual\dual}/(1+{\goth m}_C)\ar[r]& \O(U)^\dual/C^\dual
\ar[r]& {\rm LC}(X,\Z)^\dual\ar[rr]^-{\mu\mapsto \int_X\mu}&&\Z\ar[r]&0},$$
o\`u, si $x,y\in X$, l'image de $\frac{z-x}{z-y}\in\O(U)^\dual$ dans ${\rm LC}(X,\Z)^\dual$
est ${\rm Dir}_x-{\rm Dir}_y$.
        \end{coro}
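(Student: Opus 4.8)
The plan is to deduce the corollary directly from Proposition \ref{unites0} by unwinding the latter's uniqueness statement into an exact sequence. First I would construct the middle and right arrows. Given $f\in\O(U)^\dual$, Proposition \ref{unites0} writes $f=c\,u\prod_{x\in X}(z-x)^{m_x}$ with $c\in C^\dual$, $u\in\O(U)^{\dual\dual}$, $m_x\in\Z$ and $\sum_x m_x=0$. The map $\O(U)^\dual\to{\rm LC}(X,\Z)^\dual$ sending $f\mapsto\mu_f:=\sum_x m_x{\rm Dir}_x$ is well defined: by the uniqueness clause, only the pair $(c,u)$ is ambiguous (up to $u_0\in 1+{\goth m}_C$), so the exponents $m_x$ are uniquely determined by $f$. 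It is a group homomorphism since the decompositions multiply. It kills $C^\dual$ (take $u=1$, all $m_x=0$), so it factors through $\O(U)^\dual/C^\dual$. Composing with $\mu\mapsto\int_X\mu=\sum_x m_x$ gives the zero map because $\sum_x m_x=0$; this forces the image of $f\mapsto\mu_f$ to land in the kernel of $\int_X$, i.e. in the degree-zero part, which is exactly what the claimed exactness at ${\rm LC}(X,\Z)^\dual$ asserts.

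Next I would check exactness term by term. Surjectivity of $\mu\mapsto\int_X\mu$ onto $\Z$ is clear (the Dirac mass at one point maps to $1$, using $X\neq\emptyset$). Exactness at ${\rm LC}(X,\Z)^\dual$: any $\mu=\sum_x m_x{\rm Dir}_x$ with $\sum_x m_x=0$ is the image of $\prod_x(z-x)^{m_x}\in\O(U)^\dual$ by construction, and conversely we have just seen the image of $\O(U)^\dual/C^\dual$ lies in $\ker\int_X$. Exactness at $\O(U)^\dual/C^\dual$: if $f$ has $\mu_f=0$, then all $m_x=0$, so $f=c\,u$ with $u\in\O(U)^{\dual\dual}$, i.e. the class of $f$ modulo $C^\dual$ lies in the image of $\O(U)^{\dual\dual}$; and this image, modulo $C^\dual$, is precisely $\O(U)^{\dual\dual}/(\O(U)^{\dual\dual}\cap C^\dual)=\O(U)^{\dual\dual}/(1+{\goth m}_C)$, since $\O(U)^{\dual\dual}\cap C^\dual=1+{\goth m}_C$ (a constant $c$ lies in $1+\O^{++}(U)$ iff $v_U(c-1)>0$ iff $c\in 1+{\goth m}_C$, as $v_U$ restricts to $v_p$ on constants for $U$ connected). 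Injectivity of $\O(U)^{\dual\dual}/(1+{\goth m}_C)\to\O(U)^\dual/C^\dual$ is the same computation: an element of $\O(U)^{\dual\dual}$ that becomes a constant is in $1+{\goth m}_C$.

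Finally I would record the last sentence of the corollary: for $x,y\in X$, the function $\tfrac{z-x}{z-y}$ is already in the normal form of Proposition \ref{unites0} with $c=1$, $u=1$, exponent $+1$ at $x$, exponent $-1$ at $y$, and all other exponents $0$; hence its image in ${\rm LC}(X,\Z)^\dual$ is ${\rm Dir}_x-{\rm Dir}_y$. I do not expect any real obstacle here: the only point requiring a little care is the bookkeeping that the ambiguity in Proposition \ref{unites0} affects only the constant/unit part and not the exponents $m_x$, which is exactly the content of its uniqueness clause, together with the identification $\O(U)^{\dual\dual}\cap C^\dual=1+{\goth m}_C$. Everything else is a formal diagram chase translating the existence-and-uniqueness of the decomposition into the four-term exact sequence.
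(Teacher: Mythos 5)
Your argument is correct and takes essentially the same route as the paper, which presents the corollary as a direct translation of the proposition~\ref{unites0}: your unwinding (the exponents $m_x$ are well defined by the uniqueness clause, the image of the middle map is exactly the mass-zero measures via $\prod_x(z-x)^{m_x}$, and $\O(U)^{\dual\dual}\cap C^\dual=1+{\goth m}_C$) is precisely that translation. Nothing further is needed.
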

   
\subsection{Le demi-plan de Drinfeld}
    Rappelons que $\Omega_{{\rm Dr},F}$ d\'esigne l'espace analytique rigide 
 $\piqp_{F}-\piqp(F)$, muni de l'action naturelle de $G$ par homographies,
et que $\Omega_{{\rm Dr}}=C\times_F\Omega_{{\rm Dr},F}$.

Si $n\geq 1$, soient
 $$\mathcal{P}_n=\mathbf{P}^1(\O_F/\varpi^n)\quad{\rm et}\quad
    U_n=\piqp-\bigcup_{x\in \mathcal{P}_{n}} B^-(x, nv_p(\varpi)).$$
Les $U_n$ sont des ouverts affino\"{\i}des de $\Omega_{\rm Dr}$
dont la r\'eunion (croissante) est $\Omega_{\rm Dr}$.
On dispose d'une application de r\'eduction $G$-\'equivariante 
 $r: \Omega_{\rm Dr}\to \mathcal{T}$, o\`u $\mathcal{T}$ est l'arbre de Bruhat-Tits de $G$. 
 Alors $U_n$ est aussi 
l'image inverse par 
 $r$ du sous-arbre de $\mathcal{T}$ form\'e des sommets et ar\^etes \`a distance au plus $n$ du sommet central. 
Notons que $U_n$ est l'extension des scalaires \`a $C$ d'un affino\"{\i}de
$U_{n,F}$ de $\piqp_F$, et que
$\Omega_{{\rm Dr},F}$ est la r\'eunion des $U_{n,F}$.

Soit $({\cal X}_n)_{n\in\N}$ la suite de mod\`eles formels semi-stables
de $\piqp_{F}$ sur $\O_F$ d\'efinie de la mani\`ere suivante:
${\cal X}_0=\piqp_{\O_F}$ et ${\cal X}_{n}$ est obtenu \`a partir de ${\cal X}_{n-1}$
en \'eclatant les points lisses de ${\cal X}_{n-1}(k_F)$.
La fibre sp\'eciale de ${\cal X}_n$ est un arbre de $\piqp$, de rayon $n$, centr\'e en
le $\piqp$ initial, chaque $\piqp$ sauf les $\piqp$ extr\'emaux (ce sont ceux obtenus
par \'eclatement des points lisses de ${\cal X}_{n-1}(k_F)$, ce sont aussi ceux
\`a distance $n$ du centre) rencontrant $q+1$ autres $\piqp$.

On note ${\cal U}_n$ le sous-sch\'ema formel de ${\cal X}_n$ obtenu
en retirant les points lisses de ${\cal X}_{n}(k_F)$.  
La fibre g\'en\'erique de ${\cal U}_n$ est l'affino\"{\i}de $U_{n,F}$ ci-dessus.
Alors
${\cal U}_n$ est un ouvert de ${\cal U}_{n+1}$ pour tout $n$ et 
la r\'eunion croissante ${\cal U}_\infty$ de ${\cal U}_n$ est un mod\`ele semi-stable
de $\Omega_{{\rm Dr},F}$. L'action de $G$ sur $\Omega_{{\rm Dr},F}$ se prolonge
\`a ce mod\`ele (car $G$ permute les pr\'eimages des composantes irr\'eductibles
de la fibre sp\'eciale de ${\cal U}_\infty$).

\begin{lemm}\label{gab1}
Si $f\in\O^+(U_{n+1})$ s'annule sur $U_n(C)$, alors
$f\in\varpi \O^+(U_n)$.
\end{lemm}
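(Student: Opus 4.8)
The plan is to reduce everything to the restriction of $f$ to $U_n$. Since $\|f\|_{U_{n+1}}\le1$ and $U_n\subset U_{n+1}$, the restriction $f|_{U_n}$ automatically lies in $\O^+(U_n)$, so it only remains to gain the factor of $\varpi$; and here I would use that $U_n$ is a reduced affinoid over the algebraically closed field $C$, so that its $C$-points are Zariski dense in $U_n$. The hypothesis that $f$ vanishes on all of $U_n(C)$ then forces $f|_{U_n}=0$ in $\O(U_n)$, and a fortiori $f|_{U_n}\in\varpi\O^+(U_n)$. (Phrasing the conclusion with a $\varpi$ rather than ``$=0$'' is what is used downstream, and it signals that the same strategy is meant to cover the approximate variants — where one only controls $f$ modulo $\varpi$ on $U_n$, or works over the discretely valued field $\breve F$ instead of $C$ — in which a genuine argument is needed; I sketch it next.)

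For those variants, more geometrically: recall $U_{n+1}=\piqp-\bigcup_{x\in{\cal P}_{n+1}}B^-(x,(n+1)v_p(\varpi))$, and that $U_n\subset U_{n+1}$ presents $U_n$ as $\piqp$ minus the \emph{larger} disks $B^-(x,nv_p(\varpi))$, $x\in{\cal P}_n$; hence $U_{n+1}\smallsetminus U_n$ is a finite disjoint union of (generalized) annuli, each with its outer boundary circle lying in $U_n$ and with inner and outer radii differing by exactly $v_p(\varpi)$. Expanding $f$ in its Mittag-Leffler (partial-fraction) form for $\piqp$ minus a finite set of disks — as in the computations behind Prop.~\ref{unites0} — one reads the behaviour of $f$ along each such annulus off its Newton (valuation) polygon: the vanishing of $f$ on $U_n$ pins down the outer end, $\|f\|_{U_{n+1}}\le1$ controls the inner end, and since the two ends are separated by the single slope $v_p(\varpi)$ one obtains $v_{U_n}(f)\ge v_p(\varpi)$, that is $\varpi\mid f$ in $\O^+(U_n)$. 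In model-theoretic language this is the statement that $f$ reduces to a section of the structure sheaf of the special fibre ${\cal U}_{n,k_F}$ — the radius-$n$ tree of projective lines, its extremal components punctured — which vanishes on the radius-$n$ subtree.

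The step that requires care is exactly this annular estimate, not the bookkeeping around it: a function small on one boundary circle of an annulus need not be small throughout it, since its valuation polygon may dip in between (equivalently, on a punctured projective line a regular function vanishing at the attaching node need not vanish identically). So a crude maximum-modulus bound is not enough; the argument must exploit that $f$ is controlled on \emph{both} ends of each collar annulus — i.e.\ that it vanishes on all of $U_n(C)$, not merely on a Zariski-dense open — together with the exact radii $nv_p(\varpi)$ and $(n+1)v_p(\varpi)$ defining $U_n$ and $U_{n+1}$, which is what makes precisely one power of $\varpi$ appear.
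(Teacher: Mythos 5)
Your first paragraph is a valid proof of the statement exactly as it is worded, but it proves it by trivializing it: since $U_n$ is a reduced affinoid over the algebraically closed complete field $C$, a function vanishing at every point of $U_n(C)$ vanishes identically, so $f|_{U_n}=0$ and the factor $\varpi$ is vacuous. This is not the paper's argument, and it misses the content the lemma is meant to carry. The paper's proof goes through the semi-stable model ${\cal U}_{n+1}$: the reduction of $f$ is constant on each non-extremal component (these are proper $\piqp$'s), hence \emph{any} $f\in\O^+(U_{n+1})$ is congruent to a constant modulo $\varpi\O^+(U_n)$, and the vanishing hypothesis is used only at the end to kill that constant --- for which vanishing at a single point of $U_n(C)$ suffices. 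It is this stronger form that is used afterwards: in the proof of the lemme~\ref{technique}\,(iii), and again in that of the th\'eor\`eme~\ref{omegaproet}, the lemma is invoked for functions $g$ about which one only knows $v_{U_{n+1}}(g)\geq 0$, $v_{U_n}(g)>0$ and $g(Q)=0$ at one fixed point $Q$; your Nullstellensatz argument gives nothing in that situation.

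The genuine gap is in the ``approximate variant'' you sketch, which is where you yourself locate the real work. First, the variant is never stated precisely, and under the natural reading (replace vanishing by $v_{U_n}(f)>0$) it is false: the constant $f=\varpi^{1/2}\in\O^+(U_{n+1})$ has $v_{U_n}(f)=\frac{1}{2}v_p(\varpi)>0$ but does not lie in $\varpi\O^+(U_n)$. Second, the Newton-polygon/annulus estimate you describe only controls the principal parts: in the Mittag-Leffler decomposition $f=c+\sum_x f_x$ attached to the deleted balls (as in the prop.~\ref{unites0}), each $f_x$ bounded by $1$ on $U_{n+1}$ automatically has norm $\leq |\varpi|$ on $U_n$ because the radii shrink by exactly $v_p(\varpi)$; the pole-free term $c$ is invisible to every collar annulus, and it is precisely this term that the hypothesis of vanishing at a point of $U_n(C)$ must eliminate. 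So the crux is the constant term, not the annular estimate, and your sketch never addresses it --- nor does it use the properness/constancy mechanism that replaces it in the paper. Finally, the cautionary claim that a function's ``valuation polygon may dip in between'' the two circles of an annulus is backwards: $r\mapsto v_r(f)$ is concave (the sup norm is attained on the two boundary circles), so control on both ends does propagate inward; the correct point is merely that control on one circle says nothing about the other.
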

\begin{proof}
${\cal U}_n$ est inclus dans la r\'eunion des composantes
irr\'eductibles non extr\'emales de ${\cal U}_{n+1}$ qui sont des $\piqp$.
On en d\'eduit que $f$ est constante sur chacune de ces composantes irr\'eductibles,
et donc aussi sur ${\cal U}_n$.  Comme on a suppos\'e que $f$
s'annule sur $U_n(C)$, cela implique que $f=0$ sur ${\cal U}_n$
et donc $f\in \varpi \O^+(U_n)$.
\end{proof}

 Notons $A_n=\mathcal{O}(U_{n,C})$. 
Le lemme suivant rassemble un certain nombre de r\'esultats concernant ces anneaux. On renvoie le lecteur \`a la section 2.7 de \cite{FDP}
 (en particulier la preuve du th\'eor\`eme~2.7.6)  pour plus de d\'etails. 
       
\begin{lemm}\label{technique} On a 
       
{\rm (i)} ${\rm Pic}(A_n)=0$ pour tout $n$. 
       
{\rm (ii)} $\R^1\varprojlim\,A_n^\dual =0$. 
       
{\rm (iii)} $\varprojlim\, A_n^{**}=1+{\goth m}_C$ et 
$\R^1\varprojlim\, A_n^{**}=0$.       
\end{lemm}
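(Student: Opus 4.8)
Lemma \ref{technique} asserts three facts about the rings $A_n = \O(U_{n,C})$: (i) $\Pic(A_n)=0$; (ii) $\R^1\varprojlim A_n^\dual = 0$; (iii) $\varprojlim A_n^{**}=1+\goth m_C$ and $\R^1\varprojlim A_n^{**}=0$. Let me plan the proof.

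For (i): $U_n$ is obtained by removing finitely many open balls from $\piqp$. These are well-understood affinoids — the ring $A_n$ is a "Laurent domain" type object, actually a rational subdomain of $\piqp$. The standard fact is that for such affinoid subdomains of $\piqp$ (complements of finitely many open disks), the Picard group vanishes. This follows from the explicit description in Fresnel–van der Put (FDP), or one can note that Prop \ref{unites0} gives a complete description of units, which is really the $H^1$-level statement; the $\Pic$ vanishing is the companion statement about line bundles. Concretely: $U_n$ has a semistable model ${\cal U}_n$ whose special fiber is a tree of $\piqp$'s, and one uses that $\Pic$ of such a configuration, together with the reduction map, controls $\Pic(A_n)$.

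For (ii) and (iii): these are Mittag-Leffler / $\R^1\varprojlim$ vanishing statements, and the key is surjectivity of transition maps (or, more precisely, the Mittag-Leffler condition). Using Corollary \ref{unites}, each $A_n^\dual/C^\dual$ sits in an exact sequence involving $A_n^{**}/(1+\goth m_C)$ and the lattice ${\rm LC}(\mathcal{P}_n,\Z)^\dual$. The transition maps $A_{n+1}^\dual \to A_n^\dual$ are restriction; one must check that the induced maps on each piece are surjective (for ${\rm LC}(\mathcal{P}_n,\Z)^\dual$, the balls of level $n$ break into $q$ balls of level $n+1$ so restriction of the corresponding units $(z-x)$ is visibly controlled), and then invoke the five-lemma plus the standard fact that an inverse system of short exact sequences with ML on the left and ML (e.g. surjective) in the middle has $\R^1\varprojlim=0$ on the right if it vanishes on the ends. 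For $A_n^{**}$: Lemma \ref{gab1} is the crucial input — it says a function in $\O^+(U_{n+1})$ vanishing on $U_n$ lies in $\varpi\O^+(U_n)$, which gives precise control on how $\O^+(U_{n+1}) \to \O^+(U_n)$ behaves, hence on $A_{n+1}^{**}=1+\O^{++}(U_{n+1}) \to A_n^{**}$. One shows the transition maps are surjective (any element of $A_n^{**}$ extends, using that $U_n \subset U_{n+1}$ is a rational subdomain and $\O^+$ is controlled), so $\R^1\varprojlim A_n^{**}=0$, and computes $\varprojlim A_n^{**} = 1+\goth m_C$ because a compatible system gives a function on $\Omega_{\rm Dr}$ which is a unit with $v(f-1)>0$ everywhere, hence (by the maximum principle on the tree, $\Omega_{\rm Dr}$ having no nonconstant bounded functions beyond constants in $1+\goth m_C$) it is a constant.

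The main obstacle, I expect, is the surjectivity of the transition maps $A_{n+1}^{**}\to A_n^{**}$ and the identification $\varprojlim A_n^{**}=1+\goth m_C$: one needs to extend a unit-close-to-$1$ from $U_n$ to $U_{n+1}$ while staying in $1+\O^{++}$, which requires a genuine approximation/Runge-type argument on the affinoids rather than formal nonsense. Here Lemma \ref{gab1} and the explicit geometry of the semistable models ${\cal U}_n$ (each non-extremal component being a $\piqp$) do the real work: functions are constant on the old components, so extending amounts to prescribing values on the newly-added extremal $\piqp$'s, which is unobstructed. The parenthetical reference to the proof of Theorem 2.7.6 of \cite{FDP} indicates that the bulk of this is already in the literature, so the plan is to assemble these pieces — the unit description (Prop \ref{unites0}, Cor \ref{unites}), Lemma \ref{gab1}, and the FDP structure theory — rather than to prove anything from scratch; the write-up will mostly be bookkeeping with the long exact sequence of $\R^1\varprojlim$.
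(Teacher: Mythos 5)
Part (i) is fine (it is the standard fact, quoted from FDP/van der Put, that a connected affinoid of $\mathbf{P}^1_C$ has principal coordinate ring), and identifying $\varprojlim A_n^{**}$ with $1+{\goth m}_C$ via the constancy of bounded functions on $\Omega_{\rm Dr}$ is a workable variant. But the mechanism you propose for the two $\R^1\varprojlim$ statements --- surjectivity of the transition maps, Mittag-Leffler on the pieces of the suite exacte du cor.~\ref{unites}, and the claim that ``any element of $A_n^{**}$ extends'' because one only has to prescribe values on the newly added extremal components --- is wrong, and is even inconsistent with the limit you compute: if the restrictions $A_{n+1}^{**}\to A_n^{**}$ were surjective, the projection $\varprojlim A_n^{**}\to A_1^{**}$ would be onto, and $\varprojlim A_n^{**}=1+{\goth m}_C$ would force $A_1^{**}=1+{\goth m}_C$, which is absurd. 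Concretely, pick $x_0\in{\cal P}_n$ and a point $x\in U_{n+1}(C)\cap B^-(x_0,nv_p(\varpi))$; then $u=\frac{z-x}{z-x_0}=1+\frac{x_0-x}{z-x_0}$ lies in $A_n^{**}$, but any $w\in\O(U_{n+1})$ with $w|_{U_n}=c\,u$ ($c$ a constant) equals $c\,u$ by the identity principle on the connected affinoid $U_{n+1}$, hence vanishes at $x$ and is not a unit. So neither $A_{n+1}^{**}\to A_n^{**}$ nor $A_{n+1}^{\dual}\to A_n^{\dual}$ (even modulo constants) is surjective: you have the direction of extension backwards, since $U_{n+1}$ is the \emph{larger} affinoid and a function on $U_n$ in general does not extend to it at all.

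The true mechanism is the opposite of Mittag-Leffler surjectivity, and it is how the paper argues. Lemma~\ref{gab1}, iterated after normalizing at a fixed point $Q\in U_1$, shows that the image of $A_{n+k}^{**}$ in $A_n^{**}$ is squeezed, as $k\to\infty$, into arbitrarily small neighbourhoods of the constants (each descent step gains a factor $\varpi$ on $f/f(Q)-1$); this squeezing gives $\varprojlim A_n^{**}=1+{\goth m}_C$, and the vanishing of $\R^1\varprojlim A_n^{**}$ then follows by completeness: for any $(h_n)$ with $h_n\in A_n^{**}$, the product $\prod_{k\geq n}h_k/h_k(Q)$ converges in $A_n^{**}$, i.e.\ the shift map $(f_n)\mapsto(f_n/f_{n+1})$, whose cokernel computes $\R^1\varprojlim$, is surjective --- surjectivity of the shift map, not of the transition maps. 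Likewise, for (ii) the paper proves directly that $(f_n)\mapsto(f_n/f_{n+1})$ is surjective on $\prod_n A_n^{\dual}$, using the decomposition of prop.~\ref{unites0} (constants and the finitely many factors $(z-x)^{m_x}$ by hand, the principal-unit part by the convergent product above). If you prefer your d\'evissage through cor.~\ref{unites}, it can be repaired: the lattice pieces do have surjective transitions, but the vanishing of $\R^1\varprojlim$ for the pieces $A_n^{**}/(1+{\goth m}_C)$ must come from this convergence argument, not from any surjectivity.
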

       
       \begin{proof} (i) est une simple cons\'equence du fait que $U_{n, C}$ sont des affino\"{\i}des connexes de $\mathbf{P}^1_{C}$, donc les $A_n$ sont des anneaux principaux. 
       
       (ii) Cela d\'ecoule de la surjectivit\'e de l'application 
     $\prod_{n} A_n^\dual \to \prod_{n} A_n^\dual $ envoyant $(f_1,f_2,...)$ sur 
     $(f_1/f_2, f_2/f_3,...)$, qui elle-m\^eme est une cons\'equence de la description de 
     $A_n^\dual $ fournie par la proposition \ref{unites0}. 
     
(iii)  Fixons un point
$Q\in U_1$. Si $(f_n)_n\in \varprojlim A_n^{**}$.
on peut \'ecrire $f_n=a\cdot (1+g_n)$ avec 
$a=f_1(Q)=f_n(Q)$ et donc $g_n(Q)=0$, et $v_{U_{n}}(g_n)>0$ pour tout $n$. 
On en d\'eduit, en utilisant le lemme~\ref{gab1}, que 
$v_{U_{n}}(g_n)\geq v_p(\varpi)$ pour tout $n$. Une r\'ecurrence imm\'ediate montre que 
$v_{U_{n}}(g_n)\geq kv_p(\varpi)$ pour tous $n$ et $k$, d'o\`u 
$g_n=0$ pour tout $n$ et $\varprojlim A_n^{**}=1+{\goth m}_C$. Un argument similaire montre que si 
$f_n\in A_n^{**}$, le produit $\prod_{k\geq n} \frac{f_k}{f_k(Q)}$ converge dans 
$A_n^{**}$, ce qui permet de d\'emontrer la nullit\'e de $\R^1\varprojlim\, A_n^{**}$.
\end{proof}

\Subsection{La steinberg et ses variantes}
\subsubsection{Fonctions sur $\piqp(F)$}
Si $\Lambda$ est un anneau topologique,
on note ${\rm LC}(\piqp(F),\Lambda)$ (resp.~${\cal C}(\piqp(F),\Lambda)$) l'espace 
des fonctions localement constantes (resp.~continues) 
sur $\piqp(F)$ \`a valeurs dans $\Lambda$.
Comme $\piqp(F)$ est la limite projectives des ${\cal P}_n=\piqp(\O_F/\varpi^n)$ qui sont des ensembles finis,
on a ${\rm LC}(\piqp(F),\Lambda)=\varinjlim {\rm LC}({\cal P}_n,\Lambda)$, et
${\rm LC}(\piqp(F),\Lambda)$ est dense dans ${\cal C}(\piqp(F),\Lambda)$ si $\Lambda$ est un
anneau topologique.

Si $L$ est un sous-corps ferm\'e de $C$ contenant $F$, on note
${\rm LA}(\piqp(F),L)$ l'espace
des fonctions localement $F$-analytiques sur $\piqp(F)$ \`a valeurs dans $L$.

Comme $\piqp(F)$ est muni d'une action de $G$ (par homographies), cela
munit les espaces ci-dessus d'une action de $G$ qui est lisse, (i.e.~localement constante)
sur ${\rm LC}(\piqp(F),\Lambda)$, continue sur~${\cal C}(\piqp(F),\Lambda)$
et localement $F$-analytique sur ${\rm LA}(\piqp(F),L)$.  
On note ${\rm St}_\Lambda^{\rm lisse}$, ${\rm St}^{\rm cont}_\Lambda$
et ${\rm St}^{\rm an}_L$ les steinbergs lisse, continue et localement $F$-analytique,
quotients respectifs des espaces ci-dessus par les fonctions constantes: ce sont
des $\Lambda$ (resp. $L$)-repr\'esentations de $G$.

Si $L$ est un sous-corps ferm\'e de $C$, alors 
${\rm St}^{\rm cont}_L$ est un banach, et si $L$ contient~$F$, alors ${\rm St}^{\rm an}_L$
est une limite inductive compacte de banachs. De plus:

$\bullet$
${\rm St}^{\rm an}_L$
(resp.~${\rm St}^{\rm lisse}_L$)
est l'ensemble des vecteurs localement $F$-analytiques (resp.~lisses)
de ${\rm St}^{\rm cont}_L$,

$\bullet$ 
${\rm St}^{\rm cont}_L$ est le compl\'et\'e unitaire universel
de ${\rm St}^{\rm an}_L$ et de ${\rm St}^{\rm lisse}_L$,

$\bullet$
${\rm St}^{\rm lisse}_L$ est ferm\'ee dans ${\rm St}^{\rm an}_L$
et la topologie de ${\rm St}^{\rm lisse}_L$ induite par celle de ${\rm St}^{\rm an}_L$
est la topologie convexe la plus fine.

\subsubsection{Dualit\'e}
Si $\Lambda$ est un anneau topologique (la topologie discr\`ete n'est pas exclue), et si
$M$ est un $\Lambda$-module topologique, on note $M^\dual$ le $\Lambda$-module
${\rm Hom}_{\rm cont}(M,\Lambda)$, que l'on munit de la topologie faible.

Alors $({\rm St}^{\rm lisse}_\Lambda)^\dual$
est la limite projective des
${\rm Ker}:{\rm LC}({\cal P}_n,\Lambda)^\dual\to\Lambda$, chacun
des ${\rm LC}({\cal P}_n,\Lambda)^\dual$ \'etant muni de la topologie discr\`ete.

Si $L$ est un sous-corps ferm\'e de $C$, alors
$({\rm St}^{\rm cont}_L)^\dual$, $({\rm St}^{\rm an}_L)^\dual$ et $({\rm St}^{\rm lisse}_L)^\dual$
sont respectivement les espaces des mesures, distributions et distributions alg\'ebriques, de masse
totale $0$ sur $\piqp(F)$.  De plus:

$\bullet$
$({\rm St}^{\rm cont}_L)^\dual$ est un dual faible de banach
et $({\rm St}^{\rm an}_L)^\dual$ et $({\rm St}^{\rm lisse}_L)^\dual$ sont des fr\'echets\footnote{
Plus pr\'ecis\'ement, $({\rm St}^{\rm lisse}_L)^\dual$ est une limite projective d\'enombrable d'espaces
de dimension finie.},

$\bullet$ les ${\rm Dir}_a-{\rm Dir}_b$, pour $a,b\in F$, engendrent un sous-espace dense 
des trois espaces,

$\bullet$ La restriction $({\rm St}^{\rm an}_L)^\dual\to ({\rm St}^{\rm lisse}_L)^\dual$ est surjective,
et $({\rm St}^{\rm cont}_L)^\dual\to ({\rm St}^{\rm an}_L)^\dual$ est injective, d'image dense,
et identifie $({\rm St}^{\rm cont}_L)^\dual$ \`a l'espace des vecteurs $G$-born\'es 
de $({\rm St}^{\rm an}_L)^\dual$.

\Subsection{Cohomologie de de Rham du demi-plan de Drinfeld}
\begin{prop}\label{steinb1}
On a un isomorphisme naturel
    $$\O(\Omega_{\rm Dr})^\dual/C^\dual\simeq ({\rm St}^{\rm lisse}_{\mathbf{Z}})^\dual$$
envoyant $\frac{z-a}{z-b}$ sur ${\rm Dir}_a-{\rm Dir}_b$.
\end{prop}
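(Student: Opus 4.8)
Le plan est de passer à la limite projective sur l'épuisement $\Omega_{\rm Dr}=\bigcup_n U_n$ dans la suite exacte du corollaire \ref{unites}, en contrôlant les $\R^1\varprojlim$ grâce au lemme \ref{technique}. On note $A_n=\O(U_n)$. Comme $\Omega_{\rm Dr}$ est la réunion croissante des affinoïdes $U_n$, on a $\O(\Omega_{\rm Dr})=\varprojlim_n A_n$; un élément de $\O(\Omega_{\rm Dr})$ est inversible si et seulement si sa restriction à chaque $U_n$ l'est, donc $\O(\Omega_{\rm Dr})^\dual=\varprojlim_n A_n^\dual$, et de même $\O(\Omega_{\rm Dr})^{\dual\dual}=\varprojlim_n A_n^{**}$. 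Comme $C^\dual$ est un système projectif constant (d'où $\R^1\varprojlim C^\dual=0$), la suite exacte $0\to C^\dual\to A_n^\dual\to A_n^\dual/C^\dual\to 0$ donne $\O(\Omega_{\rm Dr})^\dual/C^\dual=\varprojlim_n(A_n^\dual/C^\dual)$.

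On applique ensuite le corollaire \ref{unites} à $U=U_n$, dont l'ensemble des centres des boules retirées est $\mathcal{P}_n$: en posant $B_n=A_n^{**}/(1+{\goth m}_C)$ et en notant ${\rm Div}^0(\mathcal{P}_n)=\kker\big({\rm LC}(\mathcal{P}_n,\Z)^\dual\to\Z\big)$ le groupe des diviseurs de degré $0$ sur $\mathcal{P}_n$, on obtient des suites exactes fonctorielles en $n$
$$0\to B_n\to A_n^\dual/C^\dual\to{\rm Div}^0(\mathcal{P}_n)\to 0,$$
l'image de $\frac{z-a}{z-b}$ dans ${\rm Div}^0(\mathcal{P}_n)$ étant ${\rm Dir}_{\bar a}-{\rm Dir}_{\bar b}$, où $\bar a,\bar b$ sont les réductions de $a,b$ modulo $\varpi^n$. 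Le point-clé est que $\varprojlim_n B_n=0$ et $\R^1\varprojlim_n B_n=0$: cela provient du lemme \ref{technique}(iii) (i.e.~$\varprojlim_n A_n^{**}=1+{\goth m}_C$ et $\R^1\varprojlim_n A_n^{**}=0$) appliqué à la suite exacte de systèmes projectifs $0\to 1+{\goth m}_C\to A_n^{**}\to B_n\to 0$, dont le terme de gauche est constant: sa suite exacte longue montre d'une part que $\varprojlim_n B_n$ s'injecte dans $\R^1\varprojlim(1+{\goth m}_C)=0$, d'autre part que $\R^1\varprojlim_n B_n$ est un quotient de $\R^1\varprojlim_n A_n^{**}=0$.

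Il ne reste qu'à passer à la limite dans la suite exacte ci-dessus: la suite exacte longue donne
$$0\to\varprojlim_n B_n\to\O(\Omega_{\rm Dr})^\dual/C^\dual\to\varprojlim_n{\rm Div}^0(\mathcal{P}_n)\to\R^1\varprojlim_n B_n,$$
et la nullité des termes extrêmes fournit un isomorphisme $\O(\Omega_{\rm Dr})^\dual/C^\dual\overset{\sim}{\to}\varprojlim_n{\rm Div}^0(\mathcal{P}_n)$. Or $\varprojlim_n{\rm Div}^0(\mathcal{P}_n)$ est exactement $({\rm St}^{\rm lisse}_{\mathbf Z})^\dual$ d'après la description rappelée avant l'énoncé, et l'isomorphisme envoie $\frac{z-a}{z-b}$ sur la famille compatible $({\rm Dir}_{\bar a}-{\rm Dir}_{\bar b})_n$, c'est-à-dire sur ${\rm Dir}_a-{\rm Dir}_b$. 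Le caractère naturel (et la $G$-équivariance) résulte de ce que toutes les étapes le sont: le système $\{U_n\}$ n'est pas $G$-stable, mais tout $gU_n$ est compris entre deux $U_m$, ce qui suffit pour la limite; on peut aussi le vérifier directement sur les générateurs $\frac{z-a}{z-b}$. Je m'attends à ce que l'unique point demandant un peu de soin soit cette gymnastique sur les $\R^1\varprojlim$ pour la partie unités principales, tout le contenu géométrique étant déjà contenu dans les propositions \ref{unites0}--\ref{unites} et le lemme \ref{technique}.
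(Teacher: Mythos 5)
Votre démonstration est correcte et suit essentiellement la même voie que celle de l'article: passage à la limite projective sur les $U_n$ dans la suite exacte du cor.~\ref{unites}, la nullité des termes parasites étant fournie par le lemme~\ref{technique}. Vous explicitez simplement la gymnastique sur les $\varprojlim$ et $\R^1\varprojlim$ (et l'identification de l'image de $\frac{z-a}{z-b}$ via les réductions modulo $\varpi^n$) que l'article laisse au lecteur.
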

\begin{proof}
Posons $$A=\O(\Omega_{\rm Dr})=\varprojlim_n A_n,$$
avec $A_n=\O(U_n)$. 
Le cor.\,\ref{unites} fournit des suites exactes
     $$0\to A_n^{**}/(1+{\goth m}_C)\to A_n^\dual/C^\dual
\to {\rm LC}( {\cal P}_n,\Z)^\dual\to \Z\to 0,$$
et donc, en passant \`a limite et en utilisant le lemme \ref{technique},
une suite exacte
$$0\to A^\dual/C^\dual\to {\rm LC}( \mathbf{P}^1(F),\Z)^\dual\to \Z\to 0,$$
qui fournit l'isomorphisme annonc\'e.
Que $\frac{z-a}{z-b}$ soit envoy\'e sur ${\rm Dir}_a-{\rm Dir}_b$ se voit facilement en revenant
aux d\'efinitions dans le cor.~\ref{unites}
\end{proof}

\begin{prop}\label{steinb2}
{\rm (i)}
L'application
$\mu\mapsto\int_{\piqp(F)}\frac{dz}{z-x}\,\mu(x)$ induit 
un isomorphisme $$({\rm St}^{\rm an}_C)^\dual\cong \Omega^1(\Omega_{\rm Dr}).$$

{\rm (ii)} L'application qui envoie $f\in A^\dual$ sur son symbole
$(f)_{\rm dR}=\frac{df}{f}$ en cohomologie de de Rham induit un isomorphisme
$$C\widehat\otimes(\O(\Omega_{\rm Dr})^\dual/C^\dual)\cong H^1_{\rm dR}(\Omega_{\rm Dr}).$$

{\rm (iii)} Le diagramme 
$$\xymatrix@C=.5cm@R=.6cm{&({\rm St}^{\rm an}_C)^\dual\ar[r]^-{\sim}\ar[dl]
&\Omega^1(\Omega_{{\rm Dr}})\ar[dr]\\
({\rm St}^{\rm lisse}_C)^\dual\ar@{=}[r]&C\widehat\otimes({\rm St}^{\rm lisse}_\Z)^\dual& 
C\widehat\otimes(\O(\Omega_{\rm Dr})^\dual/C^\dual)\ar[l]_-{\sim}
\ar[r]^-{\sim}&H^1_{\rm dR}(\Omega_{\rm Dr})}$$
est 
un diagramme commutatif 
de $G$-repr\'esentations.
\end{prop}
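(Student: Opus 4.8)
The plan is to deduce everything from part~(i), which is the analytic heart of the statement; once (i) is available, parts~(ii) and~(iii) follow formally from Proposition~\ref{steinb1} together with a residue computation. So I would first concentrate on the isomorphism $({\rm St}^{\rm an}_C)^\dual\cong\Omega^1(\Omega_{\rm Dr})$.

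For (i): the first step is well-definedness. For $z\in\Omega_{\rm Dr}$ the function $x\mapsto 1/(z-x)$ on $\piqp(F)$ is locally $F$-analytic, where near $\infty$ one uses the local parameter $z-x=z^{-1}-x^{-1}$; here the hypothesis that $\mu$ has total mass $0$ is exactly what makes the contribution near $\infty$ behave. The family of $1$-forms $\frac{dz}{z-x}$ varies analytically with $z$ over each affinoid $U_n$, and boundedness of $\mu$ on the compact opens of $\piqp(F)$ gives $\omega_\mu:=\int\frac{dz}{z-x}\,\mu(x)\in\varprojlim_n\Omega^1(U_n)=\Omega^1(\Omega_{\rm Dr})$; the map $\mu\mapsto\omega_\mu$ is continuous and, by the change-of-variables formula for homographies, $G$-equivariant. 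For bijectivity I would work at finite level: restricting $\omega_\mu$ to $U_n$ and expanding in partial fractions along the removed balls $B^-(x,nv_p(\varpi))$, $x\in{\cal P}_n$, the coefficient of $(z-x)^{-k}dz$ is, up to a nonzero scalar, the $(k-1)$-st moment of $\mu$ over the ball $r^{-1}(x)$ (the ball at $\infty$ being handled through the parameter $z^{-1}-x^{-1}$). Since a locally $F$-analytic distribution is determined by these moments, the map is injective; conversely every $1$-form on $U_n$ comes from such a tuple of coefficients, using the structure of $\O(U_n)$ recalled in~\S 2.7 of~\cite{FDP}. Passing to the limit over $n$ --- the transition maps being compatible and the relevant $\R^1\varprojlim$ vanishing by Lemma~\ref{technique} --- produces the inverse, and the open mapping theorem for Fréchet spaces promotes the continuous bijection to a topological isomorphism. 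The constraint ``total mass $0$'' reflects that a $1$-form on $\piqp$, hence on $\Omega_{\rm Dr}\subset\piqp$, has vanishing sum of residues.

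For (ii): since $\Omega_{\rm Dr}$ is a Stein curve, $H^1(\Omega_{\rm Dr},\O)=0$ and therefore $H^1_{\rm dR}(\Omega_{\rm Dr})=\Omega^1(\Omega_{\rm Dr})/d\,\O(\Omega_{\rm Dr})$. The key observation is that a form $f\,dz$ admits a primitive in $\O(\Omega_{\rm Dr})$ if and only if all its residues along the removed balls vanish, the only obstruction to integrating a partial fraction being the $(z-x)^{-1}$-term, whose formal primitive $\log(z-x)$ is not rigid analytic on the complement of $B^-(x,r)$. Under the isomorphism of (i), in which $\frac{dz}{z-x}$ corresponds to ${\rm Dir}_x$, taking residues along balls corresponds to restricting a distribution to the indicators of balls, i.e.\ to the (surjective) restriction map $({\rm St}^{\rm an}_C)^\dual\to({\rm St}^{\rm lisse}_C)^\dual$, whose kernel is then $d\,\O(\Omega_{\rm Dr})$. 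Hence
$$H^1_{\rm dR}(\Omega_{\rm Dr})\cong({\rm St}^{\rm lisse}_C)^\dual\cong C\wotimes({\rm St}^{\rm lisse}_{\bf Z})^\dual\cong C\wotimes(\O(\Omega_{\rm Dr})^\dual/C^\dual),$$
the middle isomorphism because $({\rm St}^{\rm lisse}_{\bf Z})^\dual$ is a countable projective limit of finite free $\bf Z$-modules, and the last one by Proposition~\ref{steinb1}. Finally, to see that this composite is induced by the symbol map $f\mapsto\frac{df}{f}$, it suffices to check it on the $\frac{z-a}{z-b}$, which topologically generate $\O(\Omega_{\rm Dr})^\dual/C^\dual$: there $d\log\frac{z-a}{z-b}=\frac{dz}{z-a}-\frac{dz}{z-b}$ corresponds to ${\rm Dir}_a-{\rm Dir}_b$, in agreement with Proposition~\ref{steinb1}.

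For (iii): the bottom identification is the one just built, so the left triangle and the right square commute on the dense subspace spanned by the ${\rm Dir}_a-{\rm Dir}_b$, hence everywhere by continuity; $G$-equivariance of the top horizontal map was noted in (i), and the remaining maps are $G$-equivariant by construction (Proposition~\ref{steinb1} and the symbol map being $G$-equivariant), so the whole diagram is one of $G$-representations. I expect the only genuine difficulty to lie in the bijectivity of (i): controlling the partial-fraction expansions on the $U_n$ uniformly in $n$, treating the point at infinity via the parameter $z^{-1}-x^{-1}$, and gluing the finite-level data into an actual locally analytic distribution using the Mittag--Leffler vanishing of Lemma~\ref{technique}; everything else is bookkeeping on top of Proposition~\ref{steinb1}.
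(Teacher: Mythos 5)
Your strategy is sound but genuinely different from what the paper does: the paper's proof of Proposition \ref{steinb2} is essentially a citation — the descriptions of $\Omega^1(\Omega_{\rm Dr})$ as $({\rm St}^{\rm an}_C)^\dual$ and of $H^1_{\rm dR}(\Omega_{\rm Dr})$ in terms of the smooth Steinberg are taken as standard from \cite{Morita} and the discussion following Theorem 5 of \cite{SS}, and only the compatibility with Proposition \ref{steinb1} (hence the diagram in (iii)) is added. You instead propose to reprove the Morita duality (i) from scratch via partial-fraction expansions on the $U_n$ and moments of distributions, and to deduce (ii) from (i) by a residue argument rather than quoting it. This buys a self-contained treatment and makes transparent why the smooth Steinberg appears as the "residue quotient" of the analytic one; the cost is that you are redoing a nontrivial classical theorem whose quantitative content your sketch only gestures at.

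Two places where the sketch does not yet close. First, in the surjectivity step of (i) you invoke "the relevant $\R^1\varprojlim$ vanishing by Lemma \ref{technique}"; that lemma concerns ${\rm Pic}(A_n)$, $A_n^\dual$ and $A_n^{**}$, i.e. multiplicative data, and gives nothing for assembling a locally analytic distribution from the partial-fraction coefficients. What is actually needed is (a) that for each $n$ the level-$n$ coefficients of $\omega|_{U_n}$ define a continuous functional on the Banach space of functions analytic on the level-$n$ balls — this does follow from convergence of the principal parts on $U_n$, but it must be said — and (b) the compatibility of these functionals as $n$ grows, coming from uniqueness of partial-fraction expansions and re-expansion of level-$(n+1)$ principal parts, with the chart at $\infty$ handled separately; this is precisely the content of Morita's theorem and is the part you must write out if you do not cite it. Second, in (ii) the criterion "a form has a primitive in $\O(\Omega_{\rm Dr})$ iff all its ball residues vanish" is a finite-level statement; on the Stein space you must glue primitives along the $U_n$, using that $H^0_{\rm dR}(U_n)=C$ with identity transition maps so that $H^1_{\rm dR}(\Omega_{\rm Dr})=\varprojlim_n H^1_{\rm dR}(U_n)$ — easy, but it is what legitimizes the identification with $({\rm St}^{\rm lisse}_C)^\dual$. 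With these points supplied, your residue derivation of (ii) and the density argument for (iii) on the classes of $\frac{z-a}{z-b}$ do recover the compatibility with Proposition \ref{steinb1} that the paper asserts.
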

\begin{proof}  Les descriptions de $\Omega^1(\Omega_{\rm Dr})$ et 
de $H^1_{\rm dR}(\Omega_{\rm Dr})$ en termes de repr\'esentations de Steinberg sont parfaitement standard, cf. 
 \cite{Morita} et la discussion suivant le th\'eor\`eme 5 dans \cite{SS}. Le reste se d\'eduit de la proposition 
 ci-dessus.
\end{proof}

\subsection{Cohomologie \'etale $p$-adique du demi-plan de Drinfeld}
Le r\'esultat suivant est d\^u \`a Drinfeld \cite{Drinfeld}. 
    
    \begin{theo}\label{omegaet}
      On a des isomorphismes de $G\times \mathcal{G}_F$-modules pour tout $k\geq 1$, 
       $$H^1_{\eet}(\Omega_{\rm Dr}, \Z/{p^k}(1))\simeq ({\rm St}^{\rm lisse}_{\Z/p^k})^\dual,$$
et donc aussi des isomorphismes de $G\times \mathcal{G}_F$-modules 
       $$H^1_{\eet}(\Omega_{\rm Dr}, \zp(1))\simeq ({\rm St}^{\rm cont}_{\zp})^\dual,
\quad H^1_{\eet}(\Omega_{\rm Dr}, \qp(1))\simeq ({\rm St}^{\rm cont}_{\Q_p})^\dual.$$
    
    \end{theo}
    
    \begin{proof}    
La troisi\`eme assertion se d\'eduit de la seconde en tensorisant
par $\Q_p$ et la seconde de la premi\`ere par passage \`a la limite:
$({\rm St}^{\rm lisse}_{\Z/p^k\Z})^\dual=({\rm St}^{\rm cont}_{\zp})^\dual/p^k$.
Il suffit donc de d\'emontrer la premi\`ere assertion.
On a   $$H^1_{\eet}(\Omega_{\rm Dr}, \Z/{p^k}(1))\simeq A^\dual /(A^\dual )^{p^k}.$$
En effet, la suite exacte de Kummer
     $ 0\to \Z/{p^k}(1)\to {\mathbb G}_m\stackrel{p^k}{\to} {\mathbb G}_m\to 0 $
fournit la suite exacte courte
     $$
     0\to A^\dual /(A^\dual )^{p^k}\to H^1_{\eet}(\Omega_{\rm Dr},\Z/{p^k}(1))\to H^1_{\eet}(\Omega_{\rm Dr},{\mathbb G}_m)_{p^k}\to 0
     $$
Il suffit donc de prouver que
$H^1_{\eet}(\Omega_{\rm Dr},{\mathbb G}_m)=0$, 
ce qui r\'esulte de la nullit\'e
de ${\rm Pic}(A_n)$ et $\R^1\varprojlim A^\dual _n$ (lemme~\ref{technique}).

Maintenant, $A^\dual /(A^\dual )^{p^k}\simeq (A^\dual /C^\dual )/(A^\dual /C^\dual )^{p^k}$. 
  On conclut en utilisant la prop.~\ref{steinb1} et en remarquant  que 
     $ ({\rm St}^{\rm lisse}_{\Z})^\dual /p^k \simeq ({\rm St}^{\rm lisse}_{\Z/p^k})^\dual $.
    \end{proof}

\Subsection{Cohomologie pro\'etale du demi-plan de Drinfeld}
\begin{theo}\label{omegaproet}
On a une suite exacte de $G\times \mathcal{G}_F$-modules 
$$0\to \mathcal{O}(\Omega_{\rm Dr})/C\to H^1_{\proet} (\Omega_{\rm Dr}, \qp(1))
\to ({\rm St}^{\rm lisse}_{\Q_p})^\dual\to 0,$$
   \end{theo}
   
   \begin{proof} Comme $\R^1\varprojlim H^0_{\proet}(U_{n},\qp(1))=0$,
   on obtient un isomorphisme
   $$H^1_{\proet} (\Omega_{\rm Dr}, \qp(1))\simeq \varprojlim H^1_{\proet} (U_{n}, \qp(1)).$$
  L'espace $U_{n}$ \'etant quasi-compact et s\'epar\'e, on a $$H^1_{\proet} (U_{n}, \qp(1))=H^1_{\eet}(U_{n}, \qp(1))=(\varprojlim H^1_{\eet}(U_{n}, \Z/{p^k}(1)))\otimes_{\zp} \qp={A_n^\dual }\widehat{\otimes} \qp,$$
   o\`u $\widehat{\otimes}$ d\'enote le produit tensoriel compl\'et\'e.
   
Tensoriser la suite exacte du cor.\,\ref{unites} par $\Q_p$ fournit 
une suite exacte 
$$0\to (A_n^{**}/(1+{\goth m}_C))\widehat{\otimes} \qp\to 
(A_n^\dual /C^\dual)\widehat{\otimes}\qp 
\to {\rm LC}( {\cal P}_n, \qp)^\dual \to \Q_p\to 0.$$
Le th\'eor\`eme s'en d\'eduit en
passant \`a la limite projective, en utilisant l'isomorphisme
$$({\rm St}^{\rm lisse}_{\Q_p})^\dual=\varprojlim_n{\rm Ker}\big({\rm LC}( {\cal P}_n, \qp)^\dual \to \Q_p\big)\,,$$
la nullit\'e de $\R^1\varprojlim\, A_n$ (qui d\'ecoule du fait que $(A_n)_{n}$ d\'efinit 
une alg\`ebre de Fr\'echet-Stein) et l'isomorphisme de prosyst\`emes
$$((A_n^{**}/(1+{\goth m}_C))\widehat{\otimes} \qp)_{n}\simeq (A_n/C)_{n}
$$
que nous allons \'etablir.
Pour cela,
      fixons un point $Q\in U_1$ et notons $A_{n,Q}^+$ (resp.~$A_{n,Q}^{\dual\dual}$)
l'ensemble des 
      $g\in A_n^+$ (resp.~$f\in A_{n}^{\dual\dual}$) telles que 
$g(Q)=0$ (resp.~$f(Q)=1$). 
On d\'eduit du lemme\,\ref{gab1} que, si $kv_p(\varpi)\geq 1$, alors:

\quad $\bullet$ $f\in A_{n,Q}^{\dual\dual}$ implique $f\in 1+p\varpi A^+_{n-k-1,Q}$
et donc $\log f\in A^+_{n-k-1,Q}$,

\quad $\bullet$ $g\in A_{n,Q}^+$ implique $g\in p\varpi A^+_{n-k-1,Q}$
et donc $\exp f\in A_{n-k-1,Q}^{\dual\dual}$.

D'o\`u des isomorphismes de pro-objets $(A_{n,Q}^{**})_{n}
\simeq (A_{n,Q}^+)_{n}$ et 
$$(({A_n^{**}/(1+{\goth m}_C))}\widehat{\otimes} \qp)_{n} \simeq 
      (A_{n,Q}^{\dual\dual}\widehat\otimes \qp)_{n}\simeq (A_{n,Q}^+\widehat\otimes \qp)_{n}\simeq (A_n/C)_{n},$$
comme annonc\'e.
\end{proof}
 \begin{coro}
La suite exacte du th\'eor\`eme~\ref{omegaproet} s'inscrit dans le
diagramme commutatif suivant de
$G\times \mathcal{G}_F $-repr\'esentations:
 $$
 \xymatrix@R=.6cm@C=.8cm{
 0\ar[r] & \mathcal{O}(\Omega_{\rm Dr}) /C\ar[r]^-{\exp}\ar@{=}[d] & H^1_{\proet}(\Omega_{\rm Dr},\qp(1))\ar[d]^{\dlog}\ar[r] & ({\rm St}^{\rm lisse}_{\Q_p})^\dual\ar[r]\ar[d]^{\iota} & 0\\
0\ar[r] & \mathcal{O}(\Omega_{\rm Dr} )/C\ar[r]^-d  & \Omega^1 (\Omega_{\rm Dr})\ar[r] & 
({\rm St}^{\rm lisse}_{C})^\dual\ar[r] & 0\\ } $$
 \end{coro}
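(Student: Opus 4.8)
The plan is to take the top row from Theorem~\ref{omegaproet}, recognise the bottom row as the global de Rham complex of $\Omega_{\rm Dr}$, construct the two vertical arrows $\dlog$ and $\iota$ by hand, and then check commutativity of the two squares on an explicit dense family of classes.

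For the bottom row: $\Omega_{\rm Dr}$ is a connected Stein curve, so $d\colon\mathcal O(\Omega_{\rm Dr})/C\to\Omega^1(\Omega_{\rm Dr})$ is injective and its cokernel is $H^1_{\rm dR}(\Omega_{\rm Dr})$, which Proposition~\ref{steinb2} identifies with $({\rm St}^{\rm lisse}_C)^\dual$ via the chain $H^1_{\rm dR}(\Omega_{\rm Dr})\cong C\widehat\otimes(\mathcal O(\Omega_{\rm Dr})^\dual/C^\dual)\cong C\widehat\otimes({\rm St}^{\rm lisse}_{\Z})^\dual=({\rm St}^{\rm lisse}_C)^\dual$; I would carry this identification along, so that the last map of the bottom row sends the de Rham symbol $\frac{df}{f}$ of $f\in\mathcal O(\Omega_{\rm Dr})^\dual$ to the class of $f$, i.e.\ $\frac{dz}{z-a}-\frac{dz}{z-b}\mapsto{\rm Dir}_a-{\rm Dir}_b$. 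The arrow $\iota$ I would take to be the tautological base-change map $({\rm St}^{\rm lisse}_{\Q_p})^\dual\to({\rm St}^{\rm lisse}_C)^\dual$ induced by $\Q_p\hookrightarrow C$ on algebraic distributions of total mass $0$ on $\piqp(F)$; on generators it is ${\rm Dir}_a-{\rm Dir}_b\mapsto{\rm Dir}_a-{\rm Dir}_b$.

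The arrow $\dlog$ I would build at finite level and then pass to the limit: on $U_n$ the Kummer sequence together with $\Pic(A_n)=0$ gives $H^1_{\proet}(U_n,\Q_p(1))=A_n^\dual\widehat\otimes\Q_p$, exactly as in the proof of Theorem~\ref{omegaproet}, and the group homomorphism $A_n^\dual\to\Omega^1(U_n)$, $f\mapsto\frac{df}{f}$, kills $C^\dual$ and extends by continuity and $\Q_p$-linearity to $A_n^\dual\widehat\otimes\Q_p\to\Omega^1(U_n)$; these are compatible with the inclusions $U_n\hookrightarrow U_{n+1}$, so $\varprojlim_n$ produces $\dlog\colon H^1_{\proet}(\Omega_{\rm Dr},\Q_p(1))\to\Omega^1(\Omega_{\rm Dr})$ (equivalently, $\dlog$ is the map to $H^1_{\proet}(\Omega_{\rm Dr},\widehat{\mathcal O}(1))=\Omega^1(\Omega_{\rm Dr})$ coming from $\Q_p(1)\hookrightarrow\widehat{\mathcal O}(1)$ and the Stein computation of $H^1_{\proet}(-,\widehat{\mathcal O})$). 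For the left square I would check $\dlog\circ\exp=d$: recalling from the proof of Theorem~\ref{omegaproet} that $\exp$ is, at finite level, the pro-system isomorphism $(A_n/C)_n\overset{\sim}{\to}((A_n^{**}/(1+{\goth m}_C))\widehat\otimes\Q_p)_n$ given on representatives by $g\mapsto\exp(g)$, one gets $\dlog(\exp g)=\frac{d(\exp g)}{\exp g}=dg$. For the right square, note that the top arrow is surjective with kernel the image of $\exp$, and $\dlog\circ\exp=d$ lands in $d(\mathcal O(\Omega_{\rm Dr})/C)$, which maps to $0$ in $({\rm St}^{\rm lisse}_C)^\dual$; hence $\dlog$ followed by the bottom surjection factors through $({\rm St}^{\rm lisse}_{\Q_p})^\dual$, and on the dense family of classes of $\frac{z-x}{z-y}$ (cf.~Corollary~\ref{unites}) the induced map sends ${\rm Dir}_x-{\rm Dir}_y$ to the image of $\frac{dz}{z-x}-\frac{dz}{z-y}$, i.e.\ to ${\rm Dir}_x-{\rm Dir}_y=\iota({\rm Dir}_x-{\rm Dir}_y)$, so by continuity it equals $\iota$. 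Finally, $G\times\mathcal G_F$-equivariance of the verticals is immediate from the functoriality of $\dlog$ and of the base-change map $\iota$, the horizontal arrows being equivariant by Theorem~\ref{omegaproet} and Proposition~\ref{steinb2}.

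The one genuinely delicate point is bookkeeping rather than ideas: one must verify that the finite-level maps $A_n^\dual\widehat\otimes\Q_p\to\Omega^1(U_n)$ are well defined and continuous after $p$-adic completion, compatible with the transition maps, and compatible with the identifications used in the proof of Theorem~\ref{omegaproet}; equivalently, that the sheaf-theoretic $p$-adic $\dlog$ on pro-\'etale $H^1$ is computed by $f\mapsto\frac{df}{f}$ on units under the Kummer identification. This is standard, and once granted everything reduces to the diagram chase on the $\frac{z-x}{z-y}$ above.
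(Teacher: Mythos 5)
Votre argument est correct et suit essentiellement la preuve du texte : $\iota$ est l'inclusion évidente, $\dlog$ est défini comme $\varprojlim_n$ des applications $f\mapsto\frac{df}{f}$ via l'identification de Kummer $H^1_{\proet}(U_n,\Q_p(1))=A_n^\dual\widehat\otimes\Q_p$, le carré de gauche se vérifie par $\dlog\circ\exp=d$, et celui de droite sur les symboles de $\frac{z-a}{z-b}$, qui s'envoient sur $\frac{df}{f}$ et sur ${\rm Dir}_a-{\rm Dir}_b$, la densité de ces classes dans $({\rm St}^{\rm lisse}_{\Q_p})^\dual$ concluant. Le supplément de soin que vous apportez (factorisation par le quotient, continuité, compatibilité des identifications) ne fait qu'expliciter ce que le texte laisse implicite.
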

 \begin{proof}
$\iota$ est l'inclusion \'evidente.
 Pour d\'efinir l'application $\dlog$, utilisons les isomorphismes
 \begin{align*}
 H^1_{\proet}(\Omega_{\rm Dr},\qp(1))=\varprojlim_n  A_n^\dual \widehat{\otimes}_{\Z_p}\qp,\quad
 \Omega^1(\Omega_{\rm Dr})=\varprojlim_n\Omega^1(U_{n})
 \end{align*}
et posons $\dlog=\varprojlim_n \dlog$. 
Le carr\'e de gauche commute de mani\`ere \'evidente; la commutativit\'e de celui de droite
r\'esulte de ce que la classe de $f=\frac{z-a}{z-b}$ dans $H^1_{\proet}$
a pour image $\frac{df}{f}$ dans $\Omega^1$ et ${\rm Dir}_a-{\rm Dir}_b$ dans
$({\rm St}^{\rm lisse}_{\Q_p})^\dual$ et dans $({\rm St}^{\rm lisse}_C)^\dual$ identifi\'e
\`a~$H^1_{\rm dR}$.
 \end{proof}     
\begin{rema}
Comme nous le verrons, le terme $({\rm St}^{\rm lisse}_{\Q_p})^\dual$ s'interpr\`ete
comme $(\bst^+\otimes H^1_{\rm HK}(\Omega_{\rm Dr}))^{N=0,\varphi=p}$ et $\iota$ comme
$\theta\otimes\iota_{\rm HK}$, o\`u 
$H^1_{\rm HK}$ est la cohomologie de Hyodo-Kato,
$\iota_{\rm HK}:C\widehat\otimes_{\breve\Q_p}H^1_{\rm HK}(\Omega_{\rm Dr})
\overset{\sim}\to H^1_{\rm dR}(\Omega_{\rm Dr})$ est l'isomorphisme de Hyodo-Kato
et $\theta:\bst^+\to C$ est l'application habituelle.
\end{rema}

\section{Cohomologie \'etale $p$-adique et correspondance de Langlands locale}\label{DEMI2}
Nous allons admettre le th.\,\ref{diagfond} (que nous d\'emontrerons dans le chap.~\ref{GAB4}, cf.~\S\,\ref{GAB8})
et en d\'eduire un certain nombre de cons\'equences, dont le th.\,\ref{intro1}.
\subsection{La cohomologie pro\'etale $p$-adique de ${\cal M}_\infty$}\label{Emer1}
Si $M$
est un $(\varphi,N,\G_F)$-module supercuspidal, de pente~$\frac{1}{2}$ et rang $2$ sur
$L\otimes\Q_p^{\rm nr}$,
posons\footnote{Si $M$ est de pente $\frac{1}{2}$, le caract\`ere central de ${\rm JL}(M)$
est unitaire et l'action de $W_F$ sur ${\rm Hom}({\rm JL}(M),X)$,
pour $X=L\otimes \O({\cal M}_\infty), L\otimes\Omega^1({\cal M}_\infty)$, s'\'etend en une action de $\G_F$.}:
\begin{align*}
\O[M]=&\ {\rm Hom}_{L[\check G]}
\big({\rm JL}(M), L\otimes_{\Q_p}\O({{\cal M}_\infty})\big)^{\G_F}\\
\Omega^1[M]=&\ {\rm Hom}_{L[\check G]}
\big({\rm JL}(M), L\otimes_{\Q_p}\Omega^1({{\cal M}_\infty})\big)^{\G_F}\\
H^1_{\proet}[M]=&\ {\rm Hom}_{L[\check G]}
\big({\rm JL}(M), H^1_{\proet}({\cal M}_\infty,L(1))\big)
\end{align*}
(Les modules $\O[M]$ et $\Omega^1[M]$ sont des $L\otimes_{\Q_p} F$-modules
tandis que $H^1_{\proet}[M]$ est un $L$-module.)

Il r\'esulte du th.\,\ref{diagfond} que l'on
a le r\'esultat suivant.
\begin{coro}\label{Diag10}
Si $M$ est un $L$-$(\varphi,N,\G_F)$-module supercuspidal, de pente~$\frac{1}{2}$ et rang $2$,
on a le diagramme commutatif suivant
de $(G\times \G_F)$-fr\'echets,
$$\xymatrix@R=.5cm@C=.5cm{
0\ar[r] & C\widehat\otimes_F\O[M]\ar[r]\ar@{=}[d]& H^1_{\proet}[M]\ar[r]\ar[d]&
(\bcris^+\otimes_{\Q_p^{\rm nr}} M)^{\varphi=p}\widehat\otimes_L {\rm LL}(M)^\dual\ar[r]\ar[d]&0\\
0\ar[r] &C\widehat\otimes_F\O[M]\ar[r] &C\widehat\otimes_F\Omega^1[M]\ar[r]&
(C\otimes_{\Q_p^{\rm nr}} M) \widehat\otimes_L {\rm LL}(M)^\dual\ar[r]&0
}$$
dans lequel les lignes sont exactes, et les fl\`eches verticales
sont injectives et ont une image ferm\'ee. 
\end{coro}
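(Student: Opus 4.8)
Le plan est le suivant : d\'eduire le corollaire du th.\,\ref{diagfond} en r\'e\'ecrivant chacun des termes de son diagramme dans la situation de pente~$\frac12$, puis en contr\^olant l'action de $\G_F$. Trois identifications de termes sont n\'ecessaires. Pour le terme de droite de la ligne du haut, on utilise que, $M$ \'etant supercuspidal, $N=0$ sur $M$ (il est en particulier potentiellement cristallin)~; comme $N(b\otimes m)=N(b)\otimes m$ sur $\bst^+\otimes_{\Q_p^{\rm nr}}M$, on obtient $(\bst^+\otimes_{\Q_p^{\rm nr}}M)^{N=0}=(\bst^+)^{N=0}\otimes_{\Q_p^{\rm nr}}M=\bcris^+\otimes_{\Q_p^{\rm nr}}M$, d'o\`u $X_{\rm st}^+(M)=(\bcris^+\otimes_{\Q_p^{\rm nr}}M)^{\varphi=p}$~; cette identification est $\G_F$-\'equivariante, $\G_F$ agissant diagonalement sur $\bcris^+\otimes_{\Q_p^{\rm nr}}M$ en commutant \`a $\varphi$, et trivialement sur ${\rm LL}(M)$. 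Pour le terme de droite de la ligne du bas, on utilise l'identification $M_{\rm dR}\otimes_F C=M\otimes_{\Q_p^{\rm nr}}C$ du th.\,\ref{gabriel}, qui fournit du m\^eme coup la compatibilit\'e de la fl\`eche $\theta$ du th.\,\ref{diagfond} avec l'application $\theta:\bcris^+\to C$ sur les p\'eriodes.

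Le point d\'elicat, et le principal obstacle, est la troisi\`eme identification, celle des termes de gauche : $\O({\cal M}_\infty)[M]\cong C\widehat\otimes_F\O[M]$ et $\Omega^1({\cal M}_\infty)[M]\cong C\widehat\otimes_F\Omega^1[M]$, compatiblement aux actions galoisiennes. L'espace $\O({\cal M}_\infty)[M]$ est un $C$-fr\'echet portant l'action semi-lin\'eaire de ${\rm W}_F$ issue de la donn\'ee de descente de Drinfeld~; on v\'erifie d'abord que cette action se prolonge en une action semi-lin\'eaire continue de $\G_F$ --- autrement dit, que le frobenius sous-jacent est de pente~$0$ ---, ce qui r\'esulte de la pente~$\frac12$ de $M$ via l'unitarit\'e du caract\`ere central de ${\rm JL}(M)$ (cf.~la note de bas de page au d\'ebut du \S\,\ref{Emer1}), et de m\^eme pour $\Omega^1$. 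On descend ensuite le long de $C/F$ : puisque $\O({\cal M}_\infty)$ et $\Omega^1({\cal M}_\infty)$ proviennent par extension des scalaires des mod\`eles $\breve{\cal M}_n$ sur $\breve F$ \`a frobenius contr\^ol\'e, et puisque $C^{\G_F}=F$ et $H^1_{\rm cont}(\G_F,C)=0$, on obtient $\O({\cal M}_\infty)[M]=C\widehat\otimes_F(\O({\cal M}_\infty)[M])^{\G_F}=C\widehat\otimes_F\O[M]$, et de m\^eme pour $\Omega^1[M]$. Enfin, $H^1_{\proet}[M]$ \'etant, d'apr\`es le th.\,\ref{diagfond}, extension ${\rm W}_F$-\'equivariante du terme de droite (d\'esormais muni d'une action de $\G_F$) par $C\widehat\otimes_F\O[M]$ (lui aussi muni d'une action de $\G_F$), l'action de ${\rm W}_F$ sur $H^1_{\proet}[M]$ se prolonge \`a $\G_F$ de mani\`ere \`a rendre le diagramme du corollaire $\G_F$-\'equivariant.

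Il reste \`a traiter l'exactitude, la commutativit\'e et les propri\'et\'es topologiques, toutes essentiellement formelles. L'exactitude des deux lignes et la commutativit\'e des carr\'es sont h\'erit\'ees du th.\,\ref{diagfond}. La fl\`eche verticale de gauche est l'identit\'e~; par le lemme du serpent appliqu\'e au diagramme du th.\,\ref{diagfond} (dont les deux colonnes de gauche co\"{\i}ncident), l'injectivit\'e des fl\`eches verticales du corollaire se ram\`ene alors \`a celle de $\theta:X_{\rm st}^+(M)\to C\otimes_{\Q_p^{\rm nr}}M$. Or $\ker(\theta:\bcris^+\to C)=t\bcris^+$ et $\varphi(t)=pt$ donnent $\ker\theta=t\cdot(\bcris^+\otimes_{\Q_p^{\rm nr}}M)^{\varphi=1}$, espace nul car $M$ est de pente $\frac12>0$ (aucune pente strictement n\'egative n'appara\^{\i}t dans $\bcris^+$). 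Le caract\`ere strict de $\theta$, donc le fait que les images des fl\`eches verticales soient ferm\'ees, r\'esulte enfin de la th\'eorie des espaces de Banach--Colmez : $X_{\rm st}^+(M)$ et $C\otimes_{\Q_p^{\rm nr}}M$ sont, \`a l'action de ${\rm LL}(M)^\dual$ pr\`es, de dimension finie au sens de Banach--Colmez, et les morphismes entre de tels espaces sont automatiquement stricts.
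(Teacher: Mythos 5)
Votre route est celle que le texte a en vue: le papier traite ce corollaire comme une cons\'equence imm\'ediate du th.\,\ref{diagfond}, avec exactement les ingr\'edients que vous utilisez ($N=0$ pour remplacer $\bst^+$ par $\bcris^+$, la pente $\frac12$ pour la structure galoisienne, l'injectivit\'e de $\theta$ via $(\bcris^+\otimes_{\Q_p^{\rm nr}}M)^{\varphi=1}=0$ comme dans la rem.\,\ref{ajout1}, la strictесse de $\theta$ via les Espaces de Banach de dimension finie, puis un transfert formel au morphisme vertical du milieu). Deux de vos justifications ne fonctionnent toutefois pas telles quelles. D'abord, le prolongement \`a $\G_F$ de l'action de ${\rm W}_F$ sur $H^1_{\proet}[M]$ ne r\'esulte pas du fait que c'est une extension de deux $\G_F$-modules par des fl\`eches ${\rm W}_F$-\'equivariantes: une telle extension n'h\'erite d'aucune action de $\G_F$ en g\'en\'eral. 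Ensuite, $H^1_{\rm cont}(\G_F,C)=0$ est un \'enonc\'e de descente pour les $C$-repr\'esentations semi-lin\'eaires de dimension finie et ne donne pas, \`a lui seul, $\O({\cal M}_\infty)[M]=C\widehat\otimes_F\O[M]$ pour ces fr\'echets de dimension infinie; de m\^eme, la descente de $\breve F$ \`a $F$ d'un frobenius semi-lin\'eaire \og de pente $0$\fg{} sur un fr\'echet n'est pas automatique et demande un argument.

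Les deux points se r\`eglent par un seul et m\^eme m\'ecanisme, celui du \S\,\ref{notasup}, que vous n'invoquez qu'\`a moiti\'e: comme $M$ est de pente $\frac12$, le scalaire par lequel $\varpi$ agit sur ${\rm JL}(M)$ est une unit\'e, donc le caract\`ere non ramifi\'e ${\rm nr}_\alpha$ correspondant est unitaire et se prolonge contin\^ument \`a $\G_F$; en tordant par ce caract\`ere, ${\rm Hom}_{L[\check G]}({\rm JL}(M),-)$ appliqu\'e \`a $\O({\cal M}_\infty)$, $\Omega^1({\cal M}_\infty)$ et $H^1_{\proet}({\cal M}_\infty,L(1))$ s'identifie aux espaces analogues pour $\mv_\infty$, qui est d\'efini sur $F$. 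Cela fournit simultan\'ement l'action naturelle de $\G_F$ sur les trois colonnes (c'est le contenu de la note de bas de page que vous citez) et la $F$-structure d'o\`u l'on tire $\O({\cal M}_\infty)[M]=C\widehat\otimes_F\O[M]$ et son analogue pour $\Omega^1$, les invariants se calculant par $(C\widehat\otimes_F W)^{\G_F}=W$. Avec ces corrections, votre argument est complet; le reste (identification de $X_{\rm st}^+(M)$, r\'eduction par chasse au diagramme de l'injectivit\'e et de la fermeture de l'image \`a celles de $\theta$, passage au produit tensoriel compl\'et\'e par ${\rm LL}(M)^\dual=\varprojlim$ de duaux de dimension finie) est correct et au niveau de d\'etail du papier, \`a ceci pr\`es qu'il vaut mieux \'etablir ${\rm Ker}\,\theta=t\cdot(\bcris^+\otimes_{\Q_p^{\rm nr}}M)^{\varphi=1}$ directement sur les espaces propres de $\varphi$ (suite exacte fondamentale) plut\^ot que via l'\'egalit\'e d\'elicate ${\rm Ker}(\theta\colon\bcris^+\to C)=t\bcris^+$.
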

(La ligne du bas est constitu\'ee de $L\otimes_{\Q_p}F$-modules, tandis que les termes
de la ligne du haut ne sont que des $L$-modules.  Notons que $N=0$ sur $M$, ce qui permet de remplacer $\bst$ par $\bcris$.)

\subsubsection{Consid\'erations topologiques}\label{affin3.1}
Si $Y$ est un affino\"{\i}de  de dimension~$1$ sur $C$ et si
$M=\Z/p^n,\Z_p,\Q_p$, on dispose des groupes de cohomologie \'etale
$H^i_{\eet}(Y,M(1))$, pour $i=0,1$, reli\'es par:
$$H^i_{\eet}(Y,\Q_p(1))=\Q_p\otimes H^i_{\eet}(Y,\Z_p(1)),
\quad H^i_{\eet}(Y,\Z_p(1))=\varprojlim H^i_{\eet}(Y,\Z/p^n(1)).$$

La suite exacte $0\to\Z/p^n(1)\to{\bf G}_m\to{\bf G}_m\to 0$ induit
la suite exacte de Kummer:
$$0\to (\O(Y)^\dual/C^\dual)/(\O(Y)^\dual/C^\dual)^{p^n}\to H^1_{\eet}(Y,\Z/p^n(1))\to {\rm Pic}(Y)[p^n]\to 0$$
et, si $f\in\O(Y)^\dual$, on note $(f)_{{\eet},n}$ son image
dans $H^1_{\eet}(Y,\Z/p^n(1))$ induite par la fl\`eche ci-dessus.

En prenant une limite projective sur $n$, puis en inversant $p$,
on obtient les suites exactes:
\begin{align*}
0\to (\O(Y)^\dual/C^\dual)^\wedge\to H^1_{\eet}(Y,\Z_p(1))\to T_p({\rm Pic}(Y))\to 0,\\
0\to \Q_p\widehat\otimes (\O(Y)^\dual/C^\dual)^\wedge\to H^1_{\eet}(Y,\Q_p(1))\to V_p({\rm Pic}(Y))\to 0,
\end{align*}
o\`u $(\O(Y)^\dual/C^\dual)^\wedge$ d\'esigne le s\'epar\'e compl\'et\'e pour la topologie $p$-adique.
Si $f\in \O(Y)^\dual$, on note $$(f)_{\eet}\in H^1_{\eet}(Y,\Z_p(1))$$ son {\it symbole en cohomologie \'etale}:
c'est l'image de $f$ par la fl\`eche ci-dessus (en composant avec l'application naturelle
de $\O(Y)^\dual$ dans $(\O(Y)^\dual/C^\dual)^\wedge$).
\begin{rema}\label{torsion}
{\rm (i)} Il r\'esulte de la suite exacte ci-dessus que $H^1_{\eet}(Y,\Z_p(1))$ est sans torsion,
et donc que $H^1_{\eet}(Y,\Q_p(1))$ est un banach.

{\rm (ii)}  
Si $Y$ est une courbe Stein,
on peut \'ecrire $Y$ comme la r\'eunion croissante
d'affino\"{\i}des $Y_n$.
Alors $H^1_{\proet}(Y,\Q_p(1))=\varprojlim_n H^1_{\eet}(Y_n,\Q_p(1))$ (on a
$H^1_{\proet}(Y_n,\Q_p(1))=H^1_{\eet}(Y_n,\Q_p(1))$ puisque $Y_n$ est un affino\"{\i}de);
la limite ne d\'epend pas du choix des $Y_n$ car deux tels syst\`emes sont cofinaux.
Comme les $H^1_{\eet}(Y_n,\Q_p(1))$ sont des banachs,
$H^1_{\proet}(Y,\Q_p(1))$ est naturellement un fr\'echet.
\end{rema}

\subsubsection{Le $\G_F$-module $H^1_{\proet}({\cal M}_\infty)$}
Nous allons calculer la multiplicit\'e d'une repr\'esentation
$V$ de $\G_F$ dans $H^1_{\proet}({\cal M}_\infty)$ comme repr\'esentation de $G$.
Notons que l'on ne peut pas esp\'erer que le r\'esultat ait toujours un lien
avec la correspondance de Langlands locale $p$-adique car
le sous-espace $C\widehat\otimes_F\O({\cal M}_\infty)$ contient des repr\'esentations
de $\G_F$ de dimension arbitraire (il suffit qu'un des poids de $V$ soit $0$
pour que $V$ apparaisse dans $C$), mais la prop.\,\ref{LL} et la rem.\,\ref{hope}
montrent que, si $V$ a la bonne forme, cette multiplicit\'e est celle esp\'er\'ee.  

Soit $M$ un $L$-$(\varphi,N,\G_F)$-module supercuspidal de rang $2$ et de pente~$\frac{1}{2}$,
et soient
$$M_{\rm dR}=(C\otimes_{\Q_p^{\rm nr}} M)^{\G_{F}}
\quad{\rm et}\quad
X_{\rm st}^+(M)=(\bcris^+\otimes_{\Q_p^{\rm nr}}M)^{\varphi=p}.$$
Alors $M_{\rm dR}$ est un $L\otimes F$-module de rang~$2$ et
$X_{\rm st}^+(M)$ est un $L$-Espace de Banach~\cite{CB,Cdr} de $L$-Dimension $([L:\Q_p],2)$.

Si $V$ est une $L$-repr\'esentation de $\G_F$, posons
$$H_M(V)={\rm Hom}_{L[\G_{F}]}(V,X_{\rm st}^+(M)),\quad
H_C(V)={\rm Hom}_{L[\G_{F}]}(V,L\otimes_{\Q_p}C).$$
Alors $H_M(V)$ est un $L$-espace de dimension finie, tandis que
$H_C(V)=(C\otimes_{\Q_p}V^\dual)^{\G_F}$ est un $L\otimes F$-module de type fini.

\begin{rema}\label{ajout1} 
Comme $M$ est de pente~$>0$, on a $(\bcris^+\otimes_{\Q_p^{\rm nr}} M)^{\varphi=1}=0$,
et
le morphisme naturel $X_{\rm st}^+(M)\to M_{\rm dR}\otimes_F C$ (induit par $\theta: X_{\rm st}^+(M)\to 
M\otimes_{\Q_p^{\rm nr}} C\simeq 
M_{\rm dR}\otimes_F C$) est injectif;
il induit donc un morphisme injectif
$$\iota: H_M(V)\to M_{\rm dR}\otimes_{L\otimes F} H_C(V),$$
ce qui permet d'identifier 
$H_M(V)$ \`a un sous-espace de $M_{\rm dR}\otimes_{L\otimes F} H_C(V)$.
\end{rema}

\begin{prop}\label{ajout2}
On a le diagramme commutatif suivant de $G$-fr\'echets
{\small
$$\xymatrix@R=.3cm@C=.3cm{
0\ar[r] & H_C(V)\bigotimes\limits_{L\otimes F}\O[M]\ar[r]\ar@{=}[d]& 
{\rm Hom}_{L[\G_F]}(V,H^1_{\rm proet}[M])\ar[r]\ar[d]&
H_M(V)\bigotimes\limits_L {\rm LL}(M)^\dual\ar[r]\ar[d]&0\\
0\ar[r] &H_C(V)\bigotimes\limits_{L\otimes F}\O[M]\ar[r] &H_C(V)\bigotimes\limits_{L\otimes F}\Omega^1[M]\ar[r]&
H_C(V)\bigotimes\limits_{L\otimes F} M_{\rm dR}\bigotimes\limits_L {\rm LL}(M)^\dual\ar[r]&0
}$$ 
}
dans lequel les lignes sont exactes et les fl\`eches verticales sont injectives, d'image ferm\'ee.
\end{prop}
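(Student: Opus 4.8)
The plan is to apply the functor $\Hom_{L[\G_F]}(V,-)$ to the commutative diagram of Corollary~\ref{Diag10} and to identify the resulting terms, the point being that $\G_F$ acts on every object of that diagram only through its ``coefficient'' factor: it acts on $C$ and on $X_{\rm st}^+(M)=(\bcris^+\otimes_{\Qp^{\rm nr}}M)^{\varphi=p}$, but trivially on $\O[M]$, $\Omega^1[M]$, $M_{\rm dR}$ and ${\rm LL}(M)^\dual$. First I would record an elementary base change statement: if $N$ is a nuclear Fréchet $L\otimes_{\Qp}F$-module with trivial $\G_F$-action, then the natural map
$$H_C(V)\otimes_{L\otimes F}N\longrightarrow \Hom_{L[\G_F]}(V,C\widehat\otimes_F N)$$
is a topological isomorphism, and likewise $\Hom_{L[\G_F]}(V,X\widehat\otimes_L{\rm LL}(M)^\dual)\cong \Hom_{L[\G_F]}(V,X)\widehat\otimes_L{\rm LL}(M)^\dual$ for $X$ a Banach or nuclear Fréchet $\G_F$-module. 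Both are obtained by writing $N$ (resp.\ ${\rm LL}(M)^\dual$) as a countable projective limit of its finite-dimensional quotients, using that $\Hom_{\rm cont}(V,-)$ commutes with such limits since $\dim_L V<\infty$, performing at each finite level the base change $(L\otimes F)^r\otimes_F C=(L\otimes C)^r$ so that $\Hom_{L[\G_F]}(V,(L\otimes C)^r)=H_C(V)^r$, and passing to the limit (legitimate because $H_C(V)$ is finitely generated over the Artinian ring $L\otimes F$).

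Applying this with $N=\O[M]$ and $N=\Omega^1[M]$ (which are genuine Fréchet spaces, the smoothness of ${\rm JL}(M)$ forcing the inductive limit defining $\O(\mathcal{M}_\infty)$ to stabilise at a finite level after $\Hom_{L[\check G]}({\rm JL}(M),-)$), with $N=M_{\rm dR}\otimes_L{\rm LL}(M)^\dual$ (using $(C\otimes_{\Qp^{\rm nr}}M)\widehat\otimes_L{\rm LL}(M)^\dual=C\widehat\otimes_F(M_{\rm dR}\otimes_L{\rm LL}(M)^\dual)$), and with $X=X_{\rm st}^+(M)$, together with the injection $\iota\colon H_M(V)\hookrightarrow M_{\rm dR}\otimes_{L\otimes F}H_C(V)$ of Remark~\ref{ajout1}, produces all the terms and all the vertical arrows of the asserted diagram; injectivity with closed image of the middle vertical is inherited from Corollary~\ref{Diag10} via the standard fact that, for finite-dimensional $V$, $\Hom_{\rm cont}(V,-)$ realises its argument as a closed subspace of $(-)^{\dim V}$, and that of $\iota\otimes 1$ follows from injectivity of $\iota$. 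It then remains to check exactness of the two rows. The bottom row is $H_C(V)\otimes_{L\otimes F}(-)$ applied to the short exact sequence of $L\otimes F$-modules $0\to\O[M]\stackrel{d}{\to}\Omega^1[M]\to M_{\rm dR}\otimes_L{\rm LL}(M)^\dual\to 0$ underlying the bottom row of Corollary~\ref{Diag10}, and it stays exact because its cokernel is a free, hence flat, $L\otimes F$-module. For the top row, left-exactness of $\Hom_{L[\G_F]}(V,-)$ gives everything except surjectivity of $\Hom_{L[\G_F]}(V,H^1_{\rm proet}[M])\to H_M(V)\otimes_L{\rm LL}(M)^\dual$.

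I would establish this last surjectivity by a diagram chase. Given $z$ in the target, lift its image in $H_C(V)\otimes_{L\otimes F}M_{\rm dR}\otimes_L{\rm LL}(M)^\dual$ to some $w\in H_C(V)\otimes_{L\otimes F}\Omega^1[M]$ using surjectivity of the bottom-right map. Reading off Corollary~\ref{Diag10} (exact rows, vertical closed embeddings), the image of $H^1_{\rm proet}[M]$ inside $C\widehat\otimes_F\Omega^1[M]$ is exactly the preimage of $\theta\big(X_{\rm st}^+(M)\widehat\otimes_L{\rm LL}(M)^\dual\big)$; since $w$ maps into that subspace, the $\G_F$-morphism $V\to C\widehat\otimes_F\Omega^1[M]$ it represents factors through $H^1_{\rm proet}[M]$, yielding $\hat w\in\Hom_{L[\G_F]}(V,H^1_{\rm proet}[M])$ lying over $w$, and $\hat w$ maps to $z$ because $\hat w$ and $z$ have the same image under the injective map $\iota\otimes 1$. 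Finally, all maps are $G$-equivariant since the $G$- and $\G_F$-actions commute. The main obstacle is the topological bookkeeping underlying the base change statement and this chase: one must check that $\Hom_{\rm cont}(V,-)$, $\widehat\otimes$ and the projective limits genuinely commute, that the images of the vertical maps are closed, and that ``the image of $H^1_{\rm proet}[M]$ is the stated preimage'' holds as topological subspaces --- all resting on the fact that the spaces involved are (ind-)Fréchet spaces of compact type and that $V$ is finite-dimensional, but requiring care.
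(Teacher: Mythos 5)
Your overall strategy is the paper's: apply $\Hom_{L[\G_F]}(V,-)$ to the diagram of cor.~\ref{Diag10}, identify the terms by base change, and exploit the fact that the left vertical arrow there is an equality. Where you genuinely diverge is in proving right-exactness of the top row: the paper observes that the obstruction is the kernel of $H^1(\G_F,C\widehat\otimes_F\O[M]\otimes_LV^\dual)\to H^1(\G_F,H^1_{\proet}[M]\otimes_LV^\dual)$ and kills it by factoring through $H^1(\G_F,C\widehat\otimes_F\Omega^1[M]\otimes_LV^\dual)$, where the map is $\O[M]\to\Omega^1[M]$ tensored with the finite $L\otimes F$-module $H^1(\G_F,C\otimes_{\Q_p}V^\dual)$, hence injective; you instead run an explicit chase, using that the image of $H^1_{\proet}[M]$ in $C\widehat\otimes_F\Omega^1[M]$ is the preimage of $\theta\big(X_{\rm st}^+(M)\widehat\otimes_L{\rm LL}(M)^\dual\big)$ and that the closed embedding lets you lift $w$ uniquely, continuously and equivariantly. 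Your chase is correct and avoids continuous Galois $H^1$ altogether, at the price of doing the topological bookkeeping by hand; the paper's argument is shorter but silently relies on the same kind of base-change commutation for $H^1$ that you need for $H^0$. Both arguments ultimately rest on the same structural input, namely that the left vertical of cor.~\ref{Diag10} is the identity.

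One step of your write-up is wrong as stated: $\O[M]$ and $\Omega^1[M]$ are \emph{not} countable projective limits of finite-dimensional quotients — they are projective limits of infinite-dimensional Banach spaces coming from an affinoid exhaustion of $\mv_n$; only ${\rm LL}(M)^\dual$ and the terms built from it are pro-finite-dimensional. So your proof of the base-change isomorphism $\Hom_{L[\G_F]}(V,C\widehat\otimes_FN)\cong H_C(V)\otimes_{L\otimes F}N$ breaks down exactly for the two terms where you need it. The statement is nevertheless true and repairable: write $N=\varprojlim_iB_i$ with $B_i$ Banach $L\otimes F$-modules, note that $\Hom_{\rm cont}(V,-)$ commutes with such limits and that $H_C(V)\otimes_{L\otimes F}-$ does too (because $L\otimes_{\Q_p}F$ is a finite product of fields, so $H_C(V)$ is finitely generated projective), and settle the Banach case by choosing an orthonormal basis, giving $C\widehat\otimes_FB_i\cong c_0(I,C)$ equivariantly and $\Hom_L(V,c_0(I,C))^{\G_F}=c_0(I,H_C(V))=H_C(V)\otimes_{L\otimes F}B_i$. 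With this lemma in place (the paper leaves the analogous commutation implicit), the rest of your argument — purity/flatness over $L\otimes F$ for the bottom row, injectivity and closedness of the vertical maps, and the lifting chase — goes through.
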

\begin{proof}
L'application naturelle $$H^1(\G_{F},C\widehat\otimes_F\O[M]\otimes_L V^\dual)\to
H^1(\G_{F},C\widehat\otimes_F\Omega^1[M]\otimes_L V^\dual)$$ est injective
(elle est obtenue en tensorisant $\O[M]\to \Omega^1[M]$ au dessus de $L\otimes F$ par
$H^1(\G_{F},C\otimes_{\Q_p} V^\dual)$, qui est un $L\otimes F$-module de rang fini).
Par commutativit\'e du diagramme du cor.~\ref{Diag10}, il en est de m\^eme de
$$H^1(\G_{F},C\widehat\otimes_F\O[M]\otimes_L V^\dual)\to
H^1(\G_{F}, H^1_{\proet}[M]\otimes_L V^\dual),$$
ce qui prouve que ${\rm Hom}_{L[\G_{F}]}(V,-)$ laisse exactes les lignes
du diagramme du cor.~\ref{Diag10}.  On en d\'eduit le r\'esultat.
\end{proof}

\subsubsection{Repr\'esentations supercuspidales de $\G_F$}\label{Emer2}
La prop.\,\ref{LL1} ci-dessous fournit des contraintes tr\`es fortes sur
$H_M(V)$. 

Si $M$ est un $(\varphi,N,\G_{F})$-module, et $k\in\Z$, on note
$M[k]$ le $(\varphi,N,\G_{F})$-module obtenu en multipliant par $p^k$
l'action de $\varphi$ sur $M$.

\begin{prop}\label{LL1}
Soit $W$ une $L$-repr\'esentation supercuspidale de $\G_{F}$, de dimension~$d$,
et soient $M=\dpst(W)$ et\footnote{
{\og $W$ supercuspidale\fg} $\Rightarrow$ 
{\og $N=0$ sur $M$\fg}, et on a donc
aussi $X_{\rm st}(M)=(\bcris\otimes M)^{\varphi=p}$.}
 $X_{\rm st}(M)=(\bst\otimes_{\Q_p^{\rm nr}} M)^{N=0,\varphi=p}$.
 Si
$V$ est une $L$-repr\'esentation de $\G_{F}$, alors:

{\rm (i)} ${\rm Hom}_{L[\G_{F}]}(V,X_{\rm st}(M))={\rm Hom}_{L[{\rm WD}_F]}(M[-1]^\dual,\dpst(V^\dual))$.

{\rm (ii)}
Si $V$ est de dimension~$d$, et si ${\rm Hom}_{L[\G_{F}]}(V,X_{\rm st}(M))\neq 0$,
alors $V$ est supercuspidale, $\dpst(V)\cong M[-1]$,
et ${\rm Hom}_{L[\G_{F}]}(V,X_{\rm st}(M))$ est de dimension~$1$ sur $L$.
\end{prop}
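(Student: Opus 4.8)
The plan is to deduce (i) by unwinding the definitions through Fontaine's comparison isomorphisms, and then to obtain (ii) as a short formal consequence.

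For (i), I would first observe that since $W$ is supercuspidal we have $N=0$ on $M$, so that $X_{\rm st}(M)=(\bcris\otimes_{\Q_p^{\rm nr}}M)^{\varphi=p}$ is a sub-$\G_F$-representation of $\bcris\otimes_{\Q_p^{\rm nr}}M$. Over a finite extension $F'/F$ trivialising the smooth $\G_F$-action on $M$ the ambient module becomes a finite sum of copies of $\bcris$, and a finite-dimensional sub-$\G_{F'}$-representation of $\bcris$ is crystalline; hence every finite-dimensional sub-$\G_F$-representation of $\bcris\otimes_{\Q_p^{\rm nr}}M$ is potentially crystalline. Since a $\G_F$-map $V\to X_{\rm st}(M)$ has potentially crystalline image, one reduces (by left-exactness in $V$, first to $V$ irreducible) to the case where $V$ is potentially semistable. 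Assuming this, I would use multiplication by the period $t$ to get a $\G_F$-isomorphism $X_{\rm st}(M)\simeq\big((\bcris\otimes_{\Q_p^{\rm nr}}M)^{\varphi=1}\big)(1)$, and then the semistable comparison isomorphism $\bst\otimes_{\Q_p}V^\dual\simeq\bst\otimes_{\Q_p^{\rm nr}}\dpst(V^\dual)$ (compatible with $\varphi$, $N$ and $\G_F$) together with adjunction and the identity $\dpst(V^\dual(1))=\dpst(V^\dual)[-1]$, to rewrite $\Hom_{L[\G_F]}(V,X_{\rm st}(M))$ as $(\bst\otimes_{\Q_p^{\rm nr}}E)^{N=0,\varphi=1,\G_F}$, where $E:=M[-1]\otimes_{\Q_p^{\rm nr}}\dpst(V^\dual)$.

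The one genuine point is then that, for any $(\varphi,N,\G_F)$-module $E$, the natural inclusion $E^{N=0,\varphi=1,\G_F}\hookrightarrow(\bst\otimes_{\Q_p^{\rm nr}}E)^{N=0,\varphi=1,\G_F}$ is an isomorphism; I would prove this by taking $\G_{F'}$-invariants first — which, by Tate's vanishing $H^0(\G_{F'},C(n))=0$ for $n\neq0$ and the attendant fact $(\bcris^{\varphi=1})^{\G_{F'}}=\Q_p$, collapses $\bst$ down to $F'_0$ and identifies $(\bst\otimes_{\Q_p^{\rm nr}}E)^{\G_{F'}}$ with the evident $F'_0$-form of $E$ — and then passing to the remaining finite quotient of $\G_F$. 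Finally $E^{N=0,\varphi=1,\G_F}$ is, by definition of the tensor structure, the space of morphisms $M[-1]^\dual\to\dpst(V^\dual)$ in the category of $(\varphi,N,\G_F)$-modules over $\Q_p^{\rm nr}$, which under the equivalence of that category with Weil--Deligne representations is precisely $\Hom_{L[{\rm WD}_F]}(M[-1]^\dual,\dpst(V^\dual))$; one has to keep careful track of the twists $[-1]$ and the duals in Fontaine's recipe in order to land on exactly this space.

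For (ii): since $W$ is supercuspidal, ${\rm WD}(M)$ — hence also ${\rm WD}(M[-1]^\dual)$ — is absolutely irreducible of dimension $d$. If $\Hom_{L[\G_F]}(V,X_{\rm st}(M))\neq0$, then by (i) there is a nonzero Weil--Deligne morphism $M[-1]^\dual\to\dpst(V^\dual)$; its source being irreducible it is injective, and as $\dim_L\dpst(V^\dual)\leq\dim_LV^\dual=d$ it is an isomorphism. Hence $\dpst(V^\dual)\simeq M[-1]^\dual$ has dimension $d$, so $V^\dual$ (and $V$) is potentially semistable of full rank, i.e. $\dpst(V)\simeq M[-1]$; moreover ${\rm WD}(V)={\rm WD}(\dpst(V)[1])={\rm WD}(M)$ is irreducible, so $V$ is supercuspidal; and $\Hom_{L[\G_F]}(V,X_{\rm st}(M))\simeq\operatorname{End}_{L[{\rm WD}_F]}(M[-1]^\dual)=L$ by Schur's lemma, the endomorphism algebra being $L$ because ${\rm WD}(M)$ is absolutely irreducible. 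The hard part will be the first half of (i): controlling which finite-dimensional Galois representations can embed in $\bcris\otimes_{\Q_p^{\rm nr}}M$, and then carrying out the identification $(\bst\otimes_{\Q_p^{\rm nr}}E)^{N=0,\varphi=1,\G_F}=E^{N=0,\varphi=1,\G_F}$ with all twists correct — everything else being formal once Fontaine's comparison theorems, Tate's theorem, and the $(\varphi,N,\G_F)\leftrightarrow{\rm WD}$ dictionary are in hand.
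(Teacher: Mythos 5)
Your overall engine — rewrite the Hom as $(N=0,\varphi=p,\G_F)$-invariants, collapse $\bst$ by taking invariants under a large open subgroup, then apply the $(\varphi,N,\G_F)\leftrightarrow{\rm WD}$ dictionary, and deduce (ii) by irreducibility of $M$ plus a dimension count and Schur — is the paper's argument. The genuine gap is your preliminary reduction to $V$ potentially semistable, which your proof really needs because the comparison isomorphism $\bst\otimes_{\Q_p}V^\dual\simeq\bst\otimes_{\Q_p^{\rm nr}}\dpst(V^\dual)$ holds only for pst $V$. First, the claim that a finite-dimensional sub-$\G_{F'}$-representation of $\bcris$ is crystalline is false: if $\eta$ is a crystalline character of $\G_{F''}$ for $F''/F'$ ramified quadratic, the projection formula gives $(\bcris\otimes_{\Q_p}\operatorname{Ind}_{F''}^{F'}\eta^{-1})^{\G_{F'}}=(\bcris\otimes\eta^{-1})^{\G_{F''}}\neq 0$, so the (irreducible, non-crystalline) induction embeds into $\bcris$; what is true, and all you need, is that such subrepresentations are de Rham, hence pst, since $\bcris\hookrightarrow\bdr$ and finite-dimensional subrepresentations of $\bdr$ are de Rham. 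Second, and more seriously, the d\'evissage ``by left-exactness in $V$, first to $V$ irreducible'' is not a proof: two left-exact contravariant functors related by a natural transformation can agree on every irreducible and still differ on an extension (already $\Hom(-,X)$ versus $\Hom(-,Y)$ for a two-dimensional unipotent module $X$ and its fixed line $Y$), and reducing instead to the maximal pst quotient of $V$ would require showing both that every map $V\to X_{\rm st}(M)$ factors through it and that $\Hom_{L[{\rm WD}_F]}(M[-1]^\dual,\dpst(V^\dual))$ is unchanged under this replacement — neither of which is addressed. (Minor: division by $t$ does not stay in $\bcris$, so your twist $X_{\rm st}(M)\simeq((\bcris\otimes M)^{\varphi=1})(1)$ is off as stated; no $t$ is needed, since $\varphi=p$ on $\bst\otimes M$ is $\varphi=1$ on $\bst\otimes M[-1]$.)

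The paper shows the reduction is unnecessary, because it never uses the comparison isomorphism for $V$: only $M$ and $\dpst(V^\dual)$ are descended. For $K/F$ finite Galois large enough, smoothness of the action and unramified descent give $M=\Q_p^{\rm nr}\otimes_{K_0}M^{\G_K}$, whence $(\bst\otimes_{\Q_p^{\rm nr}}M\otimes_LV^\dual)^{\G_K}=(\bst\otimes_LV^\dual)^{\G_K}\otimes_{K_0}M^{\G_K}$, and $(\bst\otimes_LV^\dual)^{\G_K}$ is, by the very definition of $\dpst$, a $K_0$-form of $\dpst(V^\dual)$ once $K$ is large; imposing $N=0$, $\varphi=p$ and the residual $\G_F$-action then yields $\big(\dpst(V^\dual)\otimes_{L\otimes\Q_p^{\rm nr}}M[-1]\big)^{N=0,\varphi=1,\G_F}=\Hom_{L[{\rm WD}_F]}(M[-1]^\dual,\dpst(V^\dual))$ for \emph{every} $V$, pst or not. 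If you replace your first step by this descent-of-$M$ argument, the rest of your proposal (the collapse of $\bst\otimes E$ to its invariant form via Hilbert 90 and $\bst^{\G_K}=K_0$, the WD dictionary, and your proof of (ii)) matches the paper and is correct.
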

\begin{proof}  
Si $K$ est une extension galoisienne suffisamment grande de $F$, les isomorphismes $M=\Q_p^{\rm nr}\otimes_{K_0}M^{\G_{K}}$ (et son analogue pour $\dpst(V^\dual)$) 
et $\bst\otimes_{\Q_p} V^*=\bst\otimes_{\Q_p^{\rm nr}}\dpst(V^\dual)$ fournissent
\begin{align*}
{\rm Hom}&_{L[\G_{F}]}(V,X_{\rm st}(M))=\ 
(\bst\otimes_{\Q_p^{\rm nr}} M\otimes_L V^\dual)^{N=0,\varphi=p,\G_{F}}\\
= 
&\ 
(\bst\otimes_{K_0} M^{\G_{K}}\otimes_L V^*)^{N=0,\varphi=p,\G_{F}}
=((\bst\otimes_L V^*)^{\G_{K}}\otimes_{K_0} M^{\G_{K}})^{N=0,\varphi=p,\G_{F}}\\
&=  \big(\dpst(V^\dual)\otimes_{L\otimes\Q_p^{\rm nr}} M[-1]\big)^{N=0,\varphi=1,\G_{F}}=
{\rm Hom}_{L[{\rm WD}_{F}]}(M[-1]^\dual, \dpst(V^\dual)),
\end{align*}
ce qui prouve le (i).

Passons au (ii).
Comme $M$ est irr\'eductible, et $\dpst(V^\dual)$ de dimension~$\leq\dim M$,
cela implique que ${\rm Hom}_{L[{\rm WD}_{F}]}(M[-1]^\dual,\dpst(V^\dual))$ est de dimension $0$ ou $1$
suivant que $\dpst(V^\dual)$ est isomorphe ou pas \`a $M[-1]^\dual$, ce qui \'equivaut
\`a $\dpst(V)\cong M[-1]$ car l'isomorphisme $\dpst(V^\dual)\cong M[-1]^\dual$ implique
que $V$ est supercuspidale, et donc que $V$ aussi et que $\dpst(V^\dual)=\dpst(V)^\dual$.
\end{proof}

\subsubsection{La repr\'esentation supercuspidale $V_{M,{\cal L}}$ de $\G_F$}\label{Emer3}
Si ${\cal L}$ est une $L\otimes F$-droite du $L\otimes F$-module $M_{\rm dR}$ de rang~$2$,
soit $V_{M,{\cal L}}$ la $L$-repr\'esentation de $\G_{F}$:
$$V_{M,{\cal L}}={\rm Ker}\big[X_{\rm st}^+(M)\to \bdr^+\otimes_FM_{\rm dR}\overset{\theta}{\to}
C\otimes_{F} (M_{\rm dR}/{\cal L})\big].$$
Alors $V_{M,{\cal L}}$ est une repr\'esentation supercuspidale de
$\G_F$, de dimension~$2$, \`a poids $0$ et $1$,
et $\dpst(V_{M,{\cal L}})=M[-1]$.
\begin{rema}\label{Hum1}
Si $V$ est une $L$-repr\'esentation de dimension~$2$ telle que
${\rm Hom}_{L[\G_F]}(V,X_{\rm st}^+(M))\neq 0$, il r\'esulte de la prop.~\ref{LL1}
que $V$ est supercuspidale, que
${\rm Hom}_{L[\G_F]}(V,X_{\rm st}(M))$
est de dimension~$1$, et que $\dpst(V)\cong M[-1]$.
Il existe donc une filtration sur $M_{\rm dR}$ par des
$L\otimes F$-modules non n\'ecessairement libres, telle
que $V=X_{\rm st}(M)\cap{\rm Fil}^1(\bdr\otimes_FM_{\rm dR})$,
et comme ${\rm Hom}_{L[\G_F]}(V,X_{\rm st}(M))$
est de dimension~$1$ et que l'on dispose d'un plongement de
$V$ dans $X_{\rm st}^+(M)\subset X_{\rm st}(M)$, on obtient:
$$V={\rm Ker}\big[X_{\rm st}^+(M)\to (\bdr^+\otimes_FM_{\rm dR})/{\rm Fil}^1\big].$$
On peut d\'ecomposer $L\otimes F$ sous la forme $\prod_{\sigma:F\to L}L$,
ce qui d\'ecompose $M_{\rm dR}$ sous la forme $\oplus_{\sigma:F\to L}M_{{\rm dR},\sigma}$,
o\`u $M_{{\rm dR},\sigma}$ est un $L$-module de rang $2$ (vu comme $L\otimes F$-module).
Se donner une filtration par des $L\otimes F$-modules de $M_{\rm dR}$ revient \`a se donner
une filtration de $M_{{\rm dR},\sigma}$ par des $L$-espaces vectoriels pour tout $\sigma$,
d'o\`u des poids de Hodge-Tate $\kappa_{1,\sigma}\leq \kappa_{2,\sigma}$ pour chaque $\sigma$.

Les seules contraintes r\'esultant de l'admissibilit\'e de la filtration
et du fait que l'on peut prendre $\bcris^+$ au lieu de $\bcris$ pour retrouver $V$,
sont $\kappa_{i,\sigma}\geq 0$ pour tous $\sigma$ et $i$, et $\sum_{i,\sigma}\kappa_{i,\sigma}=[F:\Q_p]$.

$\bullet$ {\it Si $F=\Q_p$}, cela ne laisse pas de choix, et {\it $V=V_{M,{\cal L}}$ pour un 
unique~${\cal L}$}.

$\bullet$ {\it Si $F\neq \Q_p$}, les $V_{M,{\cal L}}$ correspondent \`a $\kappa_{1,\sigma}=0$ et
$\kappa_{2,\sigma}=1$ pour tout $\sigma$, mais {\it il y a d'autre possibilit\'es, y compris
\`a poids $0$ et $1$}.
\end{rema}

On tire de la prop.~\ref{LL1} et de la rem.~\ref{Hum1} le r\'esultat suivant.
\begin{lemm}\label{Fil1}
{\rm (i)}
${\rm Hom}_{L[\G_F]}(V_{M,{\cal L}},X_{\rm st}^+(M))$
est de dimension~$1$ sur $L$.

{\rm (ii)} Si $F=\Q_p$, et si
$V$ est une $L$-repr\'esentation de dimension~$2$ de $\G_F$, alors
${\rm Hom}_{L[\G_F]}(V,X_{\rm st}^+(M))\neq 0$
si et seulement si $V\cong V_{M,{\cal L}}$ pour un {\rm (unique)} ${\cal L}$.
\end{lemm}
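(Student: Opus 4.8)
The plan is to derive both statements by combining prop.\,\ref{LL1} with rem.\,\ref{Hum1}; the only point requiring a little care is the passage from the ring $X_{\rm st}(M)$, which prop.\,\ref{LL1} controls, to the ring $X_{\rm st}^+(M)$ appearing in the statement.

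For (i): supercuspidality gives $N=0$ on $M$, so that $X_{\rm st}^+(M)=(\bcris^+\otimes_{\Q_p^{\rm nr}}M)^{\varphi=p}$ is a subspace of $X_{\rm st}(M)=(\bcris\otimes_{\Q_p^{\rm nr}}M)^{\varphi=p}$, and by its very definition as the kernel of a map out of $X_{\rm st}^+(M)$ the representation $V_{M,{\cal L}}$ carries a canonical $\G_F$-equivariant embedding into $X_{\rm st}^+(M)$, hence into $X_{\rm st}(M)$. Since $\dim_L V_{M,{\cal L}}=2$ equals the rank of $M$ and $\dpst(V_{M,{\cal L}})\cong M[-1]$ (recalled just above), prop.\,\ref{LL1}(ii) applies and gives $\dim_L{\rm Hom}_{L[\G_F]}(V_{M,{\cal L}},X_{\rm st}(M))=1$. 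Hence the subspace ${\rm Hom}_{L[\G_F]}(V_{M,{\cal L}},X_{\rm st}^+(M))$, which is nonzero since it contains the canonical embedding, is one-dimensional; this is (i).

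For (ii), assume $F=\Q_p$. The implication $V\cong V_{M,{\cal L}}\Rightarrow{\rm Hom}_{L[\G_F]}(V,X_{\rm st}^+(M))\neq 0$ is (i). Conversely, if ${\rm Hom}_{L[\G_F]}(V,X_{\rm st}^+(M))\neq 0$ then, a fortiori, ${\rm Hom}_{L[\G_F]}(V,X_{\rm st}(M))\neq 0$, so, $\dim_L V$ being $2$ (the rank of $M$), prop.\,\ref{LL1}(ii) makes $V$ supercuspidal with $\dpst(V)\cong M[-1]$ and ${\rm Hom}_{L[\G_F]}(V,X_{\rm st}(M))$ one-dimensional. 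Rem.\,\ref{Hum1} then takes over: using that $V$ is irreducible (so any nonzero map from $V$ into $X_{\rm st}^+(M)$ is injective) together with the one-dimensionality just obtained, it exhibits $V$ as ${\rm Ker}\big(X_{\rm st}^+(M)\to(\bdr^+\otimes_F M_{\rm dR})/{\rm Fil}^1\big)$ for a Hodge filtration on $M_{\rm dR}$ whose jumps, by admissibility and the use of $\bcris^+$ rather than $\bcris$, are forced to be $0$ and $1$ once $F=\Q_p$; equivalently ${\rm Fil}^1 M_{\rm dR}={\cal L}$ is an $L$-line and $V\cong V_{M,{\cal L}}$. Uniqueness of ${\cal L}$ is immediate, as ${\cal L}$ is this Hodge filtration, hence recovered from the isomorphism class of $V_{M,{\cal L}}$.

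Given prop.\,\ref{LL1} and rem.\,\ref{Hum1}, there is essentially no obstacle; the one thing to watch is the bookkeeping between $X_{\rm st}(M)$ and $X_{\rm st}^+(M)$: it goes through precisely because the representations in play ($V_{M,{\cal L}}$ in (i), the unknown $V$ in (ii)) are already realized inside $X_{\rm st}^+(M)$ --- by construction in (i), by rem.\,\ref{Hum1} in (ii) --- so the one-dimensional ${\rm Hom}$-space computed against $X_{\rm st}(M)$ is automatically concentrated in $X_{\rm st}^+(M)$. Everything else is a direct invocation of prop.\,\ref{LL1} and rem.\,\ref{Hum1}.
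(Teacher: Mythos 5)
Ta preuve est correcte et suit exactement la même voie que l'article, qui se contente d'ailleurs de dire que le lemme « se tire de la prop.\,\ref{LL1} et de la rem.\,\ref{Hum1} » : tu ne fais qu'expliciter cette déduction, y compris le point (bien vu) du passage de $X_{\rm st}(M)$ à $X_{\rm st}^+(M)$ via le plongement canonique de $V_{M,{\cal L}}$ (resp.\ celui fourni par la rem.\,\ref{Hum1}). Rien à redire.
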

\begin{rema}\label{Fil1.1}
Par construction, $C\otimes_{\Q_p} V_{M,{\cal L}}\cong (L\otimes_{\Q_p}C)\oplus
(L\otimes_{\Q_p}C)(1)$.  On d\'eduit des calculs de cohomologie continue
de Tate que, si $V=V_{M,{\cal L}},V_{M,{\cal L}}^\dual$, alors
$H^0(\G_F,C\otimes_{\Q_p} V)$ et $H^1(\G_F,C\otimes_{\Q_p} V)$ sont des
$(L\otimes F)$-modules de rang~$1$.
\end{rema}

\subsubsection{La repr\'esentation $W_{M,{\cal L}}$ de $G$}\label{Emer4}
Si on prend les points fixes par $\G_F$ de la ligne du bas du diagramme
du cor.~\ref{Diag10}, on obtient la suite exacte:
$$0\to\O[M]\to\Omega^1[M]\to M_{\rm dR}\otimes_L{\rm LL}(M)^\dual\to 0$$
de $L\otimes F$-repr\'esentations de $G$.
On d\'efinit la
repr\'esentation $W'_{M,{\cal L}}$ de $G$,
en prenant l'image inverse de ${\cal L}\otimes {\rm LL}(M)^\dual$ dans
$\Omega^1[M]$. On a donc une suite exacte:
$$0\to\O[M]\to W'_{M,{\cal L}}\to {\cal L}\otimes_L{\rm LL}(M)^\dual\to 0$$
de $(L\otimes F)$-repr\'esentations de $G$.

Soit $M_{\rm dR}^\dual$ le $L\otimes F$-dual de $M_{\rm dR}$;
c'est un $L\otimes F$-module libre de rang~$2$.
On dispose d'un accouplement $(L\otimes F)$-bilin\'eaire naturel 
$$M_{\rm dR}^\dual\otimes_{L\otimes F}M_{\rm dR}\to
L\otimes F,$$
et on note ${\cal L}^\perp\subset M_{\rm dR}^\dual$ l'orthogonal
de ${\cal L}$.  Alors ${\cal L}^\perp$ est un $(L\otimes F)$-module de rang~$1$,
et l'accouplement ci-dessus induit un isomorphisme
$$(M_{\rm dR}^\dual/{\cal L}^\perp)\otimes_{L\otimes F}{\cal L}\cong L\otimes F.$$
Au vu de cet isomorphisme, on pose 
$${\cal L}^{-1}=M_{\rm dR}^\dual/{\cal L}^\perp.$$
Comme $L$ s'injecte naturellement dans $L\otimes F$ (par $x\mapsto x\otimes 1$),
on fabrique une repr\'esentation $W_{M,{\cal L}}$, extension de
${\rm LL}(M)^\dual$ par ${\cal L}^{-1}\otimes_{L\otimes F}\O[M]\cong\O[M]$,
gr\^ace au diagramme suivant (la premi\`ere ligne est obtenue en tensorisant
par ${\cal L}^{-1}$ la suite exacte d\'efinissant $W'_{M,{\cal L}}$):
$$\xymatrix@R=.4cm@C=.5cm{
0\ar[r]&{\cal L}^{-1}\underset{L\otimes F}{\bigotimes}\O[M]\ar[r]&{\cal L}^{-1}\underset{L\otimes F}{\bigotimes}W'_{M,{\cal L}}
\ar[r]& (L\otimes F)\,\underset{L}{\bigotimes}\,{\rm LL}(M)^\dual\ar[r]&0\\
0\ar[r]&{\cal L}^{-1}\underset{L\otimes F}{\bigotimes}\O[M]\ar[r]\ar@{=}[u]&W_{M,{\cal L}}
\ar[r]\ar[u]& {\rm LL}(M)^\dual\ar[r]\ar[u]&0
}$$

\begin{prop}\label{dl2}
Si $F=\Q_p$, alors $$W_{M,{\cal L}}=(\Pi(V_{M,{\cal L}})^{\rm an})^\dual.$$
\end{prop}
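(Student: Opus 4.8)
Le plan est de déduire l'énoncé de la prop.\,\ref{dl10} (conjecture de Breuil--Strauch) par descente galoisienne de $C$ à $\Q_p$. Comme $F=\Q_p$, on a $L\otimes F=L$, le module $M_{\rm dR}$ est un $L$-espace de dimension~$2$, ${\cal L}$ une $L$-droite, et dans le diagramme définissant $W_{M,{\cal L}}$ le tiré en arrière le long de $x\mapsto 1\otimes x$ est l'identité; ainsi $W_{M,{\cal L}}={\cal L}^{-1}\otimes_L W'_{M,{\cal L}}$, où $W'_{M,{\cal L}}$ est l'image inverse de ${\cal L}\otimes_L{\rm LL}(M)^\dual$ dans $\Omega^1[M]$ et ${\cal L}^{-1}=M_{\rm dR}^\dual/{\cal L}^\perp$ est la $L$-droite canoniquement accouplée à ${\cal L}$.

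On commence par identifier les termes de la ligne du bas du diagramme de la prop.\,\ref{dl10} (relative à ${\cal M}_n$) à ceux du \S\,\ref{Emer1}: comme ${\rm JL}(M)$ est de dimension finie et fixée par $1+\varpi_D^n\O_D$ pour $n$ assez grand, on a $\O({\cal M}_n)[M]=\O({\cal M}_\infty)[M]$ et $\Omega^1({\cal M}_n)[M]=\Omega^1({\cal M}_\infty)[M]$, et la comparaison des diagrammes du cor.\,\ref{Diag10} et du th.\,\ref{diagfond} identifie ces modules à $C\widehat\otimes_{\Q_p}\O[M]$ et $C\widehat\otimes_{\Q_p}\Omega^1[M]$ (la ligne du bas du cor.\,\ref{Diag10} étant l'extension des scalaires à $C$ de la suite exacte $0\to\O[M]\to\Omega^1[M]\to M_{\rm dR}\otimes_L{\rm LL}(M)^\dual\to 0$, puisque $F=\Q_p$). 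La prop.\,\ref{dl10} affirme alors que $C\widehat\otimes_{\Q_p}(\Pi(V_{M,{\cal L}})^{\rm an})^\dual$ est, à l'intérieur de $C\widehat\otimes_{\Q_p}\Omega^1[M]=\Omega^1({\cal M}_n)[M]$, l'image inverse de $(C\otimes_{\Q_p}{\cal L})\widehat\otimes{\rm LL}(M)^\dual=C\widehat\otimes_{\Q_p}({\cal L}\otimes_L{\rm LL}(M)^\dual)$ par la projection sur $(C\otimes_{\Q_p}M_{\rm dR})\widehat\otimes_L{\rm LL}(M)^\dual$.

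Toutes les suites exactes courtes en jeu étant strictes et formées de fréchets, le foncteur $C\widehat\otimes_{\Q_p}(-)$ commute au passage à cette image inverse; on obtient donc, de façon $G$-équivariante et $C$-linéaire, $C\widehat\otimes_{\Q_p}(\Pi(V_{M,{\cal L}})^{\rm an})^\dual=C\widehat\otimes_{\Q_p}W'_{M,{\cal L}}$ comme sous-espaces de $\Omega^1({\cal M}_n)[M]$. Comme $W'_{M,{\cal L}}\subset\Omega^1[M]=(\Omega^1({\cal M}_n)[M])^{\G_{\Q_p}}$, ce sous-espace est $\G_{\Q_p}$-stable et ses invariants valent $W'_{M,{\cal L}}$; en transportant l'action le long de l'identification ci-dessus et en utilisant la descente galoisienne pour les fréchets sur $\Q_p$ (via Ax--Sen--Tate et la complétude), on en tire un isomorphisme de $L[G]$-fréchets $W'_{M,{\cal L}}\cong(\Pi(V_{M,{\cal L}})^{\rm an})^\dual$ (la $L$-linéarité se lisant sur le quotient ${\rm LL}(M)^\dual$, de $L$-structure canonique). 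Enfin, la torsion par la $L$-droite ${\cal L}^{-1}$ intervenant dans la définition de $W_{M,{\cal L}}$ est précisément celle qui transforme le quotient ${\cal L}\otimes_L{\rm LL}(M)^\dual$ de $W'_{M,{\cal L}}$ en ${\rm LL}(M)^\dual$ via l'accouplement canonique ${\cal L}^{-1}\otimes_L{\cal L}\cong L$, et fournit ainsi l'égalité voulue $W_{M,{\cal L}}=(\Pi(V_{M,{\cal L}})^{\rm an})^\dual$.

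Le point délicat est l'étape de descente, jointe à cette gymnastique sur les droites ${\cal L}$, ${\cal L}^\perp$, ${\cal L}^{-1}$: il faut vérifier que le plongement de la prop.\,\ref{dl10}, a priori seulement $G$-équivariant, identifie bien $C\widehat\otimes_{\Q_p}(\Pi(V_{M,{\cal L}})^{\rm an})^\dual$ à un sous-espace $\G_{\Q_p}$-stable de $\Omega^1({\cal M}_n)[M]$ dont les invariants redonnent la bonne $L$-forme --- ce qui repose sur l'exactitude de $C\widehat\otimes_{\Q_p}(-)$ sur ces suites de fréchets (non banachiques) et sur un énoncé de descente dans ce cadre --- et que la normalisation par ${\cal L}^{-1}$ dans la construction de $W_{M,{\cal L}}$ est bien la bonne, i.e.\ que le quotient lisse ${\rm LL}(M)=\Pi(V_{M,{\cal L}})^{\rm lisse}$ se place, dans la réalisation de de Rham $\Omega^1({\cal M}_n)[M]$, du côté de ${\cal L}^{-1}$ et non de ${\cal L}$.
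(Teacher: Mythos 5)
Votre démarche n'est pas celle du texte : le texte ne démontre pas la prop.\,\ref{dl2}, il la cite --- c'est précisément le théorème 1.4 de~\cite{DL}, énoncé sous forme rationnelle (sur $L$, avec les modules $\G_{\Q_p}$-invariants $\O[M]$ et $\Omega^1[M]$). La prop.\,\ref{dl10} de l'introduction n'est que ce même théorème après extension des scalaires à $C$; le sens facile est donc \ref{dl2} $\Rightarrow$ \ref{dl10} (on tensorise par $C$ la suite exacte rationnelle), et vous tentez le sens inverse, qui est le sens difficile.

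C'est là que se trouve une lacune réelle, que vous signalez vous-même sans la combler. La prop.\,\ref{dl10} n'affirme qu'une identification $C$-linéaire et $G$-équivariante; rien dans son énoncé ne dit que le plongement de $C\widehat\otimes_{\Q_p}(\Pi(V_{M,{\cal L}})^{\rm an})^\dual$ dans $\Omega^1({\cal M}_n)[M]$ est $\G_{\Q_p}$-équivariant pour l'action triviale sur $(\Pi(V_{M,{\cal L}})^{\rm an})^\dual$. Savoir que son image coïncide avec le sous-espace Galois-stable $C\widehat\otimes_{\Q_p}W'_{M,{\cal L}}$ ne suffit pas : en prenant les invariants du but on retrouve bien $W'_{M,{\cal L}}$, mais en transportant l'action de Galois par un isomorphisme non équivariant on obtient a priori une autre $\Q_p$-forme $G$-stable de $C\widehat\otimes_{\Q_p}(\Pi^{\rm an})^\dual$, et rien ne garantit qu'elle soit isomorphe, comme $G$-fréchet, à $(\Pi^{\rm an})^\dual$. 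Il faudrait soit savoir que l'isomorphisme de~\cite{DL} est compatible aux structures rationnelles (ce qui est exactement le contenu de la prop.\,\ref{dl2}, d'où un parfum de circularité), soit mener un véritable argument de descente/rigidité pour ces fréchets (du type ${\rm End}_{C[G]}=C$ plus Hilbert~90 topologique), ce que votre invocation d'Ax--Sen--Tate --- qui ne donne que $(C\widehat\otimes_{\Q_p}X)^{\G_{\Q_p}}=X$ pour une forme déjà donnée --- ne fournit pas. La gymnastique ${\cal L}$, ${\cal L}^\perp$, ${\cal L}^{-1}$ est, elle, anodine pour $F=\Q_p$; c'est l'étape de descente qui manque, et c'est pour l'éviter que le texte renvoie directement à l'énoncé rationnel de~\cite{DL}.
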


\begin{proof}
 C'est le r\'esultat principal de \cite{DL}, cf. th. 1.4 de loc.cit. 
\end{proof}

\subsubsection{Cohomologie pro\'etale de ${\cal M}_\infty$ et correspondance de Langlands}
Le r\'esultat suivant d\'ecrit la multiplicit\'e de $V_{M,{\cal L}}$
dans $H^1_{\proet}[M]$, en tant que repr\'esentation de $G$.
Dans le cas $F=\Q_p$, combin\'e avec la prop.\,\ref{dl2}, il montre
que la cohomologie pro\'etale de ${\cal M}_\infty$ encode la correspondance
de Langlands locale $p$-adique (version localement analytique) coupl\'ee
avec la correspondance de Jacquet-Langlands classique pour les
repr\'esentations $V_{M,{\cal L}}$ (i.e.~pour les repr\'esentations supercuspidales
\`a poids $0$ et $1$, cf.~rem.\,\ref{Hum1}).

\begin{prop}\label{LL}
On a un isomorphisme
$$W_{M,{\cal L}}\cong {\rm Hom}_{L[\G_{F}]}(V_{M,{\cal L}},H^1_{\proet}[M]).$$
\end{prop}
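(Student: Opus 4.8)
Le plan est d'appliquer le foncteur ${\rm Hom}_{L[\G_F]}(V_{M,\cal L},-)$ au diagramme du cor.~\ref{Diag10} et d'identifier le r\'esultat avec $W_{M,\cal L}$ gr\^ace \`a la construction de cette derni\`ere donn\'ee au \S\,\ref{Emer4}. La prop.~\ref{ajout2} appliqu\'ee \`a $V=V_{M,\cal L}$ fournit un diagramme commutatif \`a lignes exactes dont les fl\`eches verticales sont injectives d'image ferm\'ee et dont le terme central de la ligne du haut est ${\rm Hom}_{L[\G_F]}(V_{M,\cal L},H^1_{\proet}[M])$. Posons $H_M=H_M(V_{M,\cal L})$ et $H_C=H_C(V_{M,\cal L})$. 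D'apr\`es le lemme~\ref{Fil1}(i), $H_M$ est de dimension~$1$ sur $L$, engendr\'e par l'inclusion tautologique $\jmath: V_{M,\cal L}\hookrightarrow X_{\rm st}^+(M)$; d'apr\`es la rem.~\ref{Fil1.1}, $H_C$ est libre de rang~$1$ sur $L\otimes_{\Q_p}F$. Comme le carr\'e de gauche du diagramme est une identit\'e et que la fl\`eche verticale de droite est $\iota\otimes\id$, o\`u $\iota$ est l'application injective $H_M\to H_C\otimes_{L\otimes F}M_{\rm dR}$ de la rem.~\ref{ajout1}, une chasse au diagramme montre que ${\rm Hom}_{L[\G_F]}(V_{M,\cal L},H^1_{\proet}[M])$ est l'image inverse, dans $H_C\otimes_{L\otimes F}\Omega^1[M]$, de $\iota(H_M)\otimes_L{\rm LL}(M)^\dual$ par la surjection naturelle sur $H_C\otimes_{L\otimes F}M_{\rm dR}\otimes_L{\rm LL}(M)^\dual$ de noyau $H_C\otimes_{L\otimes F}\O[M]$.

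Tout revient donc \`a identifier la droite $\iota(H_M)$. Par d\'efinition de $V_{M,\cal L}$, la compos\'ee de $\jmath$ avec $X_{\rm st}^+(M)\to C\otimes_F(M_{\rm dR}/\cal L)$ est nulle; la compos\'ee $\theta\circ\jmath$ se factorise donc par $C\otimes_F\cal L$, de sorte que $\iota(H_M)$ est contenue dans le sous-$(L\otimes F)$-module de rang~$1$ ${\rm Hom}_{\G_F}(V_{M,\cal L},C\otimes_F\cal L)=H_C\otimes_{L\otimes F}\cal L$ de $H_C\otimes_{L\otimes F}M_{\rm dR}$. Le point crucial --- et c'est l\`a la vraie difficult\'e --- est que $\iota(H_M)$ \emph{engendre} ce module sur $L\otimes F$, c'est-\`a-dire que $\iota$ se prolonge en un isomorphisme $H_M\otimes_L(L\otimes F)\cong H_C\otimes_{L\otimes F}\cal L$. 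Pour $F=\Q_p$ c'est imm\'ediat: $H_C\otimes_L\cal L$ est une droite sur $L$ et $\iota$ y est injective, donc bijective. En g\'en\'eral on \'ecrit $L\otimes_{\Q_p}F\cong\prod_{\sigma:F\to L}L$ et il faut voir que la composante $(\theta\circ\jmath)_\sigma$ est non nulle pour chaque plongement $\sigma$: cela r\'esulte de ce que $V_{M,\cal L}$ est \`a poids de Hodge--Tate exactement $\{0,1\}$ pour chaque $\sigma$ (rem.~\ref{Hum1}), la non-nullit\'e de $(\theta\circ\jmath)_\sigma$ traduisant pr\'ecis\'ement le saut de la filtration de Hodge en $\sigma$. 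Cet isomorphisme s'\'ecrit $H_C\cong\cal L^{-1}=M_{\rm dR}^\dual/\cal L^\perp$, et \`a travers lui $\iota(H_M)$ devient le sous-$L$-espace $L\subset L\otimes F=\cal L^{-1}\otimes_{L\otimes F}\cal L$.

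En transportant la description de la premi\`ere \'etape \`a travers l'isomorphisme $H_C\cong\cal L^{-1}$, on obtient que ${\rm Hom}_{L[\G_F]}(V_{M,\cal L},H^1_{\proet}[M])$ est l'image inverse, dans $\cal L^{-1}\otimes_{L\otimes F}\Omega^1[M]$, de ${\rm LL}(M)^\dual\hookrightarrow(L\otimes F)\otimes_L{\rm LL}(M)^\dual=\cal L^{-1}\otimes_{L\otimes F}\cal L\otimes_L{\rm LL}(M)^\dual\hookrightarrow\cal L^{-1}\otimes_{L\otimes F}M_{\rm dR}\otimes_L{\rm LL}(M)^\dual$, le premier plongement \'etant induit par $L\hookrightarrow L\otimes F$ et le second par $\cal L\hookrightarrow M_{\rm dR}$. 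Or c'est exactement la construction en deux temps --- passage \`a $W'_{M,\cal L}$, puis produit tensoriel par $\cal L^{-1}$ et image inverse le long de $L\hookrightarrow L\otimes F$ --- qui d\'efinit $W_{M,\cal L}$ au \S\,\ref{Emer4}; d'o\`u l'isomorphisme cherch\'e. Toutes les identifications sont $G$-\'equivariantes (les modules $H_M$, $H_C$, $\cal L$, $M_{\rm dR}$ portent l'action triviale de $G$) et topologiques (les fl\`eches verticales de la prop.~\ref{ajout2} \'etant d'image ferm\'ee, les deux membres sont des sous-espaces ferm\'es de $G$-fr\'echets), ce qui donne l'isomorphisme de $G$-fr\'echets annonc\'e. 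Le seul point vraiment d\'elicat est l'\'enonc\'e de g\'en\'eration de la deuxi\`eme \'etape; le reste est une chasse au diagramme et un peu d'alg\`ebre lin\'eaire sur l'anneau semi-simple $L\otimes_{\Q_p}F$.
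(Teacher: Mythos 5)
Votre d\'emonstration est correcte et suit pour l'essentiel la m\^eme voie que la preuve de l'article: appliquer la prop.\,\ref{ajout2} \`a $V_{M,{\cal L}}$, identifier $H_C\cong {\cal L}^{-1}$ \`a partir de l'injection $\iota(H_M)\subset H_C\otimes_{L\otimes F}{\cal L}$, puis transporter le diagramme pour retrouver la d\'efinition de $W_{M,{\cal L}}$. Vous explicitez simplement davantage le point clef (la non-nullit\'e de chaque composante $\sigma$ de $\theta\circ\jmath$, li\'ee aux poids $0$ et $1$ en chaque plongement), que l'article d\'eduit plus laconiquement d'une comparaison de rangs sur $L\otimes F$.
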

\begin{proof}
Notons simplement $H_M$ et $H_C$ les modules $H_M(V_{M,{\cal L}})$ et $H_C(V_{M,{\cal L}})$.
D'apr\`es le lemme~\ref{Fil1}, $H_M$ est un $L$-module de rang~$1$.
L'injection $H_M\to M_{\rm dR}\otimes_{L\otimes F}H_C$ induit 
une application naturelle
$M_{\rm dR}^\dual\otimes_LH_M\to H_C$.
Le noyau de cette application contient ${\cal L}^\perp\otimes_L H_M$ car
l'image de $V_{M,{\cal L}}$ dans $C\otimes_F M_{\rm dR}$ par un \'el\'ement de $H_M$
est incluse dans ${\cal L}$ par d\'efinition de $V_{M,{\cal L}}$ (et le fait
que $H_M$ est de dimension~$1$).
Comme $M_{\rm dR}^\dual$ est un $(L\otimes F)$-module de rang~$2$ et
$H_C$
est un $(L\otimes F)$-module de rang~$1$ (rem.\,\ref{Fil1.1}),
on en d\'eduit que le noyau est exactement \'egal \`a ${\cal L}^\perp$,
et donc $H_C= {\cal L}^{-1}$.

On en tire, en utilisant la prop.\,\ref{ajout2}, le diagramme commutatif:
$$\xymatrix@R=.5cm@C=.4cm{
0\ar[r]&{\cal L}^{-1}\hskip-.1cm\underset{L\otimes F}{\bigotimes}\O[M]\ar[r]\ar@{=}[d]& 
 {\rm Hom}_{L[\G_{F}]}(V_{M,{\cal L}},H^1_{\proet}[M])\ar[d]\ar[r]&
{\rm LL}(M)^\dual\ar@{^{(}->}[d]\ar[r]&0\\
0\ar[r]&{\cal L}^{-1}\hskip-.1cm\underset{L\otimes F}{\bigotimes}\O[M]\ar[r]&{\cal L}^{-1}\hskip-.1cm\underset{L\otimes F}{\bigotimes}
W'_{M,{\cal L}}\ar[r]&{\cal L}^{-1}\hskip-.1cm\underset{L\otimes F}{\bigotimes}{\cal L}\underset{L}{\bigotimes}{\rm LL}(M)^\dual\ar[r]&0}$$
qui permet, en utilisant 
l'isomorphisme
${\cal L}^{-1}\otimes_{L\otimes F}{\cal L}\cong L\otimes F$ et
la d\'efinition de~$W_{M,{\cal L}}$,
de conclure.
\end{proof}

\begin{rema}\label{hope}
Posons
$$\Pi_{\rm geo}^{\rm an}(V_{M,{\cal L}}):=\big({\rm Hom}_{L[\G_F]}
(V_{M,{\cal L}},H^1_{\proet}[M])\big)^\dual.$$
Il r\'esulte des prop.\,\ref{LL} et~\ref{dl2} que, si $F=\Q_p$, alors
$\Pi_{\rm geo}^{\rm an}(V_{M,{\cal L}})$ est li\'e \`a la correspondance de Langlands locale
$p$-adique: 
$$\Pi_{\rm geo}^{\rm an}(V_{M,{\cal L}})=\Pi(V_{M,{\cal L}})^{\rm an}.$$
On peut esp\'erer que, si $F\neq \Q_p$, alors $\Pi_{\rm geo}^{\rm an}(V_{M,{\cal L}})$
a un lien avec l'espace de vecteurs $F$-analytiques de la
repr\'esentation $\Pi(V_{M,{\cal L}})$ associ\'ee \`a $V_{M,{\cal L}}$ par
l'hypoth\'etique correspondance de Langlands locale $p$-adique pour ${\bf GL}_2(F)$.
\end{rema}

\subsection{La cohomologie \'etale $p$-adique de ${\cal M}_\infty$}\label{Emer5}

On cherche \`a comprendre quelles repr\'esentations de $\G_{\Q_p}$
peuvent appara\^{\i}tre dans la cohomologie \'etale de la tour de
Drinfeld.  Comme on l'a vu, pour la cohomologie pro\'etale, il suffit
qu'un des poids de Hodge-Tate soit nul; comme nous allons le voir (th.\,\ref{eliminate}
et cor.\,\ref{emer4}),
les conditions sont nettement plus restrictives pour la cohomologie \'etale.
Cela r\'esulte en partie de ce que $\O({\cal M}_\infty)$ n'a pas d'\'el\'ements $G$-born\'es,
ce qui d\'ecoule, si $F=\Q_p$, des arguments de compl\'et\'es universels utilis\'es ci-dessous,
mais peut aussi se v\'erifier directement, pour $F$ g\'en\'eral (cf.~rem.\,\ref{extraordinary})

\subsubsection{La cohomologie \'etale comme sous-espace de la cohomologie pro\'etale}
Le r\'esultat suivant permet de d\'ecrire la cohomologie \'etale de
${\cal M}_\infty$ \`a l'int\'erieur de sa cohomologie pro\'etale.

Rappelons qu'un sous-ensemble $X$ d'un $\Q_p$-espace vectoriel localement convexe $E$
est {\it born\'e} si $p^nx_n\to 0$ pour toute suite $(x_n)_{n\in \N}$ d'\'el\'ements
de $X$.  Si $E$ est un banach d\'efini par une valuation $v$, cela \'equivaut
\`a l'existence de $N\in\Z$ tel que $v(x)\geq N$, quel que soit $x\in X$.
Si $E$ est un fr\'echet d\'efini par une famille $(v_k)_{k\in\N}$ de valuations, cela \'equivaut
\`a l'existence, pour tout $k\in\N$, de $N_k\in\Z$ tel que $v_k(x)\geq N_k$, quel que soit $x\in X$.

Si $G$ est un groupe agissant contin\^ument sur $E$, un \'el\'ement $x$ de $E$ est
dit {\it $G$-born\'e} si son orbite $\{g\cdot x,\ g\in G\}$ est born\'ee.
\begin{prop}\label{fonda1}
{\rm (i)} $H^1_{\eet}({\cal M}_n,L(1))$ est l'espace des vecteurs $G$-born\'es de
$H^1_{\proet}({\cal M}_n,L(1))$.

{\rm (ii)} Si $M$ est un $(\varphi,N,\G_{\Q_p})$-module supercuspidal,
libre de rang~$2$ sur $L\otimes_{\Q_p}\Q_p^{\rm nr}$, alors
$H^1_{\eet}[M]$ est l'espace des vecteurs $G$-born\'es de $H^1_{\proet}[M]$.
\end{prop}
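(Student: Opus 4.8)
The plan is to prove (i) by identifying $H^1_{\eet}({\cal M}_n,L(1))$ with the ``integrally bounded'' part of $H^1_{\proet}({\cal M}_n,L(1))$, and then matching integral boundedness with $G$-boundedness through the action of $G$ on the Bruhat--Tits tree; part (ii) will then follow formally. The first step is structural: ${\cal M}_n$ is a smooth Stein curve over $C$ (finite \'etale over a disjoint union of copies of $\Omega_{\rm Dr}$), and I would pull back the affino\"ids $U_m=r^{-1}(\mathcal{T}_{\leq m})$ to get an increasing exhaustion ${\cal M}_n=\bigcup_m{\cal V}_{n,m}$ by affino\"ids, with $g\cdot{\cal V}_{n,m}$ the preimage of $gU_m$; since $\mathcal{T}_{\leq m}$ is a finite union of vertex-stars, each ${\cal V}_{n,m}$ is covered by finitely many $G$-translates of ${\cal V}_{n,1}$. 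For each $m$, ${\cal V}_{n,m}$ being quasicompact and separated, $H^1_{\proet}({\cal V}_{n,m},L(1))=H^1_{\eet}({\cal V}_{n,m},L(1))$ is a Banach space with unit ball $L\otimes H^1_{\eet}({\cal V}_{n,m},\Z_p(1))$, torsion-free and $p$-adically complete by rem.\,\ref{torsion}. By rem.\,\ref{torsion}\,(ii), $H^1_{\proet}({\cal M}_n,L(1))=\varprojlim_m H^1_{\eet}({\cal V}_{n,m},L(1))$ is a Fr\'echet space whose defining seminorms $v_m$ are the Banach norms of the pieces; on the other hand, the relevant $\R^1\varprojlim$ of $H^0$-terms vanishing, $H^1_{\eet}({\cal M}_n,\Z_p(1))=\varprojlim_m H^1_{\eet}({\cal V}_{n,m},\Z_p(1))$, hence $H^1_{\eet}({\cal M}_n,L(1))=L\otimes\varprojlim_m H^1_{\eet}({\cal V}_{n,m},\Z_p(1))$, which (by the elementary identity $(\varprojlim\text{ lattices})[1/p]=\{\text{bounded sequences}\}$) is exactly the subspace of $H^1_{\proet}({\cal M}_n,L(1))$ consisting of $x=(x_m)_m$ with $\inf_m v_m(x_m)>-\infty$. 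In particular $H^1_{\eet}({\cal M}_n,L(1))\hookrightarrow H^1_{\proet}({\cal M}_n,L(1))$, and $H^1_{\eet}({\cal M}_n,\Z_p(1))$ sits inside it as a $G$-stable bounded $\Z_p$-submodule.

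Granting this, the inclusion $H^1_{\eet}\subseteq(\text{$G$-bounded})$ is immediate: any $x$ lies in $p^{-N}H^1_{\eet}({\cal M}_n,\Z_p(1))$ for some $N$, a $G$-stable bounded set, so $\{gx\}$ is bounded. For the converse I would take a $G$-bounded $x=(x_m)_m$; boundedness of $\{gx\}$ against $v_1$ gives an $N$ with $v_1\big((gx)|_{{\cal V}_{n,1}}\big)\geq-N$ for all $g$, and since $g$ induces an isometry $H^1_{\eet}({\cal V}_{n,1},L(1))\overset{\sim}{\to}H^1_{\eet}(g{\cal V}_{n,1},L(1))$ respecting the integral structures, this forces $x|_{g{\cal V}_{n,1}}\in p^{-N}H^1_{\eet}(g{\cal V}_{n,1},\Z_p(1))$ for every $g\in G$, with $N$ uniform. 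Then for each $m$ I would write ${\cal V}_{n,m}=\bigcup_{i=1}^r g_i{\cal V}_{n,1}$; the nerve being a finite tree (two stars of $\mathcal{T}$ meet in at most an edge, three never), Mayer--Vietoris gives an exact sequence $0\to\check H^1(\{g_i{\cal V}_{n,1}\},\Z_p(1))\to H^1_{\eet}({\cal V}_{n,m},\Z_p(1))\to\bigoplus_i H^1_{\eet}(g_i{\cal V}_{n,1},\Z_p(1))$ (and rationally the same), and the \v{C}ech term vanishes. Hence $x_m$, with $p^{-N}$-integral restrictions to all the $g_i{\cal V}_{n,1}$, lies in $p^{-N}H^1_{\eet}({\cal V}_{n,m},\Z_p(1))$, i.e. $v_m(x_m)\geq-N$ with $N$ independent of $m$; so $p^Nx\in\varprojlim_m H^1_{\eet}({\cal V}_{n,m},\Z_p(1))=H^1_{\eet}({\cal M}_n,\Z_p(1))$ and $x\in H^1_{\eet}({\cal M}_n,L(1))$. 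This would give (i).

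For (ii) I would deduce it from (i): since $1+\varpi_D^n\O_D$ acts trivially on ${\cal M}_n$, the groups $H^1_?({\cal M}_n,L(1))$ are smooth $\check G$-representations; as ${\rm JL}(M)$ is finite-dimensional and fixed by $1+\varpi_D^n\O_D$, and as $[(1+\varpi_D^n\O_D):(1+\varpi_D^{n'}\O_D)]$ is invertible in the characteristic-zero field $L$ (so Hochschild--Serre for ${\cal M}_{n'}\to{\cal M}_n$ collapses), one gets $H^1_?[M]=\Hom_{L[\check G]}({\rm JL}(M),H^1_?({\cal M}_n,L(1)))$ for $?=\eet,\proet$ as soon as $n$ is at least the conductor of ${\rm JL}(M)$. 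Choosing a basis $(e_i)_{i=1}^d$ of ${\rm JL}(M)$ embeds $H^1_?[M]$ $G$-equivariantly into $H^1_?({\cal M}_n,L(1))^{\oplus d}$ (the $G$- and $\check G$-actions commuting) via $\varphi\mapsto(\varphi(e_i))_i$, so $\varphi$ is $G$-bounded iff each $\varphi(e_i)$ is, iff by (i) each $\varphi(e_i)\in H^1_{\eet}({\cal M}_n,L(1))$, iff $\varphi$ factors through the $\check G$-submodule $H^1_{\eet}({\cal M}_n,L(1))\subseteq H^1_{\proet}({\cal M}_n,L(1))$, i.e. $\varphi\in H^1_{\eet}[M]$. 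The hard part of the whole argument is the vanishing of $\check H^1(\{g_i{\cal V}_{n,1}\},\Z_p(1))$ in (i): this comes down to a connectedness statement — that each connected component of ${\cal M}_n$ meets each $g_i{\cal V}_{n,1}$ and each of their pairwise intersections (the preimages of the edge-tubes of $\mathcal{T}$) in a \emph{connected} piece, so that the \v{C}ech complex splits componentwise into the acyclic-in-degree-one \v{C}ech complex of a cover of a tree — and checking it requires a genuine input about the reduction map ${\cal M}_n\to\mathcal{T}$ and the monodromy of the Drinfeld coverings.
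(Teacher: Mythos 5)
Your reduction of (ii) to (i) is fine and is exactly the paper's one-line argument, and the first half of (i) is also sound: the identification of $H^1_{\eet}({\cal M}_n,L(1))$ with the uniformly integral elements of the Fr\'echet limit, and the observation that $G$-boundedness plus equivariance gives a single $N$ with $x|_{g{\cal V}_{n,1}}\in p^{-N}H^1_{\eet}(g{\cal V}_{n,1},\O_L(1))$ for \emph{all} $g$, parallel what the paper does (with translates of affino\"{\i}des by a cocompact $\Gamma$). The problem is the step you yourself flag as ``the hard part'': to pass from integrality on the pieces to integrality on ${\cal V}_{n,m}$ you need the \v{C}ech $H^1$ of the covering by the translated stars (the term coming from $H^0$ of the pairwise intersections) to vanish, and you justify this by asserting that each connected component of ${\cal M}_n$ meets each star and each edge-tube preimage in a \emph{connected} piece, so that the nerve is a tree. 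This is not only unproved, it is false for the Drinfeld coverings: since ${\cal M}_n\to\Omega_{\rm Dr}$ is a nontrivial \'etale covering, the preimage of an edge annulus inside a connected component of ${\cal M}_n$ splits in general into several annuli joining the same two vertex components (already for ${\cal M}_1$ the components above adjacent vertices meet along several annuli), so the nerve has multiple edges, hence loops, and the \v{C}ech term does not vanish. (A smaller slip: triple intersections of translates of ${\cal V}_{n,1}$, i.e.\ of preimages of \emph{closed} balls of radius $1$, are not empty --- three consecutive stars share the middle vertex affino\"{\i}de; the paper shrinks the stars so as to contain no other node precisely to avoid this.)

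This is exactly the point where the paper's proof takes a different route, and it is worth seeing why: instead of trying to kill the \v{C}ech term, the paper keeps it, observes that the connected components of the pairwise intersections form finitely many $\Gamma$-orbits, so the image of the \v{C}ech map $\beta$ in $H^1_{\proet}({\cal M}_n,L(1))$ is topologically generated by the $\Gamma$-translates of finitely many classes $v_1,\dots,v_t$, and then \emph{enlarges} the test affino\"{\i}de to some $Y\supset Y_1,\dots,Y_r$ (an $X_i$ with $i\gg 0$) on which these finitely many classes have nonzero image. With this choice, restriction to the translates $\gamma\cdot Y$ becomes injective on $H^1_{\proet}$ and detects integrality, and the $\Gamma$-bounded hypothesis then gives $v\in p^{-N}H^1_{\eet}({\cal M}_n,\O_L(1))$. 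So to repair your argument you would either have to prove an acyclicity statement about the nerve of the covering of ${\cal M}_n$ that is in fact false, or replace the small stars ${\cal V}_{n,1}$ by sufficiently large affino\"{\i}des chosen to absorb the \v{C}ech error, which is the paper's mechanism; as written, the proposal has a genuine gap at its central step.
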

\begin{proof}
Le (ii) est une cons\'equence imm\'ediate du (i) (prendre la $M$-partie permet de
travailler en niveau fini).  D\'emontrons donc le (i).
Soit $X_i$ une suite croissante d'affino\"{\i}des
dont la r\'eunion est ${\cal M}_n$.  Alors $H^1_{\proet}({\cal M}_n,L(1))$
est la limite projective des $H^1_{\eet}(X_i,L(1))$, et chacun des
$H^1_{\eet}(X_i,L(1))$ est un banach dont la boule unit\'e est
$H^1_{\eet}(X_i,\O_L(1))$ puisque $H^1_{\eet}(X_i,\O_L(1))$ est sans torsion.
Un sous-ensemble $A$ de $H^1_{\proet}({\cal M}_n,L(1))$ est donc born\'e si et
seulement si il existe une suite $k_i$ d'entiers tels que
${\rm Res}_{X_i}(A)\subset p^{-k_i}H^1_{\eet}(X_i,\O_L(1))$, pour tout $i$.
En particulier, $H^1_{\eet}({\cal M}_i,\O_L(1))=\varprojlim_i H^1_{\eet}(X_i,\O_L(1))$
est born\'e, et comme il est invariant par $G$, cela implique qu'il
est $G$-born\'e, et donc que $H^1_{\eet}({\cal M}_n,L(1))$ est inclus dans l'ensemble
des vecteurs $G$-born\'es.

Montrons maintenant que tout vecteur $G$-born\'e de $H^1_{\proet}({\cal M}_n,L(1))$
appartient \`a $H^1_{\eet}({\cal M}_n,L(1))$.
Pour cela,
choisissons un sous-groupe
cocompact $\Gamma$ de $G$ op\'erant sans point fixe sur l'arbre de ${\bf PGL}_2$.
Choisissons aussi des affino\"{\i}des $Y_1,\dots,Y_r$ de ${\cal M}_n$ avec les propri\'et\'es
suivantes:

$\bullet$ Les $\gamma\cdot Y_i$, pour $\gamma\in\Gamma$ et $1\leq i\leq r$,
forment un recouvrement de ${\cal M}_n$.

$\bullet$ Les intersections de trois $\gamma\cdot Y_i$, correspondant \`a des couples
$(\gamma,i)$ distincts deux \`a deux, sont vides.

\noindent (Pour construire de tels $Y_i$, on peut prendre un syst\`eme de repr\'esentants
des noeuds du squelette de ${\cal M}_n$ modulo l'action de $\Gamma$ -- comme $\Gamma$
est cocompact, ce syst\`eme est fini, notons $x_1,\dots,x_r$ ses \'el\'ements --
et, si $1\leq i\leq r$,
prendre un affino\"ide correspondant \`a une \'etoile assez grande du squelette, de sommet $x_i$,
mais ne contenant aucun autre noeud.)

On a alors des suites exactes:
\begin{align*}
&\prod_{(\gamma,i)\neq(\gamma',i')}\hskip-.5cm
H^0_{\eet}((\gamma\cdot Y_i)\cap(\gamma'\cdot Y_{i'}),\O_L(1)))
\to  H^1_{\eet}({\cal M}_n,\O_L(1))\to 
\prod_{(\gamma,i)}H^1_{\eet}(\gamma\cdot Y_i,\O_L(1))\\
&\prod_{(\gamma,i)\neq(\gamma',i')}\hskip-.5cm
H^0_{\eet}((\gamma\cdot Y_i)\cap(\gamma'\cdot Y_{i'}),L(1)))
\overset{\beta}{\to} H^1_{\proet}({\cal M}_n,L(1))\to 
\prod_{(\gamma,i)}H^1_{\eet}(\gamma\cdot Y_i,L(1))
\end{align*}
Les composantes connexes des
$(\gamma\cdot Y_i)\cap(\gamma'\cdot Y_{i'})$ ne forment qu'un nombre fini d'orbites
sous l'action de $\Gamma$.  L'image de $\beta$ dans
$H^1_{\proet}({\cal M}_n,L(1))$ est donc, topologiquement, engendr\'ee par les
translat\'es sous $\Gamma$ d'un nombre fini d'\'el\'ements $v_1,\dots,v_t$.
Soit alors $Y$ un affino\"{\i}de contenant $Y_1,\dots,Y_r$ et tel que
les images de $v_1,\dots,v_t$ dans $H^1_{\eet}(Y,L(1))$ soient non nulles
(il suffit de prendre pour $Y$ un des $X_i$ ci-dessus, pour $i\gg0$).
Alors
$H^1_{\proet}({\cal M}_n,L(1))\to
\prod_{\gamma}H^1_{\eet}(\gamma\cdot Y,L(1))$ est injective
et $H^1_{\eet}({\cal M}_n,\O_L(1))$ s'identifie \`a l'ensemble
des $v\in H^1_{\proet}({\cal M}_n,L(1))$ tels que ${\rm Res}_{\gamma\cdot Y}(v)\in
H^1_{\eet}(\gamma\cdot Y,\O_L(1))$, pour tout $\gamma\in\Gamma$, ce qui \'equivaut
\`a ${\rm Res}_{Y}(\gamma\cdot v)\in
H^1_{\eet}(Y,\O_L(1))$, pour tout $\gamma\in\Gamma$.

Or un \'el\'ement $G$-born\'e $v$ de $H^1_{\proet}({\cal M}_n,L(1))$ est a fortiori
$\Gamma$-born\'e et donc
v\'erifie, en particulier, que l'ensemble $\{{\rm Res}_{Y}(\gamma\cdot v),\ \gamma\in\Gamma\}$
est born\'e dans $H^1_{\eet}(Y,L(1))$, et donc est contenu dans
$p^{-N}H^1_{\eet}(Y,\O_L(1))$, pour $N$ assez grand.  D'apr\`es ce qui pr\'ec\`ede,
ceci implique $v\in p^{-N}H^1_{\eet}({\cal M}_n,\O_L(1))$, ce qui permet de conclure.
\end{proof}

\begin{center}
{\it Dans le reste de ce paragraphe, on suppose que $F=\Q_p$}.
\end{center}

\subsubsection{Sous-repr\'esentations galoisiennes de $H^1_{\eet}({\cal M}_\infty,\Q_p(1))$}
Soit $M$ un
$L$-$(\varphi, N, \mathcal{G}_{\Q_p})$-module, supercuspidal de pente $\frac{1}{2}$, libre de rang $2$ sur 
$L\otimes_{\Q_p} \Q_p^{\rm nr}$.
On dispose des $L$-modules $M_{\rm dR}$ et $X_{\rm st}^+(M)$ (\S\,\ref{Emer1}) et des $L$-repr\'esentations
$V_{M,{\cal L}}$ de $\G_F$ (\no\ref{Emer3}).
\begin{coro}\label{fonda2}
Si ${\cal L}$ est une $L$-droite de $M_{\rm dR}$, alors
$${\rm Hom}_{L[\G_{\Q_p}]}(V_{M,{\cal L}},H^1_{\eet}[M])\cong \Pi(V_{M,{\cal L}})^\dual.$$
En particulier, ${\rm Hom}_{L[\G_{\Q_p}]}(V_{M,{\cal L}},H^1_{\eet}[M])\neq 0$.
\end{coro}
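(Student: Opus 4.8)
Le plan consiste \`a encha\^iner les trois ingr\'edients d\'ej\`a \`a notre disposition: l'identification de la cohomologie pro\'etale de la prop.\,\ref{LL}, la comparaison avec la correspondance de Langlands locale $p$-adique de la prop.\,\ref{dl2} (i.e.~le r\'esultat principal de~\cite{DL}), et le passage de la cohomologie pro\'etale \`a la cohomologie \'etale via les vecteurs $G$-born\'es (prop.\,\ref{fonda1}), combin\'e avec le fait, d\^u \`a~\cite{CD}, que $\Pi(V_{M,{\cal L}})$ est le compl\'et\'e unitaire universel de $\Pi(V_{M,{\cal L}})^{\rm an}$.

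On commence par remarquer que, $V_{M,{\cal L}}$ \'etant de dimension finie sur $L$ et ne portant qu'une action de $\G_{\Q_p}$ (qui commute \`a celle de $G$), l'action de $G$ sur ${\rm Hom}_{L[\G_{\Q_p}]}(V_{M,{\cal L}},H^1_{\proet}[M])$ est simplement la post-composition avec l'action de $G$ sur le but: apr\`es choix d'une base de $V_{M,{\cal L}}$, ce $G$-fr\'echet s'identifie \`a une puissance finie de $H^1_{\proet}[M]$, et un morphisme $\phi$ y est donc $G$-born\'e si et seulement si $\phi(e)$ est un vecteur $G$-born\'e de $H^1_{\proet}[M]$ pour tout $e$ dans cette base. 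Joint \`a la prop.\,\ref{fonda1}\,(ii) (qui identifie $H^1_{\eet}[M]$ \`a l'espace des vecteurs $G$-born\'es de $H^1_{\proet}[M]$, et qui en fait un sous-espace $\G_{\Q_p}$-stable), ceci montre que ${\rm Hom}_{L[\G_{\Q_p}]}(V_{M,{\cal L}},H^1_{\eet}[M])$ est exactement le sous-espace des vecteurs $G$-born\'es de ${\rm Hom}_{L[\G_{\Q_p}]}(V_{M,{\cal L}},H^1_{\proet}[M])$.

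On transporte ensuite cette \'egalit\'e \`a travers l'isomorphisme topologique de $G$-fr\'echets de la prop.\,\ref{LL}, ${\rm Hom}_{L[\G_{\Q_p}]}(V_{M,{\cal L}},H^1_{\proet}[M])\cong W_{M,{\cal L}}$, lequel vaut $(\Pi(V_{M,{\cal L}})^{\rm an})^\dual$ d'apr\`es la prop.\,\ref{dl2}. Enfin, en dualisant l'\'enonc\'e de~\cite{CD} selon lequel $\Pi(V_{M,{\cal L}})$ est le compl\'et\'e unitaire universel de $\Pi(V_{M,{\cal L}})^{\rm an}$ --- exactement comme dans le cas de la steinberg rappel\'e au \S\,\ref{DEMI} ---, on obtient que $\Pi(V_{M,{\cal L}})^\dual$ s'identifie au sous-espace des vecteurs $G$-born\'es de $(\Pi(V_{M,{\cal L}})^{\rm an})^\dual$. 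En combinant ces identifications il vient ${\rm Hom}_{L[\G_{\Q_p}]}(V_{M,{\cal L}},H^1_{\eet}[M])\cong \Pi(V_{M,{\cal L}})^\dual$, la non-nullit\'e \'etant automatique puisque $\Pi(V_{M,{\cal L}})\neq 0$.

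Le point demandant de l'attention --- plus qu'un v\'eritable obstacle --- est le contr\^ole des topologies: il faut v\'erifier que l'isomorphisme de la prop.\,\ref{LL} est bien un isomorphisme topologique de $G$-repr\'esentations, de sorte que la notion de partie born\'ee se transporte, et que la dualit\'e utilis\'ee pour passer du compl\'et\'e unitaire universel aux vecteurs $G$-born\'es du dual est la dualit\'e faible employ\'ee partout (de sorte que $\Pi(V_{M,{\cal L}})^\dual\hookrightarrow(\Pi(V_{M,{\cal L}})^{\rm an})^\dual$ soit un plongement ferm\'e identifiant le premier \`a la partie born\'ee du second). Ces compatibilit\'es \'etant de m\^eme nature que celles d\'ej\`a \'etablies au \S\,\ref{DEMI} pour le demi-plan de Drinfeld lui-m\^eme, le corollaire en r\'esulte formellement.
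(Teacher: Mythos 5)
Votre esquisse suit exactement la d\'emonstration du texte: identification de $H^1_{\eet}[M]$ aux vecteurs $G$-born\'es de $H^1_{\proet}[M]$ via la prop.\,\ref{fonda1}, transport par les prop.\,\ref{LL} et~\ref{dl2} vers $(\Pi(V_{M,{\cal L}})^{\rm an})^\dual$, puis dualisation du fait que $\Pi(V_{M,{\cal L}})$ est le compl\'et\'e unitaire universel de $\Pi(V_{M,{\cal L}})^{\rm an}$ d'apr\`es~\cite{CD}. Les pr\'ecisions que vous ajoutez (commutation de ${\rm Hom}_{L[\G_{\Q_p}]}(V_{M,{\cal L}},-)$ avec le passage aux vecteurs $G$-born\'es, compatibilit\'e des topologies) sont correctes et ne font qu'expliciter ce que le texte laisse implicite.
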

\begin{proof}
Il r\'esulte de la prop.~\ref{fonda1}
que 
${\rm Hom}_{L[\G_{\Q_p}]}(V_{M,{\cal L}},H^1_{\eet}[M])$ est l'ensemble des vecteurs
$G$-born\'es de ${\rm Hom}_{L[\G_{\Q_p}]}(V_{M,{\cal L}},H^1_{\proet}[M])$.
Or les prop.~\ref{LL} et~\ref{dl2} permettent d'identifier ce dernier espace \`a
$(\Pi(V_{M,{\cal L}})^{\rm an})^\dual$ et le r\'esultat est une traduction, par dualit\'e,
de ce que $\Pi(V_{M,{\cal L}})$
est le compl\'et\'e universel de $\Pi(V_{M,{\cal L}})^{\rm an}$~\cite[th. VII.11]{CD}.
\end{proof}

\begin{rema}\label{hope2}
Ce r\'esultat sugg\`ere que, si $F\neq \Q_p$, la $G$-repr\'esentation
$$\Pi_{\rm geo}(V_{M,{\cal L}}):=
{\rm Hom}_{L[\G_{\Q_p}]}(V_{M,{\cal L}},H^1_{\eet}[M])^\dual$$
pourrait \^etre celle que l'on cherche en vue d'une correspondance de Langlands locale $p$-adique
pour ${\bf GL}_2(F)$.
\end{rema}

\begin{theo} \label{eliminate}
Soit $V$ une $L$-repr\'esentation de $\mathcal{G}_{\Q_p}$. Alors 
     
{\rm (i)} ${\rm Hom}_{L[\mathcal{G}_{\Q_p}]}(V, H^1_{\eet}[M])\ne 0$ 
si et seulement s'il existe\footnote{On se permet de remplacer, si besoin, $L$ par une extension finie.} $\cal L$ tel que $V_{M,{\cal L}}$ soit un quotient de $V$.
     
{\rm (ii)} Le $G$-banach $\Pi_M(V)={\rm Hom}_{L[\mathcal{G}_{\Q_p}]}(V, H^1_{\eet}[M])^\dual $ 
est admissible,
de longueur finie, et ses facteurs de Jordan-H\"older 
sont isomorphes \`a des $\Pi(V_{M, {\cal L}})$ pour certains~$\cal L$.        
\end{theo}
          
La preuve de ce r\'esultat est faite au \no\ref{emer5.2}.
On en tire, en utilisant le cor.~\ref{fonda2} et le fait que toute repr\'esentation
supercuspidale, de dimension~$2$, \`a poids $0$ et $1$, est de la forme $V_{M,{\cal L}}$
(cf.~rem.\,\ref{Hum1}),
 la cons\'equence suivante qui montre que:

$\bullet$ \`A part pour des caract\`eres lisses
provenant de $\Omega_{\rm Dr}\times\pi_0({\cal M}_\infty)$, {\it le socle de $H^1_{\eet}({\cal M}_\infty)$ ne contient que
des repr\'esentations supercuspidales de dimension~$2$, \`a poids~$0$ et~$1$}.

$\bullet$ {\it La cohomologie \'etale $p$-adique de la tour de Drinfeld 
encode la correspondance
de Langlands locale $p$-adique} pour les repr\'esentations supercuspidales de dimension~$2$, 
\`a poids~$0$ et~$1$.
\begin{coro}\label{emer4}
Soit $V$ une $L$-repr\'esentation absolument irr\'eductible de $\G_{\Q_p}$, de dimension~$\geq 2$.

{\rm (i)}
Si $V$ est supercuspidale, de dimension~$2$, \`a poids $0$ et $1$,
$${\rm Hom}_{L[{\rm W}_{\Q_p}]}(V,L\otimes_{\Q_p}H^1_{\eet}({\cal M}_\infty,\Q_p(1)))=
{\rm JL}(V)\otimes \Pi(V)^\dual.$$

{\rm (ii)} Dans le cas contraire, ${\rm Hom}_{L[{\rm W}_{\Q_p}]}(V,L\otimes_{\Q_p}H^1_{\eet}({\cal M}_\infty,\Q_p(1)))=0$.
\end{coro}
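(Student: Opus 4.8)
The plan is to assemble Corollary~\ref{emer4} from three results already in hand: Theorem~\ref{eliminate}, Corollary~\ref{fonda2}, and Remark~\ref{Hum1} (which identifies the supercuspidal $2$-dimensional representations of weights $0$ and~$1$ with the $V_{M,{\cal L}}$). The first step is to decompose the cohomology along the smooth action of $\check G$. Since the $\check G$-action on each $H^1_{\eet}({\cal M}_n,L(1))$ factors through a finite quotient of $\O_D^\dual$, the $G\times{\rm W}_{\Q_p}\times\check G$-module $H^1_{\eet}({\cal M}_\infty,L(1))=L\otimes_{\Q_p}H^1_{\eet}({\cal M}_\infty,\Q_p(1))$ is smooth as a $\check G$-representation, and I would split it, $G\times{\rm W}_{\Q_p}$-equivariantly, into its $\check G$-isotypic components $\bigoplus_\sigma\sigma\otimes_L m_\sigma$, where $\sigma$ runs over the irreducible smooth $L$-representations of $\check G$ and $m_\sigma={\rm Hom}_{L[\check G]}(\sigma,H^1_{\eet}({\cal M}_\infty,L(1)))$. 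As $V$ is finite-dimensional, ${\rm Hom}_{L[{\rm W}_{\Q_p}]}(V,-)$ commutes with this direct sum, so it suffices to compute each ${\rm Hom}_{L[{\rm W}_{\Q_p}]}(V,m_\sigma)$.

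There are two cases. If $\sigma$ is a character of $\check G$ (equivalently, factors through the reduced norm), then ${\rm W}_{\Q_p}$ acts on $m_\sigma$ through a character: the corresponding isotypic component is assembled from the cohomology of $\Omega_{\rm Dr}$, on which Galois acts trivially in degree~$1$ (Theorem~\ref{omegaet}), and from the action on $\pi_0({\cal M}_\infty)$, which is through $\nu_{{\rm W}_F}$. Since $V$ is absolutely irreducible of dimension~$\geq 2$, this gives ${\rm Hom}_{L[{\rm W}_{\Q_p}]}(V,m_\sigma)=0$. If $\sigma$ is not a character, then by Jacquet--Langlands $\sigma\cong{\rm JL}(M)$ for a unique supercuspidal $(\varphi,N,\G_{\Q_p})$-module $M$, free of rank~$2$ over $L\otimes_{\Q_p}\Q_p^{\rm nr}$; for $\sigma$ to occur in $H^1_{\eet}({\cal M}_\infty,L(1))$ a weight/central-character constraint forces $M$ to be of slope~$\frac{1}{2}$ (cf.\ the discussion opening \S\ref{Emer1}), so that $m_\sigma=H^1_{\eet}[M]$ carries a continuous $\G_{\Q_p}$-action extending the ${\rm W}_{\Q_p}$-action. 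As $V$ is finite-dimensional, any ${\rm W}_{\Q_p}$-equivariant map $V\to H^1_{\eet}[M]$ is continuous, hence $\G_{\Q_p}$-equivariant by density of ${\rm W}_{\Q_p}$ in $\G_{\Q_p}$; thus ${\rm Hom}_{L[{\rm W}_{\Q_p}]}(V,m_\sigma)={\rm Hom}_{L[\G_{\Q_p}]}(V,H^1_{\eet}[M])$.

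Finally I would apply Theorem~\ref{eliminate}(i): ${\rm Hom}_{L[\G_{\Q_p}]}(V,H^1_{\eet}[M])\neq 0$ exactly when $V_{M,{\cal L}}$ is a quotient of $V$ for some~${\cal L}$ (after possibly enlarging $L$); as $V$ is irreducible and $V_{M,{\cal L}}$ is $2$-dimensional, this forces $V\cong V_{M,{\cal L}}$, hence $V$ supercuspidal of dimension~$2$ with Hodge--Tate weights $0$ and~$1$. If $V$ is not of this shape, every summand vanishes and ${\rm Hom}_{L[{\rm W}_{\Q_p}]}(V,L\otimes_{\Q_p}H^1_{\eet}({\cal M}_\infty,\Q_p(1)))=0$, which is~(ii). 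If $V$ is of this shape, write $V\cong V_{M_0,{\cal L}_0}$ for the unique pair $(M_0,{\cal L}_0)$ given by Remark~\ref{Hum1}; by uniqueness there, the only contributing $\sigma$ is $\sigma_0={\rm JL}(M_0)={\rm JL}(V)$, and Corollary~\ref{fonda2} identifies ${\rm Hom}_{L[\G_{\Q_p}]}(V_{M_0,{\cal L}_0},H^1_{\eet}[M_0])$ with $\Pi(V)^\dual$, so that
$$ {\rm Hom}_{L[{\rm W}_{\Q_p}]}\big(V,\,L\otimes_{\Q_p}H^1_{\eet}({\cal M}_\infty,\Q_p(1))\big)\;\cong\;{\rm JL}(V)\otimes_L\Pi(V)^\dual, $$
which is~(i). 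The substantive work lies upstream, in Theorems~\ref{diagfond} and~\ref{eliminate}; within this deduction the only delicate points are the smooth-$\check G$ isotypic decomposition together with the verification that every $\sigma$ occurring is either a character or ${\rm JL}(M)$ with $M$ supercuspidal of rank~$2$ and slope~$\frac{1}{2}$ (so that no ``exotic'' $\check G$-type contributes), and the routine upgrade of ${\rm W}_{\Q_p}$-equivariance to $\G_{\Q_p}$-equivariance for maps out of a finite-dimensional representation.
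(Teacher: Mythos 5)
Votre proposition est correcte et suit essentiellement la même route que le texte, qui déduit ce corollaire directement du th.~\ref{eliminate}, du cor.~\ref{fonda2} et de la rem.~\ref{Hum1}; les étapes que vous ajoutez (la réduction aux composantes $\check G$-isotypiques, l'élimination de la partie caractère via l'action triviale de Weil sur $H^1_{\eet}(\Omega_{\rm Dr},\Q_p(1))$ et l'action par $\nu_{{\rm W}_F}$ sur $\pi_0$, la réduction à la pente $\frac{1}{2}$, et le passage de l'équivariance sous ${\rm W}_{\Q_p}$ à celle sous $\G_{\Q_p}$) ne font qu'expliciter ce que le texte laisse implicite. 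Le seul point à formuler avec soin est votre décomposition initiale: une représentation lisse de $\check G$ n'est pas nécessairement somme directe de composantes isotypiques (le centre non compact peut agir de façon non semi-simple, par exemple à travers $\pi_0({\cal M}_\infty)\cong F^\dual$), mais comme elle est semi-simple après restriction à $\O_D^\dual$ et que la partie caractère est de toute façon tuée par ${\rm Hom}_{L[{\rm W}_{\Q_p}]}(V,-)$ pour $V$ absolument irréductible de dimension $\geq 2$, cela n'affecte pas votre argument.
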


\subsubsection{D\'emonstration du th.~\ref{eliminate}}\label{emer5.2}
Passons \`a la preuve du th.~\ref{eliminate}.
On a d\'ej\`a prouv\'e 
que ${\rm Hom}_{L[\G_{\Q_p}]}(V_{M,{\cal L}},H^1_{\eet}[M])\neq 0$
(cf.~cor.~\ref{fonda2}).
Il s'ensuit que, si $V$ a un quotient isomorphe \`a $V_{M,{\cal L}}$, alors
${\rm Hom}_{L[\G_{\Q_p}]}(V,H^1_{\eet}[M])\neq 0$.
Il s'agit donc de prouver la r\'eciproque, et le (ii).

Soit donc $V$ une $L$-repr\'esentation de $\mathcal{G}_{\Q_p}$; notons simplement
$H_M$ et $H_C$ les modules $H_M(V)$ et $H_C(V)$ (\no\ref{Emer1}).
La prop.\,\ref{ajout2}
fournit une identification 
$${\rm Hom}_{L[\G_{\Q_p}]}(V, H^1_{\rm proet}[M])\simeq \{\omega\in H_C\otimes_{L} \Omega^1[M],\ 
\pi_{\rm dR}(\omega)\in H_M\otimes_L {\rm LL}(M)^\dual \}.$$    
    
Appliquer le foncteur ${\rm Hom}_{L[\G_{\Q_p}]} (V,-)$ \`a la suite exacte 
$$0\to V_{M,{\cal L}}\to X_{\rm st}^+(M)\to C\otimes_{\Q_p} (M_{\rm dR}/{\cal L})\to 0$$    
fournit le r\'esultat suivant.
\begin{lemm} \label{super} 
Si ${\cal L}$ est une $L$-droite de $M_{\rm dR}$, les conditions suivantes sont \'equivalentes:

{\rm (i)} ${\rm Hom}_{L[\G_{\Q_p}]} (V, V_{M, {\cal L}})=0$.

{\rm (ii)} $\iota:H_M\to M_{\rm dR}\otimes_L H_C$ 
induit une injection $H_M\to (M_{\rm dR}/{\cal L})\otimes_L H_C$.
\end{lemm}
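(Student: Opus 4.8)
The plan is to read off the equivalence directly from the left-exact sequence obtained by applying the functor ${\rm Hom}_{L[\G_{\Q_p}]}(V,-)$ to the short exact sequence
$$0\to V_{M,{\cal L}}\to X_{\rm st}^+(M)\to C\otimes_{\Q_p}(M_{\rm dR}/{\cal L})\to 0$$
defining $V_{M,{\cal L}}$ (cf.~\no\ref{Emer3}). Since ${\rm Hom}$ is left exact, this gives an exact sequence
$$0\to {\rm Hom}_{L[\G_{\Q_p}]}(V,V_{M,{\cal L}})\to {\rm Hom}_{L[\G_{\Q_p}]}(V,X_{\rm st}^+(M))\to {\rm Hom}_{L[\G_{\Q_p}]}\big(V,C\otimes_{\Q_p}(M_{\rm dR}/{\cal L})\big),$$
so that ${\rm Hom}_{L[\G_{\Q_p}]}(V,V_{M,{\cal L}})$ is the kernel of the rightmost arrow. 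The content of the lemma is then to recognise that arrow as the composite $H_M\to M_{\rm dR}\otimes_LH_C\to (M_{\rm dR}/{\cal L})\otimes_LH_C$.

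First I would identify the two nontrivial terms. The middle one is $H_M=H_M(V)$ by definition (cf.~\S\,\ref{Emer1}). For the term on the right I would use that $M_{\rm dR}=(C\otimes_{\Q_p^{\rm nr}}M)^{\G_F}$ carries the trivial $\G_{\Q_p}$-action, so that $C\otimes_{\Q_p}(M_{\rm dR}/{\cal L})\cong (L\otimes_{\Q_p}C)\otimes_L(M_{\rm dR}/{\cal L})$ as $\G_{\Q_p}$-representations; since $M_{\rm dR}/{\cal L}$ is a finite-dimensional $L$-vector space (of dimension $1$, as $F=\Q_p$), the functor ${\rm Hom}_{L[\G_{\Q_p}]}(V,-)$ commutes with tensoring by it, whence an isomorphism ${\rm Hom}_{L[\G_{\Q_p}]}\big(V,C\otimes_{\Q_p}(M_{\rm dR}/{\cal L})\big)\cong H_C\otimes_L(M_{\rm dR}/{\cal L})$ with $H_C=H_C(V)$. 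Unwinding the description of $\iota$ recalled in rem.\,\ref{ajout1} (it is induced by $\theta:X_{\rm st}^+(M)\to M\otimes_{\Q_p^{\rm nr}}C\simeq M_{\rm dR}\otimes_FC$), the arrow in question is exactly $\iota:H_M\hookrightarrow M_{\rm dR}\otimes_LH_C$ followed by the natural surjection $M_{\rm dR}\otimes_LH_C\twoheadrightarrow (M_{\rm dR}/{\cal L})\otimes_LH_C$.

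Combining the two steps yields ${\rm Hom}_{L[\G_{\Q_p}]}(V,V_{M,{\cal L}})={\rm Ker}\big(H_M\to (M_{\rm dR}/{\cal L})\otimes_LH_C\big)$, which vanishes exactly when this induced map is injective, that is, precisely under condition (ii); hence (i)$\Leftrightarrow$(ii). I do not expect a serious obstacle. The only points that deserve a word are that the displayed sequence really is short exact, i.e.\ that $X_{\rm st}^+(M)\to C\otimes_{\Q_p}(M_{\rm dR}/{\cal L})$ is surjective (this is part of the setup of \no\ref{Emer3} and ultimately rests on \cite{CF}), and the small bookkeeping identifying the connecting map with the composite built from $\iota$; both are routine.
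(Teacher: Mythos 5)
Your proposal is correct and coincides with the paper's own argument: the lemma is obtained there precisely by applying ${\rm Hom}_{L[\G_{\Q_p}]}(V,-)$ to the exact sequence $0\to V_{M,{\cal L}}\to X_{\rm st}^+(M)\to C\otimes_{\Q_p}(M_{\rm dR}/{\cal L})\to 0$ and identifying the resulting map with $\iota$ composed with the projection modulo ${\cal L}$. Your extra bookkeeping (triviality of the Galois action on $M_{\rm dR}/{\cal L}$, identification of the terms with $H_M$ and $(M_{\rm dR}/{\cal L})\otimes_L H_C$) is exactly what the paper leaves implicit.
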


Fixons un $\cal L_0$ tel que ${\rm Hom}_{L[\G_{\Q_p}]} (V, V_{M, {\cal L_0}})=0$. 
Soit $m_1$ une base de $\cal L_0$, et compl\'etons $m_1$ en une base 
$m_1, m_2$ de $M_{\rm dR}$. On peut \'ecrire $x\in H_M$ sous la forme 
$x=m_1\otimes a(x)+m_2\otimes b(x)$, avec $a(x),b(x)\in H_C$, et le lemme\,\ref{super}
implique que
$b: H_M\to H_C$ est injective. En notant $S={\rm Im}(b)\subset H_C$, 
on en d\'eduit l'existence d'une application lin\'eaire $A: S\to H_C$ telle que 
$$H_M=\{m_1\otimes A(x)+m_2\otimes x,\  x\in S\}.$$ 
        
Supposons maintenant que 
${\rm Hom}_{L[\G_{\Q_p}]} (V, V_{M, {\cal L}})=0$
pour tout $\cal L$ (en se permettant de remplacer $L$ par une extension finie). Nous allons montrer que $ {\rm Hom}_{L[\G_{\Q_p}]}(V, H^1_{\eet}[M])=0$. Par le lemme\,\ref{super}, l'application
$\iota: H_M\to M_{\rm dR}\otimes_L H_C$ induit une injection $ H_M\to (M_{\rm dR}/\cal L)\otimes_L H_C$ pour tout $\cal L$. Comme $$H_M=\{m_1\otimes A(x)+m_2\otimes x,\ x\in S\},$$ 
on en d\'eduit que $A-\lambda\cdot {\rm id}$ est injective pour tout $\lambda$,
et donc que le seul sous-espace de $S$ stable par $A$ est $0$.
     
Fixons $\lambda_1,\dots,\lambda_n\in L$ deux \`a deux distincts, avec $n=\dim H_C$. 
Gr\^ace au
lemme\,\ref{painful} ci-dessous, $A$ s'\'etend en un endomorphisme $u: H_C\to H_C$, diagonalisable,
de valeurs propres $\lambda_1,\dots,\lambda_n$.
 Quitte \`a changer de base dans $H_C$, on peut donc supposer que $u$ est repr\'esent\'e par la matrice diagonale de coefficients 
 $\lambda_1,...,\lambda_n$. Soit $\mathcal{L}_i$ la droite engendr\'ee par $m_2+\lambda_i m_1$ dans $M_{\rm dR}$. On obtient une inclusion $H_M\subset \oplus_{i=1}^n 
\mathcal{L}_i$ qui, combin\'ee avec 
l'identification (cf. ci-dessus)
$${\rm Hom}_{L[\G_{\Q_p}]}(V, H^1_{\rm proet}[M])\simeq \{\omega\in H_C\otimes_{L} \Omega^1[M],\ 
\pi_{\rm dR}(\omega)\in H_M\otimes_L {\rm LL}(M)^\dual \}\,,$$
induit un plongement (prop.\,\ref{dl2})
$$ {\rm Hom}_{L[\G_{\Q_p}]}(V, H^1_{\rm proet}[M])\subset \bigoplus_{i=1}^n (\Pi(V_{M, {\cal L_i}})^{\rm an})^\dual .$$
 En passant aux vecteurs $G$-born\'es comme dans la preuve du cor.\,\ref{fonda2},
on obtient un plongement 
 $$ {\rm Hom}_{L[\G_{\Q_p}]}(V, H^1_{\eet}[M])\subset \bigoplus_{i=1}^n \Pi(V_{M, {\cal L_i}})^\dual $$
Puisque les $\lambda_i$ sont arbitraires (deux \`a deux distincts) et les $ \Pi(V_{M, {\cal L_i}})^\dual $ sont des
$G$-modules topologiques simples, on en d\'eduit que  $ {\rm Hom}_{L[\G_{\Q_p}]}(V, H^1_{\eet}[M])=0$. 

Cela termine la preuve du (i) du th\'eor\`eme; passons au (ii).
On sait d\'ej\`a (cor.~\ref{fonda2}), que $\Pi_M(V_{M,{\cal L}})=\Pi(V_{M,{\cal L}})$.
Si $V$ n'a pas de quotient isomorphe \`a un $V_{M,{\cal L}}$, alors
$\Pi_M(V)=0$ d'apr\`es le (i).  Dans le cas contraire, on a
une suite exacte
$$0\to V'\to V\to V_{M, {\cal L}}\to 0$$
pour un certain ${\cal L}$, et on d\'emontre le r\'esultat, par r\'ecurrence
sur la dimension de $V$, en utilisant la suite exacte
$$0\to \Pi_M(V_{M, {\cal L}})^\dual \to \Pi_M(V)^\dual \to \Pi_M(V')^\dual $$
et les propri\'et\'es~\cite{STBanach} de la cat\'egorie des repr\'esentations unitaires admissibles
de $G$.
       
\smallskip
Pour conclure, il ne reste plus qu'\`a prouver le r\'esultat d'alg\`ebre lin\'eaire suivant.
\begin{lemm}\label{painful}
Soit $A:S\to H_C$ une application lin\'eaire telle que le seul sous-espace de $S$ stable par $A$ soit $0$,
et soient
$\lambda_1,\dots,\lambda_n\in L$, avec $n=\dim_L H_C$, deux \`a deux distincts.  Alors
$A$ admet un prolongement \`a $H_C$ de valeurs propres $\lambda_1,\dots,\lambda_n$.
\end{lemm}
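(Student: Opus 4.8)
Posons $n=\dim_L H_C$; le cas $n\le 1$ \'etant imm\'ediat (on a alors $S=0$ et $u=\lambda_1\id$ convient), on suppose $n\ge 2$. On observe d'abord que l'hypoth\`ese \'equivaut \`a l'absence de vecteur propre de $A$ dans $S$ apr\`es extension des scalaires \`a $\overline L$: un tel vecteur propre engendre, via le $\overline L$-espace engendr\'e par son orbite sous $\Gal(\overline L/L)$, un sous-$L$-espace non nul de $S$ stable par $A$; r\'eciproquement, tout sous-espace stable non nul produit un vecteur propre sur $\overline L$.

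Le plan est de proc\'eder par r\'ecurrence sur la codimension de $S$ dans $H_C$, afin de se ramener au cas o\`u $S$ est un hyperplan. Supposons $\dim S\le n-2$ et fixons $v_0\notin S$; on prolonge $A$ en $\widehat A\colon S\oplus Lv_0\to H_C$ en posant $\widehat A(v_0)=y$, o\`u $y$ est choisi de fa\c con que $\widehat A$ v\'erifie encore l'hypoth\`ese (toute extension de $\widehat A$ \`a $H_C$ prolongera alors $A$). Un calcul imm\'ediat montre que $\widehat A$ poss\`ede, sur $\overline L$, un vecteur propre non contenu dans $S$ si et seulement si $y$ appartient \`a l'image de l'application polynomiale $(\lambda,s)\mapsto\lambda v_0+(\lambda\,\id-A)(s)$; un vecteur propre situ\'e dans $S$ en fournirait un pour $A$, ce qui est exclu. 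La source de cette application \'etant de dimension $1+\dim S\le n-1<n$, l'adh\'erence de Zariski de son image est de dimension $<n$, donc diff\'erente de $H_C$; comme $L$ est infini, on peut choisir $y$ en dehors, d'o\`u la r\'eduction.

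Il reste \`a traiter le cas o\`u $S$ est un hyperplan de $H_C$. Fixons $v\notin S$, de sorte que $H_C=S\oplus Lv$, et \'ecrivons $A(s)=A_S(s)+\phi(s)\,v$ avec $A_S\in\operatorname{End}_L(S)$ et $\phi\in\Hom_L(S,L)$. Tout prolongement $u$ de $A$ s'\'ecrit $u(v)=y_S+\mu v$ ($y_S\in S$, $\mu\in L$), avec pour matrice $\matrice{A_S}{y_S}{\phi}{\mu}$ dans une base de $S$ compl\'et\'ee par $v$, et la formule du compl\'ement de Schur donne
$$\chi_u(x)=(x-\mu)\,q(x)-\phi\bigl(\operatorname{adj}(xI-A_S)\,y_S\bigr),\qquad q:=\chi_{A_S},$$
o\`u $\operatorname{adj}(xI-A_S)$ est la comatrice transpos\'ee, polynomiale en $x$ de degr\'e $n-2$. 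On veut rendre $\chi_u$ \'egal \`a $p:=\prod_{i=1}^{n}(x-\lambda_i)$: les polyn\^omes unitaires $(x-\mu)q$ et $p$ \'etant de degr\'e $n$, le choix $\mu:=q_{n-2}-p_{n-1}$ (coefficients de $x^{n-2}$ dans $q$ et de $x^{n-1}$ dans $p$) rend $(x-\mu)q-p$ de degr\'e $\le n-2$, et il suffit de trouver $y_S$ tel que $\phi(\operatorname{adj}(xI-A_S)\,y_S)=(x-\mu)q(x)-p(x)$. L'application $L$-lin\'eaire $\Theta\colon S\to L[x]_{\le n-2}$, $y_S\mapsto\phi(\operatorname{adj}(xI-A_S)\,y_S)$, relie deux espaces de m\^eme dimension $n-1$; il suffit donc de la voir injective. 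En \'ecrivant $\operatorname{adj}(xI-A_S)=\sum_k x^kN_k$, la relation $\operatorname{adj}(xI-A_S)(xI-A_S)=q(x)I$ fournit $N_{n-2}=I$ et $N_{k-1}=A_SN_k+q_kI$, d'o\`u, par le th\'eor\`eme de Cayley-Hamilton, $\Theta(y_S)=0$ si et seulement si $\phi(A_S^jy_S)=0$ pour tout $j\ge 0$; l'espace $W:=\bigcap_{j\ge 0}\ker(\phi\circ A_S^j)$ est alors stable par $A_S$ et contenu dans $\ker\phi$, donc stable par $A=A_S+\phi(\cdot)\,v$, d'o\`u $W=0$ par hypoth\`ese et $\Theta$ injective. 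On obtient ainsi un prolongement $u$ de $A$ avec $\chi_u=p$, et puisque les $\lambda_i$ sont deux \`a deux distincts, $u$ est diagonalisable de valeurs propres $\lambda_1,\dots,\lambda_n$.

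Le point d\'elicat sera l'\'etape de r\'eduction: il faudra v\'erifier avec soin que le lieu des $y$ \`a \'eviter est contenu dans une partie ferm\'ee de Zariski stricte de $H_C$ (simple argument de dimension, mais qui utilise de mani\`ere essentielle l'hypoth\`ese $\dim S\le n-2$), et que la condition de l'\'enonc\'e, lue sur $\overline L$ via la descente galoisienne, est pr\'eserv\'ee par le prolongement $\widehat A$.
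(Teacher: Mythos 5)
Votre d\'emonstration est correcte, mais elle suit une route r\'eellement diff\'erente de celle de l'article. Vous traduisez d'abord l'hypoth\`ese, par descente galoisienne, en l'absence de vecteur propre de $A$ dans $S\otimes_L\overline{L}$, puis vous agrandissez $S$ en un hyperplan en choisissant les images des nouveaux vecteurs de fa\c con g\'en\'erique (l'image du morphisme $(\lambda,s)\mapsto \lambda v_0+(\lambda\,{\rm id}-A)(s)$ est de dimension $\leq 1+\dim S\leq n-1<n$ et $L$ est infini), et enfin, dans le cas d'un hyperplan, vous r\'esolvez directement l'\'equation $\chi_u=p$ via la formule du compl\'ement de Schur: l'application $y_S\mapsto\phi(\operatorname{adj}(xI-A_S)\,y_S)$ est injective car $\bigcap_{j\geq 0}\ker(\phi\circ A_S^j)$ est un sous-espace de $S$ stable par $A$, donc nul, et elle est donc bijective entre espaces de dimension $n-1$; ces \'etapes sont correctement justifi\'ees (votre dernier paragraphe pr\'esente comme restant \`a v\'erifier des points que vous avez en fait d\'ej\`a trait\'es: dimension de l'image \`a \'eviter, pr\'eservation de l'hypoth\`ese par $\widehat A$). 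L'article proc\`ede autrement: r\'ecurrence sur $n$ au moyen des sous-espaces de Krylov $v,Av,\dots,A^{k(v)}v$; le cas {\og cyclique\fg} $k(v)=n-2$ se traite par une matrice compagnon, et sinon on d\'ecoupe $H_C$ en deux morceaux auxquels on applique l'hypoth\`ese de r\'ecurrence en partageant les $\lambda_i$. Votre approche \'evite cette combinatoire et montre au passage que tout polyn\^ome unitaire de degr\'e $n$ (pas seulement ceux \`a racines distinctes) est r\'ealisable comme polyn\^ome caract\'eristique d'un prolongement; en revanche, elle utilise l'infinitude de $L$ pour le choix g\'en\'erique, alors que l'argument de l'article, purement combinatoire, vaut sur un corps quelconque.
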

\begin{proof}
La d\'emonstration se fait par r\'ecurrence sur $n$, le cas $n=1$ \'etant trivial.

Si $v\in S$ est non nul, on note $k(v)$ le plus grand entier tel que $v,Av,\dots,A^{k(v)}\in S$.
Si $\sum_{i\leq k(v)+1}\lambda_iA^iv=0$, on a $\lambda_{k(v)+1}=0$ puisque 
$A^{k(v)+1}v\notin S$, alors que $A^iv\in S$ si $i\leq k(v)$; on en d\'eduit que
$\lambda_i=0$ pour tout $i$ puisque $A$ ne laisse pas invariant le sous-espace
engendr\'e par $v,Av,\dots,A^{k(v)}v$.  Autrement dit,
$v,Av,\dots,A^{k(v)+1}v$ sont lin\'eairement ind\'ependants et, en particulier, $k(v)\leq n-2$.

Si $k(v)=n-2$, alors les $e_i=A^{i-1}v$, pour $1\leq i\leq n$, forment une base de $H_C$
dans laquelle la matrice de $A$ est une matrice \`a $n$ lignes et $n-1$ colonnes
avec des $1$ en dessous de la diagonale et des $0$ partout ailleurs.
Si $\sigma_1,\dots,\sigma_n$ sont les fonctions sym\'etriques de $\lambda_1,\dots,\lambda_n$,
poser $Ae_n=\sigma_ne_1+\sigma_{n-1}e_2+\cdots+\sigma_1e_n$ fournit une extension de $A$ ayant
les propri\'et\'es voulues.

Si $k(v)<n-2$, choisissons un suppl\'ementaire $S'$ de $S_v=Lv\oplus\cdots\oplus LA^{k(v)}v$
dans $S$.  Alors $AS'\cap (S_v+AS_v)=0$: en effet, si $Av'=\sum_{i=0}^{n+1}\lambda_iA^iv$,
alors $\lambda_0=0$, sinon $v\in AS$, ce qui est contraire \`a la maximalit\'e de $k(v)$,
et donc $v'\in S_v$ puisque $A$ est injective.
On peut donc trouver un suppl\'ementaire $H_C'$ de $S_v+AS_v$ dans $H_C$, qui contient $AS'$,
et appliquer l'hypoth\`ese de r\'ecurrence \`a $A:S_v\to (S_v+AS_v)$ et $A:S'\to H_C'$,
en partitionnant $\lambda_1,\dots,\lambda_n$ en deux ensembles de cardinaux
$\dim_L (S_v+AS_v)$ et $\dim_L H_C'$.

Ceci permet de conclure.
\end{proof}

\section{M\'ethodes perfecto\"{\i}des}\label{DEMI3}
     \subsection{Quelques faisceaux pro-\'etales} 
                        
   Consid\'erons un espace adique lisse $X/C$ (dans les applications $X$ sera une vari\'et\'e rigide 
   lisse, de dimension $1$). 
    On dispose \cite{RAV} du site pro-\'etale
   $X_{\proet}$ de $X$ et d'une projection 
   $\nu: X_{\proet}\to X_{\eet}$ vers le site \'etale $X_{\eet}$ de $X$, ainsi que 
   des faisceaux suivants sur $X_{\proet}$:
   
       $\bullet$ les faisceaux 
   $\Z_p=\varprojlim_{n} \mathbf{Z}/p^n$ et $\Z_p(1)=\varprojlim_{n} \mu_{p^n}$. Si $\mathcal{F}$ est un faisceau de $\Z_p$-modules sur 
   $X_{\proet}$, on note $\mathcal{F}(1)=\mathcal{F}\otimes_{\Z_p}Ê\Z_p(1)$.

     $\bullet$ le faisceau $\mathcal{O}_{X}^+$ (pullback 
    du faisceau $\mathcal{O}_{X}^+$ sur $X_{\eet}$), et ses compl\'et\'es  $\widehat{\mathcal{O}}_X^+=\varprojlim_{n} \mathcal{O}_X^+/p^n$ et $\widehat{\mathcal{O}}_X=\widehat{\mathcal{O}}_X^+\otimes_{\Z_p} \Q_p$.

      Nous allons utiliser syst\'ematiquement dans la suite le r\'esultat suivant de Scholze \cite[lemma 3.24]{survey}.

\begin{prop}\label{Scholze} Le morphisme naturel  $\mathcal{O}_{X_{\eet}}\to \nu_* \widehat{\mathcal{O}}_X$
est un isomorphisme, et on dispose d'un isomorphisme canonique
     $\mathcal{O}_{X_{\eet}}$-lin\'eaire $\Omega^1_{X_{\eet}}\simeq R^1\nu_*\widehat{\mathcal{O}}_X(1)$.
\end{prop}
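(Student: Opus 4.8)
The statement to prove is Proposition \ref{Scholze} (Scholze's Lemma 3.24): the natural map $\mathcal{O}_{X_{\eet}}\to\nu_*\widehat{\mathcal{O}}_X$ is an isomorphism, and there is a canonical $\mathcal{O}_{X_{\eet}}$-linear isomorphism $\Omega^1_{X_{\eet}}\simeq R^1\nu_*\widehat{\mathcal{O}}_X(1)$.

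\begin{proof}[Sketch of the approach]
The plan is to reduce everything to an explicit local computation on a small affinoid $X=\Spec A$ that is \emph{étale over a torus}, i.e.\ admits an étale map to $\Spec C\langle T_1^{\pm 1},\dots,T_d^{\pm 1}\rangle$, since such affinoids form a basis for $X_{\eet}$ and the statement is local. Over such an affinoid one has the standard pro-étale "perfectoid" cover $\widetilde X=X\times_{\Spec C\langle T_i^{\pm 1}\rangle}\Spec C\langle T_i^{\pm 1/p^\infty}\rangle$, which is affinoid perfectoid with Galois group $\Gamma\cong\Zp(1)^d$ (the group of $p$-power roots of unity acting on the $T_i^{1/p^n}$). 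The first step is to identify $R^i\nu_*\widehat{\mathcal{O}}_X$ on this cover with continuous group cohomology $H^i_{\cont}(\Gamma,\widehat{\mathcal{O}}_X(\widetilde X))$: this is the Cartan–Leray / Čech-to-derived-functor spectral sequence for the pro-étale cover $\widetilde X\to X$, using that $\widetilde X$ is "pro-étale contractible enough" so that its higher $\widehat{\mathcal{O}}$-cohomology vanishes — which is Scholze's acyclicity result for affinoid perfectoids (the almost vanishing of $H^i(\widetilde X_{\proet},\widehat{\mathcal{O}}^+)$ for $i>0$, up to bounded torsion, hence honest vanishing after inverting $p$).

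The second step is the computation of $H^\ast_{\cont}(\Gamma,\widehat{\mathcal{O}}_X(\widetilde X))$. One writes $\widehat{\mathcal{O}}_X(\widetilde X)=\widehat{A}_\infty$ as a completed direct sum over multi-indices of $\widehat{A}\cdot T^{a}$ with $a\in(\tfrac1{p^\infty}\Zp/\Zp)^d$ pieces plus the "integral" part $\widehat{A}$; $\Gamma$ acts on the $T^a$ component through a character. The invariants are exactly $\widehat{A}=\widehat{\mathcal{O}}_X(X)$, giving the first assertion $\nu_*\widehat{\mathcal{O}}_X=\mathcal{O}_{X_{\eet}}$. For $H^1$, the pieces with $a\neq 0$ contribute nothing (the relevant character is nontrivial, so Koszul cohomology vanishes there up to small torsion, killed after $\otimes\Q_p$), and the $a=0$ piece $\widehat A$ with trivial $\Gamma$-action contributes $H^1_{\cont}(\Gamma,\widehat A)=\widehat A\otimes_{\Zp}H^1_{\cont}(\Gamma,\Zp)\cong\widehat A^{\,d}$; under the identification $\Gamma\cong\Zp(1)^d$ this is canonically $\bigoplus_i\widehat A\cdot\dlog T_i\,(-1)$, which is exactly $\Omega^1_{X}(-1)$ by the étale-local description of differentials on a torus. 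Twisting by $\Zp(1)$ removes the Tate twist and yields $R^1\nu_*\widehat{\mathcal{O}}_X(1)\simeq\Omega^1_{X_{\eet}}$. Finally one checks this identification is independent of the chosen chart (hence glues to a global isomorphism of sheaves on $X_{\eet}$): the class attached to $\dlog f$ for a unit $f$ is functorial in $f$, so the isomorphism is characterized intrinsically by $\dlog$, which settles compatibility.

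\textbf{The main obstacle} is controlling the torsion in the local cohomology computation: Scholze's primitive comparison / acyclicity statements hold only "almost" and only up to a bounded $p$-power, so one must be careful that after tensoring with $\Q_p$ (i.e.\ passing to $\widehat{\mathcal{O}}_X=\widehat{\mathcal{O}}_X^+\otimes_{\Zp}\Q_p$) all these error terms genuinely disappear and the spectral sequence degenerates in the relevant range. A secondary technical point is justifying that the completed tensor / completed direct sum decomposition of $\widehat{A}_\infty$ is compatible with taking continuous $\Gamma$-cohomology (exchanging a limit with cohomology), which follows from the finiteness of $\Gamma$-cohomological dimension and the Mittag-Leffler property of the truncated pieces. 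Since all of this is exactly \cite[Lemma 3.24]{survey}, the present proof consists in recalling these steps rather than redoing them in full.
\end{proof}
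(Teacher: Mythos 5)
The paper does not prove this proposition: it is quoted verbatim from Scholze (\cite[lemma 3.24]{survey}), and your sketch is a faithful outline of Scholze's own argument — reduction to an affinoid étale over a torus, the perfectoid toric tower with Galois group $\Zp(1)^d$, almost acyclicity of $\widehat{\mathcal{O}}^+$ on affinoid perfectoids plus Cartan--Leray, and the character decomposition whose trivial component yields $\Omega^1(-1)$ via $\dlog T_i$ — so it matches the treatment the paper relies on. Only a cosmetic point: the relevant objects are adic/affinoid spaces, so one should write $\Sp$ (or $\operatorname{Spa}$) rather than $\Spec$ for $C\langle T_1^{\pm1},\dots,T_d^{\pm1}\rangle$ and its perfectoid cover.
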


Une cons\'equence tr\`es utile pour la suite est la suivante:

\begin{lemm} \label{five term}
On dispose d'une suite exacte canonique 
$$0\to H^1_{\eet}(X, \mathcal{O}_{X})\to H^1_{\proet}(X, \widehat{\mathcal{O}}_X)\to \Omega^1(X)(-1)\to H^2_{\eet}(X, \mathcal{O}_{X}).$$
Si $X$ est affino\"{\i}de ou Stein, alors on a un isomorphisme canonique 
$$H^1_{\proet}(X, \widehat{\mathcal{O}}_X)\simeq \Omega^1(X)(-1).$$
\end{lemm}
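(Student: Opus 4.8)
The plan is to deduce both statements from the Leray spectral sequence of the projection of sites $\nu\colon X_{\proet}\to X_{\eet}$ with coefficients in the pro-étale sheaf $\widehat{\mathcal{O}}_X$. First I would record the two inputs furnished by Prop.~\ref{Scholze}: one has $\nu_*\widehat{\mathcal{O}}_X=\mathcal{O}_{X_{\eet}}$ and a canonical $\mathcal{O}_{X_{\eet}}$-linear isomorphism $R^1\nu_*\widehat{\mathcal{O}}_X\simeq\Omega^1_{X_{\eet}}(-1)$. Taking global sections over $X_{\eet}$, and using that $\Omega^1_{X_{\eet}}$ is a coherent sheaf, gives in particular $H^0_{\eet}(X,R^1\nu_*\widehat{\mathcal{O}}_X)=\Omega^1(X)(-1)$.

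Then I would write down the five-term exact sequence of the spectral sequence $E_2^{p,q}=H^p_{\eet}(X,R^q\nu_*\widehat{\mathcal{O}}_X)\Rightarrow H^{p+q}_{\proet}(X,\widehat{\mathcal{O}}_X)$, namely
$$0\to H^1_{\eet}(X,\nu_*\widehat{\mathcal{O}}_X)\to H^1_{\proet}(X,\widehat{\mathcal{O}}_X)\to H^0_{\eet}(X,R^1\nu_*\widehat{\mathcal{O}}_X)\to H^2_{\eet}(X,\nu_*\widehat{\mathcal{O}}_X).$$
Substituting the two identifications above turns this into exactly the four-term exact sequence in the statement, which settles the first assertion.

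For the second assertion the point is that the two extreme terms $H^1_{\eet}(X,\mathcal{O}_X)$ and $H^2_{\eet}(X,\mathcal{O}_X)$ vanish when $X$ is affinoid or Stein, whence $H^1_{\proet}(X,\widehat{\mathcal{O}}_X)\simeq\Omega^1(X)(-1)$ follows from the exact sequence. When $X$ is affinoid this is Tate's acyclicity theorem: coherent cohomology is concentrated in degree~$0$. When $X$ is Stein, I would write $X=\bigcup_n X_n$ as an increasing union of affinoids and use the $\R^1\varprojlim$ exact sequence
$$0\to \R^1\varprojlim_n H^{i-1}_{\eet}(X_n,\mathcal{O}_{X_n})\to H^i_{\eet}(X,\mathcal{O}_X)\to \varprojlim_n H^i_{\eet}(X_n,\mathcal{O}_{X_n})\to 0;$$
for $i\geq 1$ the term on the right vanishes by the affinoid case, and $\R^1\varprojlim_n\mathcal{O}(X_n)$ vanishes because, $X$ being Stein, the transition maps $\mathcal{O}(X_{n+1})\to\mathcal{O}(X_n)$ have dense image (Mittag-Leffler), so $H^i_{\eet}(X,\mathcal{O}_X)=0$ for all $i\geq1$.

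The argument is largely formal, and I expect the only ingredient of substance to be the vanishing of the coherent cohomology of a Stein analytic curve in positive degrees — the rigid-analytic analogue of Cartan's Theorem~B — which, as indicated, I would reduce to the affinoid case through the $\R^1\varprojlim$ computation above; the rest is just the bookkeeping of the Leray spectral sequence together with Prop.~\ref{Scholze}.
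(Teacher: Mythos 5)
Your argument is correct and is essentially the paper's proof: the five-term exact sequence of the Leray spectral sequence for $\nu\colon X_{\proet}\to X_{\eet}$ combined with Prop.~\ref{Scholze} gives the first assertion, and the second follows from the vanishing of coherent cohomology in positive degrees. The only cosmetic difference is that for the Stein case the paper simply invokes Kiehl's theorem, whereas you re-derive it from the affinoid (Tate) case via the $\R^1\varprojlim$/Mittag-Leffler argument, which is precisely the standard proof of that theorem.
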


\begin{proof} La premi\`ere partie d\'ecoule de la suite spectrale 
$$H^i_{\eet}(X, R^j\nu_* \widehat{\mathcal{O}}_X)\Rightarrow  H^{i+j}_{\proet}(X, \widehat{\mathcal{O}}_X),$$
combin\'ee avec la proposition \ref{Scholze}, la seconde est une cons\'equence du th\'eor\`eme de Kiehl (dans le cas Stein) et Tate (si $X$ est affino\"{\i}de). 
\end{proof}

\subsection{Le diagramme (tautologique) fondamental} 
Soit $X$ une vari\'et\'e rigide analytique lisse sur $C$.
La construction des anneaux de Fontaine $\ainf$, $\bcris^+$, $\bdr^+$, etc. se faisceautise
et donne naissance \`a des faisceaux ${\mathbb A}_{\rm inf}$, ${\mathbb B}_{\rm cris}^+$,
${\mathbb B}_{\rm dR}^+$, etc. sur le site pro\'etale de $X$.

On a un diagramme de faisceaux pour la topologie pro\'etale, \`a lignes exactes: 
$$\xymatrix@R=.5cm{0\ar[r]&\Q_p(1)\ar[r]\ar[d]&({\mathbb B}_{\rm cris}^+)^{\varphi=p}
\ar[r]\ar[d]&\widehat\O\ar@{=}[d]\ar[r]&0\\
0\ar[r]&\widehat\O(1)\ar[r]&{\mathbb B}_{\rm dR}^+/t^2\ar[r]&\widehat\O\ar[r]&0}$$
Par ailleurs, $\Omega^1(X)(-1)$ est naturellement un quotient de $H^1_{\proet}(X,\widehat\O)$.
D\'efinissons le groupe $\tHK(X)$ par:
\begin{align*}
\widetilde{\rm HK}(X)
={\rm Ker}\big[H^1_{\proet}(X,({\mathbb B}_{\rm cris}^+)^{\varphi=p})\to
H^1_{\proet}(X,\widehat\O)\to\Omega^1(X)(-1)\big].
\end{align*}

\begin{theo}\label{main}
Soit $X/C$ une courbe lisse, Stein, avec un nombre fini de composantes connexes. Alors
on dispose d'un diagramme commutatif canonique,
\`a lignes exactes 
$$\xymatrix@R=.5cm@C=.6cm{
 0\ar[r]&C\otimes\Z[\pi_0(X)]\ar[r]\ar@{=}[d]&\mathcal{O}(X)\ar[r]^-{\rm exp}\ar@{=}[d]& H^1(X, \Q_p(1))\ar[r]\ar[d]^-{\rm dlog}&
\tHK(X)\ar[r]\ar[d]^-{\rm \iota_{\rm can}}&0\\
0\ar[r]&C\otimes\Z[\pi_0(X)]\ar[r]&\mathcal{O}(X)\ar[r]^-{d} & \Omega^1(X) \ar[r]^-{\pi_{\rm dR}}&
H^1_{\rm dR}(X)\ar[r]&0
}$$           
De plus:

$\bullet$ toutes les fl\`eches 
sont d'image ferm\'ee,

$\bullet$ ${\rm Ker}\,\iota_{\rm can}={\rm Ker}\,\dlog$ s'identifie naturellement \`a un
sous espace ferm\'e de $H^1_{\rm dR}(X)(1)$.
         \end{theo}
\begin{proof}         
On peut supposer que $X$ est connexe.
Si $X$ est une courbe Stein,
$H^0_{\proet}(X,\widehat \O)=\O(X)$ et 
$H^1_{\proet}(X,\widehat \O)=\Omega^1(X)(-1)$ (lemme \ref{five term}).
En passant \`a la cohomologie, et en posant
\begin{align*}
\widetilde{\rm DR}(X)={\rm Ker}\big[
H^1_{\proet}(X,{\mathbb B}_{\rm dR}^+/t^2)\to
H^1_{\proet}(X,\widehat\O)\big],
\end{align*}
on obtient un diagramme
$$\xymatrix@R=.6cm@C=.5cm{0\ar[r]&
\Q_p(1)\ar[r]\ar[d]&H^0_{\proet}(X,({\mathbb B}_{\rm cris}^+)^{\varphi=p})\ar[r]\ar[d]
&\O(X)\ar[r]\ar@{=}[d] &H^1_{\proet}(X,\Q_p(1))\ar[r]\ar[d]& \widetilde{\rm HK}(X)\ar[r]\ar[d]&0\\
0\ar[r]&\O(X)(1)\ar[r]& H^0_{\proet}(X,{\mathbb B}_{\rm dR}^+/t^2)
\ar[r]&\O(X)\ar[r]&\Omega^1(X)\ar[r]&\widetilde{\rm DR}(X)\ar[r]&0}$$
Dans ce diagramme:

$\bullet$ $\O(X)\to H^1_{\proet}(X,\Q_p(1))$ est l'exponentielle: plus pr\'ecis\'ement,
si $g\in\O(X)$, et si $U$ est un ouvert affino\"{\i}de de $X$, alors
$p^{-n}\otimes\exp(p^n g)$ d\'efinit, si $n\gg 0$, un \'el\'ement de $\Q_p\otimes\O(U)^\dual$
qui ne d\'epend pas de $n$ et dont le symbole
en cohomologie (pro)\'etale (donn\'e par l'application de Kummer) ne d\'epend pas non plus
de $n$; les classes ainsi obtenues se recollent et l'application ci-dessus s'obtient
par limite inverse sur un recouvrement croissant de $X$ par des affino\"{\i}des.

$\bullet$ D'apr\`es \cite[lemma 3.24]{survey}, $H^1_{\proet}(X,\Q_p(1))\to \Omega^1(X)$ 
est compatible avec les symboles: si $f\in\O(X)^\dual$ elle envoie le symbole $(f)_{\eet}$
de $f$ en cohomologie pro\'etale (donn\'ee par l'application de Kummer)
sur $\frac{df}{f}$.

Il en r\'esulte que $\O(X)\to\Omega^1(X)$ (en bas) est juste $d$ et donc que
$\widetilde{\rm DR}(X)=H^1_{\rm dR}(X)$.

On en d\'eduit que le noyau de $\exp$ est juste les constantes,
ce qui nous donne le diagramme commutatif annonc\'e, et permet aussi de prouver que
\begin{equation}\label{H0}
H^0_{\proet}(X,({\mathbb B}_{\rm cris}^+)^{\varphi=p})=(\bcris^+)^{\varphi=p}\otimes\Z[\pi_0(X)].
\end{equation}

Le fait que $d$ soit d'image ferm\'ee est classique, cf. \cite[cor. 3.2]{GKDR}.
Montrons que
 ${\rm exp}$ est d'image ferm\'ee. Si
$f_n\in \mathcal{O}(X)$ et $c\in H^1_{\proet}(X,\Q_p(1))$ sont tels que $\lim_{n\to \infty} {\rm exp}(f_n)=c$, alors 
$\lim_{n\to\infty} {\rm dlog}({\rm exp}(f_n))={\rm dlog}(c)$ dans $\Omega^1(X)$, et donc
${\rm lim}_{n\to\infty} df_n={\rm dlog}(c)$, ce qui montre que  
$f_n$ converge dans $\mathcal{O}(X)/C$. Ainsi $c={\rm exp}(\lim_{n\to \infty} f_n)$,
d'o\`u le r\'esultat.

Le fait que $\dlog$ et $\iota_{\rm can}$ soient d'image ferm\'ee est prouv\'e
au \no\ref{GRAB2} et l'\'enonc\'e concernant le noyau de $\iota_{\rm can}$ au \no\ref{GRAB3}.
\end{proof}
   
\subsubsection{Exemples concrets: disques et couronnes ouverts} \label{GRAB1}
       
Soit $D=\{|z|<1\}$ le disque unit\'e ouvert sur $C$. Nous allons calculer
$H^1_{\proet}(D, \Q_p(1))$ et d\'ecrire les applications ${\rm dlog}$ et 
${\rm exp}$ introduites ci-dessus.
         
Soit $B_r$ la boule ferm\'ee de rayon $r<1$. Alors 
$$H^1_{\proet}(D, \Q_p(1))=\varprojlim_{r\to 1} H^1_{\proet}(B_{r}, \Q_p(1))=
\varprojlim_{r\to 1} H^1_{\eet}(B_{r}, \Q_p(1)).$$
La suite de Kummer fournit un isomorphisme 
$$H^1_{\eet}(B_{r}, \Q_p(1))\simeq {(\mathcal{O}(B_r)^\dual /C^\dual )}\widehat\otimes_{\Z_p}\Q_p.$$
La d\'ecomposition $\mathcal{O}(B_r)^\dual =C^\dual \O(B_r)^{\dual\dual}_1$, o\`u $\O(B_r)^{\dual\dual}_1=\{f\in \O(B_r)^{\dual\dual},\ f(1)=1\}$,
fournit une application $\log: \mathcal{O}(B_r)^\dual /C^\dual\to \mathcal{O}(B_r)/C$ (la s\'erie
d\'efinissant le logarithme converge si $f\in \O(B_r)^{\dual\dual}_1$).
Cette application {\it ne se prolonge
 pas au compl\'et\'e $p$-adique}, mais elle se prolonge pour tout $r'<r$ en une application 
 $\log: {(\mathcal{O}(B_r)^\dual /C^\dual )}\widehat\otimes_{\Z_p}\Q_p\to \mathcal{O}(B_{r'})/C$ (si 
 $f\in \O(B_r)^{\dual\dual}_1$, alors $v_{B_{r'}}(f-1)\geq c$ pour une constante $c>0$, qui d\'epend de 
 $r$ et $r'$). En passant \`a la limite projective, on obtient une application 
 $$\log: H^1_{\proet}(D, \Q_p(1))\to \varprojlim_{r'\to 1} \mathcal{O}(B_{r'})/C=\mathcal{O}(D)/C.$$
En utilisant la compatibilit\'e de $H^1_{\proet}(X, \Q_p(1))\to \Omega^1(X)$ avec les symboles
et la d\'efinition de l'application $\exp$ ci-dessus (preuve du th.\,\ref{main}),
on en d\'eduit le r\'esultat suivant.
 
 \begin{prop}
Les applications ${\rm exp}$ et ${\rm log}$ induisent un isomorphisme 
$$\mathcal{O}(D)/C\simeq H^1_{\proet}(D, \Q_p(1)), $$
l'application ${\rm dlog}$ s'identifiant \`a $d\circ \log$, o\`u 
$d: \mathcal{O}(D)/C\to \Omega^1(D)$ est la d\'erivation. En particulier, 
${\rm dlog}: H^1_{proet}(D, \Q_p(1))\to  \Omega^1(D)$ est un hom\'eomorphisme sur son image, qui est ferm\'ee. 
\end{prop}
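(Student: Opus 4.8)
The plan is to reduce everything to closed sub-balls of $D$, where \'etale cohomology is computed by the Kummer sequence, and then pass to the limit. First I would write $D=\bigcup_{r<1}B_r$ and observe that $H^0_{\proet}(B_r,\Q_p(1))=\Q_p(1)$ with identity transition maps, so that $\R^1\varprojlim_r H^0_{\proet}(B_r,\Q_p(1))=0$ and hence $H^1_{\proet}(D,\Q_p(1))=\varprojlim_r H^1_{\proet}(B_r,\Q_p(1))=\varprojlim_r H^1_{\eet}(B_r,\Q_p(1))$, the last equality because $B_r$ is affino\"{\i}de. Since the Tate algebra in one variable is principal, $\operatorname{Pic}(B_r)=0$, so the Kummer sequence gives $H^1_{\eet}(B_r,\Q_p(1))\simeq(\mathcal{O}(B_r)^\dual/C^\dual)\widehat\otimes_{\Z_p}\Q_p$, exactly as for the affino\"{\i}des $U_n$ treated earlier.

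The heart of the argument is the construction of the logarithm. On each level, the decomposition $\mathcal{O}(B_r)^\dual=C^\dual\cdot\mathcal{O}(B_r)^{\dual\dual}_1$ (units normalized to have constant term $1$) yields a map $\log\colon\mathcal{O}(B_r)^\dual/C^\dual\to\mathcal{O}(B_r)/C$. This map does \emph{not} extend to the $p$-adic completion at level $B_r$, but it extends to $(\mathcal{O}(B_r)^\dual/C^\dual)\widehat\otimes_{\Z_p}\Q_p\to\mathcal{O}(B_{r'})/C$ for every $r'<r$: the coefficientwise estimate $v_{B_{r'}}(f-1)\geq v_p(r/r')>0$, valid uniformly over $f\in\mathcal{O}(B_r)^{\dual\dual}_1$, forces the completed series to converge on the smaller ball. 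Passing to the inverse limit over $r'\to 1$ (using $\R^1\varprojlim=0$ for the system $(\mathcal{O}(B_{r'}))$) then produces $\log\colon H^1_{\proet}(D,\Q_p(1))\to\mathcal{O}(D)/C$.

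Next I would check that $\log$ is a two-sided inverse of the map $\exp\colon\mathcal{O}(D)/C\to H^1_{\proet}(D,\Q_p(1))$ constructed in the proof of th.\,\ref{main} (the (pro)\'etale symbol of $\exp(p^ng)^{1/p^n}$, $n\gg 0$). At each level $B_{r'}$ this is the formal identity $\log\circ\exp=\operatorname{id}$, $\exp\circ\log=\operatorname{id}$, which passes to the limit and yields the isomorphism $\mathcal{O}(D)/C\simeq H^1_{\proet}(D,\Q_p(1))$. For the assertion about $\dlog$, I would invoke the compatibility of $\dlog$ with symbols from \cite[lemma 3.24]{survey}: $\dlog(\exp(g))=p^{-n}\,d(\exp(p^ng))/\exp(p^ng)=dg=d(\log\exp(g))$, so under the above isomorphism $\dlog$ is identified with the derivation $d\colon\mathcal{O}(D)/C\to\Omega^1(D)$. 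This map is injective (a function with vanishing differential on the connected space $D$ is constant) and has closed image in the Fr\'echet space $\Omega^1(D)$ by \cite[cor. 3.2]{GKDR} (in fact the image is all of $\Omega^1(D)$, every $1$-form on a disk being exact); the open mapping theorem then shows that $d$, hence $\dlog$, is a homeomorphism onto that closed image.

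The step I expect to be the main obstacle is the construction of $\log$ on the completed cohomology: one must see precisely why the na\"{\i}ve logarithm diverges at level $B_r$ yet is rescued by shrinking to $B_{r'}$, isolate the uniform estimate on $v_{B_{r'}}(f-1)$ that makes this work, and check that all the resulting maps are compatible with the various inverse limits. The remaining ingredients — the Kummer reduction, the comparison with $\exp$, and the topological conclusion drawn from Grosse-Kl\"onne's closed-image theorem — are then essentially routine.
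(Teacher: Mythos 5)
Your proposal is correct and follows essentially the same route as the paper: reduction to the closed balls $B_r$ via Kummer, construction of $\log$ from the decomposition $\mathcal{O}(B_r)^\dual=C^\dual\,\mathcal{O}(B_r)^{\dual\dual}_1$ together with the uniform lower bound on $v_{B_{r'}}(f-1)$ for $r'<r$, passage to the limit, and identification of $\dlog$ with $d\circ\log$ through the compatibility with symbols. Your extra remarks (surjectivity of $d$ on the disk, the open mapping theorem, the reference to Grosse-Kl\"onne) merely flesh out the final topological assertion that the paper states without detail, and do not change the argument.
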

 
On laisse au lecteur le soin de d\'emontrer (avec les m\^emes arguments):
   
\begin{prop}\label{couronne}
Soit $\mathcal{C}$ une couronne ouverte (des deux c\^ot\'es) sur $C$. 
On a une suite exacte 
$$0\to \mathcal{O}(\mathcal{C})/C\to H^1_{\proet}(\mathcal{C}, \Q_p(1))\to \Q_p\to 0,$$
et l'application ${\rm dlog}:  H^1_{\proet}(\mathcal{C}, \Q_p(1))\to \Omega^1(\mathcal{C})$ est un 
hom\'eomorphisme sur son image, qui est ferm\'ee. 
\end{prop}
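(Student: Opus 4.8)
The plan is to repeat, essentially verbatim, the proof of the preceding proposition about the open disc; the only new ingredient is the rank-one ``winding'' class carried by the coordinate of the annulus. Write $\mathcal{C}=\{\rho_1<|z|<\rho_2\}$ and exhaust it by an increasing sequence of closed annuli $\mathcal{C}_1\subset\mathcal{C}_2\subset\cdots$. Since $H^0_{\proet}(\mathcal{C}_r,\Q_p(1))=\Q_p$ for all $r$, the pro-system is Mittag--Leffler, so $H^1_{\proet}(\mathcal{C},\Q_p(1))=\varprojlim_r H^1_{\proet}(\mathcal{C}_r,\Q_p(1))$; and each $\mathcal{C}_r$ being affinoid, $H^1_{\proet}(\mathcal{C}_r,\Q_p(1))=H^1_{\eet}(\mathcal{C}_r,\Q_p(1))$ (rem.\,\ref{torsion}). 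A closed annulus is a connected affinoid subdomain of $\piqp$, hence a principal ring, so $\Pic(\mathcal{C}_r)=H^1_{\eet}(\mathcal{C}_r,{\mathbb G}_m)=0$ and the Kummer sequence yields $H^1_{\eet}(\mathcal{C}_r,\Q_p(1))\cong(\mathcal{O}(\mathcal{C}_r)^\dual/C^\dual)\widehat\otimes_{\Z_p}\Q_p$, just as for balls.

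Next I would use the structure of the units: every $f\in\mathcal{O}(\mathcal{C}_r)^\dual$ can be written $f=c\,z^m u$ with $c\in C^\dual$, $m\in\Z$ and $u\in\mathcal{O}(\mathcal{C}_r)^{\dual\dual}$ (cf.\ \cite{FDP}), the integer $m$ --- the common slope of the Newton polygon of $f$ --- being canonical and preserved by the transition maps; hence $\mathcal{O}(\mathcal{C}_r)^\dual/C^\dual\cong\Z\oplus(\mathcal{O}(\mathcal{C}_r)^{\dual\dual}/(1+{\goth m}_C))$. Tensoring with $\Q_p$ (the free summand survives), $H^1_{\eet}(\mathcal{C}_r,\Q_p(1))\cong\Q_p\oplus\big((\mathcal{O}(\mathcal{C}_r)^{\dual\dual}/(1+{\goth m}_C))\widehat\otimes\Q_p\big)$. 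On the second summand the logarithm converges after shrinking the radius, and --- exactly as for the disc, and as in the proof of th.\,\ref{omegaproet} --- induces an isomorphism of pro-systems $\big((\mathcal{O}(\mathcal{C}_r)^{\dual\dual}/(1+{\goth m}_C))\widehat\otimes\Q_p\big)_r\cong(\mathcal{O}(\mathcal{C}_r)/C)_r$ with inverse $\exp$; in particular the relevant $\R^1\varprojlim$ vanish (the system $(\Q_p)_r$ being constant). Passing to the limit gives the short exact sequence
$$0\to\mathcal{O}(\mathcal{C})/C\overset{\exp}{\longrightarrow}H^1_{\proet}(\mathcal{C},\Q_p(1))\to\Q_p\to 0,$$
the right-hand arrow recording the $z$-degree; the symbol $(z)_{\eet}$ of $z$ maps to $1$.

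It then remains to handle $\dlog$. By compatibility of $H^1_{\proet}(X,\Q_p(1))\to\Omega^1(X)$ with symbols (\cite[lemma 3.24]{survey}), $\dlog$ sends $\exp(g)$ to $dg$ and $(z)_{\eet}$ to $\frac{dz}{z}$; through the decomposition above it is thus identified with $(\mathcal{O}(\mathcal{C})/C)\oplus\Q_p\to\Omega^1(\mathcal{C})$, $(g,\lambda)\mapsto dg+\lambda\frac{dz}{z}$. This is injective: reducing modulo $d(\mathcal{O}(\mathcal{C}))$ in $H^1_{\rm dR}(\mathcal{C})=C\cdot\frac{dz}{z}$ forces $\lambda=0$, and then $g\in C$. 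Its image $d(\mathcal{O}(\mathcal{C}))\oplus\Q_p\frac{dz}{z}$ is a closed $\Q_p$-subspace of $\Omega^1(\mathcal{C})$: the subspace $d(\mathcal{O}(\mathcal{C}))$ is closed (\cite[cor. 3.2]{GKDR}), and the image of $\Q_p\frac{dz}{z}$ in the $\Q_p$-Banach $\Omega^1(\mathcal{C})/d(\mathcal{O}(\mathcal{C}))\cong C$ is the line $\Q_p\cdot 1$, which is finite-dimensional over $\Q_p$ and hence closed. Being a continuous bijection of Fr\'echet spaces onto a closed subspace, $\dlog$ is then a homeomorphism onto its (closed) image by the open mapping theorem.

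No real obstacle is expected: the argument is literally the one for the disc, and the extra bookkeeping specific to the annulus --- that the $z$-degree splits off uniformly in $r$, is unaffected by $\widehat\otimes_{\Z_p}\Q_p$, and produces a $\Q_p$-line closed in $\Omega^1(\mathcal{C})$ --- is routine.
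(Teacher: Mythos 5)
Your proposal is correct and follows exactly the route the paper intends: the paper explicitly leaves this proposition to the reader ``avec les m\^emes arguments'' as the disc case, and your fleshing-out --- Kummer plus vanishing of ${\rm Pic}$ on closed annuli, the unit decomposition $f=c\,z^m u$ splitting off the winding class $(z)_{\eet}$, the $\log/\exp$ isomorphism of pro-systems after shrinking the radii, and the closedness of $d\mathcal{O}(\mathcal{C})+\Q_p\frac{dz}{z}$ via the one-dimensionality of $H^1_{\rm dR}(\mathcal{C})$ together with the open mapping theorem --- is precisely that argument. Nothing needs to be changed.
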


\Subsubsection{L'image de $\dlog$}\label{GRAB2}
\begin{conv}
 On \'ecrit simplement 
$H^1(Z)$ au lieu de $H^1_{\proet}(Z, \Q_p(1))$ dans la suite de ce paragraphe.
\end{conv}
Que l'image de $\iota_{\rm can}$ soit ferm\'ee \'equivaut \`a ce que celle de ${\rm dlog}$ le soit;
nous allons montrer que celle de $\dlog$ l'est.
Cela repose sur le r\'esultat suivant.

 \begin{prop}\label{EST} Soit $U\Subset V$ une inclusion stricte d'affinoides lisses de dimension $1$ sur 
$C$. 

{\rm a)} Si $c_n\in H^1(V)$ sont tels que
$\lim_{n\to \infty} {\rm dlog}(c_n)=0$ dans $\Omega^1(V)$, alors il existe $d_n\in H^1(U)^{{\rm dlog}=0}$ tels que 
$\lim_{n\to \infty} (c_n|_{U}+d_n)=0$ dans $H^1(U)$.

{\rm b)} L'image de $H^1(V)^{\rm dlog=0}$ dans $H^1(U)^{\rm dlog=0}$ est de dimension finie sur $\Q_p$.
     \end{prop}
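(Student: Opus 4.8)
The plan is to reduce everything to the explicit geometry of a semi-stable formal model of $V$, the crucial local input being a generalisation of the disk example of \S\ref{GRAB1}: although $\log$ does not extend $p$-adically continuously to $\O(V)^\dual$, it does extend after restriction to a strictly smaller affinoid, with a valuation gain (a Newton-polygon estimate, exactly as for the disk) that is uniform in the unit and depends only on the pair $U\Subset V$. So, after a harmless descent to a discretely valued subfield of $C$ and a finite extension, fix a semi-stable formal model of $V$, write $V=\bigcup_a V_a$ where each $V_a$ is a closed disk, a closed annulus, or the preimage of a smooth affine component of the special fibre (a proper curve minus finitely many open disks), and put $U_a=U\cap V_a$, so that $U_a\Subset V_a$ and the overlaps $V_a\cap V_b$ are open annuli. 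Since $H^1$ of an affinoid curve is a Banach (rem.~\ref{torsion}(i)), Mayer--Vietoris shows that $H^1(U)\to\prod_a H^1(U_a)$ has closed image and finite-dimensional kernel (the \v{C}ech $H^1$ of the cover, with coefficients $\Q_p(1)$), is strict by the open mapping theorem, and that its kernel is contained in $H^1(U)^{{\rm dlog}=0}$. It therefore suffices to treat each pair $U_a\Subset V_a$ and then reassemble, keeping track of finite-dimensional error terms.

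Fix a genus-$0$ piece $V_a$. By prop.~\ref{unites0} and cor.~\ref{unites}, $H^1(V_a)$ is the direct sum of a finite free $\Q_p$-module generated by the boundary units $(z-x_j)$ and of $\Q_p\widehat\otimes(\O(V_a)^{\dual\dual}/(1+{\goth m}_C))^\wedge$; the residue map $\Omega^1(V_a)\to\Q_p^{\,r_a}$ kills $\dlog$ of the second summand and is injective on $\dlog$ of the first. Hence $\dlog(c_n|_{V_a})\to 0$ forces the boundary component of $c_n|_{V_a}$ to tend to $0$ (in a finite-dimensional space) and leaves $\dlog(\beta_n)\to 0$ for the topologically nilpotent component $\beta_n$. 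The uniform logarithm furnishes a continuous $\log_{U_a}\colon\Q_p\widehat\otimes(\O(V_a)^{\dual\dual}/(1+{\goth m}_C))^\wedge\to\O(U_a)/C$ with $d\circ\log_{U_a}=\dlog|_{U_a}$ and $\exp_{U_a}\circ\log_{U_a}=\res_{U_a}$ on this summand; since $d\colon\O(U_a)/C\to\Omega^1(U_a)$ is injective with closed image (\cite[cor. 3.2]{GKDR}), hence a homeomorphism onto its image, $\dlog(\beta_n|_{U_a})\to 0$ gives $\log_{U_a}(\beta_n)\to 0$, hence $\beta_n|_{U_a}\to 0$, hence $c_n|_{U_a}\to 0$; the same computation shows that every class in $H^1(V_a)^{{\rm dlog}=0}$ restricts to $0$ on $U_a$. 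For a piece $V_a$ of positive genus the only change is the extra finite-dimensional quotient $V_p(\Pic(V_a))$: after projecting away the finitely many boundary and Hodge directions (the latter spanned by a lift of $V_p(\Pic(V_a))$, which meets $d\O(V_a)$ trivially by removable singularities on the proper model) one reduces to the previous case, so $c_n|_{U_a}$ tends to $0$ modulo a fixed finite-dimensional subspace and $\mathrm{im}(H^1(V_a)^{{\rm dlog}=0}\to H^1(U_a))$ is finite-dimensional.

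Reassembling via Mayer--Vietoris then yields both statements. For (b): a class in $H^1(V)^{{\rm dlog}=0}$ restricts, on each $U_a$, into a fixed finite-dimensional subspace; by the Mayer--Vietoris sequence (closed image, finite kernel) its restriction to $U$ lies in a fixed finite-dimensional subspace of $H^1(U)$, which gives the finite-dimensionality of the image. For (a): on each $U_a$ we have produced $e_{n,a}\in H^1(U_a)^{{\rm dlog}=0}$ (living in a fixed finite-dimensional space) with $c_n|_{U_a}+e_{n,a}\to 0$; on the overlaps $e_{n,a}-e_{n,b}\to 0$, so $(e_{n,a})_a$ is a \v{C}ech $0$-cochain for $Z\mapsto H^1(Z)^{{\rm dlog}=0}$ whose coboundary tends to $0$; by strictness of the \v{C}ech differentials of the cover (open mapping theorem again) this coboundary is the coboundary of a $0$-cochain tending to $0$, and adding it produces local corrections that glue to a global $d_n\in H^1(U)^{{\rm dlog}=0}$ with $c_n|_U+d_n\to 0$.

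The hard part is the uniform logarithm estimate on a general basic piece: one must bound, uniformly over all units and over the strict inclusion $U_a\Subset V_a$, the convergence of the logarithm series after restriction --- this is exactly why $U\Subset V$ (and not merely $U\subseteq V$) is indispensable, just as in the disk example where $\log$ fails to extend to $B_r$ but extends to every $B_{r'}$ with $r'<r$. A secondary technical point is the separation, inside $\Omega^1(V_a)$, of the $\dlog$-image of the topologically nilpotent units from the finitely many directions carried by the boundary residues and by the Hodge-type classes of the positive-genus part, so that the finite-dimensional error terms appearing in the Mayer--Vietoris reassembly really are finite-dimensional.
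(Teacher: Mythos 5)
Your reduction to a semi-stable model of $V$ breaks at its very first step: for a general covering $V=\bigcup_a V_a$ by closed disks, closed annuli and preimages of components of the special fibre, it is simply not true that $U_a:=U\cap V_a\Subset V_a$. Whenever $U$ crosses an internal annulus $V_a\cap V_b$, the piece $U_a$ meets the corresponding boundary circle of $V_a$, and whenever $V_a$ lies in the interior of $U$ one even has $U_a=V_a$. This is not cosmetic: the only mechanism your argument has for killing $\dlog$-trivial classes is restriction to a \emph{strictly} smaller piece (your "uniform logarithm"), and on such pieces it is unavailable; in particular the assertions that every class of $H^1(V_a)^{\dlog=0}$ dies on $U_a$, and that a class of $H^1(V)^{\dlog=0}$ restricts on each $U_a$ into a fixed finite-dimensional subspace, are unjustified (and the second one is what all of b) rests on). On top of this, the two ingredients you yourself isolate as essential are nowhere proved: the uniform convergence of the logarithm on a basic piece of positive genus, together with the separation of the $\dlog$-image of the unit part from the boundary-residue and $V_p(\Pic(V_a))$ directions; and the closedness/strictness of the \v{C}ech differentials for the presheaf $Z\mapsto H^1(Z)^{\dlog=0}$ that you invoke to glue the local corrections $e_{n,a}$ into a global $d_n\in H^1(U)^{\dlog=0}$. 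As written, the proposal is a plan in which the hardest analytic and gluing steps are postulated rather than established.

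For comparison, the proof in the text sidesteps every one of these points: by van der Put, $U$ is the complement, in a proper smooth connected curve $Z$, of finitely many open disks, and one may assume $V=U(r)$; the Mayer--Vietoris sequence for the covering of $Z$ by $U(r)$ and the disks, the computation on an \emph{open} annulus (prop.~\ref{couronne}), and the strictness lemma~\ref{gens de l'est 1} (applicable because the relevant cokernels land in the finite-dimensional spaces $H^1(Z)$ and $H^2(Z)$) reduce both a) and b) to the finite-dimensionality of $H^1(Z)$; in particular a class of $H^1(V)^{\dlog=0}$, once restricted to $U(r')$, extends to $Z$, which gives b) immediately, and no uniform logarithm on general pieces is ever needed. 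If you want to salvage your route, you must either prove the uniform-log estimate for each type of basic piece and redesign the covering so that strict inclusion genuinely holds wherever you use it (for instance shrinking only near the boundary of $V$ and treating the interior of $U$ by a global argument), or adopt the compactification argument above.
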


      \begin{proof}   a) On peut supposer que $U$ est connexe. 
       D'apr\`es \cite{vdp} on peut trouver une courbe propre et lisse, connexe $Z/C$, telle que 
       $D:=Z\moins U$ soit une r\'eunion disjointe finie 
   de disques ouverts $D_1,...,D_s$. Fixons des param\`etres 
$z_i$ sur $D_i$ pour identifier chaque $D_i$ au disque $\{|z_i|<1\}$. Soit $r<1$ et consid\'erons 
l'affino\"{\i}de $U(r)$, compl\'ementaire de la r\'eunion des disques ouverts $D_i(r)\subset D_i$ de rayon 
$r$. Les $U(r)$ (pour $r\to 1$) forment une base de voisinages stricts de $U$, donc on peut supposer que 
$V=U(r)$ pour un $r<1$. Notons  
 $\mathcal{C}(r)=D\cap U(r)$, une r\'eunion disjointe de couronnes $\mathcal{C}_i(r)$ d\'efinies par $r\leq |z_i|<1$, et soit $\mathcal{C}(r)^{-}$ la r\'eunion des couronnes ouvertes associ\'ee.
   
     Le recouvrement (admissible) de $Z$ par $D$ et $U(r)$ induit      
     une 
           suite de Mayer-Vietoris s'ins\'erant dans un diagramme commutatif 
            $$\xymatrix@R=.6cm{\ H^1(Z)\ar [r]^-{\alpha} \ar [d]^{\rm dlog}&
H^1(U(r))\oplus H^1(D)\ar[d]^{{\rm dlog}}\ar [r]^-{\beta}&
H^1(\mathcal{C}(r))\ar[d]^{\rm dlog}\ar[r]&H^2(Z)\\
\Omega^1(Z)\ar[r]&\Omega^1(U(r))\oplus \Omega^1(D)\ar[r]&\Omega^1(\mathcal{C}(r))
}$$

   Soient maintenant $c_n\in H^1(U(r))$ tels que $\lim_{n\to \infty} {\rm dlog}(c_n)=0$. 
   Alors ${\rm dlog}(\beta(c_n, 0))$ tend vers $0$ dans $\Omega^1(\mathcal{C}(r))$. 
   La proposition \ref{couronne} montre alors que la restriction de $\beta(c_n,0)$ \`a 
   la couronne ouverte $\mathcal{C}(r)^{-}$ tend vers $0$. Fixons $r'\in (r,1)$.
   Nous avons un diagramme analogue \`a celui ci-dessus avec 
   $r$ remplac\'e par $r'$, et on note $\alpha', \beta'$ les applications correspondantes, et $c_n'$ la restriction de $c_n$ \`a $U(r')$. On vient de montrer que $\beta'(c_n', 0)$ tend vers $0$ dans $H^1(\mathcal{C}(r'))$. Mais $\beta'$ est un hom\'eomorphisme sur son image (lemme \ref{gens de l'est 1} ci-dessous), d'o\`u l'existence d'une suite $x_n\in H^1(Z)$ telle que 
   $\alpha'(x_n)+(c_n',0)\to 0$ dans 
    $H^1(U(r'))\oplus H^1(D)$. Le diagramme ci-dessus montre alors que ${\rm dlog}(x_n)\to 0$. En appliquant encore le lemme \ref{gens de l'est 1} on obtient l'existence de $y_n\in H^1(Z)^{\rm dlog=0}$ tels que $x_n-y_n\to 0$ dans 
    $H^1(Z)$. Il suffit de poser $d_n=y_n|_{U}$, alors $d_n\in H^1(U)^{\rm dlog=0}$ et par construction 
    $d_n+c_n|_U\to 0$. 
    
    b) Nous gardons les notations introduites ci-dessus. Si $c\in H^1(U(r))^{\rm dlog=0}$, alors la restriction de $\beta(c,0)$ \`a 
    $\mathcal{C}(r)^{-}$ est nulle (prop. \ref{couronne}). Le diagramme ci-dessus montre que la restriction de 
    $c$ \`a $U(r')$ provient d'une classe de $H^1(Z)$. On en d\'eduit que l'image de $H^1(V)^{\rm dlog=0}$ dans $H^1(U)^{\rm dlog=0}$ est un quotient de $H^1(Z)$, donc de dimension finie. 
\end{proof}

\begin{lemm} \label{gens de l'est 1}
Soit $f: U\to V$ une application continue entre des $L$-fr\'echets. Si le conoyau est
de dimension finie, alors ${\rm Im}(f)$ est ferm\'ee et $f$ induit un hom\'eomorphisme 
$U/{\rm Ker}(f)\overset{\sim}\to {\rm Im}(f)$.
\end{lemm}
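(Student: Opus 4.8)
The plan is to deduce the statement from the open mapping theorem for Fréchet spaces, after first reducing to the case of an injective map. Since $f$ is continuous, $\operatorname{Ker}(f)$ is a closed subspace of $U$, so $\overline U:=U/\operatorname{Ker}(f)$ is again an $L$-Fréchet space and $f$ factors through a continuous injection $\bar f\colon \overline U\to V$ with $\operatorname{Im}(\bar f)=\operatorname{Im}(f)$. It therefore suffices to show that a continuous injection $\bar f\colon \overline U\to V$ between Fréchet spaces whose cokernel is finite-dimensional has closed image and is a homeomorphism onto it.

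The key step is to complement $\operatorname{Im}(f)$ inside the target by a finite-dimensional subspace and then invoke the open mapping theorem. Choose $w_1,\dots,w_n\in V$ whose images form a basis of the finite-dimensional $L$-vector space $V/\operatorname{Im}(f)$, and set $W=\sum_{i=1}^{n}Lw_i$; then $W$ is finite-dimensional, $W\cap\operatorname{Im}(f)=0$ and $W+\operatorname{Im}(f)=V$, i.e. $V=\operatorname{Im}(f)\oplus W$ as abstract $L$-vector spaces. Consider
$$h\colon \overline U\times W\longrightarrow V,\qquad h(\bar u,w)=\bar f(\bar u)+w.$$
This map is $L$-linear and continuous (it is the sum of $\bar f$ and of the inclusion $W\hookrightarrow V$), it is surjective because $\operatorname{Im}(f)+W=V$, and it is injective because $\bar f(\bar u)+w=0$ forces $w\in W\cap\operatorname{Im}(f)=0$, hence $\bar u=0$ by injectivity of $\bar f$. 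Its source is a Fréchet space, being the product of the Fréchet space $\overline U$ with the finite-dimensional (hence Fréchet) space $W$. By the open mapping theorem, a continuous $L$-linear bijection of Fréchet spaces is a homeomorphism, so $h$ is a homeomorphism.

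To conclude, note that $\overline U\times\{0\}$ is closed in $\overline U\times W$; hence $\operatorname{Im}(f)=h(\overline U\times\{0\})$ is closed in $V$, and the restriction of $h$ to $\overline U\times\{0\}$ — which is exactly $\bar f$ under the obvious identification — is a homeomorphism onto $\operatorname{Im}(f)$. Composing with the quotient map $U\twoheadrightarrow\overline U$ yields the desired homeomorphism $U/\operatorname{Ker}(f)\overset{\sim}{\longrightarrow}\operatorname{Im}(f)$. The only point requiring a moment's care is the applicability of the open mapping theorem, namely that all the spaces in sight are genuinely Fréchet (metrizable and complete): this holds for $U$ and $V$ by hypothesis, for $\overline U=U/\operatorname{Ker}(f)$ because $\operatorname{Ker}(f)$ is closed, and for $\overline U\times W$ as a finite product of Fréchet spaces. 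Everything else is formal, so I do not anticipate a genuine obstacle here.
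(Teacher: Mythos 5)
Your argument is correct and is exactly the paper's approach: the paper dispatches the lemma in one line as a direct consequence of the open mapping theorem, and your write-up (quotient by the closed kernel, add a finite-dimensional complement $W$ of the image, apply the open mapping theorem to the continuous bijection $\overline U\times W\to V$) is the standard way of making that one-liner precise. No gaps; the only implicit point, that a finite-dimensional subspace of a Hausdorff locally convex space over $L$ is closed and carries its canonical topology, is standard and you use it correctly.
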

\begin{proof} Cons\'equence directe du th\'eor\`eme de l'image ouverte.
\end{proof}

Revenons \`a la preuve du fait 
que ${\rm dlog}$ est d'image ferm\'ee. L'argument est familier, mais pour le confort du lecteur on donne les d\'etails. Soit $(U_k)_{k\geq 1}$ un recouvrement croissant de type Stein de $X$.
      On notera $x^{(k)}$ la restriction \`a $U_k$ d'une classe $x$ d\'efinie sur un ouvert contenant $U_k$.     
      Soient 
     $c_n\in H^1(X)$ et $\omega\in \Omega^1(X)$ tels que $\lim_{n\to\infty} {\rm dlog}(c_n)=\omega$.
     
      Fixons $k\geq 1$. On a 
    $\lim_{n\to\infty}{\rm dlog}(c_n^{(k+1)})=\omega^{(k+1)}$, donc $\lim_{n\to\infty} {\rm dlog}(c_{n+1}^{(k+1)}-c_n^{(k+1)})=0$. La proposition \ref{EST} fournit une suite $(d_n^{(k)})$ d'\'el\'ements de $H^1(U_k)^{{\rm dlog}=0}$ telle que 
    $\lim_{n\to\infty} c_{n+1}^{(k)}-c_n^{(k)}-d_n^{(k)}=0$. En posant $e_n^{(k)}=d_1^{(k)}+...+d_{n-1}^{(k)}$, il s'ensuit que 
    $s_k:=\lim_{n\to\infty} (c_n^{(k)}-e_n^{(k)})$ existe dans $H^1(U_k)$, et puisque ${\rm dlog}(e_n^{(k)})=0$ on a  
    $${\rm dlog}(s_k)=\lim_{n\to\infty} {\rm dlog}(c_n^{(k)})=\omega^{(k)}.$$
    
      On a donc obtenu une suite de classes $s_k\in H^1(U_k)$ telles que ${\rm dlog}(s_k)=\omega^{(k)}$ pour tout 
      $k$. Nous avons $s_{k+1}^{(k)}-s_k\in H^1(U_k)^{\rm dlog=0}$ pour tout $k$. Comme 
      $R^1\varprojlim H^1(U_k)^{\rm dlog=0}=0$ (d'apr\`es le b) de la proposition \ref{EST}), il existe 
      $\alpha_k\in H^1(U_k)^{\rm dlog=0}$ tels que $u:=(s_1-\alpha_1, s_2-\alpha_2,...)\in \varprojlim_{k} H^1(U_k)=H^1(X)$. 
      On obtient donc une section globale $u\in H^1(X)$ telle que $u^{(k)}=s_k-\alpha_k$ pour tout $k$, et donc 
      ${\rm dlog}(u^{(k)})={\rm dlog}(s_k)=\omega^{(k)}$ pour tout $k$. On en d\'eduit que ${\rm dlog}(u)=\omega$, ce qui montre que ${\rm dlog}$ est d'image ferm\'ee.

\subsubsection{Le noyau de $\iota_{\rm can}$}\label{GRAB3}
      Nous aurons aussi besoin de contr\^oler le noyau de $\iota_{\rm can}$ ou, ce qui revient au m\^eme gr\^ace au diagramme du 
th.\,\ref{main}, le noyau de ${\rm dlog}$. Soit $X/C$ un espace adique lisse, affino\"{\i}de ou Stein. 
Le morphisme $\Q_p(1)\to\widehat\O(1)$ utilis\'e pour d\'efinir $\dlog$
se factorise \`a travers $\Q_p(1)\to(\Bdr^+/t^2)(1)\to \widehat\O(1)$.
D'o\`u une application naturelle (cf.\,preuve du th.\,\ref{main} (d\'ebut)
pour l'isomorphisme $\widetilde{\rm DR}\cong H^1_{\rm dR}(X)$)
$$\lambda_X: H^1(X)^{\rm dlog=0}\to \widetilde{\rm DR}(1)\cong H^1_{\rm dR}(X)(1).$$
\begin{prop}\label{noyau plonge} 
Si $X/C$ est un espace Stein, lisse de dimension $1$, alors
$\lambda_X$ est injective, d'image ferm\'ee. 
\end{prop}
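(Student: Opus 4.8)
Le plan est le suivant. Comme $X$ est Stein, le lemme~\ref{five term} donne $H^0_{\proet}(X,\widehat\O(j))=\O(X)$ et $H^1_{\proet}(X,\widehat\O(j))=\Omega^1(X)$ pour tout $j\in\Z$ (sur $C$ les torsions à la Tate sont canoniquement triviales; on les garde pour l'action galoisienne). L'application $\lambda_X$ est induite sur le $H^1$ par l'inclusion $a\colon\Q_p(1)\hookrightarrow(\Bdr^+/t^2)(1)$ du sous-faisceau constant, qui intervient dans la factorisation $\Q_p(1)\xrightarrow{a}(\Bdr^+/t^2)(1)\xrightarrow{q}\widehat\O(1)$ du morphisme définissant $\dlog$; comme $\dlog=q_*\circ a_*$, une classe est dans $H^1(X)^{\dlog=0}$ dès que $a_*$ l'envoie dans $\widetilde{\rm DR}(1)=\ker q_*$, de sorte que $\lambda_X=a_*|_{H^1(X)^{\dlog=0}}$ et
$$\ker\lambda_X=\ker\bigl(a_*\colon H^1_{\proet}(X,\Q_p(1))\to H^1_{\proet}(X,(\Bdr^+/t^2)(1))\bigr)=\coker\bigl(H^0_{\proet}(X,(\Bdr^+/t^2)(1))\to H^0_{\proet}(X,Q)\bigr),$$
où $Q=\coker(a)$. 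La filtration par les puissances de $t$ présente $(\Bdr^+/t^2)(1)$, resp.~$Q$, comme extension de $\widehat\O(1)$, resp.~$\widehat\O(1)/\Q_p(1)$, par $\widehat\O(2)$; comme dans la preuve du th.~\ref{main} les applications de liaison $\O(X)\to\Omega^1(X)$ correspondantes sont la différentielle, et une chasse au diagramme (lemme du serpent) identifie $\ker\lambda_X$ au conoyau de $\ker d=C\otimes\Z[\pi_0(X)]\to\ker\partial$, où $\partial$ est la liaison de $0\to\widehat\O(2)\to Q\to\widehat\O(1)/\Q_p(1)\to0$; en déroulant, ce conoyau se réinjecte dans $H^1(X)^{\dlog=0}$ et l'injection obtenue n'est autre que $\lambda_X$ — cette partie n'est donc que du ménage, et l'injectivité réclame un ingrédient de plus.

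Cet ingrédient est la réduction au cas propre, comme dans la preuve de la prop.~\ref{EST}. On écrit $X=\bigcup_k U_k$ comme réunion croissante d'affinoïdes $U_k\Subset U_{k+1}$ et, grâce à \cite{vdp}, on plonge chaque $U_k$ dans une courbe propre et lisse $Z_k$ dont le complémentaire est une réunion finie de disques ouverts et de couronnes ouvertes. Sur une courbe propre et lisse $Z$, les théorèmes de comparaison $p$-adiques (prop.~\ref{Shimura} et théorème de comparaison de de Rham) identifient $\widetilde{\rm DR}(Z)$ à $H^1_{\rm dR}(Z)$ et entraînent que la réalisation de de Rham ``modulo $t^2$'' est fidèle, c'est-à-dire que l'analogue de $\lambda_{Z}$ y est injectif. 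Les suites de Mayer-Vietoris associées au recouvrement de $Z_k$ par ses disques, ses couronnes et les voisinages stricts $U_k(r)$ de $U_k$, jointes au fait que $\dlog$ est un homéomorphisme sur son image fermée sur les disques et les couronnes (prop.~\ref{couronne}) et au lemme~\ref{gens de l'est 1}, propagent cette injectivité de $Z_k$ à $U_k$ — exactement selon le même mécanisme que dans la preuve de la prop.~\ref{EST} — d'où l'injectivité de $\lambda_{U_k}$, puis, par passage à la limite projective sur $k$, celle de $\lambda_X$. C'est le point délicat de la preuve.

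Pour l'image fermée, on réutilise l'exhaustion: $H^1_{\rm dR}(X)=\varprojlim_k H^1_{\rm dR}(U_k)$ avec $H^1_{\rm dR}(U_k)$ de dimension finie (courbe affinoïde, cf.~\cite{vdp}), et $\lambda_X=\varprojlim_k\lambda_{U_k}$. En posant $N_k=\operatorname{Im}\bigl(H^1(X)^{\dlog=0}\to H^1(U_k)^{\dlog=0}\bigr)$, la prop.~\ref{EST}\,b) assure que les $N_k$ sont de dimension finie, que les flèches de transition $N_{k+1}\to N_k$ sont surjectives, et que $H^1(X)^{\dlog=0}=\varprojlim_k N_k$; les images $W_k:=\lambda_{U_k}(N_k)\subset H^1_{\rm dR}(U_k)$ sont alors de dimension finie (donc fermées), à transitions surjectives, et un argument de Mittag-Leffler donne $\operatorname{Im}(\lambda_X)=\varprojlim_k W_k$, qui est fermé dans $\varprojlim_k H^1_{\rm dR}(U_k)=H^1_{\rm dR}(X)$ comme limite projective de sous-espaces fermés. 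Moralement, l'injectivité exprime qu'une extension de de Rham de $\Q_p$ par $\Q_p(1)$ sur $X$, de réalisation de de Rham triviale, est triviale — énoncé facile sur une courbe propre et étendu aux courbes Stein par la réduction ci-dessus.
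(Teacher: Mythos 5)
Votre strat\'egie est essentiellement celle du texte: exhaustion de Stein, plongement de chaque affino\"{\i}de dans une courbe propre via \cite{vdp}, Mayer--Vietoris, et injectivit\'e au niveau propre fournie par les th\'eor\`emes de comparaison. Mais l'\'etape pivot que vous affirmez --- {\og l'injectivit\'e de $\lambda_{U_k}$\fg} pour chaque affino\"{\i}de $U_k$ --- n'est pas ce que ce m\'ecanisme fournit, et vous n'en donnez aucune d\'emonstration. Dans l'argument de Mayer--Vietoris, une classe $c\in H^1(V)^{\dlog=0}$ avec $\lambda_V(c)=0$ ne se rel\`eve \`a la compactification $Z$ qu'apr\`es restriction \`a un affino\"{\i}de strictement plus petit $U\Subset V$ (c'est pr\'ecis\'ement le point b) de la prop.~\ref{EST}); l'ingr\'edient au niveau propre (l'injectivit\'e de $H^1(Z,\Q_p(1))\to H^1_{\proet}(Z,\widehat\O(1))$, cf.~\cite{RAV}) tue alors le rel\`evement et ne donne que $c|_U=0$, pas $c=0$ sur $V$ --- et comme la restriction $H^1(V)\to H^1(U)$ n'a aucune raison d'\^etre injective, on ne peut pas en d\'eduire l'injectivit\'e de $\lambda_V$ elle-m\^eme. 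L'\'enonc\'e local correct (et celui que prouve le texte) est donc: si $U\Subset V$ et $\lambda_V(c)=0$, alors $c|_U=0$; il suffit pour conclure, puisque $H^1(X)=\varprojlim_k H^1(U_k)$ et que l'on peut restreindre de $U_{k+1}$ \`a $U_k$, mais votre r\'edaction s'appuie sur l'\'enonc\'e plus fort, non justifi\'e et vraisemblablement inaccessible par cette m\'ethode.

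L'argument d'image ferm\'ee comporte aussi des lacunes: pour une courbe affino\"{\i}de $U_k$, $H^1_{\rm dR}(U_k)$ n'est ni de dimension finie ni s\'epar\'e ($d\O(U_k)$ est dense mais non ferm\'e dans $\Omega^1(U_k)$, d\'ej\`a pour un disque ferm\'e), de sorte que {\og $W_k$ de dimension finie donc ferm\'e\fg} ne tient pas, et l'identification topologique $\varprojlim_k H^1_{\rm dR}(U_k)=H^1_{\rm dR}(X)$ n'est pas justifi\'ee. La finitude r\'eellement disponible, et suffisante, est celle de l'image de $H^1(U_{k+1})^{\dlog=0}\to H^1(U_k)^{\dlog=0}$ (prop.~\ref{EST}, b)); on en d\'eduit que $\lambda_X$ est d'image ferm\'ee en travaillant directement dans les fr\'echets $H^1(U_k)$, $\Omega^1(U_k)$ et $H^1_{\rm dR}(X)$, par le m\^eme argument de convergence et de ${\rm R}^1\varprojlim$ qu'au \no\ref{GRAB2}, ce qui est la voie suivie par le texte. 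Votre variante \`a la Mittag-Leffler peut \^etre r\'epar\'ee dans cet esprit, mais pas en passant par les quotients $H^1_{\rm dR}(U_k)$, qui ne sont pas assez s\'epar\'es pour ces man\oe uvres.
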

      
\begin{proof} Si 
$X=\cup_{k} U_k$ est un recouvrement de type Stein de $X$, alors 
$\lambda_X$ est la limite inverse des $\lambda_{U_k}$. 
En raisonnant comme dans la preuve du fait que l'image de $\iota_{\rm can}$ est ferm\'ee,
on ram\`ene l'injectivit\'e de $\lambda_X$ \`a l'\'enonc\'e local ci-dessous 
(le fait que $\lambda_X$ est d'image ferm\'ee d\'ecoule facilement du fait que l'image de 
$H^1(U_{k+1})^{\rm dlog=0}$ dans $H^1(U_k)^{\rm dlog=0}$ est de dimension finie, cf. prop. \ref{EST}). 
\end{proof}

         \begin{lemm}
          Soit $U\Subset V$ une inclusion stricte d'affinoides lisses sur $C$, de dimension $1$. 
         Si $c\in H^1(V)^{\rm dlog=0}$ est tel que $\lambda_V(c)=0$, alors $c|_U=0$ dans $H^1(U)$. 
          \end{lemm}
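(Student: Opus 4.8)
Le plan est de reproduire la m\'ecanique de van der Put et Mayer--Vietoris de la preuve de la prop.~\ref{EST}, en poussant $c$ sur une courbe propre o\`u la nullit\'e de la classe de de Rham deviendra exploitable. On se ram\`ene d'abord \`a $U$ connexe, puis on remplace $V$ par un voisinage strict de la forme $U(r)$, $r<1$: de tels affino\"{\i}des forment un syst\`eme cofinal de voisinages stricts de $U$ et les applications $\dlog$, $\lambda$ sont compatibles \`a la restriction, donc il suffit de traiter ce cas. Via \cite{vdp}, on \'ecrit $U=Z\moins\bigsqcup_{i=1}^s D_i$ avec $Z/C$ courbe propre lisse connexe et les $D_i$ des disques ouverts; on pose $\mathcal{C}(r)=D\cap U(r)$, r\'eunion des couronnes semi-ouvertes de recouvrement, et $\mathcal{C}(r)^-$ la r\'eunion des couronnes ouvertes correspondantes.

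Ensuite, comme $\dlog(c|_{\mathcal{C}(r)})=0$, on restreint \`a la couronne ouverte $\mathcal{C}(r)^-$, o\`u $\dlog$ est injective (prop.~\ref{couronne}), pour obtenir $c|_{\mathcal{C}(r)^-}=0$. On fixe $r_0\in(r,1)$; alors $\mathcal{C}(r_0)=D\cap U(r_0)\subset\mathcal{C}(r)^-$, d'o\`u $c|_{\mathcal{C}(r_0)}=0$, et le couple $(c|_{U(r_0)},0)$ se recolle par Mayer--Vietoris pour le recouvrement $Z=D\cup U(r_0)$ en une classe $\tilde c\in H^1_{\eet}(Z,\Q_p(1))$ telle que $\tilde c|_D=0$ et $\tilde c|_{U(r_0)}=c|_{U(r_0)}$ (le recollement a lieu en cohomologie \'etale, tous les affino\"{\i}des en jeu \'etant quasi-compacts). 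Comme $U\subset U(r_0)$, il suffira de montrer $\tilde c=0$, puisqu'alors $c|_U=\tilde c|_U=0$.

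On transf\`ere alors la condition de de Rham \`a $Z$. Puisque $\tilde c|_D=0$ et $\lambda_{U(r_0)}(\tilde c|_{U(r_0)})=\lambda_{U(r_0)}(c|_{U(r_0)})=0$, l'application $\dlog$ annule $\tilde c$ sur chacun des ouverts du recouvrement $\{D,U(r_0)\}$; $\Omega^1_Z$ \'etant un faisceau, la forme globale $\dlog_Z(\tilde c)\in\Omega^1(Z)$ est nulle, de sorte que $\lambda_Z(\tilde c)\in H^1_{\dr}(Z)(1)$ est bien d\'efinie et se restreint sur $U(r_0)$ en $\lambda_{U(r_0)}(\tilde c|_{U(r_0)})=0$. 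La suite de Mayer--Vietoris de de Rham pour $Z=D\cup U(r_0)$ montre que la restriction $H^1_{\dr}(Z)\to H^1_{\dr}(U(r_0))$ est injective: les $D_i(r_0)$ sont des disques (sans cohomologie de de Rham en degr\'e $\geq 1$) et $H^0_{\dr}$ des couronnes d'intersection est atteint depuis $H^0_{\dr}(U(r_0))$, donc l'application de liaison $H^0_{\dr}(\mathcal{C}(r_0))\to H^1_{\dr}(Z)$ est nulle. D'o\`u $\lambda_Z(\tilde c)=0$.

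Il restera \`a invoquer que, pour la courbe propre lisse $Z$, l'application $\lambda_Z$ est injective sur les classes v\'erifiant $\dlog_Z=0$ --- l'analogue de la prop.~\ref{noyau plonge} en situation propre. C'est l\`a le point d\'elicat: il s'agit de l'injectivit\'e du morphisme de comparaison $H^1_{\eet}(Z,\Q_p(1))\hookrightarrow H^1_{\proet}(Z,(\Bdr^+/t^2)(1))$ pour $Z$ propre, qui d\'ecoule du th\'eor\`eme de comparaison $p$-adique (concr\`etement, $H^1_{\eet}(Z,\Q_p)$ est de de Rham, et comme les sauts de la filtration de Hodge de $H^1_{\dr}(Z)$ sont dans $\{0,1\}$, les r\'eseaux \'etale et de de Rham sont \`a distance $\leq 1$, ce qui emp\^eche une classe $\Q_p$-rationnelle non nulle d'\^etre tu\'ee modulo $t^2$); cf.\ aussi la prop.~\ref{Shimura}. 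On conclut alors $\tilde c=0$, puis $c|_U=0$.
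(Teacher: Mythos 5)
Your first half coincides with the paper's own argument: reduction to $U$ connexe and $V=U(r)$, the van der Put presentation $U=Z\moins D$ with $Z$ propre lisse, the use of the prop.~\ref{couronne} to kill $c$ on the open annuli, and the Mayer--Vietoris lifting of $(c|_{U(r_0)},0)$ to a class $\tilde c\in H^1(Z)$ with $\tilde c|_D=0$ (this is exactly the lifting used in the paper, which refers back to the proof of the point b) of the prop.~\ref{EST}; incidentally your parenthetical ``tous les affino\"{\i}des en jeu \'etant quasi-compacts'' is inaccurate, $D$ and $\mathcal{C}(r_0)$ are not quasi-compact, but the Mayer--Vietoris sequence in pro\'etale cohomology is what is needed and is what the paper uses). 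The divergence, and the genuine gap, is in the last step. You assert that ``$\lambda_Z(\tilde c)\in H^1_{\dr}(Z)(1)$ est bien d\'efinie'' and compatible with restriction to $U(r_0)$, and that $\lambda_Z$ is injective on $\ker(\dlog_Z)$. But for a \emph{proper} curve $Z$ none of this is available: the identification $\widetilde{\rm DR}(X)\cong H^1_{\dr}(X)$ underlying the definition of $\lambda_X$ uses $H^1_{\proet}(X,\widehat\O)=\Omega^1(X)(-1)$ (lemme~\ref{five term}), which fails for $Z$ propre since $H^1(Z,\O_Z)\neq 0$; moreover $\dlog_Z(\tilde c)=0$ only says that the image of $\tilde c$ in $H^1_{\proet}(Z,\widehat\O(1))$ lies in the subspace coming from $H^1(Z,\O_Z)(1)$, so the image of $\tilde c$ in $H^1_{\proet}(Z,(\Bdr^+/t^2)(1))$ has no reason to lie in a ``de Rham'' kernel, and your $\lambda_Z(\tilde c)$ is simply not defined by the recipe of the paper. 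The prop.~\ref{noyau plonge} cannot be invoked either: it concerns Stein spaces and its proof is precisely this lemma.

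Even granting some definition of $\lambda_Z$, your two final ingredients do not connect: the injectivity you invoke (of $H^1_{\eet}(Z,\Q_p(1))\to H^1_{\proet}(Z,(\Bdr^+/t^2)(1))$, which is indeed true, and most easily by composing with the projection to $H^1_{\proet}(Z,\widehat\O(1))$ and citing \cite{RAV} --- so this is the easy part, not the ``point d\'elicat'') requires the vanishing of the \emph{full} image of $\tilde c$ in $H^1_{\proet}(Z,(\Bdr^+/t^2)(1))$, whereas your de Rham Mayer--Vietoris argument only controls a de Rham quotient. What Mayer--Vietoris with coefficients $\mathcal{F}=(\Bdr^+/t^2)(1)$ actually gives is that this image comes from the connecting map out of $H^0_{\proet}(\mathcal{C}(r_0),\mathcal{F})$, a space much larger than $H^0_{\dr}(\mathcal{C}(r_0))$ and invisible to your argument with $H^1_{\dr}(Z)\hookrightarrow H^1_{\dr}(U(r_0))$. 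Controlling this boundary contribution is the heart of the paper's proof: one pushes to $\widehat\O(1)$-coefficients and checks that the composite $H^0_{\proet}(\mathcal{C}(r_0),\mathcal{F})\to H^0_{\proet}(\mathcal{C}(r_0),\widehat\O(1))\to H^1_{\proet}(Z,\widehat\O(1))$ vanishes, because the image of the first arrow is $\ker\big(\O(\mathcal{C}(r_0))(1)\to\Omega^1(\mathcal{C}(r_0))(1)\big)$, which extends over the disks $D$ and hence dies under the boundary map; then $\tilde c$ has zero image in $H^1_{\proet}(Z,\widehat\O(1))$ and the injectivity of $H^1(Z)\to H^1_{\proet}(Z,\widehat\O(1))$ \cite{RAV} gives $\tilde c=0$. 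This step, or a substitute for it, is missing from your proposal.
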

         
         \begin{proof} Reprenons les notations du paragraphe 1 de la preuve de la proposition \ref{EST}: 
on a donc $U=Z\moins D$, avec $Z$ une courbe propre et lisse, connexe. On peut          
         supposer que $V=U(r)$ pour un $r<1$. Fixons $r'\in (r,1)$. On va montrer que 
         $c|_{U(r')}=0$ dans $H^1(U(r'))$, ce qui permettra de conclure. 
         Comme nous l'avons d\'ej\`a remarqu\'e (cf. la preuve du point b) de la proposition \ref{EST}), la restriction $c|_{U(r')}$ de $c$ \`a $U(r')$ se rel\`eve en une classe $s\in H^1(Z)$, nulle en restriction \`a $D$. Consid\'erons le diagramme obtenu \`a partir de la suite de Mayer-Vietoris pour le recouvrement de 
         $Z$ par $U(r')$ et $D$, o\`u l'on a pos\'e 
$\mathcal{F}= \mathbb{B}_{\rm dR}^+/t^2(1)$. 
        $$\xymatrix@C=.4cm@R=.5cm{\ H^0_{\proet}(\mathcal{C}(r'))\ar [r]\ar [d]&
H^1(Z)\ar[d]\ar [r]&
H^1(U(r'))\oplus H^1(D)\ar[d]\\
H^0_{\proet}(\mathcal{C}(r'), \mathcal{F})\ar[r] \ar[d]& H^1_{\proet}(Z, \mathcal{F}) \ar[r] \ar[d]&
H^1_{\proet}(U(r'),\mathcal{F})\oplus H^1_{\proet}(D, \mathcal{F}) \ar[d]\\
 H^0_{\proet}(\mathcal{C}(r'), \widehat{\mathcal{O}}(1)) \ar[r] & H^1_{\proet}(Z,  \widehat{\mathcal{O}}(1))\ar[r]& H^1_{\proet}( U(r'),  \widehat{\mathcal{O}}(1))\oplus H^1_{\proet}(D, \widehat{\mathcal{O}}(1))
}$$
 Comme $\lambda_{U(r')}(c)=0$, l'image $s'$ de $s$ 
dans $H^1_{\proet}(Z, \mathcal{F})$ vient d'une section $\alpha\in H^0_{\proet}(\mathcal{C}(r'), \mathcal{F})$. 
La compos\'ee $$H^0_{\proet}(\mathcal{C}(r'), \mathcal{F})\to  H^0_{\proet}(\mathcal{C}(r'), \widehat{\mathcal{O}}(1)) \to 
H^1_{\proet}(Z, \widehat{\mathcal{O}}(1))$$ est nulle, puisque l'image de $H^0_{\proet}(\mathcal{C}(r'), \mathcal{F})\to  H^0_{\proet}(\mathcal{C}(r'), \widehat{\mathcal{O}}(1)) $ s'identifie \`a 
${\rm Ker}(\mathcal{O}(\mathcal{C}(r'))(1)\to \Omega^1(\mathcal{C}(r'))(1))$ (utiliser la suite exacte 
$0\to \widehat{\mathcal{O}}(2)\to \mathcal{F}\to \widehat{\mathcal{O}}(1)\to 0$), et ce dernier espace est contenu dans  
${\rm Ker}(H^0_{\proet}(\mathcal{C}(r'), \widehat{\mathcal{O}}(1)) \to 
H^1_{\proet}(Z, \widehat{\mathcal{O}}(1)))$ (car l'image de 
$\mathcal{O}(D)(1)$ est contenue dans le dernier noyau). Donc $s$ a une image nulle dans 
$H^1_{\proet}(Z, \widehat{\mathcal{O}}(1))$ et puisque 
$H^1(Z)\to H^1_{\proet}(Z, \widehat{\mathcal{O}}(1))$ est injective \cite{RAV}, on obtient
$s=0$. Cela permet de conclure.                            
         \end{proof}
          
\Subsection{Courbes propres}\label{GRAB4}
\begin{prop} \label{Shimura}     
Si $X=X_K\otimes_{K} C$, o\`u $K$ est une extension finie de $\Q_p$ et $X_K$ est une courbe propre et lisse sur 
 $K$, alors on a des isomorphismes naturels 
$$ \tHK(X)\simeq (\bst^+\otimes_{\Q_p^{\rm nr}} D_{\rm pst}(H^1_{\eet}(X, \Q_p)))^{\varphi=p, N=0}=
(\bst^+\otimes_{\Q_p^{\rm nr}} H^1_{\rm HK}(X))^{\varphi=p, N=0}.$$
\end{prop}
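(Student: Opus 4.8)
The plan is to reduce the statement to the fundamental exact sequence of $p$-adic Hodge theory, both in its sheaf-theoretic form on $X_{\proet}$ and in its classical form for $\bcris^+$-representations, and to glue the two using the comparison theorems. First, the second equality is nothing but the semistable comparison theorem: since $X_K$ is proper and smooth, $H^1_{\eet}(X,\qp)$ is a potentially semistable representation of $\G_K$, and the comparison of Faltings and Scholze (\cite{Fa1,RAV}) identifies $H^1_{\rm HK}(X)$ with $D_{\rm pst}(H^1_{\eet}(X,\qp))$ as $(\varphi,N,\G_K)$-modules over $\Q_p^{\rm nr}$, compatibly after $\otimes\,\bdr$ with the de Rham comparison $H^1_{\rm dR}(X_K)\otimes_K\bdr\cong H^1_{\eet}(X,\qp)\otimes_{\qp}\bdr$ (so that $\Fil^1 H^1_{\rm dR}(X_K)=H^0(X_K,\Omega^1_{X_K})$ corresponds to the filtration making $D_{\rm pst}$ weakly admissible). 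It therefore suffices to produce a natural isomorphism $\tHK(X)\cong(\bst^+\otimes_{\Q_p^{\rm nr}}D)^{\varphi=p,N=0}$, where $D:=H^1_{\rm HK}(X)$.

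Next I would compute the left-hand side. Applying $R\Gamma_{\proet}(X,-)$ to the top row $0\to\qp(1)\to(\Bcris^+)^{\varphi=p}\to\widehat\O\to 0$ of the tautological diagram, and using that $X$ is a proper connected curve over $C$: the map $H^0_{\proet}(X,(\Bcris^+)^{\varphi=p})=(\bcris^+)^{\varphi=p}\to H^0_{\proet}(X,\widehat\O)=C$ is $\theta$, which is surjective with kernel $\qp(1)$ by Fontaine's fundamental exact sequence over a point, so $H^1_{\proet}(X,\qp(1))\hookrightarrow H^1_{\proet}(X,(\Bcris^+)^{\varphi=p})$; here $H^1_{\proet}(X,\qp(1))=H^1_{\eet}(X,\qp)(1)$, which is finite-dimensional. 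The next edge map $H^1_{\proet}(X,\widehat\O)\to H^2_{\proet}(X,\qp(1))$ vanishes, because $H^2_{\proet}(X,\qp(1))\cong\qp$ and its generator maps to the nonzero crystalline period $t\in H^2_{\proet}(X,(\Bcris^+)^{\varphi=p})$ (which one reads off from the comparison theorem in degree $2$, where $H^2_{\rm HK}(X)=D_{\rm pst}(\qp(-1))$). Hence $H^1_{\proet}(X,(\Bcris^+)^{\varphi=p})\twoheadrightarrow H^1_{\proet}(X,\widehat\O)$, with kernel $H^1_{\eet}(X,\qp)(1)$. Combining with Lemma \ref{five term} (which, since $H^2_{\eet}(X,\O_X)=0$ for a curve, yields $0\to H^1_{\eet}(X,\O_X)\to H^1_{\proet}(X,\widehat\O)\to\Omega^1(X)(-1)\to 0$), the subspace $\tHK(X)=\ker[H^1_{\proet}(X,(\Bcris^+)^{\varphi=p})\to\Omega^1(X)(-1)]$ fits into a short exact sequence
$$0\longrightarrow H^1_{\eet}(X,\qp)(1)\longrightarrow\tHK(X)\longrightarrow H^1_{\eet}(X,\O_X)\longrightarrow 0.$$

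Then I would match the right-hand side with the same extension. Since $N_D$ is nilpotent, the operator $\exp(-uN_D)$ furnishes a $\varphi$- and $\G_K$-equivariant isomorphism $(\bst^+\otimes_{\Q_p^{\rm nr}}D)^{N=0,\varphi=p}\overset{\sim}{\longrightarrow}(\bcris^+\otimes_{\Q_p^{\rm nr}}D)^{\varphi=p}=X_{\rm st}^+(D)$ (using $N\varphi=p\varphi N$ on $D$), so the $N=0$ condition is harmless. Fontaine's fundamental exact sequence for $\bcris^+$-representations (\cite{CB,Cdr}), applied to $D$ with its Hodge filtration, then reads $0\to V\to X_{\rm st}^+(D)\overset{\theta}{\longrightarrow}C\otimes_K(D_{\rm dR}/\Fil^1 D_{\rm dR})\to 0$, and by the comparison theorems $V=H^1_{\eet}(X,\qp)(1)$ while $D_{\rm dR}/\Fil^1 D_{\rm dR}=H^1_{\rm dR}(X_K)/H^0(X_K,\Omega^1_{X_K})\cong H^1(X_K,\O_{X_K})$, whence $C\otimes_K(D_{\rm dR}/\Fil^1)\cong H^1_{\eet}(X,\O_X)$. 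One finishes by checking that the comparison isomorphisms carry this exact sequence onto the one of the previous paragraph, $G$-, $\G_K$-, $\varphi$- and topologically equivariantly, and concludes by the five lemma.

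The main obstacle is precisely this last compatibility: one must see that the morphism of pro-étale sheaves $\qp(1)\to(\Bcris^+)^{\varphi=p}\to\widehat\O$ induces, on $H^1$ of the proper curve $X$, exactly Fontaine's fundamental exact sequence attached to $D=D_{\rm pst}(H^1_{\eet}(X,\qp))$ — equivalently, that the geometric maps $\dlog$ and $\iota_{\rm can}$ of Theorem \ref{main} agree with the algebraic comparison maps of $D$. This requires the functoriality of the crystalline and semistable period morphisms of Faltings and Scholze, used not merely as abstract isomorphisms of cohomology groups but through the naturality of the period maps; everything else — the computation of $H^*_{\proet}$ of a proper curve with $\widehat\O$- and $\qp(1)$-coefficients, the $N=0$ reduction, and Fontaine's algebraic exact sequence — is standard or already available in the excerpt.
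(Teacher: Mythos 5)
Your overall starting point (the pro\'etale sheaf sequence $0\to\Q_p(1)\to(\Bcris^+)^{\varphi=p}\to\widehat\O\to0$ plus the comparison theorems) is the paper's, but two of your steps do not hold up as written. First, the reduction ``$N=0$ is harmless'': $\exp(-uN_D)$ is $\varphi$-equivariant but \emph{not} $\G_K$-equivariant, since $g(u)=u+c(g)t$ with $c(g)\neq0$ in general; so for $N_D\neq0$ (which does occur for proper curves, e.g.\ with semistable non-cristalline $H^1$, comme une courbe de Tate ou de Mumford) your identification $(\bst^+\otimes D)^{N=0,\varphi=p}\cong(\bcris^+\otimes D)^{\varphi=p}$ is not natural, and the sequence you then invoke, with a Galois-stable copy of $H^1_{\eet}(X,\Q_p)(1)$ inside $(\bcris^+\otimes D)^{\varphi=p}$, is not correct Galois-\'equivariamment in the non-crystalline case: the equivariant copy of $H^1_{\eet}(X,\Q_p)(1)$ furnished by the comparison lives in $(\bst^+\otimes D)^{N=0,\varphi=p}$ and genuinely involves $u$. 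Since the proposition is used later precisely through its naturality and its $\G_F$-structure, this is not a cosmetic point. Second, your vanishing of the connecting map $H^1_{\proet}(X,\widehat\O)\to H^2_{\proet}(X,\Q_p(1))$ rests on the claim that the generator of $H^2_{\proet}(X,\Q_p(1))$ has nonzero image in $H^2_{\proet}(X,(\Bcris^+)^{\varphi=p})$; this cannot be ``read off'' from the Hyodo--Kato comparison without some control of $H^2_{\proet}$ with $(\Bcris^+)^{\varphi=p}$-coefficients, and the obvious detection fails because the composite $\Q_p(1)\to(\Bcris^+)^{\varphi=p}\to\widehat\O$ is zero ($\theta(t)=0$).

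Both problems disappear if you argue as the paper does. One compares, in degree $1$ only, the long exact sequence of the sheaf sequence with $0\to V(1)\to(\bcris^+)^{\varphi=p}\otimes V\to C\otimes V\to0$ (où $V=H^1_{\eet}(X,\Q_p)$), using $H^1_{\proet}(X,\Q_p(1))=V(1)$ on the left and Scholze's primitive comparison $\alpha_X:C\otimes_{\Q_p}V\overset{\sim}{\to}H^1_{\proet}(X,\widehat\O)$ on the right; a diagram chase gives $(\bcris^+)^{\varphi=p}\otimes V\overset{\sim}{\to}H^1_{\proet}(X,(\Bcris^+)^{\varphi=p})$, and the surjectivity onto $H^1_{\proet}(X,\widehat\O)$ (hence your $\delta=0$) comes out as a byproduct rather than being an input. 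With this identification, $\tHK(X)$ becomes the kernel of an explicit map $(\bcris^+)^{\varphi=p}\otimes V\to\Omega^1(X_K)(-1)$, and the conclusion is a purely algebraic filtration computation in $\bst\otimes_{\Q_p^{\rm nr}}M$: one has $(\bcris^+)^{\varphi=p}\otimes V={\rm Fil}^0(\bst\otimes M)^{\varphi=p,N=0}$, and its intersection with $\bdr^+\otimes_KM_{\rm dR}$ equals $(\bst^+\otimes M)^{\varphi=p,N=0}$ --- keeping $\bst^+$ and $N=0$ throughout, with no extension-matching or five lemma at the end. In other words, the compatibility you single out as ``the main obstacle'' and defer is exactly where the work lies; your proposal leaves that step, together with the two points above, unproved.
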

     
\begin{proof} 
Soient $V=H^1_{\eet}(X, \Q_p)$, $M=D_{\rm pst}(V)$ et $M_{\rm dR}=D_{\rm dR}(V)$. 
Alors $V$ est une repr\'esentation potentiellement semi-stable
\`a poids de Hodge-Tate 
$0,-1$ et ${\rm Fil}^1(M_{\rm dR})\cong \Omega^1(X_K)$ et $M\cong H^1_{\rm HK}(X)$
d'apr\`es le th\'eor\`eme de comparaison semi-stable~\cite{Ts}.

On a un diagramme commutatif
   $$\xymatrix@R=.6cm@C=.5cm{
 0 \ar[r]& H^1_{\eet}(X, \Q_p)(1)\ar[r]\ar[d]^{\wr}& (\bcris^+)^{\varphi=p}\otimes H^1_{\eet}(X, \Q_p)
\ar[r]\ar[d]& C \otimes_{\Q_p}H^1_{\eet}(X, \Q_p) \ar[d]^{\alpha_X}_{\wr} \ar[r]& 0\\
0\ar[r] & H^1_{\proet}(X, \Q_p(1)) \ar[r]&
H^1_{\proet}(X,  (\mathbb{B}_{\rm cris}^+)^{\varphi=p})\ar[r]& H^1_{\proet}(X, \widehat{\mathcal{O}})
}$$  
dans lequel les lignes sont exactes et $\alpha_X$ est un isomorphisme, cf. \cite{RAV}.
D'o\`u un isomorphisme naturel:
$$(\bcris^+)^{\varphi=p}\otimes_{\Q_p}H^1_{\eet}(X, \Q_p)\overset{\sim}{\to} H^1_{\proet}(X, (\mathbb{B}_{\rm cris}^+)^{\varphi=p}).$$
        L'isomorphisme
     $\bst\otimes_{\Q_p} V\simeq \bst\otimes_{\Q_p^{\rm nr}} M$
nous donne
\begin{align*}
      (\bcris^+)^{\varphi=p}\otimes_{\Q_p} &V
\simeq {\rm Fil}^0 (\bst\otimes_{\Q_p^{\rm nr}} M)^{\varphi=p, N=0}\\
     &= (\bst\otimes_{\Q_p^{\rm nr}} M)^{\varphi=p, N=0}\cap 
\big(\bdr^+\otimes_K M_{\rm dR}+\tfrac{1}{t} \bdr^+\otimes_K \Omega^1(X_K)\big).
\end{align*}
On en d\'eduit que le noyau du morphisme\footnote{Induit par l'inclusion 
$(\bcris^+)^{\varphi=p}\otimes_{\Q_p} V\subset \bdr^+\otimes_K
M_{\rm dR}+\frac{1}{t} \bdr^+\otimes_K \Omega^1(X_K)$ et l'application $x\mapsto \theta(tx)(-1)$.} 
$(\bcris^+)^{\varphi=p}\otimes_{\Q_p} V\to \Omega^1(X_K)(-1)$ est 
$$(\bst\otimes_{\Q_p^{\rm nr}} M)^{\varphi=p, N=0}\cap 
\big(\bdr^+\otimes_K M_{\rm dR}\big)=(\bst^+\otimes_{\Q_p^{\rm nr}} M)^{\varphi=p, N=0}.$$
Mais, d'apr\`es ce qui pr\'ec\`ede, ce noyau s'identifie \`a 
    $\tHK(X)$, d'o\`u le r\'esultat.       
         \end{proof}

\section{Cohomologie de de Rham \`a support compact de ${\cal M}_\infty$}\label{GAB1.1}
\Subsection{Cohomologie des tours de Drinfeld et de Lubin-Tate} \label{GAB1}
    Soit $({\rm LT}_j)_{j\geq 0}$ la tour de Lubin-Tate.
On note $\widehat{\rm LT}_\infty$ le compl\'et\'e de la limite projective ${\rm LT}_\infty$ de la tour des
$({\rm LT}_j)_{j\geq 0}$, 
et $\widehat{\cal M}_{\infty}$ celui de ${\cal M}_\infty$.
Les tours compl\'et\'ees $\widehat{\rm LT}_\infty$ et $\widehat{\cal M}_{\infty}$ sont des espaces perfecto\"{\i}des~\cite[th. 6.5.4]{SW},
munis d'actions de $G\times \check{G}$ (voir \cite[chap. 3]{Dat} pour les d\'etails concernant ces actions),
et isomorphes en tant que $(G\times\check G)$-espaces perfecto\"{\i}des~\cite[th. 7.2.3]{SW}
(cet isomorphisme pr\'ecise l'isomorphisme de Faltings-Fargues \cite{Faltings2tours, Fargues} et nous utiliserons pleinement ce raffinement). 

Fixons une uniformisante $\varpi$ de $F$ et notons
simplement
$\widehat{\rm LT}_{\infty}^\varpi$ et $\widehat{\cal M}_{\infty}^\varpi$
(resp.~${\rm LT}_j^\varpi$ et ${\cal M}_j^\varpi$) les quotients de
$\widehat{\rm LT}_{\infty}$ et $\widehat{\cal M}_{\infty}$
(resp.~${\rm LT}_j$ et ${\cal M}_j$) par
       l'action de $\varpi$ vu comme \'el\'ement du centre de $G$ (ou de $\check G$,
cela revient au m\^eme).

Enfin, posons 
$$G_j=\begin{cases}{\bf GL}_2(\O_F)&{\text{si $j=0$,}}\\
1+\varpi^j{\bf M}_2(\O_F)&{\text{ si $j\geq 1$,}}\end{cases}
\quad
\check G_n=\begin{cases}\O_D^\dual&{\text{si $n=0$,}}\\
1+\varpi_D^n\O_D&{\text{ si $n\geq 1$.}}\end{cases}$$ 

Le but de ce chapitre est d'\'etablir le r\'esultat suivant:
    
\begin{theo} \label{comparer}
Si $j,n\in\N$,
on a des isomorphismes naturels:
$$H^1_{\rm dR, c}(\mv_n)^{G_j}\cong H^1_c(\widehat{\cal M}^\varpi_\infty,\O)^{G_j\times\check G_n}
\cong H^1_c(\widehat{\rm LT}^\varpi_\infty,\O)^{G_j\times\check G_n}\cong
H^1_{\rm dR, c}({\rm LT}_j^\varpi)^{\check G_n},$$
et donc
un isomorphisme naturel 
$$H^1_{\rm dR, c}({\rm LT}_\infty^\varpi)\simeq H^1_{\rm dR, c}({\cal M}_\infty^\varpi)$$
qui munit les deux membres d'une structure de
$C[\check G\times G]$-module lisse, admissible d\'ej\`a en tant que $G$-module.
\end{theo}

\Subsection{Quelques rappels sur la tour de Lubin-Tate} 
 Nous faisons des rappels tr\`es succincts sur la tour de Lubin-Tate, en renvoyant le lecteur \`a \cite{Drinfeld, Dat, Weinstein} pour plus de d\'etails. 
    La th\'eorie de Lubin-Tate fournit un sch\'ema formel 
    ${\rm Spf}(A_0)$
    sur $\mathcal{O}_{\breve{F}}$, classifiant les d\'eformations par quasi-isog\'enies de
  l'unique $\mathcal{O}_F$-module formel de dimension $1$ et de hauteur $2$ (relativement \`a $F$) sur 
    $\overline{\mathbf{F}}_p$. 
    En ajoutant des structures de niveau \`a ces d\'eformations, Drinfeld \cite{Drinfeld} a construit 
        une tour de sch\'emas formels $({\rm Spf}(A_n))_{n\geq 1}$ sur 
         $\mathcal{O}_{\breve{F}}$. La fibre g\'en\'erique adique $\breve{{\rm LT}}_n$ de ${\rm Spf}(A_n)$ est un rev\^etement fini \'etale galoisien,
     de groupe de Galois ${\rm GL}_2(\mathcal{O}_F/\varpi^n)$ de $\breve{{\rm LT}}_0$, ce dernier \'etant une r\'eunion disjointe d\'enombrable de copies du disque unit\'e ouvert sur $\breve{F}$. On note ${\rm LT}_n=\breve{{\rm LT}}_n\times_{\breve{F}} C$.
     On peut d\'efinir (voir \cite{Dat}) une action de $G\times \check{G}$ sur la tour $({\rm LT}_n)_{n\geq 0}$ (contrairement \`a la tour de Drinfeld, on n'a pas d'action de $G\times \check{G}$ sur chaque \'etage de la tour). Soit $\widehat{\rm LT}_\infty$ le compl\'et\'e de la limite projective de la tour des
$({\rm LT}_j)_{j\geq 0}$. 
     
\begin{prop}\label{wein1}
{\rm (i)} Si $U$ est un domaine rationnel de ${\rm LT}_j$, 
l'image inverse de $U$ dans $\widehat{\rm LT}_\infty$ est un affino\"{\i}de perfecto\"{\i}de.

{\rm (ii)} Si $U$ est un domaine rationnel de ${\cal M}_j$, 
l'image inverse de $U$ dans $\widehat{\cal M}_\infty$ est un affino\"{\i}de perfecto\"{\i}de.
\end{prop}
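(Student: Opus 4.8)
The plan is to prove Proposition~\ref{wein1} by reducing everything to a single model computation at finite level and then invoking the perfectoidness criterion of Scholze--Weinstein. First I would treat~(i). The key point is that $\widehat{\rm LT}_\infty$ is obtained as the (completed) inverse limit of the ${\rm LT}_j$ along the transition maps, which are finite \'etale of degree a power of $p$ above ${\rm LT}_0$; so it suffices to exhibit, for a cofinal system of rational domains $U\subset {\rm LT}_j$, a $j'\geq j$ and a finite \'etale map $V\to U$ (the preimage of $U$ in ${\rm LT}_{j'}$) whose further pullbacks to the tower are obtained by adjoining $p$-power roots in a way that makes the inverse limit perfectoid. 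Concretely: I would first reduce to $U$ contained in (a disjoint component of) ${\rm LT}_0$, which is a union of open unit disks over $\breve F$; shrinking $U$ one may assume $U$ is an affinoid subdomain of the disk. Then the crucial input is that the coordinate ring of $\breve{\rm LT}_j$ over $\breve{\rm LT}_0$ is, after suitable normalization, generated by the $\varpi^j$-torsion points of the universal Lubin--Tate $\O_F$-module, and the transition maps are given (up to finite \'etale base change) by the multiplication-by-$\varpi$ isogeny, i.e.\ by extracting $q$-th roots ($q=\#k_F$) of a parameter. Passing to the completed limit therefore adjoins compatible systems of $p$-power roots of a pseudo-uniformizer, and one checks the Frobenius is surjective modulo $p$; this is exactly the statement that $\widehat{\rm LT}_\infty$ restricted over $U$ is an affinoid perfectoid, which is essentially~\cite[th.~6.5.4]{SW} localized. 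I would cite that theorem for the perfectoidness of $\widehat{\rm LT}_\infty$ globally and then note that being an affinoid perfectoid is local for the analytic topology on a perfectoid space, so it suffices that the preimages of the rational $U$ form an admissible cover by affinoid perfectoids, which is part of the structure theory.

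For~(ii) I would give two arguments, the shorter being to transport~(i) across the Faltings--Fargues isomorphism. By \cite[th.~7.2.3]{SW} one has a $(G\times\check G)$-equivariant isomorphism of perfectoid spaces $\widehat{\rm LT}_\infty\simeq\widehat{\cal M}_\infty$; so an affinoid perfectoid subset of one corresponds to an affinoid perfectoid subset of the other. The only thing to check is that the preimage in $\widehat{\cal M}_\infty$ of a rational domain of ${\cal M}_j$ corresponds, under this isomorphism, to the preimage in $\widehat{\rm LT}_\infty$ of a rational domain of some ${\rm LT}_{j'}$ (or at least to an open which is a finite union of such preimages). This is not quite immediate because the isomorphism does not descend to finite levels, but one can argue as follows: the preimage of a rational domain in ${\cal M}_j$ is a $\check G_j\times G$-stable (for $G$, only the compact-open part acts with bounded orbits) quasi-compact open of $\widehat{\cal M}_\infty$, and quasi-compact opens of a perfectoid space that are stable under a profinite group and saturated for the tower map are exactly preimages of quasi-compact opens at finite level; transporting such an open to $\widehat{\rm LT}_\infty$ and using that the two towers induce the same pair of profinite groups $(G_\infty,\check G_\infty)$, it becomes the preimage of a rational domain in some ${\rm LT}_{j'}$. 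Then~(i) applies.

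Alternatively, and more robustly, I would prove~(ii) directly, in complete analogy with~(i), using Drinfeld's description of the coverings $\breve{\cal M}_n$ of $\Omega_{{\rm Dr},F}$: above $\Omega_{{\rm Dr},F}$ the tower is built by adding $\varpi^n$-level structures to the universal special formal $\O_D$-module, and the transition maps are again governed by an isogeny whose effect on parameters is to extract $q$-power roots. Concretely, over a rational domain $U\subset{\cal M}_j$ one has $U\subset U_{m,C}$ for some $m$ (the affinoids exhausting $\Omega_{\rm Dr}$ from Section~\ref{DEMI}), $\O^+(U_{m,C})$ is topologically of finite type over $\O_C$, and the ring of functions on the preimage of $U$ in $\widehat{\cal M}_\infty$ is the $p$-adic completion of the colimit of the $\O^+$ of the finite-level preimages; the colimit is generated by roots of a pseudo-uniformizer coming from the Drinfeld level structures, so the completion is perfectoid. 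This is the content of \cite[th.~6.5.4,~7.2.3]{SW} and I would simply localize their global statement. The main obstacle, in either approach, is the bookkeeping that identifies \emph{rational domains at finite level} with \emph{quasi-compact opens stable under the relevant profinite group} — i.e.\ making precise that a rational domain of ${\cal M}_j$ or ${\rm LT}_j$ pulls back to a $\check G_j$- (resp.\ $G_j$-) stable quasi-compact open and that all such opens arise this way — so that one may freely pass between the two towers and invoke the perfectoidness of the full completed tower. Everything else is either the cited structure theory of \cite{SW} or the elementary fact that affinoid-perfectoidness is analytic-local.
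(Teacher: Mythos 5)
Your reduction for (i) fails at its last step: you invoke the claim that \emph{being affinoid perfectoid is local for the analytic topology}, and that is false. Affinoidness (perfectoid or not) is not a local property --- a quasi-compact open covered by affinoid perfectoids need not be affinoid perfectoid (think of $\mathbf{P}^1$ covered by two closed discs). So knowing from \cite[th.~6.5.4]{SW} that $\widehat{\rm LT}_\infty$ \emph{is} a perfectoid space, i.e.\ admits \emph{some} cover by affinoid perfectoids, does not allow you to ``localize'' and conclude that the preimage of a given rational domain of ${\rm LT}_j$ is affinoid perfectoid. The missing input, which is exactly what the paper uses, is structural: Weinstein's lemma \cite[lemme~2.10.1]{Weinstein} exhibits an increasing exhaustion of $\widehat{\rm LT}_\infty$ by explicit affinoid perfectoids (cut out by $|X_1|^n\leq|\varpi|$ and $|Y_1|^n\leq|\varpi|$, with $X_1,Y_1$ coming from the universal level structure), and one combines this with Scholze's theorem that a rational subset of an affinoid perfectoid is affinoid perfectoid \cite[th.~6.3]{ScholzeIHES}. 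The preimage of a rational domain of ${\rm LT}_j$ is quasi-compact (inverse limit of the finite \'etale preimages at finite level), hence contained in one term of the exhaustion, of which it is a rational subset defined by the same pulled-back inequalities; that gives (i). Your ``extract $q$-th roots along the tower'' sketch is not a substitute: the transition maps of the Lubin--Tate tower are finite \'etale, governed by the formal group law and only congruent to $X\mapsto X^q$ modulo $\varpi$; making that approximation rigorous on suitable subdomains is precisely the content of Weinstein's lemma, which you never actually invoke.

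For (ii) you overcomplicate and in doing so introduce a second gap. Your first route requires descending a $G_{j'}$-stable quasi-compact open of $\widehat{\rm LT}_\infty$ to a \emph{rational domain} of ${\rm LT}_{j'}$; at best such an open descends to a quasi-compact open, i.e.\ a finite union of rational domains, and (i) does not apply to finite unions (again because affinoid perfectoidness is not stable under unions). But no comparison of finite levels is needed at all: once $\widehat{\cal M}_\infty\cong\widehat{\rm LT}_\infty$ is known to be an increasing union of affinoid perfectoids, the preimage in $\widehat{\cal M}_\infty$ of a rational domain of ${\cal M}_j$ --- a quasi-compact open defined by inequalities among functions pulled back from level $j$ --- sits inside one term of the exhaustion and is a rational subset of it, hence affinoid perfectoid; this is all the paper's proof of (ii) consists of, via the isomorphism of perfectoid spaces \cite[th.~7.2.3]{SW}. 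Your alternative ``direct'' argument on the Drinfeld side asserts that the Drinfeld level structures generate $p$-power roots of a pseudo-uniformizer so that the completed colimit is perfectoid; this is not an available elementary fact (in \cite{SW} the perfectoidness of the Drinfeld tower is obtained through the duality with the Lubin--Tate side, not by such a computation), so as written it assumes what is to be proved.
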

\begin{proof}
La structure de niveau universelle sur $A_1$ donne naissance \`a deux \'el\'ements
$X_1,Y_1$ de $A_1$. Weinstein \cite[lemme. 2.10.1]{Weinstein} a montr\'e que les sous-espaces 
de $\widehat{\rm LT}_\infty$
d\'efinis par les
in\'egalit\'es $|X_1|^n\leq |\varpi|$ et $|Y_1|^n\leq |\varpi|$ sont des affino\"ides perfecto\"{\i}des.
Il s'ensuit que $\widehat{\rm LT}_\infty$ est une r\'eunion croissante d'affino\"ides perfecto\"{\i}des
et donc que tout domaine rationnel de $\widehat{\rm LT}_\infty$ est affino\"{\i}de 
perfecto\"{\i}de~\cite[th. 6.3]{ScholzeIHES}.  On en d\'eduit le (i); le (ii) s'ensuit
gr\^ace \`a l'isomorphisme $\widehat{\cal M}_\infty\cong \widehat{\rm LT}_\infty$ d'espaces
perfecto\"{\i}des.
\end{proof}

\begin{rema}\label{wein2}
Si $X$ est une composante connexe de ${\cal M}_0$ ou ${\rm LT}_0$, on dispose d'une famille
de domaines rationnels $U_\lambda$, $\lambda\in\Q_+$, strictement croissante, telle
que 
$U_\lambda\moins \ocirc U_{\lambda'}$
est rationnel si $\lambda'<\lambda$, $X=\cup_{\lambda\in\Q_+}U_\lambda$ et $U_{\lambda}=\cap_{\lambda'>\lambda}U_{\lambda'}$.  
En particulier, $X$ et $X\moins U_\lambda$
sont des r\'eunions croissantes strictes de domaines rationnels.

On en d\'eduit, en utilisant la prop.~\ref{wein1}, que si $j\geq 0$, alors
$\widehat X=\widehat{\rm LT}_\infty^\varpi\cong
\widehat{\cal M}_\infty^\varpi$ admet un recouvrement croissant
par des affino\"ides perfecto\"{\i}des $\widehat U_n$ 
images inverses d'affino\"{\i}des $U_n$ de ${\rm LT}_j^\varpi$ (resp.~${\cal M}_j^\varpi$)
tels que $\widehat X\moins \widehat U_n$ admette aussi
un recouvrement croissant
par des affino\"ides perfecto\"{\i}des 
images inverses d'affino\"{\i}des de ${\rm LT}_j^\varpi$ (resp.~${\cal M}_j^\varpi$).
\end{rema}

\Subsection{Cohomologie de $\mathcal{O}(\widehat{\cal M}^\varpi_{\infty})$ et cohomologie
de de Rham de $\mv_\infty$}\label{GAB2}
\subsubsection{Cohomologie galoisienne de $\mathcal{O}(\widehat{\cal M}^\varpi_{\infty})$}
Si $K$ est un groupe profini, on note simplement $H^i(K,-)$ les groupes de cohomologie continue de $K$.
   Nous aurons besoin du r\'esultat technique suivant.
      
\begin{lemm}\label{scholzestyle}
Soit $X$ un affino\"{\i}de ou une courbe Stein, et soit $\widehat X$ un rev\^etement
pro\'etale perfecto\"{\i}de de $X$, galoisien de groupe de Galois $K$. Si $X$ est une courbe Stein, on suppose de plus que 
l'on peut \'ecrire $X$ comme une r\'eunion croissante stricte
d'affino\"ides $X_n$ dont les images inverses dans $X$ sont des affino\"ides perfecto\"{\i}des.
Alors on dispose d'isomorphismes canoniques
$$H^0(K, \O(\widehat X))=\O(X), \quad H^1(K, \O(\widehat X))\simeq \Omega^1(X).$$
\end{lemm}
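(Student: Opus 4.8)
The plan is to compare both sides with the pro-\'etale cohomology of $\widehat{\mathcal{O}}_X$ on $X$ and then invoke Lemma~\ref{five term}. Since $\widehat X\to X$ is a pro-\'etale Galois cover with (profinite) group $K$, it is in particular a $K$-torsor in $X_{\proet}$, and one has the descent (Cartan--Leray) spectral sequence
$$E_2^{i,j}=H^i\bigl(K,H^j_{\proet}(\widehat X,\widehat{\mathcal{O}}_X)\bigr)\ \Longrightarrow\ H^{i+j}_{\proet}(X,\widehat{\mathcal{O}}_X),$$
expressing the identity $R\Gamma_{\proet}(X,\widehat{\mathcal{O}}_X)=R\Gamma\bigl(K,R\Gamma_{\proet}(\widehat X,\widehat{\mathcal{O}}_X)\bigr)$ in the pro-\'etale topos (cf.\ \cite{RAV}). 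The crux is therefore to show that $\widehat X$ is $\widehat{\mathcal{O}}_X$-acyclic, i.e.\ that $R\Gamma_{\proet}(\widehat X,\widehat{\mathcal{O}}_X)$ is concentrated in degree $0$, where it equals $\mathcal{O}(\widehat X)$. Granting this, $E_2^{i,j}=0$ for $j\geq 1$, the spectral sequence degenerates, and its edge maps give canonical isomorphisms $H^i(K,\mathcal{O}(\widehat X))\overset{\sim}{\to}H^i_{\proet}(X,\widehat{\mathcal{O}}_X)$ for all $i$; Lemma~\ref{five term} (with Prop.~\ref{Scholze}) then identifies the right-hand side with $\mathcal{O}(X)$ for $i=0$ and with $\Omega^1(X)(-1)$ for $i=1$, which is the assertion.

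To establish the acyclicity of $\widehat X$: when $X$ is affinoid, $\widehat X$ is an affinoid perfectoid, and this is precisely Scholze's acyclicity theorem for the completed structure sheaf on affinoid perfectoids (\cite{ScholzeIHES}, \cite{RAV}), namely $H^0_{\proet}(\widehat X,\widehat{\mathcal{O}}_X)=\mathcal{O}(\widehat X)$ and $H^j_{\proet}(\widehat X,\widehat{\mathcal{O}}_X)=0$ for $j\geq 1$. When $X$ is Stein, I would take the given strict affinoid exhaustion $X=\bigcup_n X_n$, let $\widehat X_n\subset\widehat X$ be the (affinoid perfectoid) preimage of $X_n$, so that $\widehat X=\bigcup_n\widehat X_n$, apply the affinoid case to each $\widehat X_n$, and pass to the limit via the exact sequences
$$0\to \R^1\varprojlim_n H^{j-1}_{\proet}(\widehat X_n,\widehat{\mathcal{O}}_X)\to H^j_{\proet}(\widehat X,\widehat{\mathcal{O}}_X)\to \varprojlim_n H^j_{\proet}(\widehat X_n,\widehat{\mathcal{O}}_X)\to 0.$$
For $j\geq 2$ both outer terms vanish; for $j=1$ the right-hand term vanishes and the left-hand term is $\R^1\varprojlim_n\mathcal{O}(\widehat X_n)=0$, the latter because strictness of the exhaustion makes the transition maps of $(\mathcal{O}(\widehat X_n))_n$ have dense image (Mittag--Leffler); for $j=0$ one gets $H^0_{\proet}(\widehat X,\widehat{\mathcal{O}}_X)=\varprojlim_n\mathcal{O}(\widehat X_n)=\mathcal{O}(\widehat X)$. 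Thus $\widehat X$ is again $\widehat{\mathcal{O}}_X$-acyclic, and the descent argument of the first paragraph concludes.

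The main obstacle is the acyclicity input: one must know that $H^j_{\proet}(\widehat X,\widehat{\mathcal{O}}_X)=0$ for $j>0$ on an affinoid perfectoid $\widehat X$, and in particular check that the \emph{almost}-vanishing of $\widehat{\mathcal{O}}_X^+$ proved by Scholze becomes genuine vanishing after $\otimes_{\Zp}\Q_p$. The secondary, more routine, point is the inverse-limit bookkeeping in the Stein case, where one needs the $\R^1\varprojlim$-terms above to vanish; this is exactly what the hypothesis that the exhaustion $(X_n)_n$ is strict and the $\widehat X_n$ are affinoid perfectoid is there to guarantee, through the density of the relevant restriction maps on global sections.
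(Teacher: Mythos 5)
Your proof is correct and follows essentially the same route as the paper: identify $H^i(K,\mathcal{O}(\widehat X))$ with $H^i_{\proet}(X,\widehat{\mathcal{O}})$ by descent along the pro-\'etale $K$-torsor $\widehat X\to X$ (the \v{C}ech complex being the continuous cochain complex, thanks to the acyclicity and the computation of $\widehat{\mathcal{O}}$ on affinoid perfectoids from \cite{RAV}), then conclude with the lemme~\ref{five term}; the only difference is that in the Stein case you pass to the limit over the $\widehat X_n$ before descending, whereas the paper descends over each $X_n$ and then commutes $H^i(K,-)$ with $\varprojlim_n$. Both variants hinge on $\R^1\varprojlim_n\mathcal{O}(\widehat X_n)=0$, which you justify a bit quickly by saying that strictness of the exhaustion gives dense transition maps; this is exactly the point the paper secures by observing that $\widehat X$ is Stein of generalized type and citing \cite[2.6, lemma~2.6.3]{KL2}, so you should either invoke that reference or deduce the density from the finite-level quasi-Stein exhaustions together with the description of $\mathcal{O}(\widehat X_n)$ as a completed colimit.
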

\begin{proof} 
Commen\c cons par traiter le cas o\`u $X$ est un affino\"{\i}de.
Par hypoth\`ese 
$\widehat X\to X$ est un recouvrement dans $X_{\proet}$, de groupe de Galois $K$. 
Comme 
$\widehat X\times_{X} \widehat X=K\times \widehat X$, le complexe de \v{C}ech 
du faisceau $\widehat{\mathcal{O}}$ attach\'e \`a ce recouvrement 
est $(\mathcal{C}^0(K^j, \O(\widehat X)))_{j}$ (cela utilise \cite[cor. 6.6]{RAV}). 
Puisque 
tous les objets que l'on consid\`ere sont quasi-compacts et s\'epar\'es et puisque le faisceau 
$\widehat{\mathcal{O}}$ n'a pas de cohomologie en degr\'e $>0$ sur les affino\"{\i}des 
perfecto\"{\i}des, on en d\'eduit un isomorphisme 
$H^j(K, \O(\widehat X))=H^j_{\proet}(X, \widehat{\mathcal{O}})$. 
Le lemme \ref{five term} permet de conclure.

Supposons maintenant que $X$ est Stein et que l'on peut \'ecrire $X$ comme une r\'eunion croissante stricte
d'affino\"ides $X_n$, dont l'image inverse $\widehat X_n$ de $X_n$ dans
$\widehat X$ est affino\"{\i}de perfecto\"{\i}de. Comme $\widehat X_n\to X_n$ est
galoisien de groupe de Galois $K$,
pour d\'eduire le r\'esultat du cas perfecto\"{\i}de, il suffit donc de v\'erifier
que $H^i(K,\varprojlim_n\O(\widehat X_n))=\varprojlim_nH^i(K,\O(\widehat X_n))$.
C'est imm\'ediat si $i=0$ et, si $i=1$, cela suit de ce que $X$ est suppos\'e Stein
et donc $\widehat X$ est aussi Stein
(de type g\'en\'eralis\'e, cf. \cite[2.6, lemma. 2.6.3]{KL2}),
et donc 
$${\rm R}^1\varprojlim \O(X_n)=0 \quad{\rm et}\quad  {\rm R}^1\varprojlim \O(\widehat X_n)=0,$$
et de la suite exacte (o\`u $B_n=\O(\widehat X_n)$)
$${\rm R}^1\varprojlim H^0(K,B_n)\to H^1(K,\varprojlim_nB_n)\to
\varprojlim_nH^1(K,B_n)\to H^0(K,{\rm R}^1\varprojlim B_n).\qedhere$$
\end{proof}

\begin{prop} \label{coh infinie} 
On dispose d'isomorphismes canoniques 
\begin{align*} H^0(G_j, \mathcal{O}(\widehat{\rm LT}^\varpi_{\infty}))=\mathcal{O}({\rm LT}^\varpi_j)
\quad&{\rm et}\quad
H^1(G_j, \mathcal{O}(\widehat{\rm LT}^\varpi_{\infty}))=\Omega^1({\rm LT}^\varpi_j), \\
H^0(\check{G}_j,  \mathcal{O}(\widehat{\cal M}^\varpi_{\infty}))=\mathcal{O}({\cal M}^\varpi_j)
\quad&{\rm et}\quad
H^1(\check{G}_j, \mathcal{O}(\widehat{\cal M}^\varpi_{\infty}))=\Omega^1({\cal M}^\varpi_j). 
\end{align*}
      \end{prop}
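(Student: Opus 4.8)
The plan is to deduce all four isomorphisms from the lemma~\ref{scholzestyle}, applied on the one hand to $X={\rm LT}_j^\varpi$, $\widehat X=\widehat{\rm LT}_\infty^\varpi$ with Galois group $K=G_j$, and on the other hand to $X={\cal M}_j^\varpi$, $\widehat X=\widehat{\cal M}_\infty^\varpi$ with $K=\check G_j$. Since the two situations are formally identical, I would write out the Lubin--Tate case and indicate at each step the substitution needed in the Drinfeld case. Thus the whole argument reduces to checking the three hypotheses of that lemma: that $X$ is a Stein curve; that $\widehat X\to X$ is a pro-\'etale perfectoid covering, Galois of group $K$; and that $X$ is a strictly increasing union of affinoids whose preimages in $\widehat X$ are perfectoid affinoids.

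For the first point I would note that ${\rm LT}_0^\varpi$ is a \emph{finite} disjoint union of open unit discs: $\breve{\rm LT}_0$ is a countable disjoint union of open discs on which the central element $\varpi$ of $G$ permutes the connected components with finitely many orbits, and for $j\geq 1$ the transition map ${\rm LT}_j^\varpi\to{\rm LT}_0^\varpi$ is finite \'etale; a finite \'etale cover of a finite disjoint union of open discs is again Stein, so ${\rm LT}_j^\varpi$ is a Stein curve, and likewise ${\cal M}_j^\varpi$ (using that ${\cal M}_0^\varpi$ is a finite disjoint union of copies of $\Omega_{\rm Dr}$, which is Stein).

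For the second point I would argue that $\widehat{\rm LT}_\infty^\varpi$ is the completion of $\varprojlim_{j'\geq j}{\rm LT}_{j'}^\varpi$, hence pro-\'etale over ${\rm LT}_j^\varpi$, and that it is perfectoid by~\cite[th. 6.5.4]{SW} together with prop.~\ref{wein1}. Since $\det(\varpi\cdot{\rm id})=\varpi^2\notin\O_F^\dual$ we have $\varpi^\Z\cap G_0=\{1\}$, so $G_j\subset G$ injects into $G/\varpi^\Z$ and acts freely on $\widehat{\rm LT}_\infty^\varpi$ with quotient ${\rm LT}_j^\varpi$; passing to the limit in the isomorphisms $\breve{\rm LT}_{j'}\times_{\breve{\rm LT}_j}\breve{\rm LT}_{j'}\cong(G_j/G_{j'})\times\breve{\rm LT}_{j'}$ gives, in $({\rm LT}_j^\varpi)_{\proet}$, an isomorphism $\widehat{\rm LT}_\infty^\varpi\times_{{\rm LT}_j^\varpi}\widehat{\rm LT}_\infty^\varpi\cong G_j\times\widehat{\rm LT}_\infty^\varpi$ (cf.~\cite[cor. 6.6]{RAV}), which is exactly the form of the Galois hypothesis used in the \v{C}ech computation of lemma~\ref{scholzestyle}. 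For Drinfeld one uses instead that $\check G_j=\varprojlim_n\check G_j/\check G_n$ is the Galois group of $\widehat{\cal M}_\infty\to\breve{\cal M}_j$, and that $\varpi^\Z\cap\check G_0=\{1\}$ because $v_D(\varpi)=2$.

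For the third point I would simply invoke rem.~\ref{wein2}, which supplies a strictly increasing covering ${\rm LT}_j^\varpi=\bigcup_n U_n$ by affinoids whose preimages $\widehat U_n$ in $\widehat{\rm LT}_\infty^\varpi$ are perfectoid affinoids (and likewise for ${\cal M}_j^\varpi$ inside $\widehat{\cal M}_\infty^\varpi$). Lemma~\ref{scholzestyle} then yields $H^0(G_j,\O(\widehat{\rm LT}_\infty^\varpi))=\O({\rm LT}_j^\varpi)$ and $H^1(G_j,\O(\widehat{\rm LT}_\infty^\varpi))\simeq\Omega^1({\rm LT}_j^\varpi)$, and the two Drinfeld identities by the same argument. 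The genuinely delicate point — the rest being bookkeeping, since the real work is packaged into lemma~\ref{scholzestyle} and rem.~\ref{wein2} — is checking that quotienting the perfectoid tower by the \emph{specific} compact open subgroup $G_j\subset G$ (resp.~$\check G_j\subset\check G$) recovers precisely ${\rm LT}_j^\varpi$ (resp.~${\cal M}_j^\varpi$) with a free action, i.e.\ that the central $\varpi$ does not interfere; once that is granted, together with the finiteness of $\pi_0({\rm LT}_j^\varpi)$ and $\pi_0({\cal M}_j^\varpi)$ needed for the $R^1\varprojlim$ arguments internal to the lemma, the conclusion follows.
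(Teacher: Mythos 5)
Your proposal is correct and follows exactly the paper's route: the paper's proof consists precisely in applying lemma~\ref{scholzestyle} to $(X,\widehat X,K)=({\rm LT}^\varpi_j,\widehat{\rm LT}^\varpi_\infty,G_j)$ and $({\cal M}^\varpi_j,\widehat{\cal M}^\varpi_\infty,\check G_j)$, the hypotheses being supplied by rem.~\ref{wein2} (together with prop.~\ref{wein1} and \cite{SW}). Your additional verifications (Steinness of the quotients, $\varpi^\Z\cap G_0=\{1\}$ and $\varpi^\Z\cap\check G_0=\{1\}$, the pro-\'etale Galois structure) simply make explicit what the paper leaves implicit.
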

\begin{proof} 
C'est une cons\'equence directe du lemme~\ref{scholzestyle}
utilis\'e pour $(X,\widehat X,K)=({\rm LT}^\varpi_j, \widehat{\rm LT}^\varpi_{\infty},G_j)$
ou $({\cal M}^\varpi_j,\widehat{\cal M}^\varpi_{\infty}, \check{G}_j)$, 
les hypoth\`eses \'etant satisfaites gr\^ace \`a la rem.~\ref{wein2}.
\end{proof}

\subsubsection{Cohomologie \`a support compact}\label{GRAB5}
Si $X$ est un des espaces ${\rm LT}^\varpi_j$, ${\cal M}^\varpi_{j}$ on note 
$$\mathcal{O}(\partial X)=\varinjlim_{U} \mathcal{O}(X\moins U),$$
la limite inductive \'etant prise sur les affino\"{\i}des $U$ d'un recouvrement Stein de 
$X$. 
On d\'efinit par le m\^eme proc\'ed\'e l'espace $\Omega^1(\partial X)$.  
On d\'efinit alors l'espace $H^1_c(X, \mathcal{O})$ comme le quotient
$\O(\partial X)/\O(X)$.

De m\^eme, si $\widehat X=\widehat{\rm LT}_\infty^\varpi\cong \widehat{\cal M}_\infty^\varpi$,
on pose $\O(\partial \widehat X)=\varinjlim_{U} \mathcal{O}(\widehat X\moins U),$
la limite inductive \'etant prise sur les domaines rationnels de $\widehat X$
(ce sont des affino\"{\i}des perfecto\"{\i}des).  Comme $\widehat X$ admet des recouvrements
croissants par les images inverses d'affino\"{\i}des de ${\rm TL}_j^\varpi$
(resp.~$\mv_j$), on peut ne consid\'erer que des $U$ provenant de ${\rm TL}_j^\varpi$
(resp.~$\mv_j$) pour calculer $\O(\partial \widehat X)$ et on peut
m\^eme supposer (rem.~\ref{wein2}) que $\widehat X\moins U$ admet des recouvrements
croissants par les images inverses d'affino\"{\i}des de ${\rm TL}_j^\varpi$
(resp.~$\mv_j$).
On d\'efinit
$H^1_c(\widehat X, \mathcal{O})$ par la suite exacte
$$0\to \mathcal{O}(\widehat X)\to \mathcal{O}(\partial \widehat X)\to H^1_c(\widehat X, \mathcal{O})\to 0.$$
\begin{rema}
Les d\'efinitions ci-dessus sont parfaitement ad hoc, mais donnent le m\^eme
r\'esultat que la d\'efinition naturelle ci-dessous ou que
la d\'efinition habituelle~\cite{Chiarellotto,vdpdual} dans le cas de $X$.
Si $Z=X,\widehat X$, on d\'efinit le pro-objet $\partial Z$
comme la limite projective des $Z\moins U$,
o\`u $U$ varie dans les domaines rationnels.  Si ${\cal F}$ est
un faisceau sur $Z$, on d\'efinit $\rg(\partial Z,{\cal F})$
comme la limite inductive $\varinjlim \rg(Z\moins U,{\cal F})$
et $\rg_c(Z,{\cal F})$ comme le c\^one $[\rg(Z,{\cal F})\to \rg(\partial Z,{\cal F})]$.
On a alors une suite exacte longue
$$0\to H^0_c(Z,{\cal F})\to H^0(Z,{\cal F})\to H^0(\partial Z,{\cal F})\to 
H^1_c(Z,{\cal F})\to H^1(Z,{\cal F})$$
Dans le cas qui nous int\'eresse, \`a savoir ${\cal F}=\O$, on a $H^0_c(Z,{\cal F})=0$
et $H^1(Z,{\cal F})=0$ (cf.~\cite[2.6, lemma~2.6.3]{KL2} pour $Z=\widehat X$),
ce qui justifie les d\'efinitions ci-dessus.
\end{rema}

\begin{coro}\label{comparer1}
On a des isomorphismes naturels:
$$H^1_c({\rm LT}_j^\varpi,\O)=H^1_c(\widehat{\rm LT}_\infty^\varpi,\O)^{G_j}
\quad{\rm et}\quad
H^1_c(\mv_n,\O)=H^1_c(\widehat{\cal M}_\infty^\varpi,\O)^{\check G_n}.$$
\end{coro}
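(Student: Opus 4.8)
The plan is to deduce Corollary \ref{comparer1} from Proposition \ref{coh infinie} by the standard ``cohomology commutes with colimits and passes to the boundary'' argument, applied to the groups $G_j$ (resp. $\check G_n$) acting on the perfecto\"id tower. The point is that taking $K$-invariants (for $K=G_j$ or $\check G_n$) is exact enough on the relevant exact sequence because $H^1(K,-)$ of the spaces of functions is computed by Proposition \ref{coh infinie}, and the higher cohomology vanishes on the affino\"id pieces.

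First I would fix $j$ (resp. $n$) and work with the quotient tower $\widehat X=\widehat{\rm LT}^\varpi_\infty\cong\widehat{\cal M}^\varpi_\infty$. By Remark \ref{wein2}, $\widehat X$ admits an increasing cover by perfecto\"id affino\"ids $\widehat U_m$ that are inverse images of affino\"ids $U_m\subset {\rm LT}^\varpi_j$ (resp. $\mv_n$), and moreover each $\widehat X\moins\widehat U_m$ also admits such a cover. For each $m$, the lemma \ref{scholzestyle} argument (or directly the perfecto\"id case of it) gives $H^0(G_j,\O(\widehat U_m))=\O(U_m)$, $H^1(G_j,\O(\widehat U_m))=\Omega^1(U_m)$, and similarly $H^0(G_j,\O(\widehat X\moins\widehat U_m))=\O({\rm LT}^\varpi_j\moins U_m)$. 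Passing to the filtered colimit over $m$ (colimits are exact and commute with continuous cohomology of the profinite group $G_j$ since everything in sight is a countable colimit of the affino\"id pieces, and $\R^1\varprojlim$ of the $H^0$'s vanishes by the Stein hypothesis) yields $H^0(G_j,\O(\widehat X))=\O({\rm LT}^\varpi_j)$, $H^0(G_j,\O(\partial\widehat X))=\O(\partial {\rm LT}^\varpi_j)$, and $H^1(G_j,\O(\partial\widehat X))$ maps onto $\Omega^1(\partial{\rm LT}^\varpi_j)$ in a controlled way.

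Then I would apply the functor $H^\bullet(G_j,-)$ (resp. $H^\bullet(\check G_n,-)$) to the defining short exact sequence
$$0\to \O(\widehat X)\to \O(\partial\widehat X)\to H^1_c(\widehat X,\O)\to 0,$$
obtaining a long exact sequence
$$0\to \O({\rm LT}^\varpi_j)\to \O(\partial{\rm LT}^\varpi_j)\to H^1_c(\widehat X,\O)^{G_j}\to H^1(G_j,\O(\widehat X))\to H^1(G_j,\O(\partial\widehat X)).$$
By Proposition \ref{coh infinie} the first term of the last arrow is $\Omega^1({\rm LT}^\varpi_j)$, which injects into $\Omega^1(\partial {\rm LT}^\varpi_j)$; hence $H^1(G_j,\O(\widehat X))\to H^1(G_j,\O(\partial\widehat X))$ is injective, so the connecting map vanishes and $H^1_c(\widehat X,\O)^{G_j}=\O(\partial{\rm LT}^\varpi_j)/\O({\rm LT}^\varpi_j)=H^1_c({\rm LT}^\varpi_j,\O)$, by the ad hoc definition of $H^1_c$ given just above the corollary. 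The identical argument with $\check G_n$ acting on $\widehat{\cal M}^\varpi_\infty$ and the affino\"ids $U_m$ coming from $\mv_n$ gives $H^1_c(\widehat{\cal M}^\varpi_\infty,\O)^{\check G_n}=H^1_c(\mv_n,\O)$.

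The main obstacle is the commutation of continuous $K$-cohomology with the two limit operations simultaneously: the filtered colimit defining $\O(\partial\widehat X)$ and the hidden inverse limit (over the Stein exhaustion) inside each $\O(\widehat U_m)$. This is handled exactly as in the proof of Lemma \ref{scholzestyle}: one reduces to the perfecto\"id affino\"id case where $\widehat{\mathcal O}$ has no higher cohomology, uses that $\widehat X$ and $\widehat X\moins\widehat U_m$ are Stein (of generalized type, \cite[2.6]{KL2}) so the relevant $\R^1\varprojlim$ vanish, and invokes the exactness of filtered colimits on cohomology of the profinite group. No genuinely new input beyond Proposition \ref{coh infinie} and Remark \ref{wein2} is needed; the work is purely in organizing these compatibilities.
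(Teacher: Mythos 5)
Votre preuve est correcte et suit essentiellement la d\'emarche du texte: on applique la cohomologie continue de $K=G_j$ (resp.~$\check G_n$) \`a la suite $0\to\O(\widehat X)\to\O(\partial\widehat X)\to H^1_c(\widehat X,\O)\to 0$, on identifie les termes gr\^ace \`a la prop.~\ref{coh infinie} et au lemme~\ref{scholzestyle} appliqu\'e, via la rem.~\ref{wein2}, \`a $\widehat X$ priv\'e d'un affino\"{\i}de perfecto\"{\i}de, et on tue la fl\`eche de connexion par l'injectivit\'e de $\Omega^1(X)\to\Omega^1(\partial X)$, exactement comme dans la preuve du corollaire. Rien \`a signaler au-del\`a de ces v\'erifications d'organisation (colimites, $\R^1\varprojlim$), que vous traitez au m\^eme niveau de d\'etail que l'article.
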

\begin{proof}
Soit $(X,\widehat X,K)=({\rm LT}_j^\varpi,\widehat{\rm LT}_\infty^\varpi,G_j)$ ou
$(\mv_n,\widehat{\cal M}_\infty^\varpi,\check G_n)$.
Comme $\Omega^1(X)\to\Omega^1(\partial X)$ est injective, on d\'eduit de la prop.~\ref{coh infinie}
(et de son analogue pour $\widehat X$ priv\'e d'un affino\"{\i}de perfecto\"{\i}de 
du type de la rem.~\ref{wein2}
auquel on peut appliquer le lemme~\ref{scholzestyle})
une suite exacte $0\to \O(X)\to \O(\partial X)\to H^1_c(\widehat X,\O)^K\to 0$ qui,
combin\'ee avec la suite exacte
$0\to \O(X)\to \O(\partial X)\to H^1_c(X,\O)\to 0$, permet de conclure. 
\end{proof}

Si $X$ est un des espaces ${\rm LT}^\varpi_j$, ${\cal M}^\varpi_{j}$, la dualit\'e de Serre \cite{Bayer, Chiarellotto, vdpdual} fournit des isomorphismes naturels 
   $$H^i_{\rm dR}(X)\simeq (H^{2-i}_{\rm dR, c} (X))^\dual , \quad H^i_{\rm dR, c}(X)\simeq (H^{2-i}_{\rm dR} (X))^\dual ,
\quad \Omega^1(X)^\dual \simeq H^1_{c}(X, \mathcal{O}_X).$$ 
En dualisant la suite exacte $$0\to \mathcal{O}(X)/H^0_{\rm dR}(X)\to \Omega^1(X)\to H^1_{\rm dR}(X)\to 0,$$
on obtient donc la
suite exacte d'espaces de type compact
$$0 \to H^1_{\rm dR, c}(X)\to \Omega^1(X)^\dual\simeq H^1_c(X, \mathcal{O})\to 
 \mathcal{O}(X)^\dual\to H^0_{\rm dR}(X)^\dual\to 0.$$
Compte-tenu du cor.\,\ref{comparer1}, le th.\,\ref{comparer} est une cons\'equence de
la proposition suivante.
\begin{prop}\label{nulle}
{\rm (i)} On a $[\mathcal{O}({\cal M}^\varpi_n)^\dual ]^{G_j}=0$ et 
$[\mathcal{O}({\rm LT}^\varpi_j)^\dual ]^{\check{G}_n}=0$.
     
{\rm (ii)} On dispose d'isomorphismes canoniques 
$$H^1_{\rm dR, c}({\cal M}^\varpi_n)^{G_j}=H^1_c({\cal M}^\varpi_n, \mathcal{O})^{G_j}
\quad{\rm et}\quad 
H^1_{\rm dR, c}({\rm LT}^\varpi_j)^{\check{G}_n}=H^1_c({\rm LT}^\varpi_j, \mathcal{O})^{\check{G}_n}.$$
\end{prop}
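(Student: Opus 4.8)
The plan is to deduce (ii) from (i) by a formal diagram chase, to reformulate (i) as the assertion that $G_j$- (resp.~$\check G_n$-) invariant global analytic functions are locally constant, and to prove that last assertion by transferring it to the Drinfeld half-plane, where it comes from the Steinberg description of prop.~\ref{steinb2}. For (ii) I would start from the exact sequence obtained above by dualizing $0\to\mathcal{O}(X)/H^0_{\rm dR}(X)\to\Omega^1(X)\to H^1_{\rm dR}(X)\to 0$, written in the form $0\to H^1_{\rm dR,c}(X)\to H^1_c(X,\mathcal{O})\to(\mathcal{O}(X)/H^0_{\rm dR}(X))^\dual\to 0$, the target being the $K$-stable closed subspace $\ker(\mathcal{O}(X)^\dual\to H^0_{\rm dR}(X)^\dual)$ of $\mathcal{O}(X)^\dual$. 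Applying the left-exact functor $(-)^K$, for $(X,K)=(\mv_n,G_j)$ or $({\rm LT}_j^\varpi,\check G_n)$, identifies $H^1_{\rm dR,c}(X)^K$ with the kernel of $H^1_c(X,\mathcal{O})^K\to\big((\mathcal{O}(X)/H^0_{\rm dR}(X))^\dual\big)^K$, and this kernel is all of $H^1_c(X,\mathcal{O})^K$ precisely when $\big((\mathcal{O}(X)/H^0_{\rm dR}(X))^\dual\big)^K=0$, which is the content of (i) (only this subspace of $\mathcal{O}(X)^\dual$ intervenes).

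Next I would reformulate (i). As $K$ is compact, averaging a lift over $K$ makes $\mathcal{O}(X)^K\to(\mathcal{O}(X)/H^0_{\rm dR}(X))^K$ surjective, so $(\mathcal{O}(X)/H^0_{\rm dR}(X))^K=\mathcal{O}(X)^K/H^0_{\rm dR}(X)^K$; passing to duals, and using that coinvariants and invariants coincide for the compact $K$, gives $\big((\mathcal{O}(X)/H^0_{\rm dR}(X))^\dual\big)^K\cong\big(\mathcal{O}(X)^K/H^0_{\rm dR}(X)^K\big)^\dual$. Thus (i) is equivalent to the statement that every $G_j$-invariant $f\in\mathcal{O}(\mv_n)$, and every $\check G_n$-invariant $f\in\mathcal{O}({\rm LT}_j^\varpi)$, is locally constant. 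Moreover these two statements are \emph{one and the same}: by prop.~\ref{coh infinie}, used exactly as in cor.~\ref{comparer1}, together with the $G\times\check G$-equivariant isomorphism of perfectoid spaces $\widehat{\rm LT}_\infty^\varpi\cong\widehat{\cal M}_\infty^\varpi$, one has, compatibly with $d$,
$$\mathcal{O}(\mv_n)^{G_j}=\mathcal{O}(\widehat{\cal M}_\infty^\varpi)^{G_j\times\check G_n}=\mathcal{O}(\widehat{\rm LT}_\infty^\varpi)^{G_j\times\check G_n}=\mathcal{O}({\rm LT}_j^\varpi)^{\check G_n},$$
so it is enough to handle $\mv_n$.

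For $\mv_n$, the single real input is that $\mathcal{O}(\Omega_{\rm Dr})^K=C$ for \emph{every} compact open subgroup $K\subset G$. I would get this from prop.~\ref{steinb2}, which presents $\mathcal{O}(\Omega_{\rm Dr})/C$ as the $G$-module $\ker\big(({\rm St}^{\rm an}_C)^\dual\to({\rm St}^{\rm lisse}_C)^\dual\big)$: since $K$ has only finitely many orbits on the compact set $\piqp(F)$, a $K$-invariant locally $F$-analytic function on $\piqp(F)$ is already locally constant, so $({\rm St}^{\rm an}_C)^K=({\rm St}^{\rm lisse}_C)^K$, and dualizing (which commutes with $K$-invariants, $K$ being compact) makes $\big(({\rm St}^{\rm an}_C)^\dual\big)^K\to\big(({\rm St}^{\rm lisse}_C)^\dual\big)^K$ an isomorphism, whence $(\mathcal{O}(\Omega_{\rm Dr})/C)^K=0$. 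Now $\mv_n\to\Omega_{\rm Dr}$ (induced by ${\cal M}_n\to\Omega_{\rm Dr}$) is $G$-equivariant and finite; on a connected component $Y$ of $\mv_n$, the stabilizer $K_Y=\mathrm{Stab}_{G_j}(Y)$ is compact open in $G$, $\mathcal{O}(Y)$ is a domain finite over the normal domain $\mathcal{O}(\Omega_{\rm Dr})$, and a $G_j$-invariant $f\in\mathcal{O}(\mv_n)$ restricts on $Y$ to a $K_Y$-invariant element whose minimal polynomial over $\Frac\,\mathcal{O}(\Omega_{\rm Dr})$ has coefficients in $\mathcal{O}(\Omega_{\rm Dr})$, necessarily $K_Y$-invariant, hence in $\mathcal{O}(\Omega_{\rm Dr})^{K_Y}=C$. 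So $f|_Y$ is algebraic over the algebraically closed field $C$, i.e.~constant, and $f\in H^0_{\rm dR}(\mv_n)$ — which proves (i), hence (ii), hence (via cor.~\ref{comparer1}) th.~\ref{comparer}.

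The diagram chases are routine; the substance is the vanishing $\mathcal{O}(\Omega_{\rm Dr})^K=C$ — the absence of non-constant $K$-invariant rigid functions on the Drinfeld half-plane — and its propagation along the finite covers $\mv_n\to\Omega_{\rm Dr}$ and, above all, across the perfectoid comparison of the two towers. That last propagation is exactly where the $G\times\check G$-equivariance of the Faltings-Fargues isomorphism of towers is indispensable, there being no Steinberg-type description available on the Lubin-Tate side to argue with directly.
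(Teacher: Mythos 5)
Your deduction of (ii) from (i) is exactly the paper's (take $K$-invariants of the dualized de Rham sequence), but your proof of (i) has a genuine gap: (i) is a statement about $K$-invariant \emph{continuous functionals} on $\mathcal{O}(X)$, and it is not equivalent to the statement that $K$-invariant \emph{functions} are locally constant. The equivalence you invoke rests on ``coinvariants and invariants coincide for the compact $K$'' and on ``dualizing commutes with $K$-invariants''; for a compact $p$-adic group acting continuously but not smoothly on a $p$-adic Fr\'echet space there is no averaging projector (the $\Q_p$-valued Haar distribution on $G_j$ is unbounded, so one can only average smooth vectors), so $(V^\dual)^K$ is the dual of the coinvariants, not of the invariants, and the two differ in general. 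Concretely, $[\mathcal{O}(X)^\dual]^K=0$ is equivalent to the density in $\mathcal{O}(X)$ of the span of $\{g\cdot f-f\}$ — a codensity statement which ``invariant functions are constant'' does not give, just as $\ker(T-1)=0$ on a Banach space does not force $\ker(T^*-1)=0$. The same flaw occurs when you pass from $({\rm St}^{\rm an}_C)^K=({\rm St}^{\rm lisse}_C)^K$ to the corresponding statement on duals, and again when you claim that ``only this subspace intervenes'' reduces (i) to a statement about invariant functions.

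The paper's proof works directly on the dual side: it first shows that the actions of $G_j$ on $\mathcal{O}(\mv_n)^\dual$ and of $\check G_n$ on $\mathcal{O}({\rm LT}^\varpi_j)^\dual$ can be differentiated, and then kills any ${\rm Lie}$-invariant functional $\lambda$ by exhibiting \emph{every} function explicitly in the image of the Lie algebra acting on $\mathcal{O}$: on the Drinfeld side $u^+(f)=-f'$ and $a^+(f)=-zf'$ give $f=a^+(f)-u^+(zf)$, whence $\lambda(f)=-(a^+\lambda)(f)+(u^+\lambda)(zf)=0$; on the Lubin--Tate side one uses the Gross--Hopkins description $\partial_i=f_i\tfrac{d}{dz}$ of the infinitesimal $\check G$-action and the fact that $f_1,f_2,f_3$ have no common zero, so the ideal they generate is dense and $\lambda=0$. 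Note also that your transfer through $\mathcal{O}(\widehat{\cal M}^\varpi_\infty)$ (via prop.~\ref{coh infinie}) moves invariant \emph{functions}, not functionals on two different spaces, so even a correct proof that $\mathcal{O}(\mv_n)^{G_j}$ is locally constant (which is true, and follows from $u^+(f)=-f'=0$ once one knows the action is differentiable) would say nothing about $[\mathcal{O}({\rm LT}^\varpi_j)^\dual]^{\check G_n}$; the Lubin--Tate side must be treated separately, and that is precisely where the Gross--Hopkins period map is needed in the paper.
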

    
\begin{proof} Le (ii) est une cons\'equence directe du (i) et de suite exacte ci-dessus. 
Pour montrer le (ii), on montre d'abord que l'action de $G_j$ (resp. $\check{G}_n$) sur $\mathcal{O}({\cal M}^\varpi_n)^\dual $ (resp. $\mathcal{O}({\rm LT}^\varpi_j)^\dual $) peut se d\'eriver (et l'op\'erateur ${\rm Lie}(G_j)\to {\rm End}(\mathcal{O}({\cal M}^\varpi_n)^\dual )$ ainsi obtenu est $\mathcal{O}_F$-lin\'eaire), en raisonnant comme dans la preuve du th\'eor\`eme 3.2 de \cite{DL}. Il suffit donc de montrer que $[\mathcal{O}({\cal M}^\varpi_n)^\dual ]^{{\rm Lie}(G)}=0$ et 
$[\mathcal{O}({\rm LT}^\varpi_j)^\dual ]^{{\rm Lie}(\check{G})}=0$.
    
Montrons-le d'abord du c\^ot\'e Drinfeld. Soit $z$ la "variable" sur $\mathbf{P}^1$ et soit 
$a^+$ (resp.~$u^+$) l'action infinit\'esimale de 
$\matrice{\mathcal{O}_F^\dual}{0}{0}{1}$ 
(resp.~de $\matrice{1}{\mathcal{O}_F}{0}{1}$). 
Un calcul direct en niveau $0$ combin\'e avec le fait que ${\cal M}^\varpi_n\to {\cal M}^\varpi_0$ est \'etale montre que:
$$u^+(f)=-f'\quad \text{et}\quad a^+(f)=-zf'=u^+(zf)+f$$ pour tout 
$f\in \mathcal{O}({\cal M}^\varpi_n)$. Si $\lambda \in \mathcal{O}({\cal M}^\varpi_n)^\dual $ est tu\'ee par ${\rm Lie}(G)$, on obtient 
$$\lambda (f)=\lambda (a^+(f)-u^+(zf))=-(a^+\lambda )(f)+(u^+\lambda )(z f)=0$$
pour tout $f\in \mathcal{O}({\cal M}^\varpi_n)$, donc $\lambda =0$.
    
L'argument est similaire, mais un peu plus subtil du c\^ot\'e Lubin-Tate. Soit 
$\pi_{\rm GH}={\rm LT}^\varpi_0\to \mathbf{P}^1$ l'application des p\'eriodes de Gross-Hopkins:
on a $\pi_{\rm GH}(z)=A(z)/B(z)$ o\`u $A,B$ sont des fonctions analytiques sur ${\rm LT}^\varpi_0$
(qui, rappelons-le, est juste une r\'eunion disjointe de disques unit\'es ouverts),
sans z\'eros communs. 
Notons $$g=AB'-A'B, \, f_1=\frac{A^2}{g},\, f_2=\frac{B^2}{g}, \, f_3=\frac{AB}{g}.$$
Gross et Hopkins (voir la toute derni\`ere page de \cite{GH}) montrent que 
$g$ est inversible dans $\mathcal{O}({\rm LT}^\varpi_0)$ et que les op\'erateurs 
$\partial_i=f_i\frac{d}{dz}$ donnent l'action infinit\'esimale de 
$\check{G}/Z(\check{G})$ sur\footnote{Cela est vrai \`a priori pour 
$j=0$, mais reste vrai pour tout $j$ car ${\rm LT}^\varpi_j\to {\rm LT}^\varpi_0$ est \'etale.} $\mathcal{O}({\rm LT}^\varpi_j)$. 
Supposons que $\lambda \in \mathcal{O}({\rm LT}^\varpi_j)^\dual $ est tu\'ee par 
${\rm Lie}(\check{G})$, donc $\lambda (f_i\cdot f')=0$ pour 
$1\leq i\leq 3$ et $f\in \mathcal{O}({\rm LT}^\varpi_j)$. En particulier 
$\lambda (f_1\cdot (f_2f)')=0$ et $\lambda (f_2\cdot (f_1f)')=0$ pour tout $f$, d'o\`u 
$\lambda ((f_1f_2'-f_2f_1')f)=0$. Un calcul direct montre que $f_1f_2'-f_2f_1'=2f_3$. Donc 
$\lambda (f_3f)=0$ pour tout $f$. On obtient de la m\^eme mani\`ere 
$\lambda (f_1f)=\lambda (f_2f)=0$ pour tout $f$, donc $\lambda $ 
est nulle sur les fonctions dans l'id\'eal engendr\'e par
$f_1,f_2,f_3$. Cet id\'eal est dense car les $f_i$ n'ont pas de z\'ero commun. 
On a donc $\lambda =0$, ce qui permet de conclure.
\end{proof}

\section{Applications de la compatibilit\'e local-global}
\label{GAB4}
Dans ce chapitre, nous utilisons des m\'ethodes globales pour prouver les th.\,\ref{gabriel} et~\ref{diagfond}.
Nous allons commencer (\S\,\ref{GAB42}, prop.\,\ref{weak}) par identifier la multiplicit\'e de ${\rm JL}(M)^\dual\otimes {\rm LL}(M)^\dual$
dans $H^1_{\rm HK}({\cal M}_\infty)$ en tant que repr\'esentation de ${\rm WD}_F$
avant de d\'eterminer (\S\,\ref{Theo1.1}) celle qui nous int\'eresse pour le th.\,\ref{gabriel}, \`a savoir celle de
${\rm JL}(M)^\dual$ comme repr\'esentation de $G\times {\rm WD}_F$.
Enfin, dans le \S\,\ref{GAB8}, nous prouvons le th.\,\ref{diagfond}.

\subsection{Notations et remarques pr\'eliminaires}\label{notasup}
Il n'y a pas que les $M$ supercuspidaux qui contribuent \`a la cohomologie
de ${\cal M}_\infty$: tout $M$ ind\'ecomposable de rang $2$ (i.e.~pas somme directe
de deux objets de rang~$1$) contribue mais, si $M$ n'est pas
supercuspidal, sa contribution est \'eparpill\'ee fa\c{c}on puzzle entre le $H^0$ et le $H^1$
(pour rassembler les morceaux, il faudrait passer \`a la cat\'egorie d\'eriv\'ee~\cite{Dat}).
Pour tenir compte de ce ph\'enom\`ene,
on d\'efinit des repr\'esentations ${\rm WD}^i(M)$, ${\rm LL}^i(M)$ et ${\rm JL}^i(M)$, pour $i=0,1$.

Notons qu'un $(\varphi,N,\G_F)$-module $M$ de rang $2$ qui est ind\'ecomposable est soit
supercuspidal soit un tordu ${\rm Sp}\otimes\eta$, o\`u $\eta$ est un caract\`ere lisse de
$F^\dual$, du module ${\rm Sp}$ d\'efini par 
$${\rm Sp}=\Q_p^{\rm nr}e_1\oplus \Q_p^{\rm nr}e_2,\quad
\varphi(e_1)=e_1,\ \varphi(e_2)=pe_2,\hskip.2cm Ne_1=0,\ Ne_2=e_1.$$
Dans ce cas, 
$M$ est dit {\it sp\'ecial}.

$\bullet$ Si $M$ est supercuspidal, on pose 
\begin{align*}
{\rm WD}^0(M)=0\quad{\rm et}\quad&  {\rm WD}^1(M)={\rm WD}(M),\\
{\rm LL}^0(M)=0\quad{\rm et}\quad&  {\rm LL}^1(M)={\rm LL}(M),\\
{\rm JL}^0(M)=0\quad{\rm et}\quad&  {\rm JL}^1(M)={\rm JL}(M).
\end{align*} 

$\bullet$ Si $M={\rm Sp}\otimes\eta$ est sp\'ecial,
on pose 
\begin{align*}
{\rm WD}^0(M)=L(N_{F/\Q_p}\eta)\quad{\rm et}\quad& {\rm WD}^1(M)=L(\eta),\\
{\rm LL}^0(M)=\eta\circ\nu_G \quad{\rm et}\quad&{\rm LL}^1(M)={\rm St}^{\rm lisse}\otimes(\eta\circ\nu_G),\\
{\rm JL}^0(M)={\rm JL}^1&(M)= \eta\circ\nu_{\check G}.
\end{align*}

Nous allons travailler avec le quotient $\mv_\infty$ 
de ${\cal M}_\infty$ par le sous-groupe $\varpi^\Z$
du centre de $\check G$, ce qui fournit un objet plus maniable (en particulier,
$\mv_n$ est d\'efini sur $F$ au lieu de $\breve F$)
et est inoffensif pour les probl\`emes
qui nous int\'eressent: cela restreint l'ensemble de $M$ contribuant \`a la cohomologie, mais
les autres se r\'ecup\`erent en tordant par un caract\`ere non ramifi\'e.
Plus pr\'ecis\'ement,
si $\alpha\in L^\dual$, notons ${\rm nr}_\alpha$ le caract\`ere de $F^\dual$, trivial sur
$\O_F^\dual$ et envoyant $\varpi$ sur $\alpha$ et,
si $H=G,\check G,W_F$, notons ${\rm nr}_{\alpha,H}$ le caract\`ere ${\rm nr}_\alpha\circ\nu_H$ de $H$.
Si $M$ est un $L$-$(\varphi,N,\G_F)$-module de rang~$2$, ind\'ecomposable, alors $\varpi$
(vu comme \'el\'ement du centre de $\check G$) agit par un scalaire $\lambda\in L$ sur ${\rm JL}^i(M)$.
Si $\alpha^{-2}=\lambda$, alors $\varpi$ agit trivialement
sur ${\rm JL}^i(M)\otimes{\rm nr}_{\alpha,\check G}$ et, si $H^\bullet$ est une cohomologie raisonnable, on a
$${\rm Hom}_{\check G}\big({\rm JL}^i(M), H^i({\cal M}_\infty)\big)=
{\rm Hom}_{\check G}\big({\rm JL}^i(M)\otimes{\rm nr}_{\alpha,\check G}, H^i({\cal M}_\infty^\varpi)\big)
\otimes\big({\rm nr}_{\alpha,G}^{-1}
\otimes{\rm nr}_{\alpha,W_F}^{-1}\big),$$
en tant que repr\'esentations de $G\times W_F$.  Pour prouver les th.\,\ref{gabriel} et~\ref{diagfond},
on peut donc supposer que $\varpi$ agit trivialement sur ${\rm JL}^i(M)$ et remplacer ${\cal M}_\infty$ par $\mv_\infty$.

On dit $M$ est {\it $\varpi$-compatible} 
si $\varpi$ (vu comme \'el\'ement du centre de $\check G$)
agit trivialement sur ${\rm JL}^i(M)$.  
Cela implique que $M$ est de pente $\frac{1}{2}$ et,
si $F=\Q_p$, 
que $\varpi$ (vu comme \'el\'ement du centre de $G$) agit trivialement
sur $\Pi(V_{M,{\cal L}})$ pour toute droite ${\cal L}$ de $M_{\rm dR}$.
On note $\fn^{\varpi}$ l'ensemble des $(\varphi,N,\G_F)$-modules de rang~$2$, ind\'ecomposables, et
$\varpi$-compatibles. 

La th\'eorie de Lubin-Tate non ab\'elienne \cite{Carayol, Carayol2, Faltings} 
(voir aussi \cite{Weinstein} pour un \'enonc\'e compact) fournit le r\'esultat suivant
(dans lequel les coefficients de ${\rm JL}^i(M)$, ${\rm WD}^i(M)$ et ${\rm LL}^i(M)$ ont \'et\'e
\'etendus \`a $\Qbar_\ell$) dont nous allons prouver l'analogue pour la cohomologie de de Rham
\`a support compact (th.\,\ref{DAT11}).
\begin{prop}\label{DAT10}
Si $\ell\neq p$ et si
$i=0,1$, alors\footnote{${\rm LL}^i(M)^\vee$ est la contragr\'ediente de ${\rm LL}^i(M)$,
i.e. l'ensemble des vecteurs lisses de ${\rm LL}^i(M)^\dual$.}
$$\Qbar_\ell\otimes_{\Q_{\ell}}H^i_{\eet,c}(\mv_\infty,\Q_\ell)=\bigoplus_{M\in\fn^\varpi}
{\rm JL}^i(M)\otimes{\rm WD}^i(M)\otimes{\rm LL}^i(M)^\vee,$$
en tant que repr\'esentations de $\check G\times W_F\times G$.
\end{prop}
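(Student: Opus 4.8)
The plan is to obtain this statement as a repackaging of classical non-abelian Lubin--Tate theory for $G={\bf GL}_2(F)$, transported from the Lubin--Tate to the Drinfeld tower through the Faltings--Fargues isomorphism. Since the realization of local Langlands is most directly available on the Lubin--Tate side, I would first invoke the identification $\widehat{\rm LT}_\infty\cong\widehat{\cal M}_\infty$ of $G\times\check G\times W_F$-perfectoid spaces (\cite{Faltings2tours, Fargues, SW}), whose $\ell$-adic shadow is an isomorphism of the compactly supported cohomologies of the two towers respecting all three actions; it therefore suffices to establish the decomposition on whichever tower is convenient.

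Next I would recall the shape of the cohomology as a smooth admissible $G$-representation and decompose $\Qbar_\ell\otimes_{\Q_\ell}H^i_{\eet,c}(\mv_\infty,\Q_\ell)$ into $G$-isotypic components. The representations of $G$ that occur are exactly the discrete series ones, and these are parametrized through Fontaine's dictionary by the rank-$2$ indecomposable $(\varphi,N,\G_F)$-modules: the supercuspidal $M$ match the irreducible $2$-dimensional parameters ($N=0$), and the special $M={\rm Sp}\otimes\eta$ match the indecomposable reducible parameters ($N\neq0$). Passing to $\mv_\infty$ (the quotient by $\varpi^{\Z}$) fixes the central character, so the indexing set is precisely $\fn^\varpi$ with no residual twisting to account for.

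The core of the proof is then the determination of the $\check G\times W_F$-action degree by degree. For supercuspidal $M$ the cohomology is concentrated in a single degree, namely $i=1$ in the present normalization, and non-abelian Lubin--Tate theory (\cite{Carayol, Carayol2, Faltings, HT}, in the compact formulation of \cite{Weinstein}) asserts that the ${\rm LL}(M)^\vee$-isotypic part is ${\rm JL}(M)\otimes{\rm WD}(M)$ as a $\check G\times W_F$-module; this is the simultaneous realization of the Langlands and Jacquet--Langlands correspondences and yields the summand attached to such $M$. For special $M={\rm Sp}\otimes\eta$ the contribution is spread across $i=0$ and $i=1$, the presence of $N\neq0$ forcing the two Jordan--H\"older constituents of the relevant induced representation, the character $\eta\circ\nu_G$ and ${\rm St}^{\rm lisse}\otimes(\eta\circ\nu_G)$, into different cohomological degrees. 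I would extract the $i=0$ piece from the constant part of the cohomology, governed by $\pi_0(\mv_\infty)$, producing the character factors ${\rm WD}^0(M)$, ${\rm LL}^0(M)$, ${\rm JL}^0(M)$, and the $i=1$ piece from the Steinberg-type cohomology already visible over the base $\Omega_{\rm Dr}$-tower (in the spirit of \cite{SS}), producing ${\rm WD}^1(M)$, ${\rm LL}^1(M)$, ${\rm JL}^1(M)$.

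The main obstacle is the supercuspidal case: the statement that the $W_F$-action on the middle-degree cohomology is exactly ${\rm WD}(M)$, with the correct twist and sign, is the deep input of non-abelian Lubin--Tate theory, which I would cite rather than reprove. The secondary, bookkeeping difficulty concerns the special representations, where one must check that the monodromy splits the two-dimensional Weil--Deligne datum of ${\rm Sp}\otimes\eta$ as ${\rm WD}^0(M)=L(N_{F/\Q_p}\eta)$ and ${\rm WD}^1(M)=L(\eta)$, that these are paired with the expected $G$- and $\check G$-constituents in the expected degrees, and finally that summing over all $M\in\fn^\varpi$ exhausts the cohomology with no cross terms, which follows from admissibility together with the multiplicity-one properties of both correspondences (\cite{Dat}).
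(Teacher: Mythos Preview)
The paper does not give its own proof of this proposition: it is stated as a direct consequence of non-abelian Lubin--Tate theory, with the attribution ``La th\'eorie de Lubin--Tate non ab\'elienne \cite{Carayol, Carayol2, Faltings} (voir aussi \cite{Weinstein} pour un \'enonc\'e compact) fournit le r\'esultat suivant''. Your outline is therefore not competing with a proof in the paper but rather unpacking the citation, and on that level it is correct: the deep input is indeed the realization of local Langlands and Jacquet--Langlands in the cohomology of the Lubin--Tate tower, transported to the Drinfeld side via the Faltings--Fargues comparison; the special summands are exactly what comes from $\pi_0$ and the Steinberg-type contribution of the base. One minor remark: for $\ell\neq p$ you do not need the perfectoid isomorphism of \cite{SW}; the classical comparison of $\ell$-adic cohomologies of the two towers in \cite{Faltings2tours, Fargues} already does the job, and is what the paper's references point to.
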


\subsection{Multiplicit\'e de ${\rm JL}(M)^\dual\otimes {\rm LL}(M)^\dual$}\label{GAB42}
     Soit $M\in\fn^\varpi$, supercuspidal.
            Soit 
      $M_{\rm dR}=(M\otimes_{\Q_p^{\rm nr}} \Qbar_p)^{\mathcal{G}_F}$, 
un $L\otimes_{\Q_p} F$-module libre de rang $2$, et soit $\breve{M}=M\otimes_{\Q_p^{\rm nr}} \breve{\Q}_p$.

          Pour all\'eger les notations, nous allons \'ecrire 
     $$\quad X[M]:={\rm Hom}_{L[\check G]}({\rm JL}(M), X\otimes_{\Q_p} L)$$
     si $X$ est un $\check{G}$-module.
Notons que le foncteur $X\mapsto X[M]$ est exact sur la cat\'egorie
des $\check G$-modules lisses avec action triviale de $\varpi$ (car $\check G/\varpi^\Z$ est profini).

Nous fixons $n$ assez grand pour que 
      ${\rm JL}(M)$ soit triviale sur $1+\varpi_D^n \mathcal{O}_D$, ce qui est possible car 
      ${\rm JL}(M)$ est lisse et de dimension finie sur $L$. 
         
    \subsubsection{Courbes de Shimura et compatibilit\'e local-global}\label{LGC}

On choisit:

\quad $\bullet$ un corps totalement r\'eel $E$ ayant une place $\pp$ au-dessus de $p$ telle que
     $E_{\pp}=F$.

\quad $\bullet$ une place infinie $\infty_0$ de $E$,

\quad $\bullet$ une alg\`ebre de quaternions $\check B$ d\'eploy\'ee en $\infty_0$, 
compacte (modulo le centre) aux autres places infinies de $E$ et ramifi\'ee en $\pp$.
  
On note:

\quad $\bullet$ $\mathbf{A}$ les ad\`eles de $E$ et $\mathbf{A}_f$ (resp. $\mathbf{A}_f^{\pp}$) 
les ad\`eles finies, (resp.~finies hors de $\pp$).

\quad $\bullet$ $B$ l'alg\`ebre de quaternions ayant m\^emes invariants que $\check B$
en dehors de $\infty_0$ et~$\pp$, compacte modulo le centre en $\infty_0$ (et donc en toutes les places infinies de $E$) et
d\'eploy\'ee en $\pp$.

\quad $\bullet$ ${\mathbb G}$ et $\check{\mathbb G}$ les groupes alg\'ebriques
associ\'es \`a $B^\dual$ et $\check B^\dual$: si 
$R$ est une $E$-alg\`ebre, alors ${\mathbb G}(R)=(B\otimes_{E} R)^\dual $ et 
$\check{\mathbb G}(R)=(\check B\otimes_{E} R)^\dual $.

\quad $\bullet$ $\Gamma$ et $\check\Gamma$ les groupes ${\mathbb G}(E)=B^\dual$ et $\check{\mathbb G}(E)=\check B^\dual$.

On fixe des isomorphismes 
$${\mathbb G}(E_\pp)\simeq G,\quad \check{\mathbb G}(E_\pp)\simeq \check G,
\quad \check{\mathbb G}(\mathbf{A}_f^{\pp})\cong {\mathbb G}(\mathbf{A}_f^{\pp}).$$

Soit ${\rm SD}_2(\check{\mathbb G})$ l'ensemble des repr\'esentations automorphes 
$\pi=\pi_{\infty}\otimes \pi_f$ de $\check{\mathbb G}(\mathbf{A})$ 
telles que ${\rm Hom}_{\check B_{\infty}^\dual }(\sigma_2, \pi_{\infty})\ne 0$, o\`u
$\sigma_2$ est la repr\'esentation de $\check{\mathbb G}(\mathbf{R}\otimes_\Q E)=\prod_{v\mid\infty}
\check{\mathbb G}(E_v)$ triviale aux places $v\ne \infty_0$ et 
s\'erie discr\`ete holomorphe de poids $2$ et de caract\`ere central trivial en la place $\infty_0$.
Si $n\geq 1$, soit ${\rm SD}_{2,n}\subset {\rm SD}_2$ l'ensemble des $\pi$ telles
que $\check G_n$ agisse trivialement sur $\pi_\pp$.

On fixe:

\quad $\bullet$ $M\in \Phi N^{\varpi}$ supercuspidal,

\quad $\bullet$ 
$\check\Pi\in {\rm SD}_{2,n}$ d\'efinie
\footnote{Pour pouvoir globaliser~\cite{Clozel} il faut ajuster le caract\`ere central, et donc on peut
\^etre amen\'e \`a tordre tout par un caract\`ere (et donc \`a changer $\varpi$); cela peut
aussi demander de remplacer $L$ par une extension finie.} 
 sur $L$ et telle que $\check\Pi_{\pp}={\rm JL}(M)$.

On note:

\quad $\bullet$ $\Pi$ -- la repr\'esentation automorphe de ${\mathbb G}(\A)$
qui correspond \`a $\check\Pi$ par la correspondance de Jacquet-Langlands globale;
on a donc $\Pi_f^\pp=\check\Pi_f^\pp$ et $\Pi_\pp={\rm LL}(M)$.

Si $n\geq 1$ et si $U$ est un sous-groupe ouvert compact suffisamment petit de $\check{\mathbb G}(\mathbf{A}_{f}^\pp)$,
notons ${\rm Sh}_n(U)_E$ la courbe de Shimura, d\'efinie sur $E$, 
dont les $\mathbf{C}$-points\footnote{O\`u $\mathbf{C}\moins \mathbf{R}$ est le
quotient habituel de ${\bf GL}_2(E_{\infty_0})\simeq {\rm GL}_2(\mathbf{R})$.}
sont donn\'es par 
$${\rm Sh}_n(U)_E(\mathbf{C})=\check\Gamma\backslash [(\mathbf{C}\moins \mathbf{R})\times 
(\check{\mathbb G}(\mathbf{A}_f)/(U\times\check G_n))].$$
Si $K$ est un corps contenant $E$, on note
${\rm Sh}_n(U)_K$ la courbe sur $K$ obtenue
par extension des scalaires et simplement 
${\rm Sh}_n(U)$ si $K=C$.

Si $H$ est une des cohomologies $H^1_{\rm dR}(-)$, $H^1_{\rm HK}(-)$ ou $H^1_{\eet}(-,L)$, 
si $n\geq 1$, et si $K$ est une extension de $E$ (contenant $F$ si $H=H^1_{\rm HK}$), soit\footnote{Si $K=C$,
on le fait dispara\^{\i}tre de la notation.}
      $$H({\rm Sh}_{n,K})=\varinjlim_{U} H({\rm Sh}_{n}(U)_K),$$
      la limite inductive \'etant prise sur les sous-groupes ouverts 
compacts $U$ de $\check{\mathbb G}(\mathbf{A}_f^\pp)$.
Le r\'esultat suivant est une cons\'equence des th\'eor\`emes de compatibilit\'e local-global de Carayol \cite{Carayol} et Saito \cite{Saito}.    
      
\begin{prop}\label{CSpst}
On a des isomorphismes
\begin{align*}
&{\rm Hom}_{\check{\mathbb G}(\mathbf{A}_{f}^{\pp})}( \Pi_f^{\pp}, L\otimes_{\Q_p}H^1_{\rm HK}({\rm Sh}_{n}))
= \ {\rm JL}(M)\otimes_L M\\
&{\rm Hom}_{\check{\mathbb G}(\mathbf{A}_{f}^{\pp})}( \Pi_f^{\pp}, L\otimes_{\Q_p}H^1_{\rm dR}({\rm Sh}_{n,K}))
= \ {\rm JL}(M)\otimes_L (K\otimes_F M_{\rm dR}),\quad{\text{si $F\subset K$}}.
\end{align*}
\end{prop}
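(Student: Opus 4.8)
The plan is to deduce Proposition~\ref{CSpst} from the Eichler--Shimura-type decomposition of the cohomology of the Shimura curves ${\rm Sh}_n(U)$ together with the local-global compatibility theorems of Carayol~\cite{Carayol} (at the places away from $\pp$, to pin down the $\check{\mathbb G}(\mathbf{A}_f^\pp)$-action and identify the $\Pi_f^\pp$-isotypic part with the automorphic representation $\check\Pi$) and of Saito~\cite{Saito} (at the place $\pp$, to identify, as a $(\varphi,N,\G_F)$-module, the local Galois representation cut out at $\pp$ with $\dpst$ of the local component $\check\Pi_\pp={\rm JL}(M)$, i.e.\ essentially with $M$). Concretely, first I would recall that, since ${\rm Sh}_n(U)$ is a (disjoint union of) proper smooth curve(s) over $C$ obtained by base change from a curve defined over a number field, one has a Hecke-equivariant decomposition of $H^1_{\eet}({\rm Sh}_n(U),L)$ (after extending scalars and passing to the limit over $U$) into automorphic pieces indexed by $\pi\in{\rm SD}_{2,n}$, each of the form $\pi_f^\pp\otimes(\text{a $2$-dimensional Galois-type space})$, where the multiplicity-one statement for quaternionic forms guarantees that the $\Pi_f^\pp$-isotypic component is exactly $\check\Pi$-isotypic and one-dimensional over the relevant Hecke field. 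This handles the ``shape'' ${\rm JL}(M)\otimes(\text{rank }2)$.

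\textbf{Identifying the rank-2 factor.} The second, substantive step is to identify the remaining rank-$2$ factor. For the étale (equivalently, via $\dpst$, the Weil--Deligne) side, this is precisely Carayol's conjecture proved by Saito: the $\check\Pi$-isotypic part of $H^1_{\eet}({\rm Sh}_{n,K},L)$, as a representation of $\G_{E_\pp}=\G_F$, has $\dpst$ isomorphic to $M$ up to a normalization twist (the $[1]/[-1]$ bookkeeping already fixed in \S\ref{notasup} and in the Introduction's footnotes, so that one really gets $M$ and not $M[\pm1]$). Then I would pass from étale to Hyodo--Kato and de Rham cohomology by invoking the semistable comparison theorem~\cite{Ts}: for the proper smooth curve ${\rm Sh}_{n,K}$ one has $H^1_{\rm HK}({\rm Sh}_{n,K})\cong\dpst(H^1_{\eet})$ as $(\varphi,N,\G_F)$-modules and $H^1_{\rm dR}({\rm Sh}_{n,K})\cong (\ddr)$, with $\Fil^1$ corresponding to $\Omega^1$, exactly as in the proof of prop.~\ref{Shimura}. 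Taking the $\Pi_f^\pp$-isotypic component (an exact functor since we are in the lisse, $\varpi$-trivial setting, $\check{\mathbb G}(\mathbf{A}_f^\pp)$ acting through a profinite quotient after fixing the level) then yields ${\rm Hom}_{\check{\mathbb G}(\mathbf{A}_f^\pp)}(\Pi_f^\pp,L\otimes H^1_{\rm HK}({\rm Sh}_n))={\rm JL}(M)\otimes_L M$. For the de Rham statement over $K\supseteq F$, I would base change: since $H^1_{\rm dR}$ commutes with extension of the base field and $M_{\rm dR}=(\Qbar_p\otimes_{\Q_p^{\rm nr}}M)^{\G_F}$ is an $L\otimes_{\Q_p}F$-module, one gets $K\otimes_F M_{\rm dR}$ on the nose, compatibly with Hyodo--Kato via $\iota_{\rm HK}$ (this last compatibility is built into the comparison isomorphism of~\cite{GK2,Ts}).

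\textbf{Main obstacle and loose ends.} The genuinely delicate point is \emph{not} the shape of the decomposition but the precise local-global compatibility at $\pp$ in the \emph{ramified-quaternion} case: Saito's theorem must be applied to the Shimura curve attached to $\check{\mathbb G}$, which is ramified (hence compact modulo center) at $\pp$, so the local component there is the finite-dimensional ${\rm JL}(M)$ rather than a principal series, and the Galois representation is cut out on a curve whose integral model at $\pp$ is not of the na\"ive type — one uses the Cerednik--Drinfeld uniformization (which is, after all, the very reason ${\cal M}_n$ enters), and it is exactly here that the normalization of ${\rm WD}(M)$ versus $\dpst(M)$ and the twist by $|\cdot|^{-1}$ must be tracked with care to land on $M$ and not a twist. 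A second, more bookkeeping-type obstacle is the globalization step itself: to produce $\check\Pi\in{\rm SD}_{2,n}$ with prescribed local component ${\rm JL}(M)$ at $\pp$ one invokes~\cite{Clozel}, and as noted in the footnote this may force a twist by a character (changing $\varpi$) and a finite extension of $L$; one must check that the final statement is insensitive to these adjustments, which it is, since twisting by an unramified character of $F^\dual$ only twists both sides of the displayed isomorphisms in the same way (cf.\ the ${\rm nr}_\alpha$ discussion in \S\ref{notasup}). The remaining verifications — exactness of the $\Pi_f^\pp$-isotypic functor, compatibility of the $G$-action at $\pp$ (through the identification ${\mathbb G}(E_\pp)\simeq G$) with the Hecke action defining ${\rm LL}(M)=\Pi_\pp$, and equivariance under ${\rm W}_F$ and $\varphi$ — are formal once the two comparison inputs are in place.
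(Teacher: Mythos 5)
Votre démonstration est correcte et suit essentiellement la même démarche que celle de l'article : décomposition à la Carayol--Saito de $H^1_{\eet}({\rm Sh}_{n,\Qbar},L)\otimes\Qbar_p$ en $\bigoplus_\pi \pi_f\otimes\rho_\pi(-1)$ avec compatibilité local-global en toute place (y compris en $\pp$, où $\check B$ est ramifiée), restriction à $\G_F$, application de $\dpst$ et $\ddr$ via les théorèmes de comparaison pour identifier les facteurs de rang $2$ des cohomologies de Hyodo--Kato et de de Rham à $M$ et $K\otimes_F M_{\rm dR}$, puis projection isotypique par ${\rm Hom}_{\check{\mathbb G}(\mathbf{A}_f^\pp)}(\Pi_f^\pp,-)$ à l'aide de la multiplicité un forte. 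Vos remarques sur les torsions de normalisation et la globalisation à la Clozel correspondent exactement aux précautions prises dans les notes de bas de page de l'article.
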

\begin{proof} 
L'espace $H^1_{\eet}({\rm Sh}_{n,\Qbar},L)$ 
est muni d'actions qui commutent de $\G_{E}:={\rm Gal}(\bar{\mathbf{Q}}/E)$ et 
$\check{\mathbb G}(\mathbf{A}_f)$.
Les th\'eor\`emes de Carayol et Saito fournissent un isomorphisme 
de $\G_{E}\times \check{\mathbb G}(\mathbf{A}_f)$-modules\footnote{Rappelons que l'on se permet de voir $\pi_f$ comme une repr\'esentation sur 
  $\overline{\Q}_p$ en utilisant l'isomorphisme fix\'e entre $\overline{\Q}_p$ et $\mathbf{C}$.} 
$$H^1_{\eet}({\rm Sh}_{n,\Qbar},L)\otimes_{L} \Qbar_p 
\simeq \bigoplus_{\pi \in {\rm SD}_{2,n}} (\pi_f\otimes_{\Qbar_p} \rho_{\pi}(-1))$$
o\`u la repr\'esentation $\rho_{\pi}: \G_{E}\to {\rm GL}_2(\Qbar_p)$ 
est telle que, pour toute place finie 
$v$ de $E$, l'on ait un isomorphisme\footnote{
$\rho_{\pi,v}$ est la restriction de $\rho_\pi$ \`a $\G_{E_v}:={\rm Gal}(\Qbar_p/E_v)$
vu comme sous-groupe de $\G_E$ via le choix d'un plongement de $\overline E$ dans $\overline E_v$,
et ${\rm JL} ({\rm WD}(\rho_{\pi,v}))$ est la repr\'esentation
de $\check{\mathbb G}(E_v)$ qui lui est associ\'ee par les recettes habituelles:
${\rm WD}(\rho_{\pi,v})$ est la repr\'esentation de Weil-Deligne obtenue via 
le th\'eor\'eme de monodromie $\ell$ ou $p$-adique et la recette de Fontaine si $\ell=p$,
et ${\rm JL} ({\rm WD}(\rho_{\pi,v}))$ est la repr\'esentation attach\'ee \`a
${\rm WD}(\rho_{\pi,v})$ par la correspondance de Langlands locale, 
combin\'ee \'eventuellement avec celle de Jacquet-Langlands si
      $\check B$ est ramifi\'ee en $v$.} 
$$ \pi_v\simeq {\rm JL} ({\rm WD}(\rho_{\pi,v})).$$
Il suffit alors de restreindre cette repr\'esentation \`a $\G_F=\G_{E_\pp}$,
d'appliquer les foncteurs $\dpst$ et $\ddr$ \`a l'isomorphisme ci-dessus,
d'utiliser les th\'eor\`emes de comparaison $p$-adiques
et
l'isomorphisme $D_{\rm pst}(\rho_{\check\Pi, \pp}(-1))=M$
(qui d\'ecoule de l'hypoth\`ese $\Pi_{\pp}\simeq {\rm JL}(M)$)
et d'appliquer ensuite le foncteur ${\rm Hom}_{\check{\mathbb G}(\mathbf{A}_{f}^{\pp})}( \Pi_f^{\pp}, -)$
en utilisant le fait 
que $\Pi_f^\pp$ d\'etermine $\Pi_\pp$ (th\'eor\`eme de multiplicit\'e~$1$ fort).
\end{proof}

En appliquant ${\rm Hom}_{\check G}({\rm JL}(M),L\otimes -)$, on d\'eduit de la prop.~\ref{CSpst} 
un diagramme commutatif 
\begin{equation}\label{diagr1}
       \xymatrix@R=.6cm{{\rm Hom}_{\check{\mathbb G}(\mathbf{A}_f^{\pp})} 
(\Pi_f^{\pp}, H^1_{\rm HK}({\rm Sh}_n)[M])\ar[r]^-{{\simeq}}\ar[d]^{\iota_{\rm HK}}&
\breve{M} \ar[d]\\
{\rm Hom}_{\check{\mathbb G}(\mathbf{A}_f^{\pp})} 
(\Pi_f^{\pp}, H^1_{\rm dR}({\rm Sh}_n)[M])\ar[r]^-{{\simeq}} & M_{\rm dR}\otimes_F C&} 
\end{equation}

    \subsubsection{Uniformisation $p$-adique} \label{UNIF}
        
     Gardons les notations introduites dans le pragraphe pr\'ec\'edent. 
    Nous allons donner, en utilisant l'uniformisation
    $p$-adique des courbes de Shimura, une autre description des espaces 
${\rm Hom}_{\check{\mathbb G}(\mathbf{A}_f^{\pp})} (\Pi_f^{\pp}, H^1_{\rm HK}({\rm Sh}_n)[M])$ et 
    ${\rm Hom}_{\check{\mathbb G}(\mathbf{A}_f^{\pp})} (\Pi_f^{\pp}, H^1_{\rm dR}({\rm Sh}_n)[M])$ 
apparus dans le diagramme $(\ref{diagr1})$.

Si $U$ est un sous-groupe ouvert de $\check{\mathbb G}(\mathbf{A}_{f}^{\pp})$, on peut aussi
voir $U$ comme un sous-groupe ouvert de ${\mathbb G}(\mathbf{A}_{f}^{\pp})$.
On note $S^\pp(U)$ le quotient 
$$S^\pp(U)={\mathbb G}(\mathbf{A}_{f}^{\pp})/U.$$
C'est un ensemble discret muni d'une action de $\Gamma$.
Le th\'eor\`eme d'uniformisation suivant est d\^u \`a \v{C}erednik-Drinfeld et Boutot-Zink \cite{BZ} (dans ce contexte).

\begin{prop}\label{CD}
 Il existe une famille d'isomorphismes d'espaces rigides analytiques 
   $${\rm Sh}_n(U)^{\rm an}
\simeq \Gamma\backslash [ {\cal M}_n\times S^\pp(U)],$$
   compatibles avec la variation de $U$ et $n$.
   \end{prop}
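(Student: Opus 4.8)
The plan is to recall the Čerednik–Drinfeld uniformization theorem in its Boutot–Zink refinement and then package it in the form needed for the tower. First I would recall the starting point: the Shimura curve ${\rm Sh}_n(U)_E$ is a moduli space of abelian varieties of ${\rm GL}_2$-type with the appropriate level structure away from $\pp$ (the level $U$) and at $\pp$ (the level $\check G_n$ inside $\O_D^\dual$), together with a polarization and endomorphism action. Because $\check B$ is ramified at $\pp$, the corresponding $p$-divisible group at $\pp$ has height $2$ over $\O_F$ and is, up to isogeny, a formal $\O_F$-module of dimension $1$ and height $2$ — exactly the object whose deformations (with Drinfeld level structure) are classified by the tower $(\breve{\cal M}_n)_{n}$. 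This is the geometric input that makes the Drinfeld tower appear.

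Next I would invoke the theorem of Čerednik and Drinfeld \cite{BZ}: completing ${\rm Sh}_n(U)$ along its special fiber over $\O_{\breve F}$ and passing to the rigid generic fiber yields a $\Gamma$-quotient of $\breve{\cal M}_n \times S^\pp(U)$, where $S^\pp(U) = {\mathbb G}(\mathbf{A}_f^\pp)/U$ records the prime-to-$\pp$ part of the adelic data and $\Gamma = B^\dual$ acts diagonally via its embedding into $G \times {\mathbb G}(\mathbf{A}_f^\pp)$ (using ${\mathbb G}(E_\pp) \simeq G$). Boutot–Zink's contribution \cite{BZ} is precisely that this holds at the level of rigid-analytic spaces (not just formal schemes or their special fibers), and with the Galois/Hecke/Weil descent data tracked, which is what allows the extension of scalars to $C$ in the statement and the compatibility with the $G$-action. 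I would then base-change from $\breve F$ to $C$ to replace $\breve{\cal M}_n$ by ${\cal M}_n$ and obtain the displayed isomorphism ${\rm Sh}_n(U)^{\rm an} \simeq \Gamma\backslash[{\cal M}_n \times S^\pp(U)]$.

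Finally I would check compatibility with the transition maps. Shrinking $U' \subset U$ corresponds to the projection $S^\pp(U') \to S^\pp(U)$ and is manifestly compatible with the uniformization, since the moduli interpretation of the prime-to-$\pp$ level structures is functorial; increasing $n$ corresponds to the transition maps $\breve{\cal M}_{n+1} \to \breve{\cal M}_n$ of the Drinfeld tower, which match the forgetful maps on level structures at $\pp$ on the Shimura side. Both compatibilities follow formally from the modular description, so there is nothing delicate there. The main obstacle is not in our argument at all — it is that Proposition \ref{CD} is a citation of a substantial theorem: the real work (the formal-scheme uniformization, the descent to rigid spaces, the equivariance for the full package of $G$-, Hecke- and Galois-actions) is carried out in \cite{BZ} and the references therein, so here I would content myself with stating precisely which inputs are being used and verifying that the tower-level and level-at-$\pp$ compatibilities are the evident ones.
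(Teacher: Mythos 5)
Your overall strategy coincides with the paper's: Proposition \ref{CD} is given there purely as a citation of \v{C}erednik--Drinfeld and Boutot--Zink \cite{BZ}, with no further argument, and your reduction of the compatibilities in $U$ and $n$ to the functoriality of the level structures is the evident one. Two points in your description of the inputs are however inaccurate and would derail the sketch if you tried to carry it out. First, the object whose deformations by quasi-isog\'enies (with level structures) are classified by the tower $(\breve{\cal M}_n)_n$ is \emph{not} the one-dimensional formal $\O_F$-module of height $2$ --- that is the Lubin--Tate moduli problem, which produces the tower $({\rm LT}_j)_j$ --- but the special formal $\O_D$-module of dimension $2$ and $\O_F$-height $4$ over $\overline{\mathbf F}_p$; it is the $\O_D$-action imposed by the ramification of $\check B$ at $\pp$, together with the fact that the whole special fiber of the Shimura curve is supersingular at $\pp$, which makes the Drinfeld space (and not merely a tube around finitely many points, as on the Lubin--Tate side) appear. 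Second, for a totally real field $E\neq\Q$ the quaternionic Shimura curve ${\rm Sh}_n(U)_E$ is not literally a moduli space of abelian varieties of ${\rm GL}_2$-type; the uniformization in \cite{BZ} goes through an auxiliary problem of PEL type (or \v{C}erednik's original group-theoretic argument), and this is part of the content of the reference rather than an available starting moduli description. Neither point affects the legitimacy of simply invoking \cite{BZ}, exactly as the paper does, but as a statement of ``which inputs are being used'' your sketch conflates the Drinfeld and Lubin--Tate towers.
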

   
          \begin{rema}\label{Shimura4}
{\rm (i)}
$S^\pp(U)$ ne comporte
qu'un nombre fini d'orbites $\Gamma x_1,\dots,\Gamma x_r$ sous l'action de $\Gamma$.
Si
on note 
      $\Gamma_i$ le stabilisateur de $x_i$ dans $\Gamma$, 
alors les $\Gamma_i$ sont des sous-groupes discrets et co-compacts de ${\mathbb G}(F)=G$, et 
   $${\rm Sh}_n(U)^{\rm an}=
\Gamma\backslash [ {\cal M}_n\times S^\pp(U)]=\coprod_{i=1}^r \Gamma_i\backslash {\cal M}_n.$$

{\rm (ii)}
Si $H^\bullet$ est une cohomologie raisonnable, on a\footnote{${\cal C}(S^\pp(U),X)$
d\'esigne l'espace des fonctions continues de $S^\pp(U)$ dans $X$, mais comme $S^\pp(U)$ est discret,
c'est l'espace de toutes les fonctions de $S^\pp(U)$ dans $X$.}
\begin{align*}
&H^q( {\cal M}_n\times S^\pp(U))={\cal C}(S^\pp(U), H^q( {\cal M}_n)),\\
&H^p(\Gamma,H^q( {\cal M}_n\times S^\pp(U)))=\oplus_{i=1}^rH^p(\Gamma_i,H^q( {\cal M}_n)),
\end{align*}
et une suite spectrale
$$H^p(\Gamma,H^q( {\cal M}_n\times S^\pp(U)))\Longrightarrow 
H^{p+q}(\Gamma\backslash( {\cal M}_n\times S^\pp(U)))=
H^{p+q}({\rm Sh}_n(U))$$
somme directe des suites analogues pour les rev\^etements ${\cal M}_n\to \Gamma_i\backslash{\cal M}_n$. 
 En bas degr\'es, cette suite spectrale
fournit la suite exacte
$$H^1(\Gamma,H^0( {\cal M}_n\times S^\pp(U)))\to H^1({\rm Sh}_n(U))\to
H^0(\Gamma,H^1( {\cal M}_n\times S^\pp(U)))\to H^2(\Gamma,H^0( {\cal M}_n\times S^\pp(U))).$$

{\rm (iii)}
Si $H^0(X)=\Z[\pi_0(X)]\otimes H^0({\rm point})$ pour $X$ ayant un nombre fini de composantes connexes (comme pour $H^\bullet_{\rm dR}$ ou $H^\bullet_{\rm HK}$),
alors $\check G$ op\`ere sur $H^0( {\cal M}_n\times S^\pp(U))$ par la norme r\'eduite, et donc
$H^0( {\cal M}_n\times S^\pp(U))[M]=0$.  La suite ci-dessus fournit donc un isomorphisme:
$$H^1({\rm Sh}_n(U) )[M]\simeq
H^0(\Gamma,H^1( {\cal M}_n\times S^\pp(U))[M]).$$
       \end{rema}

\subsubsection{Analyse fonctionnelle et r\'eciprocit\'e de Frobenius}\label{AFRF}

Soit ${\cal D}(G)$ (resp.~${\cal D}_{\rm alg}(G)$)
l'alg\`ebre des distributions (resp.~distributions alg\'ebriques) \`a support
compact sur $G$: c'est le dual de l'espace ${\rm LA}(G,L)$ (resp.~${\rm LC}(G,L)$)
des fonctions
localement analytiques (resp.~localement constantes) sur $G$.
Alors ${\cal D}_{\rm alg}(G)$ est le quotient de ${\cal D}(G)$
par l'id\'eal engendr\'e par l'alg\`ebre de Lie ${\goth g}$ de $G$;
un ${\cal D}_{\rm alg}(G)$ est donc la m\^eme chose qu'un
${\cal D}(G)$-module tu\'e par ${\goth g}$.
Les masses de Dirac sont denses dans ${\cal D}(G)$ et~${\cal D}_{\rm alg}(G)$.

Soit ${\cal F}$ un ${\cal D}(G)$-fr\'echet.
Si $U$ est un sous-groupe ouvert de ${\mathbb G}(\A_f^\pp)$,
on munit
l'espace ${\cal C}(S^\pp(U),{\cal F})$ 
d'une action de $\Gamma$
par:
$\gamma\cdot\phi(x)=\gamma(\phi(\gamma^{-1}x))$.

Comme $S^\pp(U)$ est discret, un \'el\'ement
de ${\rm LA}(G\times S^\pp(U))^\dual$ (resp.~${\rm LC}(G\times S^\pp(U))^\dual$
est une somme finie $\sum_i \mu_i\otimes\delta_{x_i}$, o\`u les
$\mu_i$ sont des \'el\'ements de
${\cal D}(G)$ (resp.~${\cal D}_{\rm alg}(G)$), $x_i\in S^\pp(U)$ et $\delta_{x_i}$
est la masse de Dirac en $x_i$.
On munit
${\rm Hom}({\rm LA}(G\times S^\pp(U))^\dual,{\cal F})$
d'actions de $\Gamma$ et $G$ commutant entre elles, en posant
$$(h*\lambda)(\delta_g\otimes\delta_x)=h(\lambda(\delta_{gh}\otimes\delta_x)),
\quad (\gamma\cdot\lambda)((\delta_g\otimes\delta_x)=
\lambda(\delta_{\gamma^{-1}g}\otimes\delta_{\gamma^{-1}x}).$$

On note $S(U)$ la vari\'et\'e $p$-adique (molle) 
$$S(U)=\Gamma\backslash(G\times S^\pp(U))=\Gamma\backslash{\mathbb G}(\A_f)/U.$$
C'est une vari\'et\'e compacte: $S(U)=\coprod_i\Gamma_i\backslash G$ et les
$\Gamma_i$ sont cocompacts dans $G$.

\begin{lemm}\label{gab13}
Si ${\cal F}$ est un ${\cal D}_{\rm alg}(G)$-fr\'echet, on a un isomorphisme\footnote{Tous
les ${\rm Hom}$ sont des ${\rm Hom}$ d'espaces vectoriels topologiques, i.e. sont
constitu\'es d'applications lin\'eaires continues.}
\begin{align*}
H^0(\Gamma,{\cal C}(S^\pp(U),{\cal F}))=&\ 
H^0(G,{\rm Hom}({\rm LC}(S(U))^{\dual},{\cal F})).
\end{align*}
\end{lemm}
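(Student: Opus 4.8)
The plan is to identify both sides by a double application of Frobenius reciprocity, one for the projection $G\times S^\pp(U)\to S^\pp(U)$ and one for the $\Gamma$-quotient $G\times S^\pp(U)\to S(U)$, and then to interchange the two (commuting) sets of invariants. Write $P=G\times S^\pp(U)$ and let $q\colon P\to S(U)=\Gamma\backslash P$ be the quotient map. Since $U$ is small, $\Gamma$ acts freely on $P$ (it already acts freely on the factor $G$ by left translation) and properly discontinuously (the quotient $S(U)$ is compact and $\Gamma$ is discrete), so $q$ is a covering map of $p$-adic manifolds with compact base, hence a locally trivial $\Gamma$-torsor; concretely, over each of the finitely many orbits $\Gamma x_i\subset S^\pp(U)$ a compact open fundamental domain for $\Gamma_i$ acting on $G$ trivialises it, which also matches the decomposition $S(U)=\coprod_i\Gamma_i\backslash G$. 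It follows that ${\rm LC}(P)$ is, as a $\Gamma$-module, coinduced from ${\rm LC}(S(U))={\rm LC}(P)^\Gamma$; dualising, ${\rm LC}(S(U))^\dual$ is realised as the (topological) $\Gamma$-coinvariants of ${\cal D}_{\rm alg}(P):={\rm LC}(P)^\dual=\bigoplus_{x\in S^\pp(U)}{\cal D}_{\rm alg}(G)\otimes\delta_x$, i.e. as a strict quotient of it, and this identification is $G$-equivariant because the $G$- and $\Gamma$-actions commute.

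Next I would apply ${\rm Hom}(-,{\cal F})$. As ${\cal F}$ carries the trivial $\Gamma$-action and ${\cal D}_{\rm alg}(P)\to{\rm LC}(S(U))^\dual$ is a strict surjection, we get a $G$-equivariant isomorphism ${\rm Hom}({\rm LC}(S(U))^\dual,{\cal F})={\rm Hom}({\cal D}_{\rm alg}(P),{\cal F})^\Gamma$. Taking $G$-invariants and interchanging the commuting actions of $G$ and $\Gamma$, the left-hand side of the lemma becomes $\big({\rm Hom}({\cal D}_{\rm alg}(P),{\cal F})^G\big)^\Gamma$. Now $G$ fixes $S^\pp(U)$ pointwise, and ${\cal D}_{\rm alg}(P)$ is the topological direct sum of the ${\cal D}_{\rm alg}(G)\otimes\delta_x$ with $G$ acting summand by summand, so ${\rm Hom}({\cal D}_{\rm alg}(P),{\cal F})^G=\prod_{x\in S^\pp(U)}{\rm Hom}_G({\cal D}_{\rm alg}(G),{\cal F})$. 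Evaluation at $\delta_1$ identifies ${\rm Hom}_G({\cal D}_{\rm alg}(G),{\cal F})$ with ${\cal F}$: a $G$-equivariant $\lambda$ satisfies $\lambda(\delta_g)=g^{-1}\cdot\lambda(\delta_1)$, whence $\lambda(\mu)=\check\mu\cdot\lambda(\delta_1)$ with $\check\mu$ the pushforward of $\mu$ along $g\mapsto g^{-1}$, and conversely $v\mapsto(\mu\mapsto\check\mu\cdot v)$ is continuous because ${\cal F}$ is a ${\cal D}_{\rm alg}(G)$-Fréchet. Hence ${\rm Hom}({\cal D}_{\rm alg}(P),{\cal F})^G=\prod_{x}{\cal F}={\cal C}(S^\pp(U),{\cal F})$, the identification sending $\lambda$ to $F_\lambda\colon x\mapsto\lambda(\delta_1\otimes\delta_x)$.

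It then remains to check that this last identification is $\Gamma$-equivariant for the $\Gamma$-structure on ${\cal C}(S^\pp(U),{\cal F})$ defined in the text: for $G$-invariant $\lambda$ one has $(\gamma\cdot\lambda)(\delta_1\otimes\delta_x)=\lambda(\delta_{\gamma^{-1}}\otimes\delta_{\gamma^{-1}x})=\gamma\cdot\lambda(\delta_1\otimes\delta_{\gamma^{-1}x})=\gamma\cdot F_\lambda(\gamma^{-1}x)$, i.e. exactly $(\gamma\cdot\phi)(x)=\gamma(\phi(\gamma^{-1}x))$. Taking $\Gamma$-invariants yields $H^0(G,{\rm Hom}({\rm LC}(S(U))^\dual,{\cal F}))={\cal C}(S^\pp(U),{\cal F})^\Gamma=H^0(\Gamma,{\cal C}(S^\pp(U),{\cal F}))$, which is the claim. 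The one genuinely delicate step is the functional-analytic bookkeeping of the first two paragraphs: one must be careful that local triviality of $q$ really makes ${\rm LC}(P)$ a $\Gamma$-coinduced module, so that ${\rm LC}(S(U))^\dual$ is a \emph{strict} quotient of ${\cal D}_{\rm alg}(P)$ and ${\rm Hom}(-,{\cal F})$ converts $\Gamma$-coinvariants into $\Gamma$-invariants, and that ${\rm Hom}_G({\cal D}_{\rm alg}(G),{\cal F})={\cal F}$ holds as topological vector spaces; both rely on ${\cal F}$ being a genuine Fréchet module over ${\cal D}_{\rm alg}(G)$ and on $U$ being small enough that $\Gamma$ acts freely.
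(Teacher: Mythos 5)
Votre argument est correct et reprend pour l'essentiel la preuve du texte sous un habillage plus structurel~: l'évaluation en $\delta_1\otimes\delta_x$ et son inverse $\mu\otimes\delta_x\mapsto\check\mu\cdot\phi(x)$ sont exactement les applications $\phi_\lambda$ et $\lambda_\phi$ du texte, et votre vérification de la $\Gamma$-équivariance est le même calcul. Le point que vous signalez comme délicat — \`a savoir que ${\rm Hom}({\rm LC}(S(U))^{\dual},{\cal F})$ s'identifie aux éléments $\Gamma$-invariants de ${\rm Hom}({\cal D}_{\rm alg}(G\times S^\pp(U)),{\cal F})$, c'est-\`a-dire que le quotient de coinvariants est strict — est précisément l'identification que le texte affirme en une phrase au début de sa preuve (en passant par ${\rm LA}$ avant d'utiliser ${\goth g}\cdot{\cal F}=0$), de sorte que votre rédaction est au même niveau de rigueur que celle du texte.
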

\begin{proof}
Un \'el\'ement de ${\rm Hom}({\rm LA}(S(U))^{\dual},{\cal F})$
est la m\^eme chose qu'un \'el\'ement de
${\rm Hom}({\rm LA}(G\times S^\pp(U))^\dual,{\cal F})$ invariant par~$\Gamma$.

Soit $\iota:{\cal D}(G)\to{\cal D}(G)$ d\'efinie par $\iota(\delta_g)=\delta_{g^{-1}}$.
Si $\phi\in {\cal C}(S^\pp(U),{\cal F})^\Gamma$,
posons $\lambda_\phi(\mu\otimes \delta_x)=\iota(\mu)\cdot\phi(x)$.
Alors $\lambda_\phi(\gamma\mu\otimes \delta_{\gamma a})=\iota(\mu)\gamma^{-1}\phi(\gamma\cdot a)=
\iota(\mu)\cdot\phi(x)=\lambda_\phi(\mu\otimes \delta_x)$, ce qui prouve
que $\lambda_\phi$ se factorise \`a travers 
${\rm Hom}({\rm LA}(S(U))^{\dual},{\cal F})$.
Comme ${\goth g}$ tue ${\cal F}$, elle tue aussi 
${\rm Hom}({\rm LA}(G\times S^\pp(U))^\dual,{\cal F})$,
et donc $\lambda_\phi\in {\rm Hom}({\rm LC}(S(U))^{\dual},{\cal F})$.
Enfin, $(h\cdot\lambda_\pi)(\delta_g\otimes\delta_x)=h(\lambda_\phi(\delta_{gh}\otimes\delta_x))=
h((gh)^{-1}\cdot\phi_\lambda(x))=g^{-1}\cdot\phi_\lambda(x)$, ce qui prouve que $\lambda_\phi$
est invariant par $G$.

R\'eciproquement, si $\lambda\in 
H^0(G,{\rm Hom}({\rm LC}(S(U))^{\dual},{\cal F}))$,
on d\'efinit $\phi_\lambda$ par $\phi_\lambda(x)=\lambda({\delta}_1\otimes\delta_x)$.
Alors 
$$\gamma(\phi_\lambda(\gamma^{-1}x))=\gamma(\lambda(\delta_1\otimes\delta_{\gamma^{-1}x}))=
\lambda(\delta_{\gamma^{-1}}\otimes\delta_{\gamma^{-1}x})=\lambda(\delta_1\otimes\delta_x)=\phi_\lambda(x)$$
(on a utilis\'e l'invariance de $\lambda$ par $G$ puis par $\Gamma$); il s'ensuit que $\phi_\lambda$
est invariante par~$\Gamma$.

Enfin,
il est clair que $\phi_{\lambda_\phi}=\phi$
et que $\lambda_{\phi_\lambda}$ co\"{\i}ncide avec $\lambda$ sur les
masses de Dirac, et donc partout puisque les masses de Dirac
engendrent un sous-espace dense. Il s'ensuit que $\lambda\mapsto\phi_\lambda$ et $\phi\mapsto\lambda_\phi$
sont des isomorphismes inverses l'un de l'autre.
\end{proof}

\begin{prop}\label{weak}
Il existe un isomorphisme de $(\varphi, N, \mathcal{G}_F)$-modules 
$$\breve{M}\simeq {\rm Hom}_G({\rm LL}(M)^\dual, H^1_{\rm HK}({\cal M}^\varpi_n)[M])$$
s'ins\'erant dans un diagramme commutatif de $\G_F$-modules 
$$\xymatrix@R=.6cm{\breve{M}\ar[r]^-{{\sim}}\ar[d]&
{\rm Hom}_G({\rm LL}(M)^\dual , H^1_{\rm HK}({\cal M}^\varpi_n)[M])\ar[d]^{\iota_{\rm HK}}\\
M_{\rm dR}\otimes_{F} C\ar[r]^-{{\sim}}&  {\rm Hom}_G({\rm LL}(M)^\dual , H^1_{\rm dR}({\cal M}^\varpi_n)[M]) &
}$$
   \end{prop}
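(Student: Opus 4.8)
The plan is to compute both sides as the $\Pi_f^\pp$-isotypic part (for $\check{\mathbb G}(\mathbf{A}_f^\pp)$) of the cohomology of the Shimura curves ${\rm Sh}_n$, exploiting the \v{C}erednik--Drinfeld uniformization. Since $M$ is $\varpi$-compatible, replacing ${\cal M}_n$ by $\mv_n$ is harmless (cf.\ \S\,\ref{notasup}): $H^1_{\rm HK}({\cal M}_n)[M]=H^1_{\rm HK}(\mv_n)[M]$, and similarly in de Rham. Diagram (\ref{diagr1}), which packages the local--global compatibility of Carayol and Saito (prop.\,\ref{CSpst}), already gives natural isomorphisms, compatible with $\iota_{\rm HK}$ and with the $\varphi$-, $N$- and $\G_F$-actions,
$${\rm Hom}_{\check{\mathbb G}(\mathbf{A}_f^\pp)}(\Pi_f^\pp, H^1_{\rm HK}({\rm Sh}_n)[M])\simeq\breve M,\qquad {\rm Hom}_{\check{\mathbb G}(\mathbf{A}_f^\pp)}(\Pi_f^\pp, H^1_{\rm dR}({\rm Sh}_n)[M])\simeq M_{\rm dR}\otimes_F C.$$
So the proposition reduces to producing, for $\bullet\in\{{\rm HK},{\rm dR}\}$, a natural isomorphism ${\rm Hom}_{\check{\mathbb G}(\mathbf{A}_f^\pp)}(\Pi_f^\pp, H^1_\bullet({\rm Sh}_n)[M])\simeq{\rm Hom}_G({\rm LL}(M)^\dual, H^1_\bullet(\mv_n)[M])$ compatible with all of these.

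First I would uniformize. Fixing a tame level $U$ and applying the exact functor $X\mapsto X[M]$ (exact on smooth $\check G$-representations with trivial $\varpi$, since $\check G/\varpi^\Z$ is profinite) to the Hochschild--Serre spectral sequence of the covering ${\cal M}_n\times S^\pp(U)\to{\rm Sh}_n(U)$ furnished by prop.\,\ref{CD}, the degenerate terms $H^1(\Gamma,H^0)$ and $H^2(\Gamma,H^0)$ vanish: indeed $M$ is supercuspidal whereas $\check G$ acts on $H^0_\bullet({\cal M}_n\times S^\pp(U))=\Z[\pi_0]\otimes H^0({\rm point})$ through the reduced norm, so this $H^0$ has trivial $[M]$-part (rem.\,\ref{Shimura4}\,(iii)). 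Because $\check G=\check{\mathbb G}(E_\pp)$ acts trivially on the discrete set $S^\pp(U)$, one gets
$$H^1_\bullet({\rm Sh}_n(U))[M]\simeq H^0\big(\Gamma,{\cal C}(S^\pp(U),H^1_\bullet(\mv_n)[M])\big),$$
naturally in $U$ and compatibly with $\iota_{\rm HK}$.

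Next comes the functional-analytic step, which I expect to carry the real difficulty. Set ${\cal F}=H^1_\bullet(\mv_n)[M]$. To invoke lemma~\ref{gab13} one must know that ${\cal F}$ is a ${\cal D}_{\rm alg}(G)$-Fr\'echet, i.e.\ that the (locally analytic) $G$-action is killed by ${\goth g}$: on the de Rham side this follows from Cartan's formula (on cohomology classes the infinitesimal action equals $d\circ\iota_{\goth X}$, hence vanishes) and transports to Hyodo--Kato via the $G$-equivariant $\iota_{\rm HK}$; more structurally, the $G$-action on $H^1_\bullet(\mv_n)$ becomes smooth after passing to the supercuspidal $M$-part (cf.\ prop.\,\ref{isot} and the combinatorial description of the special fibre of a semistable model of $\mv_n$). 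Granting this, lemma~\ref{gab13} gives $H^0(\Gamma,{\cal C}(S^\pp(U),{\cal F}))\simeq H^0\big(G,{\rm Hom}({\rm LC}(S(U))^\dual,{\cal F})\big)$; passing to $\varinjlim_U$ and applying ${\rm Hom}_{\check{\mathbb G}(\mathbf{A}_f^\pp)}(\Pi_f^\pp,-)$, which commutes with $H^0(G,-)$, reduces the problem to the $\Pi_f^\pp$-isotypic part of $\varinjlim_U{\rm LC}(S(U))={\rm LC}(\Gamma\backslash{\mathbb G}(\mathbf{A}_f))$.

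Here the global input enters: since $\Pi_\pp={\rm LL}(M)$ and, by strong multiplicity one for $\check{\mathbb G}$, the datum $\Pi_f^\pp$ pins down the automorphic representation $\Pi$ uniquely, the $\Pi_f^\pp$-isotypic part of the space of smooth automorphic forms on $\Gamma\backslash{\mathbb G}(\mathbf{A}_f)$ is, as a $G$-representation, ${\rm LL}(M)$ with multiplicity one. Plugging this in (and keeping track of the dual ${\rm LC}(S(U))^\dual$) yields ${\rm Hom}_{\check{\mathbb G}(\mathbf{A}_f^\pp)}(\Pi_f^\pp, H^1_\bullet({\rm Sh}_n)[M])\simeq H^0\big(G,{\rm Hom}({\rm LL}(M)^\dual,{\cal F})\big)={\rm Hom}_G({\rm LL}(M)^\dual, H^1_\bullet(\mv_n)[M])$, which is the isomorphism sought; its $\iota_{\rm HK}$-, $\varphi$-, $N$- and $\G_F$-equivariance is inherited from (\ref{diagr1}) and from the naturality of the uniformization, of lemma~\ref{gab13} and of the $p$-adic comparison theorems. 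The main obstacle, as flagged, is to establish that the supercuspidal $M$-part of $H^1_{\rm HK}(\mv_n)$ (and of $H^1_{\rm dR}(\mv_n)$) is a smooth $G$-Fr\'echet, so that the ``smooth'' lemma~\ref{gab13} genuinely applies, together with the attendant care with projective-limit topologies, closedness of images and completed tensor products.
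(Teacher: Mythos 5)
Votre proposition est correcte et suit pour l'essentiel la même route que la preuve du texte : uniformisation de \v{C}erednik--Drinfeld et rem.\,\ref{Shimura4} pour éliminer les termes en $H^0$ (nuls en la partie $[M]$ car $\check G$ agit par la norme réduite sur $\pi_0$), lemme~\ref{gab13}, passage à la limite sur $U$, décomposition automorphe de ${\rm LC}(\Gamma\backslash{\mathbb G}(\mathbf{A}_f))$ avec multiplicité $1$ forte, puis comparaison avec le diagramme~(\ref{diagr1}) issu de la prop.\,\ref{CSpst}. Le point que vous signalez — vérifier que $H^1_{\rm dR}(\mv_n)[M]$ et son analogue de Hyodo--Kato sont des ${\cal D}_{\rm alg}(G)$-fréchets pour que le lemme~\ref{gab13} s'applique — est effectivement laissé implicite dans le texte, et votre argument par la formule de Cartan (transporté par $\iota_{\rm HK}$, qui est $G$-équivariant) est une justification acceptable.
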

\begin{proof}
Le lemme~\ref{gab13}, utilis\'e pour ${\cal F}=H^1_{\rm dR}(\mv_n)$ et combin\'e avec la rem.~\ref{Shimura4}, fournit
   un diagramme commutatif 
     $$\xymatrix@R=.6cm{ H^1_{\rm HK}({\rm Sh}_n(U))[M]\ar[r]^-{\sim}\ar[d]^{\iota_{\rm HK}}&
{\rm Hom}_G^{\rm cont}({\rm LC}(S(U))^\dual , H^1_{\rm HK}({\cal M}^\varpi_n)[M])  \ar[d]^{\iota_{\rm HK}}\\
H^1_{\rm dR}({\rm Sh}_n(U))[M]\ar[r]^-{\sim} & 
{\rm Hom}_G^{\rm cont}({\rm LC}(S(U))^\dual , H^1_{\rm dR}({\cal M}^\varpi_n)[M]) &
} $$

En passant \`a la limite sur $U$, on en d\'eduit un diagramme
     $$\xymatrix{ H^1_{\rm HK}({\rm Sh}_n)[M]\ar[r]^-{\sim}\ar[d]^{\iota_{\rm HK}}&
{\rm Hom}_G^{\rm cont}({\rm LC}(\Gamma\backslash{\mathbb G}(\A_f))^\dual , H^1_{\rm HK}({\cal M}^\varpi_n)[M])  \ar[d]^{\iota_{\rm HK}}\\
H^1_{\rm dR}({\rm Sh}_n)[M]\ar[r]^-{\sim} & 
{\rm Hom}_G^{\rm cont}({\rm LC}(\Gamma\backslash{\mathbb G}(\A_f))^\dual , H^1_{\rm dR}({\cal M}^\varpi_n)[M]) &
} $$

Enfin, la d\'ecomposition de ${\rm LC}(\Gamma\backslash{\mathbb G}(\A_f))$
en somme de repr\'esentations automorphes pour ${\mathbb G}(\A)$
et le th\'eor\`eme de multiplicit\'e~$1$ fort
fournissent un diagramme commutatif 
     $$\xymatrix@R=.6cm{{\rm Hom}_{\check{\mathbb G}(\mathbf{A}_f^{\pp})} 
(\Pi_f^{\pp}, H^1_{\rm HK}({\rm Sh}_n)[M])\ar[r]^-{\sim}\ar[d]^{\iota_{\rm HK}}&
{\rm Hom}_G({\rm LL}(M)^\dual , H^1_{\rm HK}({\cal M}^\varpi_n)[M]) \ar[d]^{\iota_{\rm HK}}\\
{\rm Hom}_{\check{\mathbb G}(\mathbf{A}_f^{\pp})} (\Pi_f^{\pp},  
H^1_{\rm dR}({\rm Sh}_n)[M])\ar[r]^-{{\sim}} &  
{\rm Hom}_G({\rm LL}(M)^\dual , H^1_{\rm dR}({\cal M}^\varpi_n)[M]).} $$
Pour conclure,
il n'y a plus qu'\`a comparer ce diagramme avec
le diag.~(\ref{diagr1}).
\end{proof}

\Subsection{Cohomologies de de Rham et de Hyodo-Kato de $\mv_\infty$}\label{Theo1.1}
\subsubsection{Cohomologie \`a support compact}

Soit $H^1_{\rm dR, c}({\cal M}^\varpi_n)$ la cohomologie de de Rham 
\`a support compact de ${\cal M}^\varpi_n$. 
\begin{theo}\label{DAT11}
On a une d\'ecomposition\footnote{Apr\`es extension des scalaires \`a $C$ pour
${\rm JL}^1(M)$, ${\rm WD}^1(M)$ et ${\rm LL}^1(M)^\vee$.}
$$H^1_{\rm dR,c}(\mv_\infty)=\bigoplus_{M\in\fn^\varpi}
{\rm JL}^1(M)\otimes_C{\rm WD}^1(M)\otimes_C{\rm LL}^1(M)^\vee,$$
en tant que repr\'esentation de $\check G\times G$.
\end{theo}
\begin{proof}
On a une d\'ecomposition
$$H^1_{\rm dR, c}({\cal M}^\varpi_n)=
\bigoplus_{M\in \fn_{n}^\varpi} 
{\rm JL}^1(M)\otimes_C {\rm Hom}_{C[\check G]}({\rm JL}^1(M),H^1_{\rm dR, c}({\cal M}^\varpi_n)).$$
Notons que $H^1_{\rm dR, c}({\cal M}^\varpi_n)$ est le 
$C$-dual de $H^1_{\rm dR}({\cal M}^\varpi_n)$ (cf.~\cite[th.\,4.11]{GK1}), 
et donc que $H^1_{\rm dR, c}({\cal M}^\varpi_n)[M]$ est le
$C$-dual de $H^1_{\rm dR}({\cal M}^\varpi_n)[\check M]$, o\`u $\check M=M^\dual[1]$,
et que ce dernier 
a pour quotient $\check M_{\rm dR}\otimes_{L} {\rm LL}(\check M)^\dual
={\rm WD}^1(\check M)\otimes{\rm LL}^1(\check M)^\dual$
 si $M$ est supercuspidal
(prop.~\ref{weak}).
Le calcul de la cohomologie de de Rham
         de $\Omega_{\rm Dr}\times\pi_0(\mv_n)$ montre que cela reste vrai
si $M$ est sp\'ecial.
En prenant une limite inductive sur $n$, on en d\'eduit
une surjection
$$
H^1_{\rm dR, c}(\mv_\infty)\to \bigoplus_{M\in\fn^\varpi} 
{\rm JL}^1(M)\otimes_C {\rm WD}^1(M)\otimes_C {\rm LL}^1(M)^\vee.$$
On cherche \`a prouver que cette surjection de $G$-repr\'esentations 
lisses est en fait un isomorphisme.
D'apr\`es le th.\,\ref{comparer}, le membre de gauche est aussi isomorphe
\`a $H^1_{\rm dR, c}({\rm LT}^\varpi_\infty)$, et donc ses $G_j$-invariants
sont $H^1_{\rm dR, c}({\rm LT}^\varpi_j)$.
Or ${\rm LT}^\varpi_j$ est 
une courbe analytique ouverte au sens
de \cite{Wewers} et donc, si $\ell\neq p$, on a \cite{Wewers}
$$\dim_C H^1_{\rm dR, c}({\rm LT}^\varpi_j)=\dim_{\Q_\ell}
H^1_{\eet, c}({\rm LT}^\varpi_j, \Q_\ell),$$
et les deux membres sont finis.
La th\'eorie de Lubin-Tate non ab\'elienne (cf.~prop.\,\ref{DAT10}) permet donc de montrer
que les deux membres ont les m\^emes invariants sous l'action de $G_j$ (le foncteur des $G_j$-invariants
est exact sur les repr\'esentations lisses car $G_j$ est ouvert compact),
et donc que la surjection ci-dessus est un isomorphisme.
\end{proof}

\begin{rema}
Soit $\check G'\subset\check G$ le noyau de la norme r\'eduite.
Les r\'esultats du chap.\,\ref{DEMI} montrent que 
la cohomologie
de $\Omega_{\rm Dr}\times\pi_0(\mv_\infty)=\check G'\backslash \mv_\infty$
est enti\`erement d\'ecrite
par la contribution des $M$ sp\'eciaux et que, r\'eciproquement, les $M$ sp\'eciaux
ne contribuent qu'\`a la cohomologie de $\Omega_{\rm Dr}\times\pi_0(\mv_\infty)$.
\end{rema}

   \subsubsection{Preuve du th.\,\ref{gabriel}}\label{GAB7}
Soit $M\in\fn^\varpi$, supercuspidal. Le r\'esultat suivant implique
le th.\,\ref{gabriel}.
\begin{theo} \label{theo 1.1} 
Il existe un isomorphisme de $G$-fr\'echets
$${\rm Hom}_{\check{G}}({\rm JL(M)}, L\otimes_{\Q_p} H^1_{\rm HK}({\cal M}^\varpi_{\infty}))\simeq \breve{M}\widehat{\otimes}_{L} {\rm LL}(M)^\dual ,$$
compatible avec les actions de $\varphi$ et $\mathcal{G}_F$ et s'ins\'erant dans un diagramme commutatif de 
$G$-modules 
$$\xymatrix{{\rm Hom}_{\check{G}}({\rm JL(M)}, L\otimes_{\Q_p} H^1_{\rm HK}({\cal M}^\varpi_{\infty}))\ar[r]^-{{\simeq}}\ar[d]^{\iota_{\rm HK}}&
\breve{M}\widehat{\otimes}_{L} {\rm LL}(M)^\dual \ar[d]\\
{\rm Hom}_{\check{G}}\big({\rm JL}(M), L\otimes_{\Q_p} H^1_{\rm dR}({\cal M}^\varpi_\infty)\big)\ar[r]^-{{\simeq}}&C\widehat\otimes_F M_{\rm dR}\widehat{\otimes}_L {\rm LL}(M)^\dual }$$
la fl\`eche \`a gauche \'etant induite par l'isomorphisme de 
Hyodo-Kato et celle \`a droite par l'identification 
$M_{\rm dR}\otimes_{F}  C=M\otimes_{\Q_p^{\rm nr}} C=\breve{M}\otimes_{\breve{\Q}_p} C$.        
\end{theo}
\begin{proof}
L'isomorphisme de la ligne du bas (pour la cohomologie de de Rham)
se d\'eduit, par $C$-dualit\'e, du th.\,\ref{DAT11} en appliquant
le foncteur $Z\mapsto Z[M]$.  Le reste de l'\'enonc\'e
se d\'eduit alors de la prop.\,\ref{weak}.
\end{proof}
        
\subsection{Le diagramme fondamental}\label{GAB8}

Soient $M\in\fn^\varpi$, supercuspidal, et $n\geq 1$ tel que $M\in\fn^\varpi_n$.
Posons
$$M_{\rm dR}=(M\otimes_{\Q_p^{\rm nr}} \Qbar_p)^{\mathcal{G}_F}, \quad X_{\rm st}^+(M)=(B_{\rm st}^+\otimes_{\Q_p^{\rm nr}} M)^{\varphi=p,N=0}.$$
Le r\'esultat suivant implique le th.\,\ref{diagfond} de l'introduction.
\begin{theo}\label{MAIN}
Il existe un diagramme commutatif de $G$-fr\'echets 
$$\xymatrix@R=.6cm@C=.5cm{0 \ar[r]&\ \mathcal{O}({\cal M}^\varpi_n)[M]\ar[r]^-{{\rm exp}}\ar@{=}[d]&
H^1_{\proet}({\cal M}^\varpi_n, L(1))[M]\ar[d]^{{\rm dlog}}\ar[r]&
X_{\rm st}^+(M)\widehat{\otimes}_L{\rm LL}(M)^\dual \ar[d]^{\theta}\ar[r]&0\\
0\ar[r]& \mathcal{O}({\cal M}^\varpi_n)[M] \ar[r]^-d&\Omega^1({\cal M}^\varpi_n)[M]\ar[r]
&(C \otimes_F M_{\rm dR})\widehat{\otimes}_L {\rm LL}(M)^\dual \ar[r]&0
}$$
  \end{theo}
\begin{proof}
On obtient le diagramme voulu avec $\tHK(\mv_n)[M]$ 
au lieu de $X_{\rm st}^+(M)\widehat{\otimes}_L{\rm LL}(M)^\dual $
en utilisant:

$\bullet$  
le diagramme du th.\,\ref{main} pour $X=\mv_n$, auquel on applique $Z\mapsto Z[M]$,

$\bullet$ la nullit\'e de
$\Z[\pi_0(\mv_n)][M]$ ($\check G$ agit \`a travers la norme r\'eduite 
sur~$\pi_0(\mv_n)$),

$\bullet$ 
l'isomorphisme $H^1_{\rm dR}(\mv_n)[M]\cong (C \otimes_F M_{\rm dR})\widehat{\otimes}_L {\rm LL}(M)^\dual $
du th.\,\ref{theo 1.1}.  

\noindent Il suffit donc 
de prouver que
$\tHK(\mv_n)[M]\cong X_{\rm st}^+(M)\widehat{\otimes}_L{\rm LL}(M)^\dual $, ce qui est fait 
au~\no\ref{GAB10}.
\end{proof}

La preuve
s'appuie sur le r\'esultat suivant qui d\'ecrit les fonctions
\`a valeurs dans $\tHK(\mv_n)$ en termes de la cohomologie de Hyodo-Kato de
courbes de Shimura.
        
\begin{prop}\label{cruciale}
Si $U$ est un sous-groupe ouvert compact suffisamment petit de ${\mathbb G}(\A_f^\pp)$, alors
$${\cal C}(S^\pp(U),\tHK(\mv_n))^\Gamma[M]=
(\bst^+\otimes_{\breve\Q_p}H^1_{\rm HK}({\rm Sh}_n(U)))^{N=0,\varphi=p}[M].$$
\end{prop}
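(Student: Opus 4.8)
The plan is to combine the \v{C}erednik--Drinfeld uniformisation (prop.~\ref{CD}) with the computation of $\tHK$ on proper curves (prop.~\ref{Shimura}) and the functional-analytic reciprocity of \S\,\ref{AFRF}. The strategy is entirely parallel to that of prop.~\ref{weak}, but now carried out for the functor $\tHK$ instead of $H^1_{\rm HK}$. First I would apply the uniformisation isomorphism ${\rm Sh}_n(U)^{\rm an}\simeq \Gamma\backslash[{\cal M}_n\times S^\pp(U)]$ of prop.~\ref{CD}, or rather its $\varpi$-quotient version $\Gamma\backslash[\mv_n\times S^\pp(U)]=\coprod_i\Gamma_i\backslash\mv_n$, to relate $\tHK({\rm Sh}_n(U))$ to the $\Gamma$-cohomology of the sheaf-theoretic quantities defining $\tHK$ on $\mv_n$. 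Since ${\rm Sh}_n(U)$ is a disjoint union of \emph{proper} curves over a finite extension of $\Q_p$, prop.~\ref{Shimura} gives
$$\tHK({\rm Sh}_n(U))\cong(\bst^+\otimes_{\Q_p^{\rm nr}}H^1_{\rm HK}({\rm Sh}_n(U)))^{N=0,\varphi=p},$$
so the right-hand side of the claimed identity is controlled once one knows the left-hand side is computed by a $\Gamma$-invariants functor applied to $\tHK(\mv_n)$.

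\textbf{Key steps.} The heart of the argument is to show that the Mayer--Vietoris spectral sequence for the covering $\mv_n\to\Gamma_i\backslash\mv_n$ (equivalently, the Hochschild--Serre spectral sequence $H^p(\Gamma,H^q)\Rightarrow H^{p+q}$ of rem.~\ref{Shimura4}(ii)), applied to the pro\'etale sheaves $({\mathbb B}_{\rm cris}^+)^{\varphi=p}$ and $\widehat\O$ and their relevant kernels, is compatible with the definition of $\tHK$. Concretely: (i) unwind the definition $\tHK(X)={\rm Ker}[H^1_{\proet}(X,({\mathbb B}_{\rm cris}^+)^{\varphi=p})\to\Omega^1(X)(-1)]$ for $X={\rm Sh}_n(U)$ using rem.~\ref{Shimura4}(ii)--(iii); the $H^0$ contribution of $({\mathbb B}_{\rm cris}^+)^{\varphi=p}$ on $\mv_n\times S^\pp(U)$ is $(\bcris^+)^{\varphi=p}\otimes\Z[\pi_0]\otimes{\cal C}(S^\pp(U),\Q_p)$ by \eqref{H0}, and its $[M]$-part vanishes because $\check G$ acts through the reduced norm on $\pi_0(\mv_n)$, killing the $H^1(\Gamma,H^0)$ and $H^2(\Gamma,H^0)$ edge terms; hence $\tHK({\rm Sh}_n(U))[M]\cong H^0(\Gamma,H^1(\mv_n\times S^\pp(U),\text{sheaf}))[M]$, and the kernel defining $\tHK$ passes through this identification since $\Omega^1$ and $H^1_{\proet}(\mv_n,\widehat\O)=\Omega^1(\mv_n)(-1)$ (lemma~\ref{five term}) behave the same way. (ii) Rewrite $H^0(\Gamma,H^1(\mv_n\times S^\pp(U),-))={\cal C}(S^\pp(U),\tHK(\mv_n))^\Gamma$, which is the left-hand side of the proposition. (iii) On the other hand, apply prop.~\ref{Shimura} to each proper component $\Gamma_i\backslash\mv_n$ of ${\rm Sh}_n(U)$ and reassemble, giving the right-hand side $(\bst^+\otimes_{\breve\Q_p}H^1_{\rm HK}({\rm Sh}_n(U)))^{N=0,\varphi=p}$; then take the $[M]$-part, which is exact on smooth $\check G$-modules with trivial $\varpi$-action. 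Finally take the inductive limit over the auxiliary level $U$ away from $\pp$; this is harmless since everything in sight is a filtered colimit.

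\textbf{Main obstacle.} The delicate point will be step~(i): verifying that forming $\tHK$ (a kernel inside pro\'etale $H^1$) genuinely commutes with the $\Gamma$-invariants/Hochschild--Serre machinery, i.e. that no spurious contributions enter from $H^2(\Gamma,-)$ or from the failure of $H^0_{\proet}(\mv_n,\widehat\O)$-type terms to split off cleanly. This requires knowing that $H^2(\Gamma_i,(\bcris^+)^{\varphi=p})$-type terms either vanish or are annihilated after applying $[M]$, which is exactly where the supercuspidality of $M$ and the reduced-norm action on $\pi_0$ are used; one also needs the finiteness and closedness statements of th.~\ref{main} (that $\dlog$ and $\iota_{\rm can}$ have closed image, and $\ker\iota_{\rm can}$ embeds in $H^1_{\rm dR}(\mv_n)(1)$) to ensure the relevant sequences of Fr\'echet spaces remain exact after taking $\Gamma$-invariants and the $[M]$-part. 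Granting these, the proof is a diagram chase matching the two descriptions of $\tHK({\rm Sh}_n(U))[M]$.
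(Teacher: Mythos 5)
Votre proposition est correcte et suit essentiellement la m\^eme route que la preuve du papier: le membre de droite est $\tHK({\rm Sh}_n(U))[M]$ par la prop.~\ref{Shimura}, le membre de gauche est le noyau de la fl\`eche sur les $\Gamma$-invariants, et la comparaison se fait via la suite spectrale de Hochschild--Serre, la nullit\'e de $H^i(\Gamma,H^0(\mv_n\times S^\pp(U),({\mathbb B}_{\rm cris}^+)^{\varphi=p}))[M]$ provenant de la formule (\ref{H0}) et de l'action de $\check G$ par la norme r\'eduite sur $\pi_0(\mv_n)$, tandis que les termes en $\Omega^1$ co\"{\i}ncident parce que $\mv_n$ est Stein. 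Les ingr\'edients suppl\'ementaires que vous invoquez (les \'enonc\'es de fermeture du th.~\ref{main}, la r\'eciprocit\'e du \S\,\ref{AFRF}, la limite sur $U$) ne sont pas n\'ecessaires \`a ce stade; le papier ne les utilise que plus tard, dans la d\'eduction du th.~\ref{MAIN}.
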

\begin{proof}
Soient ${\mathbb U}=(\Bcris^+)^{\varphi=p}$ et ${\rm U}=(\bcris^+)^{\varphi=p}$
et \'ecrivons $H^i(-)$ au lieu de $H^i_{\proet}(-)$.
Le membre de droite est aussi (cf.~prop.\,\ref{Shimura}) \'egal \`a 
$$\tHK({\rm Sh}_n(U))[M]=
{\rm Ker}\big[H^1({\rm Sh}_n(U),{\mathbb U})[M]\to \Omega^1({\rm Sh}_n(U)\big][M].$$
On a ${\cal C}(S^\pp(U),\tHK(\mv))=\tHK(\mv_n\times S^\pp(U))$, et on d\'eduit de
la d\'efinition de $\tHK$ que
$${\cal C}(S^\pp(U),\tHK(\mv_n))^\Gamma={\rm Ker}\big[H^1(\mv_n\times S^\pp(U),{\mathbb U})^\Gamma
\to \Omega^1(\mv_n\times S^\pp(U))^\Gamma\big].$$
Comme $\mv_n$ est Stein, et comme ${\rm Sh}_n(U)=\Gamma\backslash(\mv_n\times S^\pp(U))$, on a
$$\Omega^1(\mv_n\times S^\pp(U))^\Gamma=\Omega^1({\rm Sh}_n(U)).$$
Pour conclure, il suffit donc de v\'erifier que
$$H^1(\mv_n\times S^\pp(U),{\mathbb U})^\Gamma[M]=H^1({\rm Sh}_n(U), {\mathbb U})[M]$$ et, pour cela,
il suffit, compte-tenu de la suite spectrale habituelle, de v\'erifier que
$H^0(\mv_n\times S^\pp(U),{\mathbb U})[M]=0$ (car alors
$H^i(\Gamma, H^0(\mv_n\times S^\pp(U),{\mathbb U}))[M]=0$),
ou encore que $H^0(\mv_n,{\mathbb U})[M]=0$.

Or $H^0(\mv_n,{\mathbb U})=\Z[\pi_0(\mv_n)]\otimes {\rm U}$ (cf.~formule (\ref{H0})).
Le r\'esultat est donc une cons\'equence du fait que $\check G$ agit \`a travers la norme
r\'eduite sur $\pi_0(\mv_n)$, et donc que $\Z[\pi_0(\mv_n)][M]=0$.
\end{proof}

\subsubsection{$G$-modules de Fr\'echet isotypiques}\label{GAB9}
      
Soit $\pi$ une repr\'esentation lisse irr\'eductible de $G$ (dans les applications on aura 
$\pi={\rm LL}(M)$). 
     
\begin{defi}
Un $L$-fr\'echet $\mathcal{F}$ muni d'une action continue de $G$ est dit
{\it $\pi$-isotypique}
 s'il existe un $L$-banach $X$ et un isomorphisme de $L[G]$-modules topologiques (l'action de 
$G$ sur $X$ \'etant triviale)
$\mathcal{F}\simeq \pi^\dual \widehat{\otimes}_L X$.      
\end{defi}

\begin{prop}\label{isot}
  Soit $\mathcal{F}$ un $L$-fr\'echet $\pi$-isotypique. Tout sous-fr\'echet $G$-stable de $\mathcal{F}$ est $\pi$-isotypique. 
   \end{prop}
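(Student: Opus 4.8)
The plan is to reduce to a statement about closed sub-bimodules of the completed tensor product $\pi^\dual\wotimes_L X$ and then exploit the irreducibility of $\pi$ together with the fact that $G$ acts trivially on the Banach factor $X$. Concretely, fix an isomorphism of $L[G]$-modules $\mathcal{F}\simeq \pi^\dual\wotimes_L X$ and let $\mathcal{G}\subset\mathcal{F}$ be a closed $G$-stable subfrechet. First I would observe that, since $\pi$ is smooth irreducible, its dual $\pi^\dual$ is a topologically irreducible $G$-module (every nonzero closed $G$-stable subspace is dense, hence equal to $\pi^\dual$ if it is also closed), and more precisely that $\End_{G}^{\cont}(\pi^\dual)=L$ by a Schur-type argument valid for smooth admissible representations. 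This is the structural input that makes the whole thing work.

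Next I would set $Y=\{y\in X:\ \pi^\dual\otimes y\subset\mathcal{G}\}$, more precisely $Y$ should be defined as the set of $y\in X$ such that the ``slice'' $\{\lambda\otimes y\ :\ \lambda\in\pi^\dual\}$ (or rather its closure) lies in $\mathcal{G}$; one checks $Y$ is a closed $L$-subspace of $X$ (closedness of $\mathcal{G}$ plus continuity of the inclusion give this). The claim is then that $\mathcal{G}=\pi^\dual\wotimes_L Y$. One inclusion is immediate. For the other, take $0\neq v\in\mathcal{G}$; using a nonzero functional $\mu\in\pi$ (i.e. an element of the smooth dual of $\pi^\dual$, which separates points since $\pi$ is admissible) one produces, by applying the $G$-action and the distribution algebra $\mathcal{D}_{\rm alg}(G)$ acting on $\pi^\dual$, enough translates of $v$ inside $\mathcal{G}$ to see that the entire slice $\pi^\dual\otimes y_0$ lies in $\mathcal{G}$ for the corresponding $y_0\in X$, hence $y_0\in Y$. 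Running this over all $v$ and using that the slices span a dense subspace of $\pi^\dual\wotimes_L X$ yields $\mathcal{G}\subset\pi^\dual\wotimes_L Y$, and then equality. Finally one remarks that $\pi^\dual\wotimes_L Y$ is by definition $\pi$-isotypic, so $\mathcal{G}$ is $\pi$-isotypic.

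The main obstacle I anticipate is the functional-analytic bookkeeping in the ``enough translates'' step: one must be careful that the action of $\mathcal{D}_{\rm alg}(G)$ (or at least of finite linear combinations of Dirac masses $\delta_g$) on $\pi^\dual$ generates, for a suitable $\mu\in\pi$ of the form $\mu=\sum c_i\delta_{g_i}$ acting via $\langle g_i\cdot v,\mu\rangle$, the whole of $\pi^\dual\otimes y_0$ topologically inside the \emph{closed} subspace $\mathcal{G}$ — this uses topological irreducibility of $\pi^\dual$ as a $G$-representation, i.e. that the $G$-translates of any nonzero vector are total. One also needs that passing to the completed tensor product does not create unexpected closed $G$-submodules not of product form; this is where writing $X$ as an inverse limit of Banach spaces (or $\mathcal{F}$ as an inverse limit of $\pi^\dual\wotimes$ Banach pieces, $\pi^\dual$ itself being a compact inductive limit of Banach duals) and checking the argument level by level, then taking the limit, is the cleanest route. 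The admissibility of $\pi$ (so that $\pi^{U}$ is finite-dimensional for $U$ open compact, and $\pi^\dual=\varinjlim_U(\pi^{U})^\dual$) is what keeps all of this under control.
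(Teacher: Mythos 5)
Your reduction — set $Y=\{y\in X:\ \pi^\dual\otimes y\subset\mathcal{G}\}$ and try to prove $\mathcal{G}=\pi^\dual\widehat\otimes_L Y$ — is a sensible target (it is in fact the strong form of the statement), and the easy inclusion and the closedness of $Y$ are fine. The gap is exactly the step you describe as "enough translates": for $v\in\mathcal{G}$ and $\mu\in\pi$ with $y_0=v(\mu)\neq 0$, you assert that acting by $G$ (or by ${\cal D}_{\rm alg}(G)$, or by limits of Dirac masses) on $v$ inside the closed subspace $\mathcal{G}$ yields the whole slice $\pi^\dual\otimes y_0$. None of the inputs you invoke gives this. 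Topological irreducibility of $\pi^\dual$ says that the closed $G$-span of a nonzero vector \emph{of $\pi^\dual$} is $\pi^\dual$; for a general element $v=\sum_n\lambda_n\otimes x_n$ of $\pi^\dual\widehat\otimes_L X$, applying $\delta_g$ moves all the components $\lambda_n$ simultaneously, and since $G$ acts trivially on $X$ there is no operator available that projects onto the "$y_0$-direction". Producing a pure tensor in the closure of the $L[G]$-orbit of $v$ is precisely the content of the proposition, not something one can wave at; as written, the core of your argument is circular. The final remark about working "level by level" on Banach pieces does not rescue it: the truncations $V_n^\dual\otimes_L X$ are not $G$-stable (the $V_n\subset\pi$ are not subrepresentations, $\pi$ being irreducible and infinite-dimensional), so there is no equivariant statement to check at finite level and then pass to the limit.

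What is missing is a genuine mechanism, and the paper supplies one by arguing dually: replace $X$ by the closure of $Z=\sum_{f\in\mathcal{G}}f(\pi)$, then show $\mathcal{G}$ is dense (hence equal, being closed) in $\pi^\dual\widehat\otimes_L\overline{Z}$ by testing against a continuous functional $\mathcal{L}=\sum_{i=1}^n v_i\otimes\lambda_i\in\pi\otimes_L X^\dual$ vanishing on $\mathcal{G}$, and using the Jacobson density theorem to find $\mu\in L[G]$ with the $\mu\cdot v_i$ arbitrary, which forces every $\lambda_i$ to kill the dense subspace $Z$, hence $\mathcal{L}=0$. Some argument of this type (Hahn--Banach plus algebraic density of $L[G]$ in the relevant endomorphism ring, or, for supercuspidal $\pi$, explicit projectors built from matrix coefficients) is what your sketch needs at the crucial point. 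Note also that your appeal to "${\rm End}^{\cont}_G(\pi^\dual)=L$ by a Schur-type argument" is not automatic over a $p$-adic coefficient field: Schur's lemma only gives a division algebra, and if ${\rm End}_{L[G]}(\pi)\neq L$ the strong form you are aiming at actually fails (a twisted diagonal $\{(\lambda,e(\lambda))\}$ with $e\notin L$ is a closed $G$-stable subspace of $\pi^\dual\widehat\otimes_L L^2$ not of the form $\pi^\dual\widehat\otimes_L Y$), so this hypothesis must be addressed, e.g.\ by enlarging $L$, rather than quoted.
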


\begin{proof} 
\'Ecrivons $\mathcal{F}=\pi^\dual \widehat{\otimes}_L X$ pour un $L$-Banach $X$ avec action triviale de $G$. 
 Soit 
  $\mathcal{F'}$ un sous-fr\'echet $G$-stable de $\mathcal{F}$. 
On peut remplacer $X$ par l'adh\'erence de $Z:=\sum_{f\in \mathcal{F'}} f(\pi)$ dans $X$, ce qui permet de supposer que $Z$ est dense dans $X$. On va montrer que $\mathcal{F'}$ est dense dans $\mathcal{F}$, et donc $\mathcal{F}'=\mathcal{F}$
est $\pi$-isotypique. Supposons que ce n'est pas le cas, il existe 
donc $\mathcal{L}\in \mathcal{F}^\dual $ s'annulant sur  
$\mathcal{F'}$. 
Comme $\pi=\varinjlim_{n} V_n$ (r\'eunion croissante, avec 
$V_n$ de dimension finie), on a un isomorphisme d'espaces vectoriels 
$$\mathcal{F}^\dual =(\varprojlim_{n} (V_n^\dual \otimes_L X))^\dual =\varinjlim_{n} (V_n\otimes_L X^\dual )=\pi\otimes_L X^\dual .$$
On peut donc \'ecrire
 $\mathcal{L}=\sum_{i=1}^n v_i\otimes \lambda_i$, o\`u les $v_i\in\pi$ forment
une famille libre, et 
 $\lambda_i\in X^\dual $. Fixons $f\in \mathcal{F'}$. Par hypoth\`ese,
 $\sum_{i=1}^n \lambda_i(f(\mu\cdot v_i))=0$
 pour tout $\mu \in L[G]$. 
Le th\'eor\`eme de densit\'e de Jacobson \cite{Jacobson} assure,
pour tous $x_1,...,x_n\in \pi$, 
l'existence de $\mu\in L[G]$ tel que 
 $\mu\cdot v_i=x_i$ pour $1\leq i\leq n$. On en d\'eduit que $\sum_{i=1}^n \lambda_i(f(x_i))=0$
 pour tous $x_1,...,x_n\in \pi$. Donc $\lambda_i$ est nulle sur $f(\pi)$ pour tout $i$ et tout 
 $f\in \mathcal{F'}$. Puisque $\sum_{f\in \mathcal{F'}} f(\pi)$ est dense dans $X$, on obtient 
 $\lambda_i=0$ pour tout $i$, en contradiction avec 
l'hypoth\`ese $\mathcal{L}\ne 0$. Cela permet de conclure.
 \end{proof}
 
 \begin{coro}\label{isotype} L'image et le noyau de $\iota_{\rm can}: \tHK(\mv_n)[M]\to
 H^1_{\rm dR}({\cal M}^\varpi_n)[M]$ sont $\pi:={\rm LL}(M)$-isotypiques.
 \end{coro}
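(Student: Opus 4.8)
The plan is to deduce everything from the diagram of Theorem~\ref{main} applied to $X=\mv_n$, combined with the identification of $H^1_{\rm dR}(\mv_n)[M]$ furnished by Theorem~\ref{theo 1.1} and the isotypicity criterion of Proposition~\ref{isot}.

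First I would apply the functor $Z\mapsto Z[M]$ to the conclusions of Theorem~\ref{main} for the Stein curve $X=\mv_n$. Since ${\rm JL}(M)$ is finite-dimensional and $\check G/\varpi^\Z$ is profinite, this functor is exact on smooth $\check G$-modules with trivial $\varpi$-action and is, up to the harmless auxiliary representation ${\rm JL}(M)$, the extraction of a direct summand; in particular it turns closed subspaces into closed subspaces, commutes with completed tensor products, and commutes with the Tate twist. Thus Theorem~\ref{main} yields that $\iota_{\rm can}\colon\tHK(\mv_n)[M]\to H^1_{\rm dR}(\mv_n)[M]$ has closed image, and that its kernel, which coincides with $\ker\dlog[M]$, is identified by the natural map $\lambda$ (cf.\,\no\ref{GRAB3}) with a closed $G$-stable subspace of $H^1_{\rm dR}(\mv_n)[M](1)$.

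Next I would invoke Theorem~\ref{theo 1.1}: $H^1_{\rm dR}(\mv_n)[M]\cong C\widehat\otimes_F M_{\rm dR}\widehat\otimes_L{\rm LL}(M)^\dual$, where $G$ acts only on the factor ${\rm LL}(M)^\dual$. Hence $H^1_{\rm dR}(\mv_n)[M]$ is $\pi$-isotypic in the sense of \S\ref{GAB9}, with associated Banach space $C\widehat\otimes_F M_{\rm dR}$; and the same holds for the Tate twist $H^1_{\rm dR}(\mv_n)[M](1)$, the twist being invisible to the $G$-action. It now suffices to apply Proposition~\ref{isot}: the image of $\iota_{\rm can}$ is a closed, hence Fréchet, $G$-stable subspace of the $\pi$-isotypic Fréchet $H^1_{\rm dR}(\mv_n)[M]$, so it is $\pi$-isotypic; and $\ker\iota_{\rm can}=\ker\dlog[M]$ is, via $\lambda$, a closed $G$-stable subspace of the $\pi$-isotypic Fréchet $H^1_{\rm dR}(\mv_n)[M](1)$, so it is $\pi$-isotypic as well. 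This gives the corollary.

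The one point that genuinely needs care is the topological behaviour of $Z\mapsto Z[M]$ — that it preserves closedness of subspaces and completed tensor products — and this is precisely why one observes that, modulo the finite-dimensional representation ${\rm JL}(M)$, it is nothing but the extraction of a $\check G$-isotypic direct summand; once this is granted, the rest of the argument is formal. Everything else (closedness of the image and the embedding of the kernel) is already provided by Theorem~\ref{main}, and the isotypicity itself is a direct application of Proposition~\ref{isot} once Theorem~\ref{theo 1.1} puts $H^1_{\rm dR}(\mv_n)[M]$ in the required form.
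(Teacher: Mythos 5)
Your proof is correct and follows essentially the same route as the paper: isotypicity of $H^1_{\rm dR}(\mv_n)[M]$ via le th.\,\ref{theo 1.1}, closedness of the image and the embedding ${\rm Ker}\,\iota_{\rm can}={\rm Ker}\,\dlog\hookrightarrow H^1_{\rm dR}(\mv_n)[M](1)$ via le th.\,\ref{main} and la prop.\,\ref{noyau plonge}, then la prop.\,\ref{isot}. The only cosmetic difference is that the paper phrases the isotypicity of the de Rham term through a descent of the bottom rows of la prop.\,\ref{weak} and du th.\,\ref{theo 1.1} to $F$, whereas you (legitimately) read it off directly from the statement of le th.\,\ref{theo 1.1}, your observation that $Z\mapsto Z[M]$ is extraction of a direct summand being the same implicit point.
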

\begin{proof}
$\mv_n$ est d\'efini sur $F$.
Il s'ensuit que $H^1_{\rm dR}(\mv_n)=C\widehat\otimes H^1_{\rm dR}(\mv_{n,F})$,
et nous laissons au lecteur le soin de v\'erifier
que la ligne du bas des diagrammes de la prop.\,\ref{weak} et du th.\,\ref{theo 1.1}
s'obtiennent
par extension des scalaires \`a $C$ de la ligne analogue sur $F$.
Il s'ensuit que $H^1_{\rm dR}({\cal M}^\varpi_n)[M]$ est $\pi$-isotypique.
On conclut en combinant la proposition pr\'ec\'edente avec le th.\,\ref{main}
et la prop.\,\ref{noyau plonge}.
\end{proof}
              
   \subsubsection{Preuve du th\'eor\`eme \ref{MAIN}} \label{GAB10}
Nous aurons besoin de la variante suivante de la loi de r\'eciprocit\'e de Frobenius.
 \begin{prop}\label{Frobenius reciprocity}
     Soit $\pi$ une repr\'esentation lisse de $G$, irr\'eductible, 
et soit $\mathcal{F}=\pi^\dual \widehat{\otimes}_L X$ un $L$-fr\'echet $\pi$-isotypique. 
Alors, si $U$ est un sous-groupe ouvert compact de~${\mathbb G}(\A_f^\pp)$,
on a un isomorphisme:
$${\cal C}(S^\pp(U),{\cal F})=H^0(G,{\rm Hom}(\pi,{\rm LC}(S(U)))\otimes X.$$
         \end{prop}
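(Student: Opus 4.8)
The plan is to reduce the assertion to the classical Frobenius reciprocity isomorphism already packaged in Lemma~\ref{gab13}, exploiting the $\pi$-isotypic structure of $\mathcal{F}$ to eliminate the Lie-algebra-vanishing hypothesis that appears there. First I would write $\mathcal{F}=\pi^\dual\widehat{\otimes}_L X$ with trivial $G$-action on the $L$-Banach space $X$. Since $\pi$ is smooth, $\pi^\dual$ is an inverse limit of finite-dimensional spaces on which $\mathfrak{g}$ acts trivially (smoothness of $\pi$ means $\pi=\varinjlim V_n$ with $V_n$ finite-dimensional and $G$ acting through a finite quotient on... no: rather $V_n$ is fixed by an open compact subgroup, hence killed by $\mathfrak{g}$), so $\pi^\dual$ is a $\mathcal{D}_{\mathrm{alg}}(G)$-module, and therefore so is $\mathcal{F}=\pi^\dual\widehat{\otimes}_L X$. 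Thus the hypothesis of Lemma~\ref{gab13} is satisfied, giving
$$H^0(\Gamma,\mathcal{C}(S^\pp(U),\mathcal{F}))=H^0(G,{\rm Hom}({\rm LC}(S(U))^\dual,\mathcal{F})).$$

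Next I would identify the right-hand side with the claimed expression. Using $\mathcal{F}=\pi^\dual\widehat{\otimes}_L X$ and the fact that $X$ carries the trivial $G$-action, one has a natural isomorphism ${\rm Hom}({\rm LC}(S(U))^\dual,\pi^\dual\widehat{\otimes}_L X)\cong {\rm Hom}({\rm LC}(S(U))^\dual,\pi^\dual)\widehat{\otimes}_L X$ (the Banach factor $X$ commutes with the continuous-Hom functor applied to the nuclear/reflexive space ${\rm LC}(S(U))^\dual$), compatibly with the $G$-action, which lives entirely on the first factor. Taking $G$-invariants then yields
$$H^0(G,{\rm Hom}({\rm LC}(S(U))^\dual,\mathcal{F}))=H^0(G,{\rm Hom}({\rm LC}(S(U))^\dual,\pi^\dual))\widehat{\otimes}_L X.$$
Finally, ${\rm Hom}({\rm LC}(S(U))^\dual,\pi^\dual)$ is, by reflexivity and duality of the spaces in play, naturally ${\rm Hom}(\pi,{\rm LC}(S(U)))$ as a $G$-module, so the $G$-invariants give exactly $H^0(G,{\rm Hom}(\pi,{\rm LC}(S(U))))$, and combining with the displayed identity of Lemma~\ref{gab13} proves the proposition. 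Note that $\mathcal{C}(S^\pp(U),\mathcal{F})^\Gamma=H^0(\Gamma,\mathcal{C}(S^\pp(U),\mathcal{F}))$ is the object on the left, so the statement as written follows.

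The main obstacle is the bookkeeping with topological tensor products: one must check that $\widehat{\otimes}_L X$ passes through both ${\rm Hom}_{\mathrm{cont}}({\rm LC}(S(U))^\dual,-)$ and the formation of $G$-invariants, which requires knowing that ${\rm LC}(S(U))^\dual$ is a nice enough space (a countable inverse limit of finite-dimensional spaces, as $S(U)=\coprod_i\Gamma_i\backslash G$ with $\Gamma_i$ cocompact, so ${\rm LC}(S(U))$ is an inductive limit of finite-dimensional pieces and its dual a Fréchet space that is even a countable product of finite-dimensional spaces after a suitable filtration) and that taking $G$-invariants of $\pi^\dual\widehat{\otimes}_L X$ with trivial action on $X$ commutes with $\widehat{\otimes}_L X$ — the latter because $H^0(G,-)=\ker(\text{a continuous map})$ and $\widehat{\otimes}_L X$ is exact on the relevant category of Fréchet spaces with a Banach right factor. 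Once these functional-analytic points are in place, the rest is a formal chain of canonical isomorphisms, and the whole argument is essentially a variant of the proof of Lemma~\ref{gab13} with ``$\mathfrak{g}$ kills $\mathcal{F}$'' replaced by ``$\mathcal{F}$ is $\pi$-isotypic, hence a fortiori a $\mathcal{D}_{\mathrm{alg}}(G)$-module''.
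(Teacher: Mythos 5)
Your argument is correct and follows essentially the paper's own route: both proofs reduce to Lemma~\ref{gab13} (your remark that smoothness of $\pi$ makes $\mathcal{F}=\pi^\dual\widehat{\otimes}_L X$ a ${\cal D}_{\rm alg}(G)$-module is precisely what makes that lemma applicable), then pull the $G$-trivial Banach factor $X$ out of the Hom and use the duality ${\rm Hom}({\rm LC}(S(U))^\dual,\pi^\dual)\cong{\rm Hom}(\pi,{\rm LC}(S(U)))$. The paper settles the functional-analytic bookkeeping you list as the main obstacle more directly by invoking the admissibility of ${\rm LC}(S(U))$ (since $S(U)$ is compact), which makes $H^0(G,{\rm Hom}(\pi,{\rm LC}(S(U))))$ finite-dimensional, so that extracting $X$ and replacing the completed tensor product by an ordinary one is immediate.
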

\begin{proof}
D'apr\`es le lemme~\ref{gab13}, on a 
$${\cal C}(S^\pp(U),{\cal F})=H^0(G,{\rm Hom}({\rm LC}(S(U))^\dual, \pi^\dual \widehat{\otimes}_L X)).$$
Comme $G$ op\`ere trivialement sur $X$, on peut sortir $X$ de la parenth\`ese,
utiliser l'isomorphisme
${\rm Hom}({\rm LC}(S(U))^\dual, \pi^\dual )={\rm Hom}(\pi,{\rm LC}(S(U)))$
et la finitude de ${\rm Hom}(\pi,{\rm LC}(S(U)))$ qui r\'esulte du fait
que $S(U)$ est compacte et donc ${\cal C}(S(U))$ et, par suite, ${\rm LA}(S(U))$
est une repr\'esentation admissible de $G$.
\end{proof}

D'apr\`es le cor.~\ref{isotype}, on peut \'ecrire
${\rm Ker}(\iota_{\rm can})$ sous la forme $\pi^\dual \widehat{\otimes}_L X$, o\`u $X$
est un $L$-banach avec action triviale de $G$.
Maintenant, il r\'esulte de la prop.~\ref{CSpst} que
$H^1_{\rm HK}({\rm Sh}_n(U))$ est une somme de copies de $M$, et comme $M$ est de pente~$\frac{1}{2}$,
le noyau $t(\bst^+\otimes M)^{N=0,\varphi=1}$ de
$\theta:(\bst^+\otimes M)^{N=0,\varphi=1}\to C\otimes M_{\rm dR}$ est r\'eduit \`a $0$.
On en d\'eduit, en utilisant la prop.~\ref{cruciale}, que
$${\rm Ker}\big({\cal C}(S^\pp(U),\tHK(\mv_n))[M]\to 
{\cal C}(S^\pp(U),H^1_{\rm dR}(\mv_n))[M]\big)^\Gamma=0,$$
pour tout $U$.  En prenant $U$ assez petit, cela prouve que $X=0$ et donc
que
$\tHK(\mv_n)[M]$ s'identifie (via $\iota_{\rm can}$) \`a un sous $L$-fr\'echet de 
$H^1_{\rm dR}({\cal M}^\varpi_n)[M]$. Ce dernier \'etant $\pi$-isotypique (th.\,\ref{theo 1.1}) 
il s'ensuit que $\tHK(\mv_n)[M]$ est $\pi$-isotypique (cor.\,\ref{isotype}). 

\'Ecrivons donc 
$$\tHK(\mv_n)[M]=\pi^\dual \widehat{\otimes}_L Z,$$ 
o\`u $Z$ est un $L$-banach avec action triviale de~$G$. 
En combinant les prop.~\ref{Frobenius reciprocity} et~\ref{cruciale}, on obtient 
un isomorphisme 
$${\rm Hom}_G(\pi, {\rm LC}(U^{\pp}))\otimes_L Z\simeq 
(\bst^+\otimes_{\breve\Q_p}H^1_{\rm HK}({\rm Sh}_n(U)[M]))^{\varphi=p, N=0}.$$
En passant \`a la limite sur $U$, 
puis en appliquant ${\rm Hom}_{{\mathbb G}(\mathbf{A}_f^{\pp})}(\Pi_f^{\pp}, -)$ 
et en utilisant la prop.~\ref{CSpst}, on obtient un isomorphisme 
$$Z\simeq X_{\rm st}^+(M):=(M\otimes_{\Q_p^{\rm nr}} B_{\rm st}^+)^{\varphi=p, N=0}$$
qui permet de conclure.

\begin{appendix}
 \section{Mod\`eles semi-stables \'equivariants des rev\^etements du demi-plan de Drinfeld}\label{GRAB6}
\subsection{Mod\`eles semi-stables des courbes alg\'ebriques}\label{GRAB7}

Ce qui suit est un r\'esum\'e de la th\'eorie classique des mod\`eles
semi-stables des courbes~\cite{QL}; $K$ est un corps de caract\'eristique~$0$
complet pour $v_p$ suppos\'ee discr\`ete.

\subsubsection{Mod\`eles minimaux}\label{GRAB8}
Soit $X$ une courbe projective lisse, g\'eom\'etriquement irr\'eductible, d\'efinie sur $K$.
Un {\it mod\`ele minimal} pour $X$ est un mod\`ele $\sx$ propre, r\'egulier, de $X$, i.e.~un
sch\'ema r\'egulier $\sx$, propre sur $\so_K$, dont la fibre g\'en\'erique est $X$,
et ne contenant pas de courbe exceptionnelle du premier type\footnote{Un sous-sch\'ema ferm\'e
$E$ d'un sch\'ema noeth\'erien $X$ est {\it une courbe exceptionnelle du premier type}
si $E$ est un diviseur de Cartier effectif sur $X$, il existe un corps $k$ tel que
$E\simeq \piqp_k$, et le pull-back du faisceau normal $\sn_{E/X}$ est $\so_{\piqp_k}(-1)$.}.
Un mod\`ele propre r\'egulier existe d'apr\`es le th\'eor\`eme de Lipman de r\'esolution
des singularit\'es sur les surfaces.  De plus, si $\sx$ est un tel mod\`ele, on a une suite
de morphismes
$$
\sx=\sx_m\to \sx_{m-1}\to \ldots\to \sx_1\to \sx_0
$$
de mod\`eles propres r\'eguliers de $X$ telle que chaque morphisme soit obtenu par
contraction d'une courbe exceptionnelle du premier type et que $\sx_0$ soit un mod\`ele
minimal.  En particulier, il existe des mod\`eles minimaux.

\subsubsection{Mod\`eles semi-stables}\label{GRAB9}
  Il existe une extension finie $K'$ de $K$ telle que tout mod\`ele minimal de $X_{K'}$ soit
\'etale semi-stable, i.e.~semi-stable localement pour la topologie \'etale\footnote{
Les composantes irr\'eductibles de la fibre sp\'eciale peuvent avoir de l'autointersection.}.
Un tel mod\`ele est dit {\it minimal \'etale semi-stable}.  Plus pr\'ecis\'ement, il
existe une extension finie $K'$ de $K$ telle que, pour toute extension finie $L$ de $K'$,
il existe des mod\`eles minimaux sur $L$ et, de plus, tout mod\`ele minimal de $X_L$ est
\'etale semi-stable.  En particulier, on peut imposer \`a $L$ d'\^etre une extension
galoisienne de~$K$.
  
Si $X$ est de genre~$\geq 1$, un mod\`ele minimal est unique (\`a isomorphisme unique pr\`es).
Plus g\'en\'eralement, si $\sx$ est un mod\`ele minimal de $X$ et $\sy$
est un mod\`ele r\'egulier propre, alors il existe un unique morphisme
de mod\`eles $\sy\to \sx$; ce morphisme est une suite de contractions de courbes exceptionnelles 
du premier type.
 Si $\sx$, $\sx'$ sont des mod\`eles minimaux \'etale semi-stables de $X$ et $X'$ respectivement,
alors tout morphisme $X'\to X$ s'\'etend, de mani\`ere unique, en un morphisme
$\sx'\to \sx$. En particulier, si $X_{K'}$ a un mod\`ele minimal \'etale semi-stable $\sx$
sur une extension galoisienne $K'$ de $K$, l'action naturelle du groupe
$\Aut_K(X)\times\Gal(K^{\prime}/K)$ s'\'etend \`a~$\sx$. 

On peut aussi \'eclater tous les points d'auto-intersection des composantes de la fibre sp\'eciale de $\sx$
pour obtenir un mod\`ele semi-stable de $X_{K'}$; l'action de $\Aut_K(X)\times\Gal(K^{\prime}/K)$
continue \`a s'\'etendre.  Le r\'esultat est {\it le mod\`ele semi-stable minimal de~$X_{K'}$}.

\subsubsection{Mod\`eles stables}\label{GRAB10}
Un {\it mod\`ele stable} $\sx$ 
d'une courbe projective lisse, g\'eom\'etriquement irr\'eductible, d\'efinie sur $K$,
est un sch\'ema propre et plat sur $\so_K$ tel que $\sx_K\simeq X$ et dont la fibre sp\'eciale
g\'eom\'etrique est une {\it courbe stable}, i.e.~une courbe r\'eduite, connexe, avec uniquement
des singularit\'es nodales,
et dont toutes les composantes irr\'eductibles
de genre 0
rencontrent les autres composantes en au moins 3 points.

Si $X$ est de genre~$\geq 2$, et si $\sx$ est le mod\`ele semi-stable minimal de $X_{K'}$
sur une extension galoisienne finie $K'$ de $K$, alors le mod\`ele stable $\sy$
de $X_{K'}$ est obtenu en contractant les courbes exceptionnelles de la fibre sp\'eciale d'auto-intersection $-2$.
Le mod\`ele stable $\sy$ n'est pas forc\'ement r\'egulier; il est stable par changement de corps de base
et l'action de $\Aut_K(X)\times\Gal(K^{\prime}/K)$ s'\'etend \`a $\sy$.
Plus g\'en\'eralement, si $\sy$, $\sy'$ sont les mod\`eles stables de $X$ et $X'$ respectivement,
alors tout morphisme $X'\to X$ s'\'etend, de mani\`ere unique, en un morphisme
$\sy'\to \sy$.

     \subsection{Mod\`eles semi-stables des courbes analytiques}\label{GRAB11}
Passons aux mod\`eles semi-stables des courbes de Berkovich
\cite{Duc,BPR,Tem}.
     
\subsubsection{Squelette analytique}\label{GRAB12}
Soit $X$ une courbe $K$-analytique.  Rappelons que les points de $X$
se r\'epartissent en $4$ types~\cite[3.3.2]{Duc} suivant la forme de leur corps r\'esiduel compl\'et\'e.
Si $Y\subset X$, on note $Y_{[i]}$ l'ensemble de ses points de type $i\in\{1,2,3,4\}$.
L'ensemble des points de $X$ ayant un voisinage qui est un disque ouvert virtuel~\cite[3.6.34]{Duc}
est un ouvert de $X$ \cite[5.1.8]{Duc}. Le compl\'ementaire $S^{\an}(X)$ de cet ensemble
est le {\it squelette analytique} de $X$; c'est un sous-graphe ferm\'e de~$X$.  Il est
localement fini et inclus dans tout sous-graphe analytiquement admissible\footnote{
Un sous-graphe ferm\'e de $X$ est {\it analytiquement admissible} si toutes les composantes
connexes de $X\moins\Gamma$ sont des disques ouverts virtuels relativement compacts
dans $X$.} de $X$ \cite[5.1.8]{Duc}.  Si $X$ est lisse et si $S^{\an}(X)$ a une intersection
non vide avec toutes les composantes connexes de $X$, alors il est analytiquement admissible et
compos\'e de points de
type $2$ ou $3$ \cite[5.1.10]{Duc}.

Soient $X$ une $K$-courbe analytique lisse et $\Gamma$ un sous-graphe analytiquement admissible
et localement fini de $X$ trac\'e sur $X_{[2,3]}$.  Un point $x$ de $\Gamma$ est {\it un noeud} \cite[5.1.12]{Duc}
s'il satisfait au moins une des conditions suivantes:
\begin{enumerate}
\item $x$ est de genre~$\geq 1$,
\item $x$ est un sommet topologique de $\Gamma$, i.e., la valence de $(\Gamma, x)$ n'est pas $2$,
\item $x$ est un point du bord $\partial^{\an} X$,
\item ${\mathfrak s}(x)$ est\footnote{${\mathfrak s}(\jcdot )$
est le  ``corps des constantes'' \cite[4.5.11]{Duc}.  Si  $K$ est
 alg\'ebriquement clos, aucun point ne satisfait cette condition; il en est de m\^eme si on remplace
$K$ par une extension finie assez grande.} strictement inclus dans ${\mathfrak s}(\beta)$,
o\`u $\beta$ est une branche de $\Gamma$ partant de~$x$.
\end{enumerate}
L'ensemble $\Sigma$ des noeuds de $\Gamma$ est un sous-ensemble ferm\'e et discret de $\Gamma$.

On note $\Sigma^{\rm an}(X)$ l'ensemble des noeuds de $S^{\rm an}(X)$.
Si $X$ est l'analytification d'une courbe projective lisse de genre~$\geq 2$,
alors $\Sigma^{\rm an}(X)$ n'est pas vide
\cite[5.4.16]{Duc}.
 
\subsubsection{Triangulations}\label{GRAB13}
Si $X$ est une $K$-courbe analytique lisse, une
{\em triangulation}\footnote{Ou  {\em semistable vertex set}~\cite{BPR}.}  \cite[5.1.13]{Duc}
de $X$ est la donn\'ee d'un sous-ensemble ferm\'e discret $S$ de $X$, contenu dans
$X_{[2,3]}$, tel que toutes les composantes connexes
de $X\moins S$ soient des disques ou couronnes ouverts virtuels relativement compacts
dans $X$. 
Le squelette $\Gamma$ d'une triangulation $S$ \cite[5.1.14]{Duc} 
est un graphe admissible et localement fini, trac\'e sur
$X_{[2,3]}$, dont les noeuds appartiennent \`a $S$.

Si $X$ est g\'eom\'etriquement connexe et compacte, et si $\Sigma^{\rm an}(X)\neq\emptyset$,
alors 
tous les sommets de $S^{\an}(X)$ sont des noeuds, et
$\Sigma^{\rm an}(X)$ 
est une triangulation de $X$ contenue dans toutes les triangulations de $X$ \cite[5.4.12]{Duc}. 
   
Soit $\sx$ une courbe formelle, plate et normale sur $\so_K$.
Soit $S(\sx)$ l'ensemble des pr\'eimages (par la sp\'ecialisation) dans $\sx_K$ des points g\'en\'eriques
des composantes irr\'eductibles de la fibre sp\'eciale de $\sx$.
Tout
 automorphisme de la paire  $ (\sx_K,S(\sx))$ s'\'etend (de mani\`ere unique) en un automorphisme 
de $\sx$  \cite[6.3.23]{Duc}. Si $\sx$ est semi-stable au sens 
large\footnote{I.e., \'etale localement, de la forme $\Spf \so_K\{X, Y\} /(XY-a) $, $a\in \so_K\moins\{0\}$.}, 
alors $S(\sx)$ est une triangulation: cela suit de ce que la pr\'eimage (par l'application de sp\'ecialisation)
d'un point
non singulier est un disque ouvert et celle d'un point double est une couronne ouverte.
Si $\sx$ est le mod\`ele stable de l'analytification d'une courbe projective lisse $X$
de genre~$\geq 2$, alors
la triangulation $S(\sx)$ est \'egale \`a $\Sigma^{\rm an}(X)$ 
et le squelette de $S(\sx)$
 est \'egal \`a $S^{\an}(X^{\an})$ \cite[Th.\,4.22]{BPR}.

\subsection{Mod\`eles semi-stables \'equivariants de ${\cal M}_n$}\label{GRAB15}
Nous allons utiliser la th\'eorie des mod\`eles semi-stables des courbes analytiques
pour construire des mod\`eles semi-stables \'equivariants de $\mv_n$
{\it sur une extension finie $L$ de $F$}.  Comme $\mv_n$ n'est pas compacte, l'existence d'un tel
mod\`ele ne r\'esulte pas directement du th\'eor\`eme de r\'eduction semi-stable pour les courbes,
mais on peut utiliser l'action de $G$ pour se ramener au cas compact.

Soit $\Gamma\subset G/\varpi^\Z$ un sous-groupe de congruence suffisamment petit pour
que $\Gamma$ op\`ere librement et discr\`etement
sur l'arbre de Bruhat-Tits.
Soit  $ X:=\Gamma\backslash\Omega_{{\rm Dr},F}$. 
Notons $\Omega_{{\rm Dr},F}^+$ le mod\`ele semi-stable standard de $\Omega_{\rm Dr}$ \cite{BC};
l'action de $G$ sur $\Omega_{{\rm Dr},F}$ s'\'etend \`a $\Omega_{{\rm Dr},F}^+$.
Soit $X^{+} :=\Gamma\backslash\Omega_{{\rm Dr},F}^{+}$ -- 
c'est le mod\`ele semi-stable minimal de $X$ \cite[3.1]{Mus},
et c'est aussi son mod\`ele stable.

Soit  $X_n:=\Gamma\backslash\mv_{n,F}$.
Choisissons une extension galoisienne finie $L$ de $F$ telle que
$X_{n,L}$ ait un mod\`ele stable $X_{n,L}^+$. 
Notons $X_{n,L}^{\circ}$ le mod\`ele semi-stable minimal (il est obtenu \`a partir
de $X_{n,L}^+$, de mani\`ere fonctorielle, en \'eclatant les points non r\'eguliers
pour obtenir le mod\`ele minimal \'etale semi-stable, puis en \'eclatant
les points d'autointersection de ce mod\`ele).

Consid\'erons les diagrammes commutatifs suivants de morphismes de courbes $L$-analytiques\footnote{
Nous omettrons l'exposant $({\jcdot})^{\an}$ s'il n'y a pas de risque de confusion.} 
et de sch\'emas formels, respectivement.
  $$
  \xymatrix@R=.6cm{
  \mv_{n,L}\ar[r]^{f_1}\ar[d]^{p_2} & \Omega_{\rm Dr,L}\ar[d]^{p_1}\\
  X_{n,L}\ar[r]^{f_2} & X_L
  }\hskip2cm
  \xymatrix@R=.5cm{ (\mv_{n,L})^{\circ}\ar[r]\ar[d]^-{p_2} & (\mv_{n,L})^{+} \ar@{.>}[r]^{f_1}\ar@{.>}[d]^-{p^+_2}& \Omega_{\rm Dr,L}^{+}\ar[d]^-{p_1}\\
  X_{n,L}^{\circ}\ar[r] & X_{n,L}^{+}\ar[r]^{f_2} & X_L^{+}}
  $$
  Le premier diagramme est commutatif. 
En ce qui concerne
le second diagramme, 
l'application $f_2$ est l'extension de $f_2: X_{n,L}\to X_L$.
 Le  mod\`ele $ (\mv_{n,L})^{+} $ est d\'efini comme le produit fibr\'e de $X_{n,L}^{+}$ et
 $\Omega_{\rm Dr,L}^{+}$ au-dessus de $X_L^{+}$. Il est \'etale au-dessus
de  $X_{n,L}^{+}$ car $\Omega_{\rm Dr,L}^{+}$ l'est au-dessus de $X_L^+$;
 c'est donc un mod\`ele semi-stable g\'en\'eralis\'e de $\mv_{n,L}$. 
De m\^eme, le mod\`ele $ (\mv_{n,L})^{\circ} $ est d\'efini comme le produit fibr\'e de
$X_{n,L}^{\circ}$ et $\Omega_{\rm Dr,L}^{+}$ au-dessus de $X_L^{\circ}$. 
Comme il est \'etale au-dessus de  $X_{n,L}^{\circ}$ c'est un mod\`ele semi-stable de $\mv_{n,L}$. 

\begin{prop}
$(\mv_{n,L})^{\circ} $ est un mod\`ele semi-stable de $\mv_{n,L}$
auquel l'action de $G\times\check G\times \G_F$ s'\'etend de mani\`ere compatible
\`a l'action sur $\Omega_{{\rm Dr},L}^+$.
\end{prop}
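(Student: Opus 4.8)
The plan is to establish semi-stability of $(\mv_{n,L})^\circ$ locally and then deal with the three group actions. First I would prove semi-stability: by construction $(\mv_{n,L})^\circ$ is the fibre product $X_{n,L}^\circ\times_{X_L^\circ}\Omega_{{\rm Dr},L}^+$. The map $\Omega_{{\rm Dr},L}^+\to X_L^\circ$ is étale (it is a quotient by the free, discrete action of $\Gamma$ on the Bruhat--Tits tree, so $X_L^\circ=X_L^+=\Gamma\backslash\Omega_{{\rm Dr},L}^+$ is the stable/minimal model and the covering is étale). Base change of an étale morphism is étale, so $(\mv_{n,L})^\circ\to X_{n,L}^\circ$ is étale. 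Since semi-stability (in the wide/étale-local sense recalled in \S\ref{GRAB11}) is preserved under étale morphisms, and $X_{n,L}^\circ$ is semi-stable by the choice of $L$, it follows that $(\mv_{n,L})^\circ$ is a semi-stable model of $\mv_{n,L}$. I should also check that its generic fibre is indeed $\mv_{n,L}$: this follows because generic fibre commutes with fibre product and $(\mv_{n,L})^\circ_L=X_{n,L}\times_{X_L}\Omega_{{\rm Dr},L}=\mv_{n,L}$ by the first commutative diagram (using that $\mv_{n,L}\to X_{n,L}$ and $\Omega_{{\rm Dr},L}\to X_L$ are both $\Gamma$-quotients with the same $\Gamma$).

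Next I would treat the Galois action of $\G_F$. The models $X_{n,L}^+$ and $X_L^+$ are the stable models of proper smooth curves $X_{n,L}$, $X_L$, hence by the functoriality recalled in \S\ref{GRAB10} the action of $\Aut_F(X_{n,L})\times\Gal(L/F)$ extends (uniquely) to $X_{n,L}^+$, and similarly for $X_L^+$; passing to the minimal semi-stable model $X_{n,L}^\circ$ (obtained functorially by blowing up non-regular points and then self-intersection points, cf.\ \S\ref{GRAB9}) the action still extends. The model $\Omega_{{\rm Dr},L}^+$ carries the action of $G\times\G_F$ by \cite{BC}. Since $(\mv_{n,L})^\circ$ is defined as a fibre product of objects all equipped with compatible $\G_F$-actions over the $\G_F$-equivariant base $X_L^\circ$, the $\G_F$-action extends to $(\mv_{n,L})^\circ$ by the universal property of the fibre product, and by construction it is compatible with the action on $\Omega_{{\rm Dr},L}^+$ via $f_1$.

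Then I would treat the $G$-action. Here the subtlety is that $G$ does not preserve $\Gamma$, so it does not act on $X_{n,L}$ directly; rather, for $g\in G$ one gets an isomorphism $\Gamma\backslash\mv_{n,F}\to (g\Gamma g^{-1})\backslash\mv_{n,F}$. The clean way is to use the characterization of $S((\mv_{n,L})^\circ)$ via triangulations (\S\ref{GRAB13}): the model $(\mv_{n,L})^\circ$ corresponds to a triangulation/skeleton of $\mv_{n,L}$ which, by its construction as a pullback of the standard skeleton of $\Omega_{{\rm Dr}}$ (the Bruhat--Tits tree) along the étale map $\mv_{n}\to\Omega_{\rm Dr}$, is intrinsic and $G$-stable (the reduction map $r:\Omega_{\rm Dr}\to\mathcal T$ is $G$-equivariant, and the skeleton of $\mv_n$ is $r^{-1}$ pulled back through the covering). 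Since an automorphism of a semi-stable formal curve preserving the associated triangulation extends uniquely to the model (\cite[6.3.23]{Duc}), and $G$ preserves this triangulation, the $G$-action on $\mv_{n,L}$ extends to $(\mv_{n,L})^\circ$; uniqueness gives compatibility with the action on $\Omega_{{\rm Dr},L}^+$. The $\check G$-action is handled the same way: $\check G$ acts on each $\mv_{n}$ (it is the Galois group data of the covering tower) commuting with everything, and it preserves the skeleton because it acts trivially on $\Omega_{\rm Dr}$ and hence on the Bruhat--Tits tree downstairs, so again \cite[6.3.23]{Duc} applies. Finally, the three extended actions commute on $(\mv_{n,L})^\circ$ because they commute on the generic fibre $\mv_{n,L}$ and the extension to the model is unique.

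The main obstacle I expect is the bookkeeping around the $G$-action and the passage to the extension $L$: one must make sure that a single finite $L$ works and that the skeleton/triangulation argument is genuinely $G$-stable despite $G$ not normalizing $\Gamma$ — i.e.\ one really needs the intrinsic description of the skeleton of $\mv_n$ as the preimage of the Bruhat--Tits tree, together with $G$-equivariance of the reduction map, rather than any equivariance of the auxiliary Mumford quotient construction. Once that intrinsic description is in place, \cite[6.3.23]{Duc} (or the functoriality of stable models in the proper case) does all the work, and the commutation of the actions is automatic by uniqueness.
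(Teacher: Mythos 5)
Your overall mechanism is the right one — and it is the paper's: once one knows that the triangulation attached to the model is stable under all the automorphisms in play, the extension of the action follows from \cite[6.3.23]{Duc}, and the passage from $(\mv_{n,L})^{+}$ to $(\mv_{n,L})^{\circ}$ is functorial; your treatment of semi-stability and of the generic fibre likewise matches the construction given just before the proposition. The gap is in the step that actually carries the weight, namely the $G$- and $\check G$-stability of the triangulation. You justify it by asserting that the triangulation (equivalently the skeleton) of $(\mv_{n,L})^{+}$ is the preimage of the Bruhat--Tits tree under $f_1:\mv_{n,L}\to\Omega_{{\rm Dr},L}$. But the triangulation the fibre-product construction actually produces is $p_2^{-1}(S(X_{n,L}^{+}))$, the pullback of the triangulation of the stable model of the \emph{compact quotient} $X_{n,L}$ along the \'etale map $p_2$; the other projection $(\mv_{n,L})^{+}\to\Omega_{{\rm Dr},L}^{+}$ is finite but not \'etale on the models, so there is no a priori identification of $S((\mv_{n,L})^{+})$ with $f_1^{-1}(S(\Omega_{{\rm Dr},L}^{+}))$. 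Moreover the claim that the minimal skeleton of $\mv_{n}$ equals $f_1^{-1}({\cal T})$ is itself a nontrivial geometric assertion about the covering over the residue discs: a finite \'etale covering of an open disc over $C$ can be connected and nontrivial (this is exactly the phenomenon the Lubin--Tate and Drinfeld towers exploit), so the complement of $f_1^{-1}({\cal T})$ need not be a union of virtual open discs without an argument. As written, the $G$- and $\check G$-parts of the statement are therefore not established.

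What is needed — and what the paper proves — is the intrinsic identification $S((\mv_{n,L})^{+})=\Sigma^{\rm an}(\mv_{n,L})$, which makes stability under ${\rm Aut}_F(\mv_{n,L})$ (hence under $G\times\check G\times\Gal(L/F)$ acting through $F$-automorphisms) automatic. The argument goes through the quotient, not through $\Omega_{\rm Dr}$: since $\Gamma$ acts freely, $p_2$ is \'etale and a local isomorphism preserving completed residue fields, so a point of $\mv_{n,L}$ lies on the skeleton (resp.\ is a node) if and only if its image in $X_{n,L}$ does (resp.\ is); since $X_{n,L}^{+}$ is the \emph{stable} model of the proper curve $X_{n,L}$, one has $S(X_{n,L}^{+})=\Sigma^{\rm an}(X_{n,L})$ (cf.\ \cite[Th.\,4.22]{BPR}); combining, $S((\mv_{n,L})^{+})=p_2^{-1}(\Sigma^{\rm an}(X_{n,L}))=\Sigma^{\rm an}(\mv_{n,L})$. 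If you replace your tree-pullback assertion by this comparison, your proof closes; note also that this handles $G$, $\check G$ and the Galois action uniformly, so your separate fibre-product argument for $\G_F$ becomes unnecessary.
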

\begin{proof}
On a d\'ej\`a vu que $(\mv_{n,L})^{\circ} $ est un mod\`ele semi-stable de $\mv_{n,L}$;
il suffit donc de v\'erifier que l'action de ${\rm Aut}_F(\mv_{n,L})$ s'\'etend \`a ce mod\`ele.
On va commencer par prouver que l'action s'\'etend \`a $(\mv_{n,L})^{+}$.

Comme $X_{n,L}^+$ est le mod\`ele stable de $X_{n,L}$, la triangulation
$S(X_{n,L}^+)$ qu'elle d\'efinit est \'egale \`a l'ensemble des
noeuds $\Sigma^{\rm an}(X_{n,L})$ du squelette analytique de $X_{n,L}$.

Comme l'action de 
$\Gamma$ sur l'arbre de  Bruhat-Tits est libre, il en est de m\^eme de son action sur 
$\mv_{n,L}$ et sur $(\mv_{n,L})^+$.  
Si $x\in\mv_{n,L}$,
le corps r\'esiduel de $x$ est donc \'egal \`a celui de son image
$p_2(x)$ et $p_2$, \'etant \'etale, est un isomorphisme local en $x$ \cite[3.1.5]{JvP}.  
On en d\'eduit que:

$\bullet$ la triangulation $S(X_{n,L}^+)$ est le quotient de $S((\mv_{n,L})^+)$ par $\Gamma$.

$\bullet$ 
$x$ appartient au squelette analytique (resp.~est un noeud) de $\mv_{n,L}$ si et seulement
si $p_2(x)$ appartient \`a celui (resp.~est un noeud) de $X_{n,L}$, et donc
$\Sigma^{\rm an}(X_{n,L})=\Gamma\backslash \Sigma^{\rm an}(\mv_{n,L})$.

On a $\Sigma^{\rm an}(\mv_{n,L})\subset S((\mv_{n,L})^+)$, et comme, d'apr\`es ce qui pr\'ec\`ede,
 les quotients
par $\Gamma$ sont \'egaux, on en d\'eduit que $\Sigma^{\rm an}(\mv_{n,L})= S((\mv_{n,L})^+)$.
Cela implique que $S((\mv_{n,L})^+)$ est invariante par ${\rm Aut}_F(\mv_{n,L})$,
et donc que l'action de ${\rm Aut}_F(\mv_{n,L})$ s'\'etend au mod\`ele
$(\mv_{n,L})^+$.  Comme $(\mv_{n,L})^\circ$ est obtenu fonctoriellement
\`a partir de $(\mv_{n,L})^+$, l'action de ${\rm Aut}_F(\mv_{n,L})$
s'\'etend aussi \`a $(\mv_{n,L})^\circ$.
\end{proof}

\begin{rema}\label{extraordinary}
Si $Y$ est une courbe Stein, connexe, ayant un mod\`ele semi-stable sur $\O_K$, alors
les fonctions born\'ees sur $Y$ sont constantes. En effet, si $f\in\O(Y)$ est born\'ee,
on peut multiplier $f$ par une puissance d'une uniformisante $\pi$ de $K$ de telle sorte que
$f\in\O^+(Y)$.  Alors $f$ modulo $\pi$ induit une fonction $\overline f$
sur la fibre sp\'eciale.  Or les composantes irr\'eductibles de la
la fibre sp\'eciales \'etant propres, $\overline f$ est constante
sur chaque composante irr\'eductible, et donc sur toute la fibre sp\'eciale
puisque celle-ci est connexe.
Si $c$ est un rel\`evement de $\overline f$ dans $\O_K$, on peut appliquer ce qui pr\'ec\`ede
\`a
$\pi^{-1}(f-c)$, et r\'eit\'erer, pour en d\'eduire que $f$ est constante modulo $\pi^n$ pour tout $n$.
\end{rema}
 
\end{appendix}

\end{document}